\newcommand{\ba}{\begin{array}}
\newcommand{\ea}{\end{array}}
\newcommand{\be}{\begin{enumerate}}
\newcommand{\ee}{\end{enumerate}}
\renewcommand{\theequation}{%
	\thesection.%
	\ifnum\value{subsection}>0 \arabic{subsection}.\fi
	\ifnum\value{subsubsection}>0 \arabic{subsubsection}.\fi
	\arabic{equation}%
}
\newtheorem{thm}[equation]{Theorem}
\newtheorem{prop}[equation]{Proposition}
\newtheorem{lemma}[equation]{Lemma}
\newtheorem{cor}[equation]{Corollary}
\newtheorem{claim}[equation]{Claim}
\newtheorem*{claim2}{Claim}
\newtheorem*{claimA}{Claim A}
\newtheorem*{claimB}{Claim B}
\newtheorem*{claimC}{Claim C}
\theoremstyle{definition}
\newtheorem{defn}[equation]{Definition}
\theoremstyle{remark}
\newtheorem{rmk}[equation]{Remark}
\newtheorem{example}[equation]{Example}
\newtheorem{convention}[equation]{Convention}
\newtheorem{notation}[equation]{Notation}
\newcommand{\R}{\mathbb{R}}
\newcommand{\Z}{\mathbb{Z}}
\newcommand{\F}{\mathbb{F}_2}
\newcommand{\bdry}{\partial}
\newcommand{\s}{\vskip.1in}
\newcommand{\n}{\noindent}
\newcommand{\Hom}{\operatorname{Hom}}
\newcommand{\ra}{\rightarrow}
\newcommand{\xra}{\xrightarrow}
\newcommand{\mf}{\mathbf}
\newcommand{\op}{\operatorname}
\newcommand{\cal}{\mathcal}
\newcommand{\es}{\emptyset}
\newcommand{\lan}{\langle}
\newcommand{\ran}{\rangle}
\newcommand{\ul}{\underline}
\newcommand{\g}{\Gamma}
\newcommand{\cf}{\mathcal{F}}
\newcommand{\cs}{\mathcal{S}}
\newcommand{\csg}{\mathcal{S}(\Gamma)}
\newcommand{\cstc}{\mathcal{S}_{\widetilde{\mathcal{C}}}}
\newcommand{\cstd}{\mathcal{S}_{\widetilde{\mathcal{D}}}}
\newcommand{\bne}{B_{n,e}}
\newcommand{\cne}{\mathcal{C}_{n,e}}
\newcommand{\tcne}{\widetilde{\mathcal{C}}_{n,e}}
\newcommand{\dne}{\mathcal{D}_{n,e}}
\newcommand{\tdne}{\widetilde{\mathcal{D}}_{n,e}}
\newcommand{\fne}{\mathcal{F}_{n,e}}
\newcommand{\tfne}{\widetilde{\mathcal{F}}_{n,e}}
\newcommand{\tf}{\widetilde{\mathcal{F}}}
\newcommand{\rne}{R_{n,e}}
\newcommand{\qne}{Q_{n,e}}
\newcommand{\rnea}{R_{n,e}^{\operatorname{alg}}}
\newcommand{\rg}{R_+(\Gamma)}
\newcommand{\vz}{\mathbb{V}}
\newcommand{\tpv}{V^+}
\newcommand{\vnb}{V^+_{nb}}
\newcommand{\gv}{\Gamma_{\mathbf{v}}}
\newcommand{\gw}{\Gamma_{\mathbf{w}}}
\newcommand{\gu}{\Gamma_{\mathbf{u}}}
\newcommand{\guv}{\Gamma_{\underline{\mathbf{b}}}}
\newcommand{\gov}{\Gamma_{\overline{\mathbf{b}}}}
\newcommand{\uv}{\underline{\mathbf{b}}}
\newcommand{\ov}{\overline{\mathbf{b}}}
\newcommand{\ob}{\overline{\beta}}
\newcommand{\oi}{OI}
\newcommand{\mi}{\mathbf{i}}
\newcommand{\mj}{\mathbf{j}}
\newcommand{\mv}{\mathbf{v}}
\newcommand{\mw}{\mathbf{w}}
\newcommand{\iv}{\mathbf{i}_{\mathbf{v}}}
\newcommand{\nv}{NV}
\newcommand{\dnv}{DNV}
\newcommand{\slv}{SLV}
\newcommand{\shv}{SHV}
\newcommand{\sv}{SV}
\newcommand{\vi}{\mathbf{v} | \mathbf{i}}
\newcommand{\bi}{\beta(\mathbf{i})}
\newcommand{\wb}{\mathbf{w}(\beta)}
\newcommand{\wh}{\widehat}
\newcommand{\whg}{\widehat{\Gamma}}
\newcommand{\whv}{\widehat{\mathbf{v}}}
\newcommand{\whi}{\widehat{\mathbf{i}}}
\newcommand{\wt}{\widetilde}
\newcommand{\tb}{\widetilde{\beta}}
\newcommand{\tg}{\widetilde{\Gamma}}
\begin{document}
\title[Contact categories of disks]
{Contact categories of disks}

\author{Ko Honda}
\address{University of California, Los Angeles, Los Angeles, CA 90095}
\email{honda@math.ucla.edu} \urladdr{http://www.math.ucla.edu/\char126 honda}

\author{Yin Tian}
\address{Yau Mathematical Sciences Center, Tsinghua University, Beijing 100084, China}
\email{yintian@tsinghua.edu.cn} \urladdr{}


\keywords{Contact structures, categorification, Heegaard Floer homology}

\subjclass[2010]{Primary 53D10; Secondary 53D40.}

\thanks{KH supported by NSF Grants DMS-0805352, DMS-1105432, DMS-1406564, and DMS-154914. YT supported by NSFC 11601256 and 11971256.}

\begin{abstract}
In the first part of the paper we associate a pre-additive category $\mathcal{C}(\Sigma)$ to a closed oriented surface $\Sigma$, called the {\em contact category} and constructed from contact structures on $\Sigma\times[0,1]$.  There are also $\mathcal{C}(\Sigma,F)$, where $\Sigma$ is a compact oriented surface with boundary and $F\subset \bdry\Sigma$ is a finite oriented set of points which bounds a submanifold of $\bdry\Sigma$, and universal covers $\widetilde{\mathcal{C}}(\Sigma)$ and $\widetilde{\mathcal{C}}(\Sigma,F)$ of $\mathcal{C}(\Sigma)$ and $\mathcal{C}(\Sigma,F)$.  In the second part of the paper we prove that the universal cover of the contact category of a disk admits an embedding into its ``triangulated envelope.''
\end{abstract}

\maketitle

\tableofcontents

\section{Introduction}


The goal of this paper is twofold. The first goal is to associate a pre-additive category $\mathcal{C}(\Sigma)$ to a closed oriented surface $\Sigma$, called the {\em contact category} and constructed from contact structures on $\Sigma\times[0,1]$.\footnote{The contact category was discovered by the first author around 2007, but never written up systematically.  We hope that this is the first in a series of papers which develops the theory of contact categories.  The idea of constructing a contact category was also pursued by Kevin Walker (unpublished) at around the same time.}  There are also $\mathcal{C}(\Sigma,F)$, where $\Sigma$ is a compact oriented surface with boundary and $F\subset \bdry\Sigma$ is a finite oriented set of points which bounds a submanifold of $\bdry\Sigma$, and universal covers $\widetilde{\mathcal{C}}(\Sigma)$ and $\widetilde{\mathcal{C}}(\Sigma,F)$ of $\mathcal{C}(\Sigma)$ and $\mathcal{C}(\Sigma,F)$. The contact category $\mathcal{C}(\Sigma)$ admits a decomposition
$$\mathcal{C}(\Sigma)=\coprod_{i\in\Z} \mathcal{C}(\Sigma,i)$$
into connected components, where $i$ is the Euler class (= first Chern class) of the contact structure evaluated on $\Sigma$.

The contact categories, a priori, have no reason to satisfy any nontrivial axioms of a triangulated category.  In spite of such an inauspicious start, the contact categories partially satisfy the axioms of a triangulated category,  and, in particular, have distinguished triangles that we call the {\em bypass exact triangles}.

The second goal of this paper is to study the universal covers of contact categories of a disk in more detail.  When $\Sigma=D^2$, $\# F=2n+2$, and we are in the component where the Euler class is $n-2e$, we abbreviate
$$\widetilde{\mathcal{C}}_{n,e}:= \widetilde{\mathcal{C}}(D^2,F; n-2e).$$
We prove that $\tcne$ admits an embedding into its ``triangulated envelope"; more precisely, we have:

\begin{thm} \label{thm embed}
There exist a family of triangulated categories $\tdne$ and additive functors $\tfne: \tcne \ra \tdne$ such that
$\tfne$ are fully faithful and images of $\tfne$ generate $\tdne$ under taking iterated cones.  Moreover, $\tfne$ is exact, i.e., takes bypass exact triangles to distinguished triangles.
\end{thm}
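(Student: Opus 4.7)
The plan is to realize $\tdne$ as the triangulated subcategory of the bounded homotopy category $K^b(A\text{-pmod})$ generated by a designated set of projectives, where $A = A_{n,e}$ is an explicitly built graded $\Z$-algebra. The functor $\tfne$ will send each dividing set to an indecomposable projective module. The idempotents of $A_{n,e}$ are indexed by the $(n,e)$-dividing set diagrams on $(D^2, F)$, the generators by elementary bypass attachments (placed in degree $1$), and the relations by the local identities read off from convex surface theory on $D^2 \times [0,1]$: bypass attachments supported in disjoint sub-discs commute, there is a trigonal identity among the three bypass moves sharing a common arc, and overtwisted contact structures are killed. The internal $\Z$-grading on $A_{n,e}$ encodes the covering data, which is what makes $\tfne$ land in the universal cover $\tcne$ rather than in $\cne$.

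\emph{Construction of $\tfne$.} On objects, set $\tfne(\Gamma) := P_\Gamma := A_{n,e}\, e_\Gamma$. A morphism in $\tcne$ is, by the bypass decomposition of tight contact structures on $D^2 \times [0,1]$, a composition of elementary bypass attachments modulo convex-surface relations; send each elementary bypass $\Gamma \to \Gamma'$ to the distinguished degree-$1$ generator $e_{\Gamma'} a\, e_\Gamma$ and extend multiplicatively. Well-definedness is built in: the relations imposed on $A_{n,e}$ are exactly the ones already satisfied in $\tcne$, so the assignment descends from the free path category on the generators to $\tcne$. Additivity is automatic.

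\emph{Exactness.} Each bypass exact triangle $\Gamma_+ \to \Gamma_0 \to \Gamma_- \to T\Gamma_+$ is supported in a small disc around a single attaching arc, so showing that $\tfne$ carries it to a distinguished triangle in $\tdne$ reduces to a finite local calculation on the three projectives $P_{\Gamma_+},P_{\Gamma_0},P_{\Gamma_-}$. I would build into the defining relations of $A_{n,e}$ the identity $a_- \circ a_+ = 0$ and verify by direct computation that $0 \to P_{\Gamma_+} \to P_{\Gamma_0} \to P_{\Gamma_-} \to 0$ is exact elsewhere, so that the mapping cone $\operatorname{Cone}(P_{\Gamma_+} \to P_{\Gamma_0})$ is quasi-isomorphic to $P_{\Gamma_-}$. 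Rotating this cone triangle yields the image of the bypass triangle.

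\emph{Faithfulness --- the main obstacle.} The approach is to compare graded dimensions on the two sides. First, use the classification of tight contact structures on $D^2 \times [0,1]$ with fixed boundary to give a normal form for $\Hom_{\tcne}(\Gamma, \Gamma')$ as a free graded $\Z$-module with combinatorial basis $\{h_\sigma\}$, where $\sigma$ ranges over equivalence classes of sequences of bypass moves modulo the contact relations. Separately, compute (or bound) the graded dimension of $e_{\Gamma'} A_{n,e}\, e_\Gamma$ directly from the presentation. Faithfulness then follows from the equality of these two graded generating functions: the ``$A_{n,e}$ is big enough'' direction comes from exhibiting the $\tfne(h_\sigma)$ as linearly independent elements, and the ``$A_{n,e}$ is small enough'' direction comes from the defining relations alone. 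This graded-dimension matching is the delicate part; I expect to prove it by induction on the number $2n+2$ of marked points, cutting $D^2$ along a properly embedded arc to exhibit $\tcne$ as an amalgam of smaller contact categories $\widetilde{\mathcal{C}}_{n',e'}$ and performing the corresponding algebraic decomposition of $A_{n,e}$ via a tensor-product-of-idempotents construction.
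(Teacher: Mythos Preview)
Your exactness step cannot work as written. You send every dividing set to an \emph{indecomposable projective} $P_\Gamma=A_{n,e}e_\Gamma$ concentrated in a single cohomological degree, and then ask that the bypass triangle $\Gamma_+\to\Gamma_0\to\Gamma_-\to T\Gamma_+$ become a distinguished triangle $P_{\Gamma_+}\to P_{\Gamma_0}\to P_{\Gamma_-}\to P_{\Gamma_+}[1]$ in $K^b(A\text{-pmod})$. But between projectives concentrated in degree~$0$ one has $\Hom_{K^b}(P_{\Gamma_-},P_{\Gamma_+}[1])=\operatorname{Ext}^1(P_{\Gamma_-},P_{\Gamma_+})=0$, so any such triangle is split and $P_{\Gamma_0}\cong P_{\Gamma_+}\oplus P_{\Gamma_-}$, contradicting indecomposability. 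Equivalently, the short exact sequence $0\to P_{\Gamma_+}\to P_{\Gamma_0}\to P_{\Gamma_-}\to 0$ you propose would automatically split, so the cone of $P_{\Gamma_+}\to P_{\Gamma_0}$ is never homotopy equivalent to a third indecomposable projective unless the map is zero or an isomorphism. There is no way to ``build the triangle into the relations'' while keeping all three images projective in one degree; somewhere a genuine cohomological shift must appear.

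The paper resolves this by a fundamentally different choice of algebra and of $\tfne$ on objects. The algebra $\rne$ has idempotents indexed only by the $\binom{n}{e}$ \emph{basic} dividing sets (those whose non-based positive regions are all boundary parallel), not by all dividing sets. A general $\g$ is sent not to a single projective but to a multi-term complex $\cf(\g)$ of basic projectives, obtained by iteratively representing $\g$ via bypass triangles until only basic dividing sets remain (Section~\ref{Sec dividing}). The chain map $\cf(\beta)$ for a bypass is then defined combinatorially (Section~\ref{Sec bypass chain}), and exactness (Proposition~\ref{prop triangle}) is a statement about cones of honest chain maps, where the shift has room to occur. Faithfulness is also proved differently from your dimension count: the paper first handles the cases $\g=\g'$ and $\g,\g'$ related by a single bypass via explicit spectral-sequence computations, then shows the Serre functors on $\tcne$ and $\tdne$ intertwine $\tfne$ (Proposition~\ref{prop serre d}), and uses this rotation to reduce the general case to one where $\g$ and $\g'$ share a boundary-parallel component, allowing induction on~$n$ (Lemma~\ref{lem embed ind}).
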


In this paper we take $\tcne$ and $\tdne$ to be $\F$-linear, where $\F$ is the field of two elements.
We believe that the analogue of Theorem \ref{thm embed} holds for any ground field, but technically difficult to keep track of signs.

The category $\tdne$ is the homotopy category of bounded chain complexes of finitely generated left projective $\rne$-modules, where $\rne$ is isomorphic to the homology of a strands algebra over a disk (cf.\ \cite{LOT} and \cite{Za}).
The contact categories and their relation to Heegaard Floer homology have been extensively studied by Mathews in a series of papers \cite{M1, M2, M3, M4}.


In~\cite{Co}, Cooper defines the {\em formal contact category} ${\cal Ko}_+(\Sigma,F)$, which can be interpreted as an abstractly constructed ``triangulated envelope'' of $\widetilde{\mathcal{C}}(\Sigma,F)$.  (Strictly speaking, the version in \cite{Co} is ungraded, but it is expected that his construction works in the graded case; we are referring to the graded version as ${\cal Ko}_+(\Sigma,F)$.)  He also proves the equivalence of ${\cal Ko}_+(D^2,F)$ and $\tdne$: the functor ${\cal Ko}_+(D^2,F)\to \tdne$ is defined using the universal property of ${\cal Ko}_+(D^2,F)$ and the functor $\tdne \to {\cal Ko}_+(D^2,F)$ is constructed using the work of Zarev~\cite{Za}.  Combining Theorem~\ref{thm embed} and (the extension of) Cooper's work gives:

\begin{thm}
$\tcne$ embeds in the triangulated envelope ${\cal Ko}_+(D^2,F)$.
\end{thm}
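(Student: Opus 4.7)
The plan is to obtain the embedding as the composite of the functor $\tfne : \tcne \ra \tdne$ from Theorem~\ref{thm embed} with a quasi-inverse of Cooper's equivalence between ${\cal Ko}_+(D^2,F)$ and $\tdne$. No new geometric input is required; the statement is a formal consequence once both ingredients are available.

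First, Theorem~\ref{thm embed} gives a faithful additive functor $\tfne : \tcne \ra \tdne$ that sends bypass exact triangles to distinguished triangles and whose image generates $\tdne$ under iterated cones. Second, invoke (the graded extension of) Cooper's theorem from~\cite{Co}, which furnishes a triangulated equivalence
$$ \Phi : {\cal Ko}_+(D^2,F) \xra{\sim} \tdne. $$
As indicated in the excerpt, one direction of $\Phi$ is constructed from the universal property of the formal contact category ${\cal Ko}_+(D^2,F)$ applied to the natural functor $\tcne \to \tdne$, while the quasi-inverse direction is built from Zarev's bordered sutured machinery~\cite{Za}. Fix a quasi-inverse $\Psi : \tdne \xra{\sim} {\cal Ko}_+(D^2,F)$, which is automatically triangulated and fully faithful.

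Form the composite $\Psi \circ \tfne : \tcne \ra {\cal Ko}_+(D^2,F)$. Since $\tfne$ is faithful and $\Psi$ is fully faithful (being an equivalence), the composition is faithful, giving the embedding asserted by the theorem. As a sanity check, $\Psi \circ \tfne$ sends bypass exact triangles in $\tcne$ to distinguished triangles in ${\cal Ko}_+(D^2,F)$: $\tfne$ is exact by Theorem~\ref{thm embed} and $\Psi$ is triangulated. Essential surjectivity onto the triangulated envelope follows from the fact that the image of $\tfne$ generates $\tdne$ under iterated cones, which upon applying the equivalence $\Psi$ says the image of $\Psi \circ \tfne$ generates ${\cal Ko}_+(D^2,F)$.

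The main obstacle is not this corollary but rather the two inputs it rests on: Theorem~\ref{thm embed} itself (the principal result of this paper) and the extension of Cooper's arguments from the ungraded to the graded setting. Granted these, the proof above is purely formal; the only item that truly needs verification is that Cooper's equivalence is implemented, on objects coming from $\tcne$, by $\tfne$, so that the two-way comparison is actually between the universal-cover contact category and its triangulated envelope rather than between two unrelated models of $\tdne$.
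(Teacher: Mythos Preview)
Your proposal is correct and follows exactly the approach the paper indicates: the theorem is stated in the paper as an immediate consequence of combining Theorem~\ref{thm embed} with (the graded extension of) Cooper's equivalence ${\cal Ko}_+(D^2,F)\simeq \tdne$, and you have simply spelled out what that combination entails. The paper gives no further argument beyond the sentence ``Combining Theorem~\ref{thm embed} and (the extension of) Cooper's work gives,'' so your write-up is in fact more detailed than the paper's own treatment.
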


It is a very interesting problem to understand whether the contact category for a general surface embeds in its triangulated envelope.
For algebraic applications, the contact categories of rectangles and annuli were used by the second author to give categorifications of the quantized Lie superalgebra $\mathfrak{sl}(1|1))$ and the Clifford algebras \cite{T1, T2, T3, T4}.

\s\n
{\em Index of notation.} We have provided an index of notation at the end of the paper, which we  hope the reader will find useful starting with Section~\ref{section: algebraic description}.

\s\n{\em Organization of the paper.}
In Section~\ref{section: contact category} we define the contact categories and their universal covers for general surfaces $\Sigma$ and $(\Sigma,F)$; from Section~\ref{section: contact category of disk} we restrict to contact categories $\cne$ and $\tcne$ over disks.  In Section~\ref{section: contact category of disk} we introduce the Serre functors of $\cne$ and $\tcne$ which provide essential simplifications in the proof of Theorem~\ref{thm embed}. In Section~\ref{section: algebraic description} we introduce notation to algebraically describe $\cne$ and $\tcne$ and in Section~\ref{section: defn of algebra} we define a family of triangulated categories $\tdne$.
In Section~\ref{section: defn of functor} we construct a family of functors $\fne: \cne \ra \dne$ of additive categories and in Section~\ref{section: functors on universal cover} we extend $\fne$ to $\tfne: \tcne \ra \tdne$ and show that the $\tfne$ preserve the shift functors and distinguished triangles.
Finally in Section~\ref{section: triangulated envelope} we show that the $\tfne$ are fully faithful and the images of $\tfne$ generate $\tdne$ under taking iterated cones.

\s\n {\em Acknowledgements.}   The first author thanks Takashi Tsuboi and the University of Tokyo (in 2007), as well as MSRI and the organizers of the Symplectic and Contact Geometry and Topology Program for their hospitality (in 2009-2010). The first author also thanks Will Kazez and Gordana Mati\'c on collaborations leading up to this work, Thomas Geisser for discussions on category theory, and Paolo Ghiggini, Toshitake Kohno and Shigeyuki Morita for helpful discussions. The second author thanks the Simons Center for an excellent research environment. Finally we thank the referee for a careful reading of the paper and a large list of comments.

\section{The contact category} \label{section: contact category}

The goal of this section is to define the contact categories $\mathcal{C}(\Sigma)$ and $\mathcal{C}(\Sigma,F)$.  We first recall some properties of bypasses and contact
structures.

\subsection{Contact structures and bypasses}

For more details, the reader is referred to \cite{H1}.

\subsubsection{Convex surfaces}

Let $\Sigma$ be a compact oriented surface and $\Gamma\subset \Sigma$ be an
oriented, properly embedded $1$-manifold (i.e., a multicurve) which
divides $\Sigma-\Gamma$ into alternating positive and negative
regions in the sense that the sign changes every time $\Gamma$ is
crossed once transversely.  The positive region (resp.\ negative
region) will be denoted $R_+(\Gamma)$ (resp.\ $R_-(\Gamma)$), and
the orientation of $\Gamma$ and the boundary orientation of
$R_+(\Gamma)$ agree. Such a $1$-manifold $\Gamma$ is called a {\em
dividing set} of $\Sigma$.

Recall that an oriented embedded surface $\Sigma$ in a contact $3$-manifold $(M,\xi)$ is {\em $\xi$-convex} (or simply {\em convex}) if there is a contact vector field $X$ which is positively transverse to $\Sigma$. The {\em dividing set of $\Sigma$ with respect to $(\xi,X)$} is the locus
$$\Gamma=\{x\in\Sigma~|~X(x)\in \xi(x)\}.$$
In this paper, our convex surfaces are either closed or compact with
Legendrian boundary. In such cases, $\Gamma$ is an oriented, properly
embedded $1$-manifold and its isotopy class is independent of the
choice of $X$. The positive (resp.\ negative) region is the set of
points $x\in\Sigma$ for which the orientation induced by $X(x)$ on
$\xi(x)$ coincides with (resp.\ is opposite of) the orientation on
$\xi(x)$. A dividing set $\Gamma$ on $\Sigma$ will usually be viewed
as the dividing set $\Gamma_{\Sigma_0}$ with respect to a
$[-\varepsilon,\varepsilon]$-invariant contact structure
$\xi_\Gamma$ on $\Sigma\times[-\varepsilon,\varepsilon]$, where we
write $\Sigma_t=\Sigma\times\{t\}$; in other words, we are locally taking $X=\bdry_t$.

Let $\Sigma$ be a convex surface with dividing set $\Gamma$. According to a criterion of Giroux~\cite{Gi1}, $\Sigma$ has a tight neighborhood if and only if either (i) $\Sigma=S^2$ and $\Gamma=S^1$, or (ii) $\Sigma\not=S^2$ and $\Gamma$ has no homotopically trivial closed  component.

\subsubsection{Bypasses} \label{subsubsection: bypasses}

An embedded Legendrian arc $\delta\subset \Sigma$ is an {\em arc of attachment of a bypass} if $\delta$ is transverse to $\Gamma$ and 
has exactly three intersections with $\Gamma$, two of which are points of
$\bdry\delta$.  We also write $\delta=\delta_+\cup \delta_-$, where $\delta_\pm$ is the closure of $\delta\cap R_\pm(\Gamma)$.
Let $U\subset \Sigma$ be a disk neighborhood of $\delta$, which transversely intersects $\Gamma$ along three arcs and whose boundary is Legendrian. Consider an overtwisted disk $(\{(r,\theta)~|~r\leq 1\},\zeta)$, where $(r,\theta)$ are polar coordinates and $\zeta$ is the germ of a contact structure such that its characteristic foliation is of ``wheel-and-spokes" type with leaves $r=1$ and $\theta=const$. Then a {\em bypass} $D$ is $(\{r\leq 1, 0\leq \theta\leq \pi\}, \zeta|_{0\leq \theta\leq \pi})$, i.e.,
one-half of an overtwisted disk.  

We now attach $D$ to the invariant contact structure $(\Sigma\times[0,\varepsilon],\xi_\Gamma)$ along $\Sigma_\varepsilon$ (resp.\ $(\Sigma\times[-\varepsilon,0],\xi_\Gamma)$ along $\Sigma_{-\varepsilon}$) so that the diameter of $D$ is glued to $\delta\times\{\varepsilon\}$ (resp.\ $\delta\times \{-\varepsilon\}$). If $D$ is attached to $\Sigma_\varepsilon$ (resp.\ $\Sigma_{-\varepsilon}$), then we say the bypass is attached {\em from the front} (resp.\ {\em from the back}).  When $D$ is attached from the front, a small one-sided neighborhood of $(\Sigma\times[0,\varepsilon])\cup D$ can be viewed as $(\Sigma\times[0,1],\xi)$, where the dividing set $\Gamma_{\Sigma_0}$ is
$\Gamma$ and $\Gamma_{\Sigma_1}$ is obtained from $\Gamma$ by
performing the local operation on $U$ as in Figure~\ref{bypass}.   More specifically, in the rest of this paper, we:
\be
\item[(B1)] fix a model contact structure $(D^2\times[0,1],\zeta)$ such that $D^2\times\{0,1\}$ is convex with Legendrian boundary, $\zeta$ is $\bdry_t$-invariant on $D^2\times[0,\varepsilon]$ and a neighborhood of $(D^2\times\{1\})\cup (\bdry D^2\times [0,1])$, the dividing sets $\Gamma_{D^2\times\{0\}}$, $\Gamma_{D^2\times\{1\}}$ and the Legendrian arc $\delta_0$ are as given in Figure~\ref{bypass}, and $\zeta$ is contactomorphic to a small one-sided neighborhood of $(D^2\times[0,\varepsilon])\cup_{\delta_0}D$, where the contactomorphism is the identity on $D^2\times [0,\varepsilon]$;
\item[(B2)] choose an identification $\phi:U\stackrel\sim\to D^2$ such that $\phi(\Gamma_{U\times\{0\}})=\Gamma_{D^2\times\{0\}}$ and $\phi(\delta)=\delta_0$; and
\item[(B3)] let $\xi|_{(\Sigma-U)\times[0,1]}$ be $t$-invariant with dividing sets $\Gamma_{(\Sigma-U)\times\{t\}}=(\Gamma-U)\times\{t\}$ and $\xi|_{U\times[0,1]} = (\phi\times id_{[0,1]})^* \zeta$.
\ee
In other words, a bypass attachment depends on the choices of $U\supset \delta$ and $\phi:U\stackrel\sim \to D^2$. We remark that topologically $\Gamma_{\Sigma_1}$ is obtained from $\Gamma=\Gamma_{\Sigma_0}$ by applying two band sums in succession.

\begin{figure}[ht]
\begin{overpic}[width=8cm]{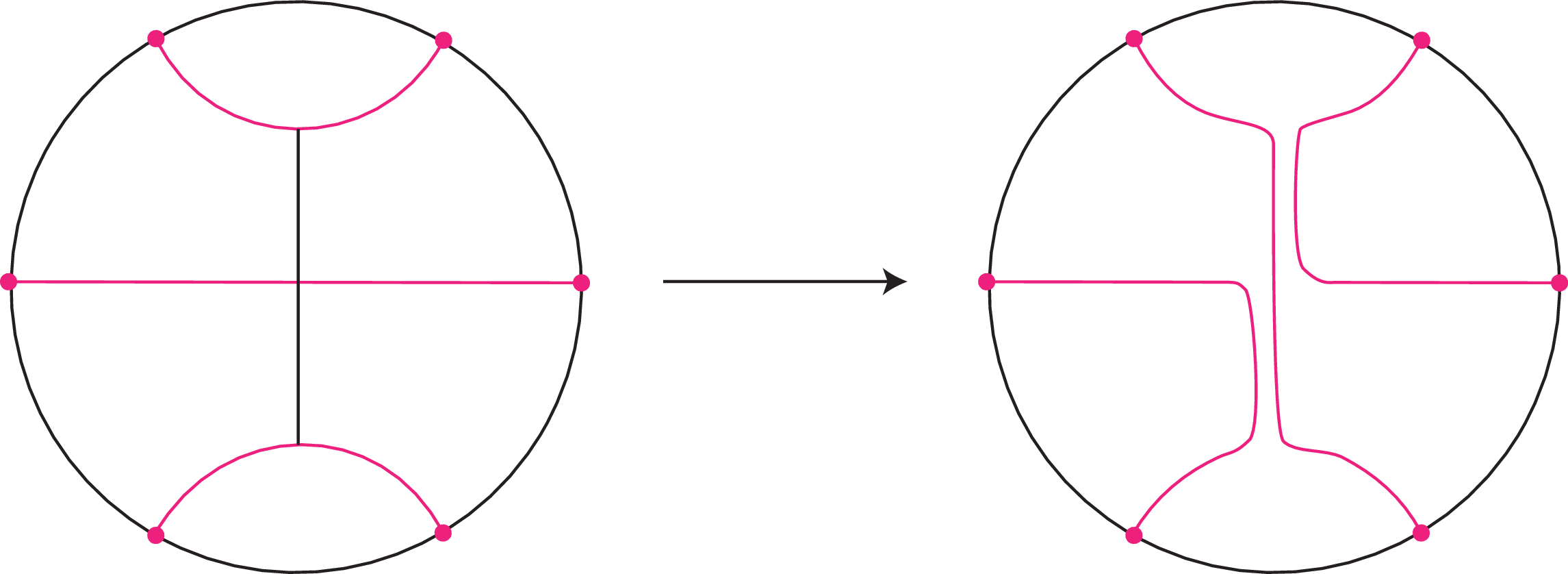}
\put(20,22){\tiny $\delta_0$} \put(0,30){\tiny $D^2$}
\end{overpic}
\caption{Effect of a bypass attachment along $\delta_0$ from the front.  The left-hand side is $D^2\times\{0\}$ and the right-hand side is $D^2\times\{1\}$.  The red arcs are the dividing curves.} \label{bypass}
\end{figure}

Suppose that $(\Sigma\times[0,1],\xi)$ is obtained by attaching a bypass along
$\delta$ to $(\Sigma_0,\Gamma_0)$ from the front. If $U_0\subset
\Sigma_0$ is the disk neighborhood of $\delta$ and $U_1$ is the
corresponding disk on $\Sigma_1$ (in particular,
$\Gamma_0|_{\Sigma_0-U_0}=\Gamma_1|_{\Sigma_1-U_1}$), then there is
a bypass arc of attachment $\delta'\subset U_1$ which gives (a
contact manifold isotopic to) $(\Sigma\times[0,1],\xi)$ when
attached to $(\Sigma_1,\Gamma_1)$ from the back.  We will call
$\delta'$ the {\em anti-bypass} arc of the bypass arc $\delta$.

\begin{convention}
If we do not explicitly mention from which side the bypass is attached, we always assume the bypass is attached from the front. 
\end{convention}

A bypass is {\em overtwisted} (resp.\ {\em trivial}) if there exists a disk neighborhood $U\subset \Sigma$ of the arc of attachment $\delta$ such that:
\be
\item $\Gamma\pitchfork U$ and $\Gamma|_U$ consists of two arcs;
\item if $\Gamma'$ is the result of attaching the bypass, then $\Gamma'|_U$ has a homotopically trivial component (resp.\ $\Gamma'|_U$ is homotopic to $\Gamma|_U$).
\ee
See Figure~\ref{ot-trivial-bypass}.

\begin{figure}[ht]
\begin{overpic}[width=6cm]{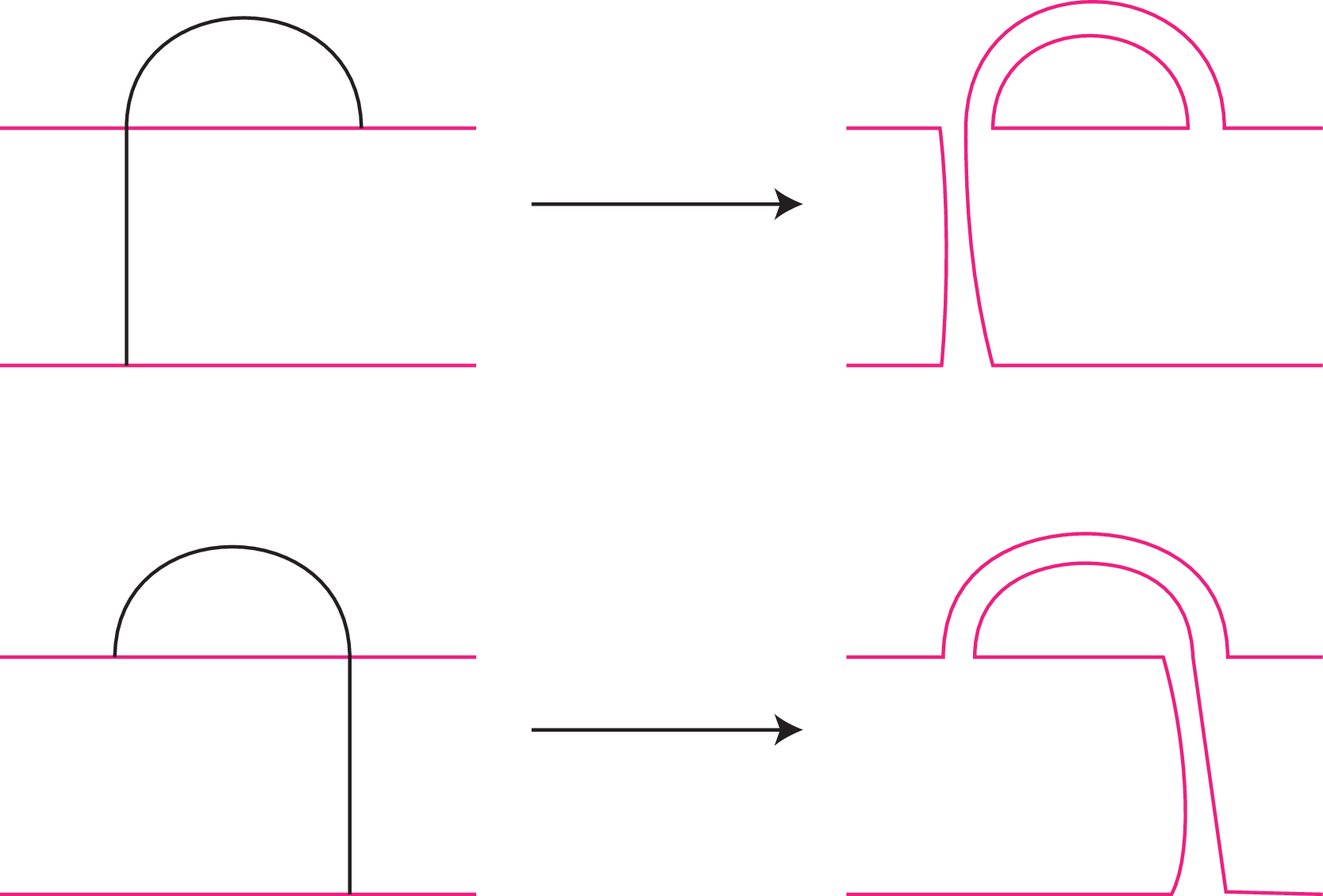}
\end{overpic}
\caption{Overtwisted bypass (top) and trivial bypass (bottom).} \label{ot-trivial-bypass}
\end{figure}

\subsubsection{Bypass rotation}

We will now discuss {\em bypass rotation}, which was introduced in \cite{HKM}. Let $\Sigma$ be a convex surface with
dividing set $\Gamma$.  The ambient contact manifold for $\Sigma$ is
the $[-\varepsilon,\varepsilon]$-invariant contact neighborhood of
$\Sigma=\Sigma_0$.  Let $\delta_0$ and $\delta_1$ be arcs of
attachment as given in Figure~\ref{rotate}.  In particular,
$\delta_0$ is obtained from $\delta_1$ by rotating one endpoint in
the counterclockwise direction. The bypasses are to be attached
``from the front''.  We will call such an operation {\em left
rotation}.

\s
\begin{figure}[ht]
\begin{overpic}[width=3.5cm]{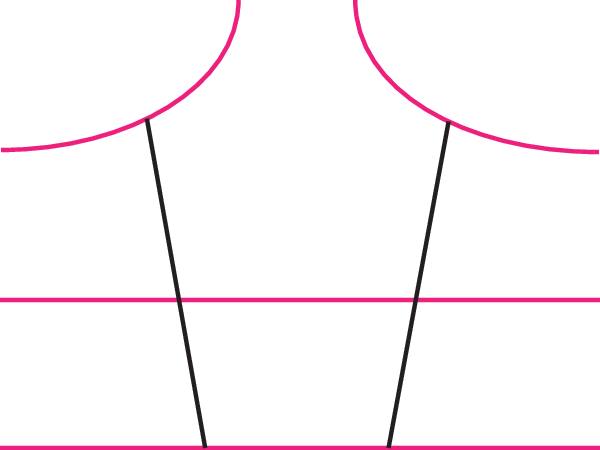}
\put(27.5,44){\tiny $\delta_0$} \put(66,44){\tiny $\delta_1$}
\end{overpic}
\caption{The arc $\delta_0$ is to the left of $\delta_1$.}
\label{rotate}
\end{figure}

\begin{lemma}[Bypass Rotation] \label{lemma: bypass rotation}
Let $(\Sigma\times[0,1],\xi_{\delta_1})$ be the contact manifold
obtained from $(\Sigma,\Gamma)$ by attaching a bypass from the front
along $\delta_1$.  If $\delta_0$ is obtained from $\delta_1$ by left
rotation, then there exists a bypass along $\delta_0$ inside
$(\Sigma\times[0,1],\xi_{\delta_1})$.
\end{lemma}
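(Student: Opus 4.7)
The plan is to realize the bypass along $\delta_0$ as an explicit convex half-disk sitting inside $(\Sigma\times[0,1],\xi_{\delta_1})$. First I would localize: since both the $\delta_1$-bypass attachment and the left rotation are supported in an arbitrarily small disk neighborhood $N\subset\Sigma$ of $\delta_0\cup\delta_1$, it suffices to find the bypass in $N\times[0,1]$. There the dividing set $\Gamma\cap N$ has a definite combinatorial type dictated by Figure~\ref{rotate}, and the dividing set $\Gamma_1\cap N$ on $\Sigma_1$ is obtained from $\Gamma\cap N$ by the two explicit band sums prescribed by the $\delta_1$-bypass attachment.

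Next I would compute, by direct drawing, the hypothetical dividing set $\Gamma_0'$ one would obtain on a convex surface $\Sigma'$ isotopic to $\Sigma_0$ by attaching a bypass along $\delta_0$ from the front, and compare it with $\Gamma_1$. The two dividing sets differ by a single additional bypass move: there is an anti-bypass arc $\delta_*'\subset\Sigma_1$ whose back-attachment takes $(\Sigma_1,\Gamma_1)$ to $(\Sigma',\Gamma_0')$. The lemma then reduces to producing this back-bypass along $\delta_*'$ inside $(\Sigma\times[0,1],\xi_{\delta_1})$.

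To produce the back-bypass I would apply the imbalance principle of \cite{HKM} to a convex annulus $A\subset N\times[0,1]$ with one boundary component on $\Sigma_0$ and the other on $\Sigma_1$, obtained by perturbing $\bdry N\times[0,1]$ so that it is convex and then restricting to an appropriate sub-annulus. The dividing set of $A$ is determined by its boundary intersections $\Gamma\cap\bdry N$ and $\Gamma_1\cap\bdry N$ together with the tightness of $\xi_{\delta_1}$, which rules out homotopically trivial closed components via Giroux's criterion. The left-rotation hypothesis is precisely what forces $A$ to carry a $\bdry$-parallel dividing arc on its $\Sigma_1$-side, and the half-disk cut off by this arc is the desired back-bypass along $\delta_*'$. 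Splitting $(\Sigma\times[0,1],\xi_{\delta_1})$ along the resulting convex surface $\Sigma'$ then decomposes it into a bypass along $\delta_0$ (attached to $\Sigma_0$ from the front) followed by a second bypass, which completes the proof.

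The main obstacle is this last combinatorial step: one must verify that the left-rotation hypothesis (as opposed to a right rotation) is exactly what places the $\bdry$-parallel dividing arc of $A$ on the $\Sigma_1$-side rather than the $\Sigma_0$-side. This amounts to a finite case analysis tracking how the two band sums of the $\delta_1$-bypass reposition the three $\Gamma$-intersections of $\delta_1$; under a right rotation the parallel arc would sit on the $\Sigma_0$-side and produce only an anti-bypass inside $(\Sigma\times[0,1],\xi_{\delta_1})$, which is insufficient for the lemma.
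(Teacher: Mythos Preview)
The paper does not actually prove this lemma: after the statement it only remarks that the result is completely local (valid already for $\Sigma=D^2$ with the four arcs of Figure~\ref{rotate}) and implicitly defers to \cite{HKM}, where bypass rotation was introduced. So there is no argument in the paper to compare against beyond the localization remark, which your first paragraph reproduces correctly.

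Your strategy through step~(3) is sound, but step~(4) does not work as written. The annulus $A=\bdry N\times[0,1]$ sits in the region where $\xi_{\delta_1}$ is still $[0,1]$-invariant: the $\delta_1$-bypass half-disk is attached in the interior of $N$, so near $\bdry N\times[0,1]$ nothing has changed. Hence the dividing set of $A$ consists of the vertical arcs $\{p\}\times[0,1]$ for $p\in\Gamma\cap\bdry N$; both boundary circles of $A$ meet $\Gamma_A$ in the same number of points, there is no imbalance, and no $\bdry$-parallel dividing arc appears on either side. Restricting to a sub-annulus cannot manufacture one. So the Imbalance Principle yields nothing here, and the left/right dichotomy you describe on $A$ never materializes.

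A clean way to finish along the lines of your steps (1)--(3) is to bypass the annulus entirely and use Eliashberg's uniqueness of the tight contact structure on $B^3$. Once you have checked combinatorially that attaching the $\delta_0$-bypass to $(\Sigma_0,\Gamma)$ and then the $\delta_1$-bypass to the resulting $(\Sigma',\Gamma_0')$ again produces $\Gamma_1$, the stacked contact structure on $N\times[0,1]$ has the same dividing set on $\bdry(N\times[0,1])$ as $\xi_{\delta_1}$. Verifying that this dividing set is connected (this is exactly where the left-rotation hypothesis is used; under a right rotation the corresponding sphere dividing set is disconnected) shows both structures are tight, hence isotopic rel boundary, and the isotopy exhibits the $\delta_0$-bypass inside $(\Sigma\times[0,1],\xi_{\delta_1})$.
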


The lemma is completely local, i.e., it is valid when $\Sigma=D^2$
and $\Gamma$ consists of the four arcs given in Figure~\ref{rotate}.

\subsection{The contact category $\mathcal{C}(\Sigma)$}
\label{subsection: rcc}

Let $\Sigma$ be a closed, oriented surface. In this subsection we assign to each $\Sigma$ a category $\mathcal{C}(\Sigma)$, called the {\em contact category}.


\begin{defn}
A surface $\Sigma$ is {\em collared} if it is equipped with auxiliary data $(\Sigma\times[-\varepsilon,\varepsilon], X)$, where:
\begin{enumerate}
\item[(i)] $\Sigma\times[-\varepsilon,\varepsilon]$ is a thickening of $\Sigma$ with coordinates $(x,t)$ so that $\Sigma=\Sigma\times\{0\}$; and
\item[(ii)] $X$ is the nonsingular vector field $\bdry_t$ on $\Sigma\times[-\varepsilon,\varepsilon]$, i.e., the pullback of $\bdry_t$ under the projection $\Sigma\times[-\varepsilon,\varepsilon]\to [-\varepsilon,\varepsilon]$.
\end{enumerate}
The manifold $\Sigma\times[-\varepsilon,\varepsilon]$ is a {\em collar} or {\em collar neighborhood} of $\Sigma$.
\end{defn}

We often write $\Sigma_t=\Sigma\times\{t\}$.

\subsubsection{The category $\mathbf{Cont}(\Sigma)$} \label{subsubsection: category cont}

We first define the category $\mathbf{Cont}(\Sigma)$, where $\Sigma$ is a collared surface with $\varepsilon>0$ small.

The objects of $\mathbf{Cont}(\Sigma)$ are dividing sets $\Gamma$ on $\Sigma$, where a {\em dividing set} $\Gamma$ is an oriented embedded $1$-manifold which is the oriented boundary of an open $2$-dimensional submanifold $R_+(\Gamma)$ of $\Sigma$. The submanifold $R_+(\Gamma)$ has the same orientation as $\Sigma$ and
$R_-(\Gamma)=\Sigma-\overline{R_+(\Gamma)}$ has the opposite orientation as $\Sigma$.   The collection of objects of $\mathbf{Cont}(\Sigma)$ will be denoted by $\mathfrak{ob}(\mathbf{Cont}(\Sigma))$.

\begin{rmk} \label{rmk object isotopy}
We take the objects to be $1$-manifolds, not {\em isotopy classes} of $1$-manifolds. See Section \ref{ssec: isotopy morphism} for more details.
\end{rmk}

Next we define $\Hom_{\mathbf{Cont}(\Sigma)}(\Gamma,\Gamma')$ to be the set of homotopy classes of contact structures $\xi$ on $\Sigma\times[0,1]$ such that:
\begin{enumerate}
\item the boundary $\bdry (\Sigma\times[0,1])=\Sigma_1-\Sigma_0$ is $\xi$-convex;
\item there exists an extension of $\xi$ to $\Sigma\times[-\varepsilon,1+\varepsilon]$ so that $(\Sigma\times[-\varepsilon, \varepsilon],\bdry_t)$ and $(\Sigma\times[1-\varepsilon,1+\varepsilon],\bdry_t)$ are collared neighborhoods of $\Sigma_0$ and $\Sigma_1$ and on which $\bdry_t$ is a contact vector field with dividing sets $\Gamma$ and $\Gamma'$ on $\Sigma_0$ and $\Sigma_1$.
\end{enumerate}
Two contact structures $\xi$ and $\xi'$ are homotopic if there is a path $\{\xi_s\}_{s\in[0,1]}$ of contact structures on $\Sigma\times[0,1]$ from $\xi$ to $\xi'$ satisfying (1) and (2) above.

The identity morphism $\Gamma\stackrel{id}\to \Gamma$ is (homotopy class of) the $[0,1]$-invariant contact structure $\xi$ on $\Sigma\times[0,1]$ with dividing set $\Gamma$ on $\Sigma\times\{t\}$, $t\in[0,1]$.

To take the composition of $[\xi] \in \Hom_{\mathbf{Cont}(\Sigma)}(\Gamma,\Gamma')$ and $[\xi']\in \Hom_{\mathbf{Cont}(\Sigma)}(\Gamma',\Gamma'')$, we choose representatives $\xi$ and $\xi'$ so they agree on collared neighborhoods of $\Sigma$ and then glue.  The composition $[\xi'\circ \xi]$ does not depend on the choices (see Remark~\ref{rmk: contractible set}). The
associativity and unit axioms are easily verified.

\begin{rmk} \label{rmk: contractible set}
The set of contact structures on $\Sigma\times[-\varepsilon,\varepsilon]$ which have dividing set $\Gamma$ with respect to the contact vector field $X=\bdry_t$ is
contractible.  For this reason, we will suppress the collar $\Sigma\times[-\varepsilon,\varepsilon]$ in the rest of the paper.
\end{rmk}

\begin{notation}
In what follows we abuse notation and write $\xi\in\Hom_{\mathbf{Cont}(\Sigma)}(\g,\g')$ to mean the homotopy class of a contact structure $\xi$.
\end{notation}

\subsubsection{Isotopy of dividing curves and the weak identity morphism} \label{ssec: isotopy morphism}

Suppose $\Gamma_0, \Gamma_1\in \mathfrak{ob}(\mathbf{Cont}(\Sigma))$ and $\Gamma_t$, $t\in[0,1]$, is an isotopy of dividing curves from $\Gamma_0$ to $\Gamma_1$. 
\begin{defn}
A contact structure $\xi$ on $\Sigma\times[0,1]$ is a {\em weak identity morphism from $\Gamma_0$ to $\Gamma_1$} if there exists a a contact vector field $X$ for $\xi$ such that $X$ is transverse to all $\Sigma_t=\Sigma\times\{t\}$ and the dividing set of $\Sigma_t$ with respect to $(\xi,X)$ is $\Gamma_t$.
\end{defn}  

A weak identity morphism from $\Gamma_0$ to $\Gamma_1$ gives an {\em isomorphism} between
$\Gamma_0$ and $\Gamma_1$, since one can similarly define its
inverse morphism $\Gamma_1\stackrel{\xi'}\to\Gamma_0$ which is also a weak identity morphism.

The space of dividing curves of a fixed isotopy type has trivial
fundamental group, except when $\Sigma=S^2$ or $T^2$, or when
$\Gamma$ has a homotopically trivial component. Consider the
situation where $\Sigma=T^2=\R^2/\Z^2$. Suppose $\Gamma_0=\Gamma_1$
consists of two parallel, homotopically nontrivial curves of slope
$\infty$.  Let $\xi_0$ be the $[0,1]$-invariant contact structure
with dividing set $\Gamma$ on $\Sigma\times\{t\}$ for all
$t\in[0,1]$. If $\phi: T^2\times[0,1]\stackrel\sim\rightarrow
T^2\times[0,1]$ is the diffeomorphism $(x,y,t)\mapsto (x+t,y,t)$,
then let $\xi_1=\phi^*\xi_0$. The contact structures $\xi_0$ and
$\xi_1$ are not isotopic relative to the boundary.  (However, they
are isotopic when the dividing sets are allowed to move freely.)
Similarly, when $\Gamma$ has a homotopically trivial component, we can take the homotopically trivial component and isotop it around a
nontrivial loop in $\Sigma$.

\subsubsection{Bypass attachment}

The most basic nontrivial morphism comes from a bypass attachment.  When attaching a bypass along $\delta$ to $(\Sigma,\Gamma)$ we need to Legendrian realize $\delta$ and $\bdry U$, where $U\subset \Sigma$ is a disk neighborhood of $\delta$ which transversely intersects $\Gamma$ along three arcs.  This can be done using the Legendrian realization principle of \cite{H1}, which states that there exists a homotopy of contact structures $\{\xi_s\}_{s\in[0,1]}$ on $\Sigma\times[-\varepsilon,\varepsilon]$ such that:
\begin{enumerate}
	\item $\xi_0$ is a given contact structure on $\Sigma\times[-\varepsilon,\varepsilon]$ which is $t$-invariant with dividing sets $\Gamma\times\{t\}$, $t\in[0,1]$,
	\item $\bdry_t$ is a contact vector field on $\Sigma\times[-\varepsilon,\varepsilon]$ with dividing sets $\Gamma\times\{t\}$ for all $\xi_s$, $s\in[0,1]$, and $t\in[-\varepsilon,\varepsilon]$, and 
	\item $\delta$ and $\bdry U$ are Legendrian with respect to $\xi_1$. 
\end{enumerate}
Since we are taking homotopy classes of contact structures in the definition of  $\mathbf{Cont}(\Sigma)$ in Section~\ref{subsubsection: category cont}, we may assume that the Legendrian realization automatically takes place when attaching bypasses. 

A bypass attachment of $D$ from the front along $\delta$ depends on the choices of $U\supset \delta$ and $\phi:U\stackrel\sim\to D^2$ by (B1)--(B3) from Section~\ref{subsubsection: bypasses}.  Two bypass attachments with the same $\delta$ and $U$ are related by a weak identity morphism which is ``supported on'' $U$.

Every morphism $\zeta\in \Hom_{\mathbf{Cont}(\Sigma)}(\Gamma,\Gamma')$ can be written as a composition of bypass attachment morphisms (or {\em bypass morphisms} for short), followed by a weak identity morphism; see \cite{H2}.

\subsubsection{Connected components of $\mathbf{Cont}(\Sigma)$}

Consider the following map $\phi$ which partitions the set of dividing curves
according to their Euler class = first Chern class (we will often refer to $\phi$
as the ``Spin$^c$-map''):
$$ \phi: \mathfrak{ob}(\mathbf{Cont}(\Sigma))\to \Z,$$
$$\Gamma\mapsto \chi(R_+(\Gamma))-\chi(R_-(\Gamma)).$$
Here $R_+(\Gamma)$ is the positively oriented subsurface of $\Sigma$
whose boundary is $\Gamma$; $R_-(\Gamma)$ is the negatively
oriented subsurface which is the complement of $R_+(\Gamma)$ in
$\Sigma$; and $\chi$ is the Euler characteristic.

We leave it to the reader to verify that the set $\phi^{-1}(i)$ is connected, i.e., for any pair $\Gamma$, $\Gamma'$ with the same $\phi$ value, there is a sequence of bypass morphisms from $\Gamma$ to $\Gamma'$. We will write $\mathbf{Cont}(\Sigma,i)$ for the full subcategory of $\mathbf{Cont}(\Sigma)$ whose objects are $\phi^{-1}(i)$. Then we have
$$\mathbf{Cont}(\Sigma)=\coprod_{i\in \Z} \mathbf{Cont}(\Sigma,i).$$
We will often refer to $\mathbf{Cont}(\Sigma,i)$ as a {\em connected component of $\mathbf{Cont}(\Sigma)$.}

\subsubsection{``Zero objects'' and ``zero morphisms''} \label{ssec: zero object}

A ``zero object'' in $\mathbf{Cont}(\Sigma,i)$ is a dividing set $\Gamma$ with an overtwisted neighborhood and a ``zero morphism'' is a homotopy class of overtwisted contact structures. Recall that, according to Giroux~\cite{Gi1}, $\Gamma$ is not a ``zero
object'' if and only if either $\Sigma=S^2$ and $\Gamma$ is connected,
or $\Sigma\not=S^2$ and $\Gamma$ has no homotopically trivial
component.

Recall that, by Eliashberg's theorem~\cite{El1}, there
is a unique overtwisted contact structure in each homotopy class of
$2$-plane field. Hence there are as many ``zero morphisms'' as there
are homotopy classes of $2$-plane fields in each
$\Hom_{\mathbf{Cont}(\Sigma)}(\Gamma,\Gamma')$; this problem is remedied when we pass to the
universal cover of the contact category in Section~\ref{subsection: universal cover}.

\subsubsection{The contact category $\mathcal{C}(\Sigma)$} \label{subsubsection: contact category}

We are now in a position to define the contact category
$$\mathcal{C}(\Sigma)=\coprod_{i\in \Z} \mathcal{C}(\Sigma,i),$$
which is an $\F$-linear (and in particular a pre-additive) category. The objects of $\mathcal{C}(\Sigma,i)$ are the same as those of $\mathbf{Cont}(\Sigma,i)$ and $\Hom_{\mathcal{C}(\Sigma,i)}(\g,\g')$ is the $\F$-vector space generated by the homotopy classes of tight contact structures of $\Hom_{\mathbf{Cont}(\Sigma,i)}(\g,\g')$.

We are identifying all the ``zero morphisms'' in $\Hom_{\mathbf{Cont}(\Sigma,i)}(\g,\g')$ into the unique zero morphism of $\Hom_{\mathcal{C}(\Sigma,i)}(\g,\g')$.
Moreover, the ``zero objects'' in $\mathbf{Cont}(\Sigma,i)$ become genuine zero objects in $\mathcal{C}(\Sigma,i)$.  They are isomorphic to each other; we choose one
zero object and denote it by $\mathbf{0}$.

\subsubsection{The categories $\mathbf{Cont}(\Sigma,F)$ and $\mathcal{C}(\Sigma,F)$}

Let $\Sigma$ be a compact oriented surface with boundary and let $F\subset \bdry\Sigma$ be a finite set of points which divides $\bdry\Sigma$ into alternating positive and
negative regions $R_+(F)$ and $R_-(F)$, i.e., the signs on both sides of any point in $F$ are opposite.  (In other words, $F$ is a set of {\em oriented} points
which is the boundary of a $1$-dimensional submanifold of $\bdry \Sigma$.)

The objects of $\mathbf{Cont}(\Sigma,F)$ are dividing sets $\Gamma$ with endpoints on
$F$, subject to the condition that the signs on $\bdry\Sigma-F$ and
the signs on $\Sigma-\Gamma$ agree. The morphisms
$\Gamma\stackrel\xi\rightarrow \Gamma'$ are homotopy classes of
contact structures on $\Sigma\times[0,1]$ so that the dividing set
on $\Sigma\times\{0\}$ is $\Gamma$, the dividing set on
$\Sigma\times\{1\}$ is $\Gamma'$, and the dividing set on
$\bdry\Sigma\times[0,1]$ is $\overline{F}\times[0,1]$, where
$\overline{F}$ is the set consisting of one point on each component
of $\bdry\Sigma-F$.

The contact category 
$$\mathcal{C}(\Sigma,F)=\coprod_{i\in \Z} \mathcal{C}(\Sigma,F,i)$$ 
is defined in the same way as in Section~\ref{subsubsection: contact category}: the objects of $\mathcal{C}(\Sigma,i)$ are the same as those of $\mathbf{Cont}(\Sigma,F,i)$ and $\Hom_{\mathcal{C}(\Sigma,F, i)}(\g,\g')$ is the $\F$-vector space generated by the tight contact structures of $\Hom_{\mathbf{Cont}(\Sigma,F,i)}(\g,\g')$.

\begin{notation}
From now on, $\Hom$ without subscripts will always mean $\Hom_{\mathcal{C}(\Sigma)}$ or $\Hom_{\mathcal{C}(\Sigma,F)}$.
\end{notation}

\subsubsection{Generators and relations} 

%
%
%
%

We now give a description of the generators and relations in $\mathcal{C}(\Sigma)$ or $\mathcal{C}(\Sigma,F)$. Recall that every $\xi\in \Hom_{\mathbf{Cont}(\Sigma)}(\Gamma,\Gamma')$ can be written as a composition of bypass morphisms and weak identity morphisms.  The description of the relations is due to Bin Tian.\footnote{Not to be confused with the second author.}

\begin{thm}[Bin Tian]\label{thm: relations in contact category}
Given $\xi\in \Hom_{\mathbf{Cont}(\Sigma)}(\Gamma,\Gamma')$, any 
two sequences of bypass attachments and weak identity morphisms from $\Gamma$ to $\Gamma'$
which compose to give $\xi$ can be taken to one another via the
following two types of operations:
\begin{enumerate}
\item[(R$_1$)] Far commutativity --- given two disjoint bypass arcs of attachment,
we can reverse the order in which the bypasses are attached.
\item[(R$_2$)] Adding a weak identity morphism.
\end{enumerate}
\end{thm}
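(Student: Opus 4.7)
The plan is to reinterpret each sequence of bypass attachments as a geometric ``movie'': a foliation of $\Sigma\times[0,1]$ by convex slices $\Sigma_t$ (with respect to a fixed transverse contact vector field) all of which carry the same dividing set up to isotopy, except at finitely many codimension-one times $t_1<\cdots<t_k$ at which a bypass is attached. Any sequence of bypass morphisms composing to $\xi$ determines such a movie, and two sequences realize the same $\xi$ iff their movies represent the same contact structure on $\Sigma\times[0,1]$ rel boundary. The strategy, in analogy with Cerf theory for handle decompositions, is to connect the two movies by a generic $1$-parameter family of movies and classify the codimension-one bifurcations that occur.

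Concretely, I would first show that the space of admissible movies for a fixed homotopy class $\xi$ is path-connected in the appropriate stratified sense, so that two given movies $M^0, M^1$ are joined by a smooth family $\{M^s\}_{s\in[0,1]}$ which is admissible except at finitely many bifurcation values of $s$; this uses the genericity/openness results of convex surface theory to promote a homotopy of contact structures to a homotopy of convex foliations. Then I would classify the generic bifurcations. The first type is that two consecutive bypass times $t_i(s), t_{i+1}(s)$ collide and cross; when the two arcs of attachment lie in disjoint disk neighborhoods, this is exactly $(R_1)$. The second type is that an arc of attachment $\delta(s)$ degenerates so that its two ``outer'' intersections with $\Gamma$ coalesce and $\delta(s)$ becomes trivial at the bifurcation value; on either side of the bifurcation the bypass is either present as a trivial bypass or absent, which is exactly $(R_2)$.

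The main obstacle is the non-generic case in which two bypass times collide with overlapping (rather than disjoint) arcs of attachment. My plan here is to use Bypass Rotation (Lemma~\ref{lemma: bypass rotation}) together with insertion of suitable trivial bypasses to separate such overlapping arcs into disjoint ones, thereby reducing the collision to case $(R_1)$. Because overlapping bypass arcs are confined to a disk neighborhood on which $\Gamma$ meets the boundary in a bounded number of points, the possible local pictures form a finite enumeration, each of which must be treated by an explicit short rewriting via $(R_1)$ and $(R_2)$. Carrying out this case analysis carefully --- and verifying that the rewriting sequences it produces actually realize the required homotopy of movies rather than merely a combinatorially similar one --- is the delicate step that I expect to be the heart of the argument.
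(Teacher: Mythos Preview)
The paper does not prove this theorem in general; it attributes the full result to Bin Tian \cite{Ti} and only sketches the special case $\Sigma=D^2_n$ with $\xi$ tight. That sketch takes an entirely different and much more elementary route than your Cerf-theoretic plan: rather than interpolating between two arbitrary bypass sequences, it constructs a single \emph{canonical} sequence $\beta_1,\dots,\beta_l$ depending only on $\Gamma'$, by iteratively choosing trivial bypasses whose arcs of attachment hug $\partial D^2$ (peeling off boundary-parallel components of $\Gamma'$ one at a time). Because each such arc lies near the boundary, it is disjoint from every bypass in any given sequence and commutes past all of them by $(R_1)$; once the $\beta_i$ have been pushed to the front, the original bypasses all become trivial and are absorbed by $(R_2)$. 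Every sequence therefore reduces to the same canonical one. This argument is short and elementary but leans entirely on the boundary-parallel structure of dividing sets on the disk and does not generalize. Your movie approach is the natural strategy for general $\Sigma$; the step you correctly flag as delicate --- colliding bypass times with overlapping rather than disjoint arcs --- is where the real content lies, and the Cerf-type transversality you invoke (promoting a homotopy of contact structures to a generic one-parameter family of convex slicings with classified codimension-one bifurcations) is essentially Giroux's program in \cite{Gi2} translated into bypass language, which takes genuine effort to make precise.
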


Theorem~\ref{thm: relations in contact category} is straightforward to prove for the contact category of a disk when $\xi$ is tight and the proof will be sketched in Section~\ref{section: contact category of disk}.

It is easy to see that relations (R$_1$) and (R$_2$) imply (R$_2'$): 
\begin{enumerate}
\item[(R$_2'$)]  Bypass rotation --- referring to Figure~\ref{rotate}, if we attach bypasses along $\delta_0$ and then along $\delta_1$, then the resulting contact structure is homotopic to the one obtained from attaching a bypass along $\delta_1$ and followed by a weak identity morphism.
\end{enumerate}

\subsubsection{Opposite category}

The opposite category $\mathcal{C}(\Sigma)^{op}$ of $\mathcal{C}(\Sigma)$ is obtained by
reversing the arrows.  It is not hard to see that
$\mathcal{C}(\Sigma)^{op}$ is equivalent to $\mathcal{C}(-\Sigma)$
via the contravariant functor which sends $\Gamma$ to $-\Gamma$ and
the morphism $\Gamma\stackrel\xi\rightarrow \Gamma'$ to
$-\Gamma\stackrel\xi\leftarrow -\Gamma'$.  Observe that when we
switch from $\Sigma$ to $-\Sigma$, the positive and negative regions
of $\Sigma-\Gamma$ get switched, i.e., $\Gamma$ gets sent to
$-\Gamma$.

\subsection{Bypass exact triangles} A sequence of bypass attachments
gives a triangle, called the {\em bypass exact triangle}, as
follows: Suppose the initial configuration is $\Gamma_1$. Pick an
arc of attachment $\delta\subset \Sigma$ and its neighborhood $U$.
Apply a bypass attachment from the front along $\delta$ to obtain
$\Gamma_2$.  Now, inside $U$, there is a unique arc of attachment
$\delta'$ which intersects all three arcs of $\Gamma_2\cap U$. A
bypass attachment from the front along $\delta'$ yields $\Gamma_3$.
Similarly, a third bypass attachment from the front along $\delta''$
yields $\Gamma_1$. This is summarized in
Figure~\ref{bypass-triangle}. For convenience, we say that the above
bypass exact triangle {\em starts at $(\Sigma,\Gamma_1,\delta)$}.
\begin{figure}[ht]
\begin{overpic}[width=4cm]{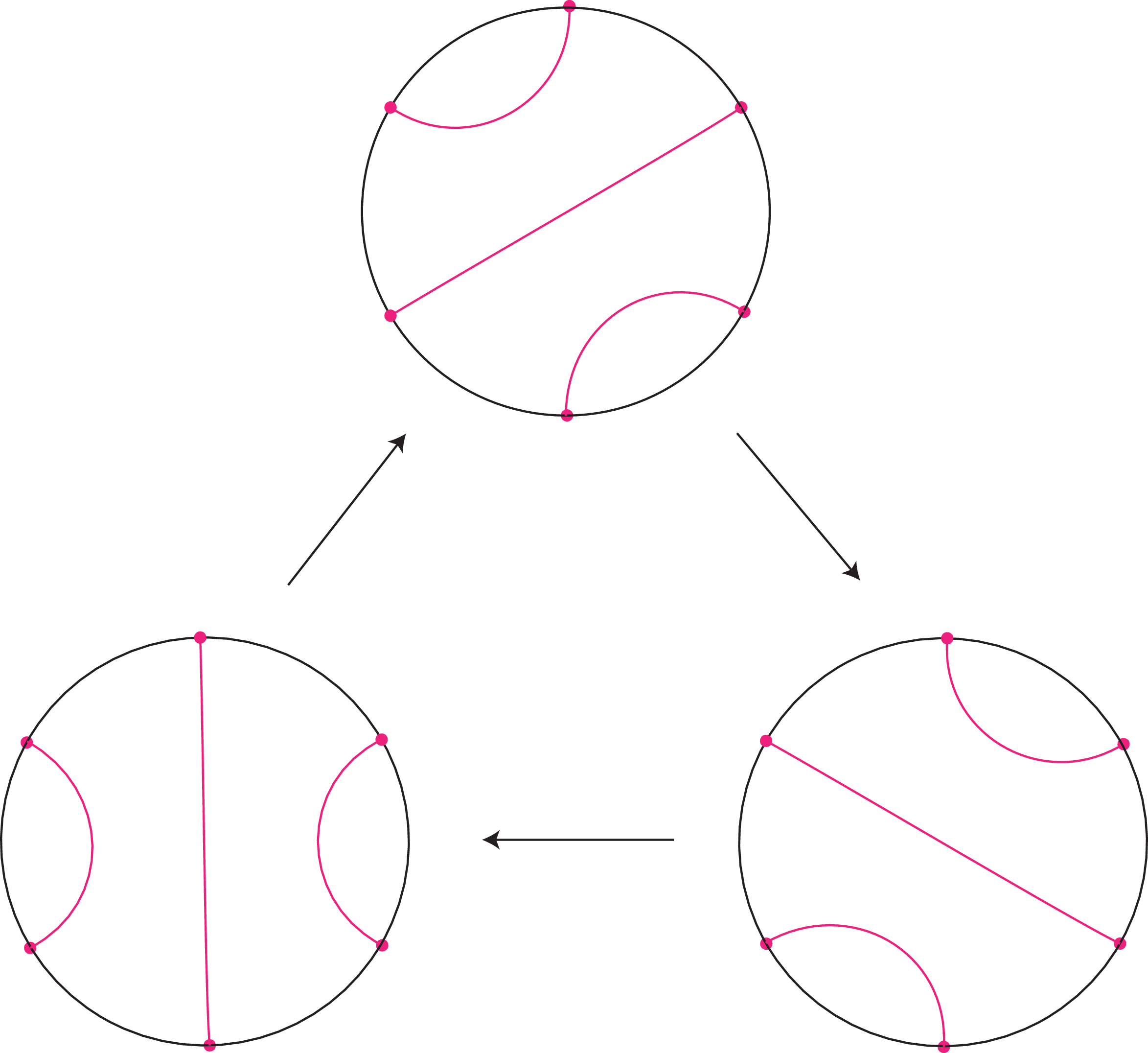}
\end{overpic}
\caption{The top is $\Gamma_1$, the bottom right is $\Gamma_2$, and
the bottom left is $\Gamma_3$.} \label{bypass-triangle}
\end{figure}

We claim that attaching two bypasses in succession inside $U$
creates an overtwisted contact structure. Indeed, $\delta'$ is the
anti-bypass arc of the bypass arc $\delta$.  Hence the bypass along
$\delta'$ from the back and the bypass along $\delta'$ from the
front glue to give an overtwisted disk. Therefore, the bypass
triangle will have the property that the composition of any two
successive edges is the zero morphism.

\s\n {\bf Examples of bypass triangles.}

\s\n (i) (Identity triangle) Consider the morphism
$\Gamma_1=\Gamma\stackrel{id}\rightarrow\Gamma=\Gamma_2$ which is
equivalently obtained by attaching a {\em trivial bypass}. Now,
attaching the next bypass yields $\Gamma_3$ which has a
homotopically trivial component.  Hence $\Gamma_3\cong \mathbf{0}$.
$$\begin{diagram} \label{eqn: identity exact triangle}
\Gamma_1=\Gamma & & \rTo^{id} & & \Gamma_2=\Gamma\\
&\luTo & & \ldTo &\\
&& \Gamma_3\cong \mathbf{0} &&
\end{diagram}$$

\n (ii) (Fold-unfold triangle) Consider the morphism
$\Gamma_1=\Gamma\rightarrow \Gamma'=\Gamma_2$ corresponding to a
bypass of fold type (i.e., a bypass such that $\Gamma_2$ is the disjoint union of $\Gamma_1$ and two parallel homotopically nontrivial curves). The next bypass attachment is an unfold type and the third bypass attachment is overtwisted. (The map
$\Gamma\stackrel{0}\rightarrow \Gamma$ factors into
$\Gamma\stackrel{\mbox{\tiny fold}}\rightarrow
\Gamma''\stackrel{\mbox{\tiny unfold}}{\xrightarrow{\hspace{.8cm}}}  \Gamma$ which
glues into an overtwisted contact structure.)

\begin{figure}[ht]
\begin{overpic}
[scale=0.25]{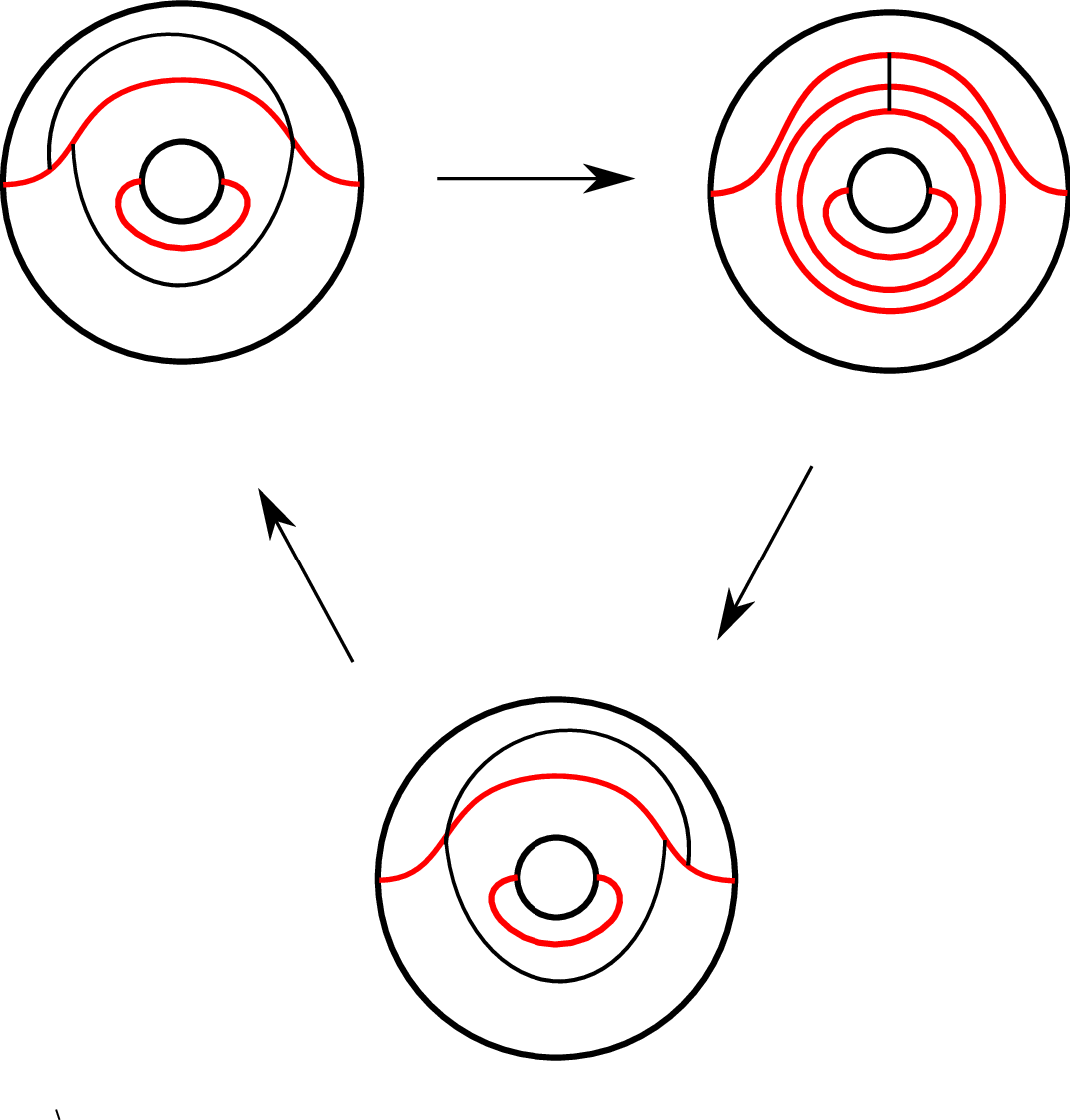}
\put(10,55){$\g$}
\put(80,55){$\g'$}
\put(60,0){$\g$}
\put(40,88){fold}
\put(70,45){unfold}
\put(32,45){$0$}
\end{overpic}
\caption{Fold-unfold triangle.}
\label{folding}
\end{figure}


\subsection{Octahedral axiom}

One of the primary motivations for introducing the contact category was that the bypass triangles often satisfy the octahedral axiom.  In other words, there was evidence that the contact category could be embedded inside some sort of ``triangulated envelope" while still preserving the bypass triangles.  Theorem~\ref{thm embed} realizes this for the contact category of the disk.

We briefly review the octahedral axiom. Refer to Figure~\ref{octahedral}. If
there are three exact triangles $(A,B,C')$, $(B,C,A')$, $(A,C,B')$,
so that the face $ABC$ commutes, then there is a fourth exact
triangle $(C',B',A')$ (i) which makes the other three faces $A'BC'$,
$A'CB'$, $AB'C'$ commute and (ii) such that the compositions
$B'\stackrel{t}\rightarrow A\stackrel{j}\rightarrow B$,
$B'\stackrel{q}\rightarrow A'\stackrel{m}\rightarrow B$ agree and
the compositions $B\stackrel{k}\rightarrow C\stackrel{p}\rightarrow
B'$, $B\stackrel{n}\rightarrow C'\stackrel{s}\rightarrow B'$ agree.

\begin{figure}[ht]
\begin{overpic}[width=7.5cm]{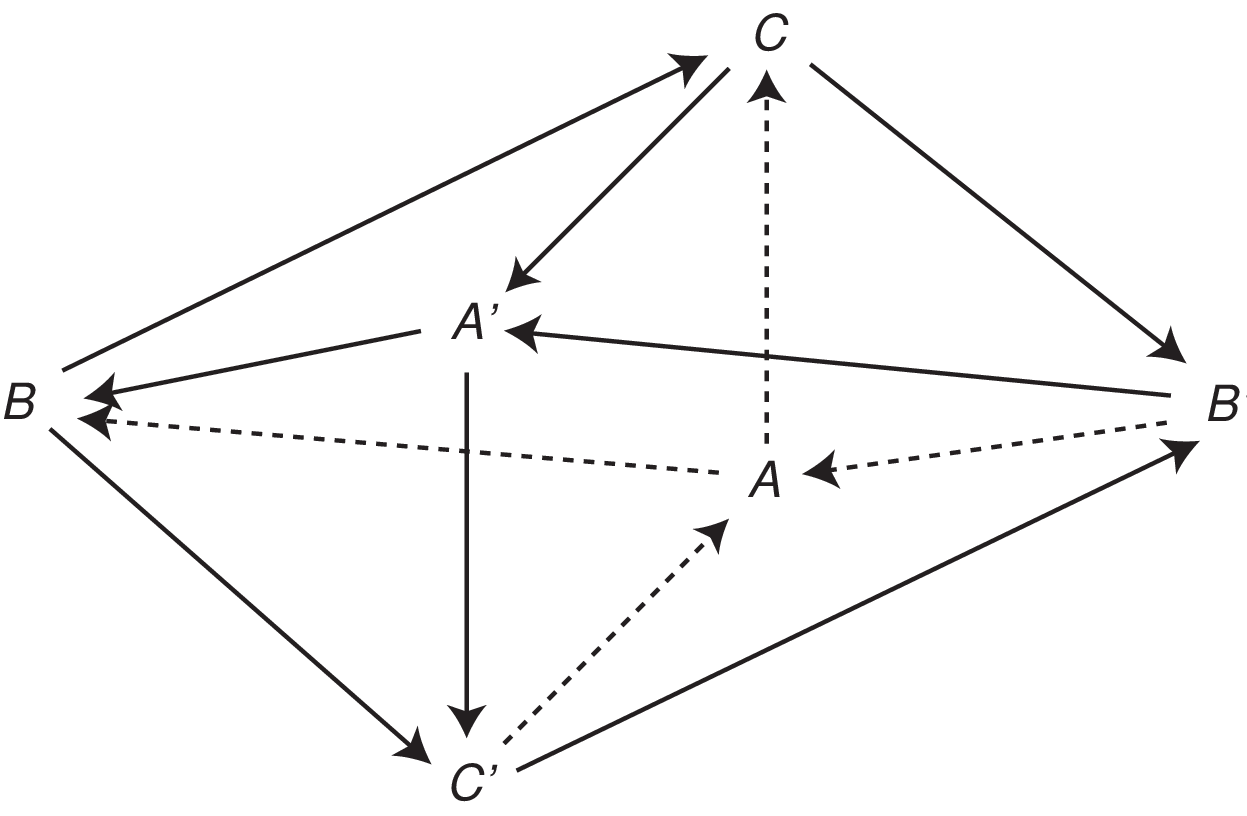}
\put(63,47){\tiny $i$} \put(22,27){\tiny $j$} \put(29,49){\tiny $k$}
\put(35.2,18){\tiny $l$} \put(24.3,38.2){\tiny $m$}
\put(15.3,17.5){\tiny $n$} \put(49,48){\tiny $o$}
\put(80.3,49){\tiny $p$} \put(75,36.7){\tiny $q$}
\put(47.9,16.3){\tiny $r$} \put(68,14.5){\tiny $s$}
\put(75,26.5){\tiny $t$}
\end{overpic}
\caption{} \label{octahedral}
\end{figure}

We present some evidence for the octahedral axiom where $\Sigma=D^2$ and $\#F=8$. The six dividing sets $\Gamma$ with $\phi(\Gamma)=1$, where $\phi$ is the Spin$^c$-map, form the octahedron given in Figure~\ref{octahedral2}, where all the arrows are nontrivial bypass morphisms.

\begin{figure}[ht]
\begin{overpic}[width=7.5cm]{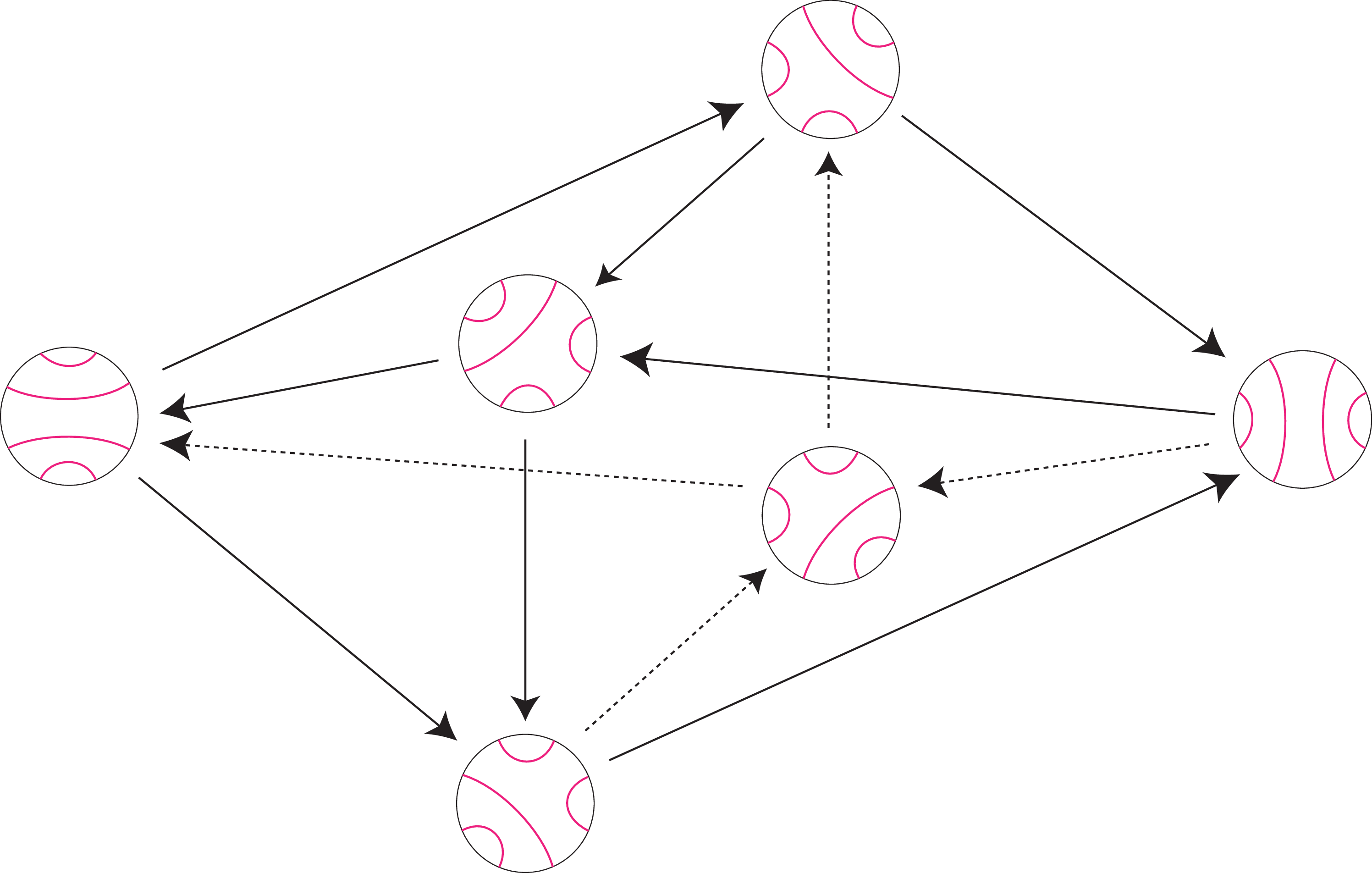}
\end{overpic}
\caption{} \label{octahedral2}
\end{figure}

Using the labeling from Figure~\ref{octahedral}, let us consider the
compositions $B'\stackrel{t}\rightarrow A\stackrel{j}\rightarrow B$
and $B'\stackrel{q}\rightarrow A'\stackrel{m}\rightarrow B$ in
Figure~\ref{octahedral2}. Both correspond to the same two bypass
moves along disjoint arcs of attachment, and differ only in the
order in which the attachment takes place.  Hence both compositions
give the same contact structure up to isotopy, and agree as
morphisms $B'\rightarrow B$. Similarly, the compositions $sn$ and
$pk$ agree.

Let us also discuss the commutativity of the triangle $A'CB'$, for
example. For this we use bypass rotation \cite[Lemma~4.2]{HKM}. The arc of attachment $\delta$ that gives rise to the
morphism $C\stackrel{o}\rightarrow A'$ can be rotated to the left to
give an arc of attachment $\delta'$ for the morphism
$C\stackrel{p}\rightarrow B'$. More precisely, inside a small
neighborhood of the union of $\Sigma=D^2$ and the bypass half-disk
along $\delta$, there exists a bypass half-disk along $\delta'$.
Moreover, the image of the arc $\delta$ on $B'$ (after the bypass
attachment along $\delta'$) is precisely the arc of attachment for
$B'\stackrel{q}\rightarrow A'$. Therefore, $C\stackrel{o}\rightarrow
A'$ can be factored into
$$C\stackrel{p}\rightarrow B'\stackrel{q}\rightarrow
A'\stackrel{x}\rightarrow A'.$$ By Eliashberg's uniqueness theorem
for tight contact structures on the $3$-ball, $x=id$ and it follows
that $o=qp$.

\subsection{The universal cover of the contact category}
\label{subsection: universal cover}

In this subsection we describe the {\em universal covers} of the contact categories $\mathcal{C}(\Sigma)$ and $\mathcal{C}(\Sigma,F)$.

\subsubsection{The universal cover}

Let $\mathcal{C}(\Sigma,i)$ be a connected component of $\mathcal{C}(\Sigma)$ and let $\Gamma_0\in \mathfrak{ob}(\mathcal{C}(\Sigma,i))$. {\em The universal cover of $\mathcal{C}(\Sigma,i)$ with basepoint $\Gamma_0$} is the category $\widetilde{\mathcal{C}}(\Sigma,i,\Gamma_0)$, together with the {\em covering functor}
$$\pi: \widetilde{\mathcal{C}}(\Sigma,i,\Gamma_0)\to \mathcal{C}(\Sigma,i),$$
defined as follows:

The objects are given by $(\Gamma_0\stackrel{[\zeta]}\to\Gamma)$, where $[\zeta]$ is a homotopy class of $2$-plane fields which are contact near $\Sigma\times\{0,1\}$ and have dividing sets $\Gamma_0$ and $\Gamma$.  The objects are also denoted by $(\g,[\zeta])$. 

We define $\Hom_{\widetilde{\mathcal{C}}(\Sigma,i,\Gamma_0)}((\g,[\zeta]),(\g',[\zeta']))$ to be the $\F$-vector space generated by homotopy classes of tight contact structures $\xi\in \Hom(\g,\g')$ such that $[\xi\circ \zeta]=[\zeta']$, where $[\xi\circ \zeta]$ is the homotopy class of $2$-plane fields on $\Sigma\times[0,1]$ obtained by concatenating  $\zeta$ and $\xi$. 

The functor $\pi$ takes $(\Gamma_0\stackrel{[\zeta]}\to\Gamma)$ to $\Gamma$ and takes
$$(\Gamma_0\stackrel{[\zeta]}\to\Gamma) \stackrel{\xi}\to
(\Gamma_0\stackrel{[\zeta']=[\xi\circ\zeta]}{\xrightarrow{\hspace{1.2cm}}} \Gamma')\in \Hom_{\widetilde{\mathcal{C}}(\Sigma,i,\Gamma_0)}((\g,[\zeta]),(\g',[\zeta'])),$$
to $\Gamma\stackrel\xi\to\Gamma'\in \Hom_{ \mathcal{C}(\Sigma,i)}(\g,\g')$. 

The universal cover $\widetilde{\mathcal{C}}(\Sigma,F,i,\Gamma_0)$ of $\mathcal{C}(\Sigma,F,i)$ is defined similarly.  Since $\Gamma_0$ determines the integer $i$, we will sometimes suppress the $i$ and write $\widetilde{\mathcal{C}}(\Sigma,\Gamma_0)$ or $\widetilde{\mathcal{C}}(\Sigma,F,\Gamma_0)$.

\subsubsection{$2$-plane fields}

Suppose that $\Sigma$ is a closed surface.
The preimage $\pi^{-1}(\Gamma)$ of $\Gamma\in\mathfrak{ob}(\mathcal{C}(\Sigma,i))$ is isomorphic to the
$\Z$-module $\Z\oplus H_1(\Sigma;\Z)$, albeit not naturally. Fix a
trivialization of the tangent bundle of $\Sigma\times[0,1]$ and a
reference $2$-plane field $(\Gamma_0\stackrel{[\zeta_0]}\to\Gamma)$.

We explain how to define the map
$$\Theta:\pi^{-1}(\Gamma)\to \Z\oplus H_1(\Sigma;\Z),$$
$$(\Gamma_0\stackrel{[\zeta]}\to\Gamma) \mapsto
(\Theta_1(\zeta),\Theta_2(\zeta));$$ 
see \cite{GH,Hu} for more details:  Using a relative
version of the Pontryagin-Thom construction, we can assign a framed
tangle in $\Sigma\times[0,1]$ to any $\zeta$. Here the framed tangle
is properly embedded and has endpoints on $\Sigma\times\{0,1\}$. 
To the difference $\zeta-\zeta_0$ we can assign a framed link $\mathcal{L}$
in (the interior of) $\Sigma\times[0,1]$. 

Any framed link $\mathcal{L}$ in $\Sigma\times[0,1]$ is the union of the following two types of links, up to framed cobordism: (i) a (not necessarily connected) $1$-manifold $C$ on $\Sigma\times\{1/2\}$, with framing coming from the surface, and (ii) a framed unknot.  The proof is a slight generalization of the usual Pontryagin-Thom proof of $\pi_3(S^2)\simeq \Z$, whose elements are classified by framed unknots: Let $\pi_\Sigma:\Sigma\times[0,1]\to\Sigma$ be the projection to $\Sigma$.  Without loss of generality assume that $\pi_\Sigma(\mathcal{L})$ is an immersion with transverse crossings, and we resolve the crossings to obtain a $1$-manifold $C$ which we can view to be on $\Sigma\times\{1/2\}$.  Adjusting the framing on $C$ and resolving the crossings are both equivalent in the framed cobordism category to adding a framed unknot.  Finally a union of framed unknots is framed cobordant to a single framed unknot.

The framing of the unknot is $\Theta_1(\zeta)$ and is equal to the Hopf invariant and the homology class of $C\subset \Sigma\times\{1/2\}$ is $\Theta_2(\zeta)$. The class $\Theta_2(\zeta)\in H_1(\Sigma;\Z)$ is dual to one-half of
the first Chern class of the difference $\zeta-\zeta_0$.

Suppose that $\bdry \Sigma\not=\emptyset$, $\bdry\Sigma$ is connected, and
$\#F>0$. Let $\mathbf{b}=\{b_1,\dots,b_{2g}\}$ be a {\em basis} for
$\Sigma$, i.e., it is a collection of disjoint, properly embedded,
oriented arcs which cut $\Sigma$ up into a single polygon; in
particular, $\mathbf{b}$ can be viewed as a basis for $H_1(\Sigma,\bdry\Sigma)$.
Let us also assume that $\mathbf{b}$ is transverse to $\Gamma_0$ and
$\Gamma$. Given the basis $\mathbf{b}$, we can take disks
$D_i=b_i\times [0,1]$, whose orientation agrees with the boundary
orientation given by that of $b_i\times\{1\}$.  We can compute
$\langle c_1(\zeta),D_i\rangle$ to be
$\chi(R_+(\Gamma_{D_i}))-\chi(R_-(\Gamma_{D_i}))$ with respect to
$\zeta$.  (Without loss of generality we may assume that $\zeta$ is an overtwisted contact structure by Eliashberg's classification of overtwisted contact structures~\cite{El1}.) Then ${1\over 2}c_1(\zeta-\zeta_0)$ assigns an integer to
the disks $D_1,\dots,D_g$, and is dual to $\Theta_2(\zeta)$.

\subsubsection{Change of basepoint}

Let $\Gamma_0,\Gamma_0'\in \mathfrak{ob}(\mathcal{C}(\Sigma,i))$ be two basepoints.  If $\zeta$ is a homotopy class of $2$-plane fields which is contact near
$\Sigma\times\{0,1\}$ and has dividing sets $\Gamma_0$ and $\Gamma_0'$ that lie on $\Sigma \times \{0\}$ and $\Sigma \times \{1\}$, respectively, then $\zeta$ induces a {\em change-of-basepoint functor}
$$F_\zeta: \widetilde{\mathcal{C}}(\Sigma,\Gamma_0') \to \widetilde{\mathcal{C}}(\Sigma,\Gamma_0),$$
which is given by:
$$F((\Gamma_0'\stackrel{[\xi]}\to \Gamma_1)\stackrel{\xi'}\to (\Gamma_0'\stackrel{[\xi'\circ \xi]}{\xrightarrow{\hspace{.8cm}}}
\Gamma_2))=(\Gamma_0\stackrel{[\xi\circ\zeta]} \longrightarrow
\Gamma_1) \stackrel{\xi'}\to (\Gamma_0\stackrel{[\xi'\circ \xi\circ \zeta]}{\xrightarrow{\hspace{1.2cm}}} \Gamma_2).$$
The functor $F_\zeta$ gives an equivalence of the two categories.  Also, if $\zeta$ is a homotopy class from $\Gamma_0$ to $\Gamma_0'=\Gamma_0$, then the
functor $F_\zeta$ is a {\em deck transformation} of $\widetilde{\mathcal{C}}(\Sigma,\Gamma_0)$.

\subsubsection{Bypass exact triangles}

A bypass exact triangle
$$...\to \Gamma_1\stackrel{\xi_1}\to \Gamma_2 \stackrel{\xi_2}\to \Gamma_3 \stackrel{\xi_3}\to \Gamma_1 \to ...$$
lifts to a bypass exact triangle
$$... \to (\Gamma_0\stackrel{[\zeta]}\to\Gamma_1) \stackrel{\xi_1}  \to (\Gamma_0\stackrel{[\xi_1\circ \zeta]}{\xrightarrow{\hspace{.8cm}}}  \Gamma_2) \stackrel{\xi_2}\to
(\Gamma_0\stackrel{[\xi_2\circ \xi_1\circ \zeta]}{\xrightarrow{\hspace{1.5cm}}}  \Gamma_3)$$
$$ \stackrel{\xi_3}\to (\Gamma_0\stackrel{[\xi_3\circ\xi_2\circ \xi_1\circ \zeta]}{\xrightarrow{\hspace{1.8cm}}}  \Gamma_1)\to....$$
For convenience, let us write $\zeta'=\xi_3\circ\xi_2\circ\xi_1\circ \zeta$.

Let $T: \widetilde{\mathcal{C}}(\Sigma,\Gamma_0)\to
\widetilde{\mathcal{C}}(\Sigma,\Gamma_0)$ be the deck transformation
which, for each fiber $\pi^{-1}(\Gamma)$ and identification
$\Theta:\pi^{-1}(\Gamma)\stackrel\sim\to \Z\oplus H_1(\Sigma;\Z)$,
sends $(m,x)\in \Z\oplus H_1(\Sigma;\Z)$ to $(m-1,x)$. In other
words, the shift functor is a grading shift which drops the Hopf
invariant (i.e., the linking number of the framed unknot) by one
without changing the relative Spin$^c$-structure.

The following theorem, due to Huang~\cite{Hu}, shows that the
functor $T$ is the shift functor for the bypass exact triangles in
$\widetilde{\mathcal{C}}(\Sigma,\Gamma_0)$.

\begin{thm}[Huang \cite{Hu}] \label{thm: huang}
The homotopy classes of $(\Gamma_0\stackrel\zeta\to\Gamma_1)$ and
$(\Gamma_0\stackrel{\zeta'}\rightarrow \Gamma_1)$ have the same
relative Spin$^c$-structure, and the Hopf invariant of $\zeta$ is
one higher than that of $\zeta'$.
\end{thm}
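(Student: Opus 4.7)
The plan is to localize the entire calculation to a ball. All three bypass attachments $\xi_1, \xi_2, \xi_3$ take place inside the disk neighborhood $U$ of the arc of attachment $\delta$, so the composite $\eta := \xi_3 \circ \xi_2 \circ \xi_1$ is a contact structure on $\Sigma \times [0,3]$ that agrees with the $I$-invariant contact structure $\eta_0$ (with dividing set $\Gamma_1$) outside the ball $B := U \times [0,3]$. Since $\zeta' = \eta \circ \zeta$, comparing the $2$-plane fields $\zeta$ and $\zeta'$ reduces to comparing $\eta$ and $\eta_0$ on $B$ relative to their common behavior on $\bdry B$.

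For the Spin$^c$ statement, the Pontryagin--Thom framed tangle representing the difference $\eta - \eta_0$ is supported in the ball $B$, hence is null-homologous in $\Sigma \times [0,3]$, giving $\Theta_2(\zeta') = \Theta_2(\zeta)$. Equivalently, when $\bdry \Sigma \neq \emptyset$ one may choose the basis arcs $b_i$ disjoint from $U$, so that $\langle c_1(\eta), D_i\rangle = \chi(R_+(\Gamma_{D_i})) - \chi(R_-(\Gamma_{D_i}))$ depends only on $\Gamma_1$ and equals $\langle c_1(\eta_0), D_i\rangle$.

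For the Hopf invariant the task is to show $\Theta_1(\zeta') = \Theta_1(\zeta) - 1$, i.e.\ that the framed unknot representing $\eta - \eta_0$ in $B$ has framing $-1$. Here we use that $\eta$ is overtwisted: by the discussion preceding the theorem, $\xi_2$ is attached along the anti-bypass arc of $\xi_1$, so $\xi_2 \circ \xi_1$ already contains a genuine overtwisted disk inside $B$, on top of which $\xi_3$ is piled. The strategy is to identify $\eta|_B$, rel $\bdry B$, with the contact structure obtained from $\eta_0|_B$ by inserting a single full Lutz twist along a Legendrian unknot transverse to that overtwisted disk; such a Lutz twist contributes exactly a $(-1)$-framed unknot to the Pontryagin--Thom difference, which lowers the Hopf invariant by $1$ while preserving the relative first Chern class.

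The main obstacle is making this identification rigorous. By Eliashberg's uniqueness theorem for overtwisted contact structures, it suffices to verify that $\eta|_B$ and the Lutz-twisted $\eta_0|_B$ are homotopic as $2$-plane fields rel $\bdry B$. Matching relative $c_1$ is already established in the previous paragraph, so only the relative Hopf invariant remains. One pins it down by an explicit model computation in a single convenient configuration --- for instance, a bypass triangle for which $\Gamma_1 \cap U$ consists of three standard parallel arcs --- using Gompf's formula on the concrete Legendrian skeleta of $\eta|_B$ and of the Lutz-twisted $\eta_0|_B$. Once the framing is verified in one model, the general case follows because the framed cobordism class of $\eta - \eta_0$ in $B$ depends only on the homotopy class of $\eta$ rel $\bdry B$, which is in turn determined by the local combinatorics of the bypass triangle and is therefore independent of the ambient $(\Sigma, \Gamma_1, \delta)$.
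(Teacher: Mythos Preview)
The paper does not actually prove this theorem: it is stated as a result of Huang and attributed to \cite{Hu} with no argument given. So there is no ``paper's own proof'' to compare against, and your proposal is effectively an attempt to supply what the paper omits.

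Your localization step and the Spin$^c$ argument are fine: the three bypasses live in $B=U\times[0,3]$, the Pontryagin--Thom tangle for $\eta-\eta_0$ is supported there and hence null-homologous, so $\Theta_2$ is unchanged. That much is straightforward.

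The Hopf-invariant part, however, has a genuine gap. Your route via Lutz twists and Eliashberg's uniqueness is circular as written: to invoke Eliashberg you must already know that $\eta|_B$ and the Lutz-twisted $\eta_0|_B$ are homotopic rel $\bdry B$, and you acknowledge that ``only the relative Hopf invariant remains'' --- but that is exactly the quantity you are trying to compute. You then defer the actual content to ``an explicit model computation \dots\ using Gompf's formula,'' which is never carried out. That computation \emph{is} the theorem; everything else is packaging. Huang's paper does precisely this local calculation (tracking the $d_3$-invariant through the three bypass attachments via an explicit handle/surgery description), and there is no shortcut around it. If you want a self-contained proof, you must actually execute the Gompf-type computation on the model bypass triangle in the ball and exhibit the $-1$.
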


\section{Contact category of a disk} \label{section: contact category of disk}

In the rest of this paper we restrict attention to contact categories of the disk.  Letting $\Sigma=D^2$, $\# F=2n+2$,  $0\leq e\leq n$, we consider $\mathcal{C}(D^2,F, n-2e)$ and $\widetilde{\mathcal{C}}(D^2,F, n-2e)$. The basepoint $\Gamma^0$ is arbitrary at this point.  If $\Gamma\in \mathfrak{ob} (\mathcal{C}(D^2,F, n-2e))$, then
$$\chi_+(\Gamma):=\chi(R_+(\Gamma))=n-e+1,\quad \chi_-(\Gamma):=\chi(R_-(\Gamma))=e+1.$$
We also write $\chi_\pm$ if $\Gamma$ is understood.

The $n+1$ arcs of $R_+(F)$ (called ``positive arcs") are labeled $\ul{0},\ul{1},\dots,\ul{n}$ in clockwise order around $\bdry D^2$.  The arc $\ul{0}$ is the ``based arc'', analogous to a basepoint.  We will often write $D^2_n$ for $(D^2,F)$ with $\# F=2n+2$ and a fixed labeling of $R_+(F)$.  We also assume that the arcs of $R_+(F)$ are evenly spaced around $\bdry D^2$.

\begin{notation}
The labels of the arcs of $R_+(F)$ will be underlined throughout the paper. We will write $\ul{s} < \ul{t}$ when we mean $s<t$.
\end{notation}

\subsection{Skeletal subcategory $\cne$} 

The category $\mathcal{C}(D^2,F, n-2e)$ has an uncountable number of objects since isotopic dividing sets are treated as different objects (cf.\ Remark~\ref{rmk object isotopy}).  However,
\begin{itemize}
\item any two isotopic dividing sets are isomorphic in $\mathcal{C}(D^2,F, n-2e)$ via a weak identity morphism and 
\item any dividing set with a contractible component is isomorphic to a zero object by Sections~\ref{ssec: zero object} and \ref{subsubsection: contact category}.
\end{itemize}
Hence $\mathcal{C}(D^2,F, n-2e)$ has only finitely many isomorphism classes of objects. In particular, the set of isomorphism classes of nonzero objects in $\mathcal{C}(D^2,F, n-2e)$ is in bijection with the set of isotopy classes of dividing sets without closed components, which in turn is in bijection with the set of crossingless matchings with $\chi_+=n-e+1$ and $\chi_-=e+1$. 

Let $\cne$ be a skeletal subcategory of $\mathcal{C}(D^2,F, n-2e)$, obtained by choosing one representative from each isomorphism class of objects of $\mathcal{C}(D^2,F, n-2e)$ and taking the full subcategory of $\mathcal{C}(D^2,F, n-2e)$ with these objects.    Let $\tcne$ be a skeletal subcategory of $\widetilde{\mathcal{C}}(D^2,F, n-2e)$.

{\em We now shift our perspective slightly and work with $\cne$ and $\tcne$ in the rest of the paper.  At this point it would be convenient to slightly change the definition of a bypass from $\Gamma$ to $\Gamma'$ so that:}
$$\mbox{new bypass} = \mbox{old bypass, followed by a weak identity morphism}.$$
Given nonzero objects $\g,\g'$ of $\cne$ with a (new) bypass from $\g$ to $\g'$, the (new) bypass does not depend on the choices of $U$ and $\phi$ that appear in the definition of the old bypass as well as the weak identity morphism. In particular, the relation (R$_2$) in Theorem~\ref{thm: relations in contact category} can be rephrased as ``adding a trivial bypass'' in the case of a disk.

\subsection{Compositions in $\cne$}

Given $\g, \g'\in \mathfrak{ob}(\cne)$, $\gamma_{\g, \g'}$ denotes the dividing set on $\bdry (D^2 \times [0,1])$ obtained by {\em edge rounding} $\g$ on $D^2 \times \{0\}$, $\g'$ on $D^2 \times \{1\}$, and the vertical dividing set on $\bdry D^2 \times [0,1]$.
See \cite[Lemma 3.11]{H1} for the definition of {\em edge rounding} of two dividing sets along a common boundary Legendrian curve.
We write $\#\gamma_{\g,\g'}$ for the number of components of $\gamma_{\g,\g'}$.  If $\#\gamma_{\g, \g'}>1$, then $\Hom(\g,\g')=0$; if $\#\gamma_{\g, \g'}=1$, then $\Hom(\g, \g')\simeq \F$ and we denote its generator by $\xi_{\g, \g'}$.

\begin{convention}
For the rest of the paper, if $\g, \g'\in \mathfrak{ob}(\cne)$, then $\Hom(\g,\g')$ is always understood to be $\Hom_{\cne}(\g,\g')$.
\end{convention}

We study the composition $\Hom(\g', \g'') \times \Hom(\g, \g') \ra \Hom(\g, \g'')$ when all three spaces are nonzero.  The following lemma is similar to \cite[Lemma 3.12]{M1}:

\begin{lemma}\label{lem stack}
Suppose that $\Hom(\g, \g'), \Hom(\g', \g''), \Hom(\g, \g'')$ are nonzero and $\g' \neq \g, \g''$.
Then the following are equivalent:
\be
\item The composition $\Hom(\g', \g'') \times \Hom(\g, \g') \ra \Hom(\g, \g'')$ is nontrivial.
\item There exists a sequence of dividing sets $\g^i$ for $0 \leq i \leq k, k\geq 1$ satisfying:
\be
\item $\g^0=\g, \g^k=\g'$;
\item each $\Hom(\g^i, \g^{i+1})$, $0 \leq i \leq k-1$, is nonzero and is generated by a bypass;
\item $\Hom(\g^i, \g'')\not=0$ for $0 \leq i \leq k$.
\ee
\item There exists a sequence of dividing sets $\g^i$ for $0 \leq i \leq k, k\geq 1$ satisfying:
\be
\item $\g^0=\g', \g^k=\g''$;
\item each $\Hom(\g^i, \g^{i+1})$, $0 \leq i \leq k-1$, is nonzero and is generated by a bypass;
\item $\Hom(\g, \g^i)\not=0$ for $0 \leq i \leq k$.
\ee
\ee
\end{lemma}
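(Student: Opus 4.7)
The plan is to establish $(1) \Leftrightarrow (2)$ and then deduce $(1) \Leftrightarrow (3)$ by passing to the opposite category, using the equivalence $\cne^{op} \simeq \mathcal{C}(-D^2,-F,\cdot)$ from Section~\ref{subsection: rcc}: this duality reverses bypass sequences and swaps the roles of $\g$ and $\g''$, converting statement (2) for $(\g'',\g',\g)$ in $\cne^{op}$ into statement (3) for $(\g,\g',\g'')$ in $\cne$.

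For $(1) \Rightarrow (2)$: since $\Hom(\g,\g'') \simeq \F$, the nontrivial composition equals $\xi_{\g,\g''}$, realized by a tight contact structure on $D^2 \times [0,1]$. By Theorem~\ref{thm: relations in contact category}, decompose the lower piece $\xi_{\g,\g'}$ into bypass morphisms $\beta_j\colon \g^{j-1} \to \g^j$ for $1 \le j \le k$, with $\g^0=\g$ and $\g^k=\g'$. Each $\g^i$ appears on a slice $D^2 \times \{t_i\}$ inside the tight contact manifold $\xi_{\g,\g''}$; since tightness is inherited by sub-$3$-balls, the upper region $D^2 \times [t_i, 1]$ carries a tight contact structure with boundary dividing sets $(\g^i,\g'')$, giving $\Hom(\g^i,\g'') \neq 0$.

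For $(2) \Rightarrow (1)$: I induct on $i$ that $\xi_{\g^i,\g''} \circ (\beta_i \circ \cdots \circ \beta_1) = \xi_{\g,\g''}$, from which the case $i=k$ yields $\xi_{\g',\g''} \circ \xi_{\g,\g'} = \xi_{\g,\g''} \neq 0$. The inductive step reduces to a key base claim: whenever $\beta\colon A \to B$ is a bypass and $C$ is an object with $\Hom(A,C)$ and $\Hom(B,C)$ both nonzero, we have $\xi_{B,C} \circ \beta = \xi_{A,C}$. Applied with $A = \g^i$, $B = \g^{i+1}$, $C = \g''$, this closes the induction.

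The main obstacle is the key base claim. I would prove it by contradiction using the bypass exact triangle $A \xrightarrow{\beta = \beta_1} B \xrightarrow{\beta_2} A_* \xrightarrow{\beta_3} A$ at $(D^2,A,\delta)$, where $\delta$ is the arc of attachment of $\beta$, together with the \emph{anti-bypass principle}: for a tight $\xi\colon B \to C$, the composition $\xi \circ \beta_1$ is overtwisted if and only if $\xi = \xi'' \circ \beta_2$ for some tight $\xi''\colon A_* \to C$ (the overtwisted disk in the composition is forced to localize near the bypass half-disk of $\beta$ and thus produces the anti-bypass half-disk inside $\xi$). Suppose $\xi_{B,C} \circ \beta$ is overtwisted. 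The principle yields $\xi_{B,C} = \xi_{A_*,C} \circ \beta_2$ with $\xi_{A_*,C}$ tight, so $\Hom(A_*,C) \neq 0$. The element $\xi_{A,C} \circ \beta_3 \in \Hom(A_*,C) \simeq \F$ is either equal to $\xi_{A_*,C}$, forcing $\xi_{B,C} = \xi_{A_*,C} \circ \beta_2 = \xi_{A,C} \circ \beta_3 \circ \beta_2 = 0$ (using $\beta_3 \circ \beta_2 = 0$ in the bypass triangle) and contradicting tightness of $\xi_{B,C}$; or zero, in which case the anti-bypass principle applied to the rotated triangle $A_* \xrightarrow{\beta_3} A \xrightarrow{\beta} B \xrightarrow{\beta_2} A_*$ yields $\xi_{A,C} = \xi_{B,C} \circ \beta$, contradicting our assumption that the right-hand side is overtwisted. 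Both cases contradict the hypothesis, completing the argument.
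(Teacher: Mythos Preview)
Your $(1)\Rightarrow(2)$ is essentially identical to the paper's argument: factor $\xi_{\g,\g'}$ into bypasses and observe that each intermediate-to-$\g''$ contact structure sits inside the tight $\xi_{\g,\g''}$. Your reduction of $(1)\Leftrightarrow(3)$ to $(1)\Leftrightarrow(2)$ via the opposite category is a clean formalization of what the paper dismisses as ``similar''.

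Where you genuinely diverge is in $(2)\Rightarrow(1)$. The paper argues the base case $k=1$ by a direct local analysis: given that $\Hom(\g,\g'')$ and $\Hom(\g',\g'')$ are both nonzero, an Euler-class count forces the portion of $\gamma_{\g,\g''}$ away from the bypass arc to be the specific three-arc configuration of Figure~\ref{S-1}, and one sees that the bypass $\xi_{\g,\g'}$ is \emph{trivial} when viewed on $\partial(D^2\times[0,1])$, so peeling it off preserves tightness. You instead run a categorical contradiction through the bypass triangle $A\to B\to A_*\to A$, using your ``anti-bypass principle'' (if $\xi\circ\beta_1$ is overtwisted and $\xi$ is tight, then $\xi$ factors through $\beta_2$). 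This principle is exactly the exactness of $\Hom(-,C)$ at $\Hom(B,C)$ on bypass triangles, i.e., the contravariant form of Lemma~\ref{lemma: exact functor}, and is dual to Lemma~\ref{lemma: reduction 1}. Both of those are stated after Lemma~\ref{lem stack} but are proved independently of it, so there is no circularity; however, your one-line justification (``the overtwisted disk is forced to localize near the bypass half-disk'') is heuristic and should be replaced by the actual argument of Lemma~\ref{lemma: reduction 1}: view $\xi$ on the ball $D^2\times I$ with connected boundary dividing set, observe that attaching $\beta_1$ from the back makes the ball overtwisted, and deduce the existence of the anti-bypass half-disk inside $(D^2\times I,\xi)$.

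The trade-off: the paper's local picture is more self-contained and geometric, while your approach isolates the key homological input (exactness of $\Hom$ on bypass triangles) and would generalize verbatim to any setting where that exactness is known.
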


\begin{proof}
We prove the equivalence of (1) and (2). The proof of the equivalence of (1) and (3) is similar.

\s\n
(1) $\Rightarrow$ (2): The tight contact structure $\xi_{\g,\g'}$ can be written as a composition of bypasses $\xi_{\g^{k-1}, \g^{k}} \circ \cdots \circ \xi_{\g^{0}, \g^{1}}$, where $\g^0=\g$ and $\g^k=\g'$. For $0 \leq i \leq k$, the contact structure $\xi_{\g^{k}, \g''} \circ \xi_{\g^{k-1}, \g^{k}} \circ \cdots  \circ \xi_{\g^{i}, \g^{i+1}}$ is tight because it can be embedded in $\xi_{\g',\g''} \circ \xi_{\g, \g'} =\xi_{\g, \g''}$ which is tight.  Hence $\Hom(\g^i, \g'')\not=0$ for $0 \leq i \leq k$.

\s
\n (2) $\Rightarrow$ (1): Suppose $k=1$, i.e., $\Hom(\g, \g')$ is generated by a nontrivial bypass $\xi_{\g, \g'}$ and $\Hom(\g, \g'')$ and $\Hom(\g', \g'')$ are nonzero. The dividing sets $\g$ and $\g'$ only differ on a neighborhood of the bypass arc of attachment. Since $\Hom(\g, \g'')$ and $\Hom(\g', \g'')$ are nonzero, $\#\gamma_{\g', \g''}= \#\gamma_{\g, \g''}=1$ and the portion of $\gamma_{\g, \g''}$ which is outside a neighborhood of the arc of attachment is given by the three black arcs in Figure \ref{S-1}.  (There are a priori three possibilities for the black arcs by Euler class considerations and only one of them satisfies $\#\gamma_{\g', \g''}= \#\gamma_{\g, \g''}=1$.)  The tight contact structure $\xi_{\g,\g''}$ is obtained from $\xi_{\g',\g''}$ by attaching a bypass which is trivial when viewed as a bypass on $\bdry (D^2\times[0,1])$.  Hence (1) follows when $k=1$.

\begin{figure}[ht]
\begin{overpic}
[scale=0.3]{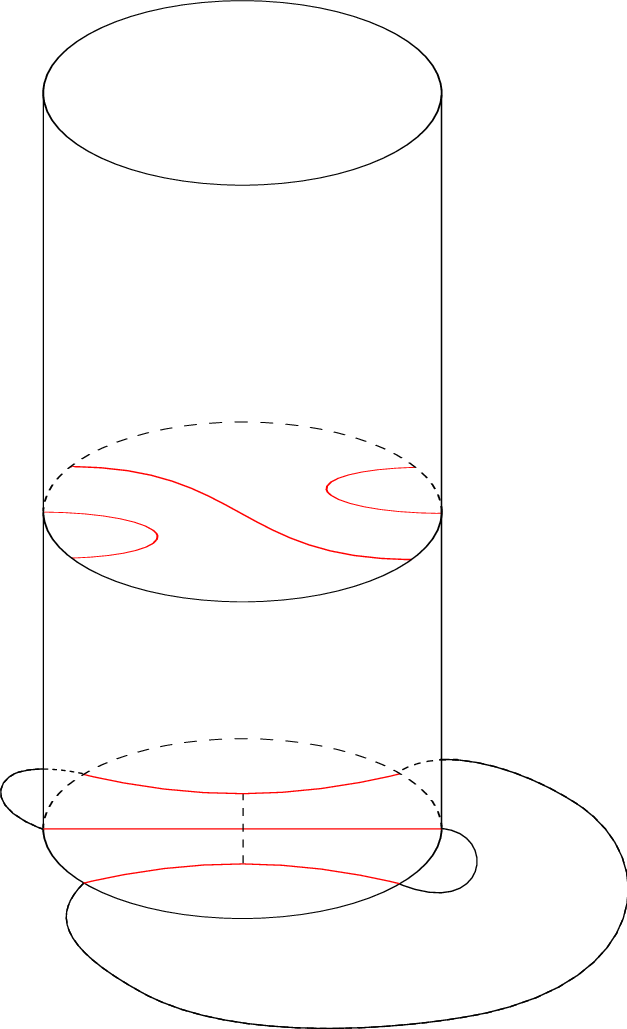}
\put(46,18){$\g$}
\put(46,48){$\g'$}
\put(46,88){$\g''$}
\end{overpic}
\caption{Peeling off the bypass $\xi_{\g, \g'}$ inside $\xi_{\g, \g''}$.}
\label{S-1}
\end{figure}

When $k>1$, one can show by induction on $k-i$ that the compositions $\Hom(\g', \g'') \times \Hom(\g^i, \g') \ra \Hom(\g^i, \g'')$ are nontrivial for all $0 \leq i \leq k-1$.  This implies (1) in general.
\end{proof}

\begin{lemma}\label{lemma: reduction 1}
Let $\g,\g',\g''$ be nonzero dividing sets.  Suppose $\xi\in \Hom(\g,\g')$ is nonzero, $\beta'\in \Hom(\g',\g'')$ is a nontrivial bypass, and $\beta'\circ \xi\in \Hom(\g,\g'')$ is zero.  Then $\xi$ can be factored into $\beta\circ \zeta$, where $\beta,\beta'$ are two consecutive bypasses of a bypass triangle.
\end{lemma}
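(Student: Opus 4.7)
The plan is to identify $\beta$ as the predecessor bypass of $\beta'$ in the bypass triangle starting at $(D^2,\g',\delta')$, where $\delta'\subset\g'$ is the arc of attachment of $\beta'$, and then to extract $\beta$ geometrically from the top of $\xi$. Let $U\subset D^2$ be a small disk neighborhood of $\delta'$ and form the bypass triangle
$$\g'\xrightarrow{\beta'}\g''\xrightarrow{\beta_2}\g'''\xrightarrow{\beta}\g'.$$
Since consecutive compositions in the triangle are zero by construction, $\beta'\circ\beta=0$ is automatic, so it suffices to exhibit a tight $\zeta\in\Hom(\g,\g''')$ with $\xi=\beta\circ\zeta$.

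The geometric intuition is a bypass-duality principle: if $\eta\in\Hom(\g,\g')$ is tight but $\beta'\circ\eta=0$, then a bypass half-disk realizing $\beta$ is embedded in $\eta$ adjacent to the top $\g'\times\{1\}$. Indeed, the overtwisted disk $D_{ot}$ witnessing $\beta'\circ\xi=0$ may be chosen to sit in a small neighborhood of the bypass half-disk $B'$ of $\beta'$; it splits along a diameter into two bypass half-disks, one of which is $B'$ itself and the other of which is an embedded half-disk inside $\xi$ whose arc of attachment on $\g'\times\{1\}$ is precisely the arc $\delta_2\subset U$ for the triangle bypass $\beta:\g'''\to\g'$. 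Pushing this interior half-disk slightly inward by a contact isotopy displays $\xi$ as a stacking $\beta\circ\zeta$.

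To make this rigorous I would proceed by induction on the length of a bypass decomposition $\xi=\xi_k\circ\cdots\circ\xi_1$ with $\g^0=\g$, $\g^k=\g'$, produced by Lemma~\ref{lem stack}(1)$\Rightarrow$(2). If the last bypass $\xi_k$ is supported in $U$, then by the bypass-triangle classification of local bypass moves $\xi_k$ must equal $\beta$, so $\g^{k-1}=\g'''$ and $\zeta=\xi_{k-1}\circ\cdots\circ\xi_1$ works (tightness of $\zeta$ is automatic since $\xi=\beta\circ\zeta$ is tight). If $\xi_k$ is supported outside $U$, the far commutativity relation (R$_1$) of Theorem~\ref{thm: relations in contact category} lets us interchange $\xi_k$ with a copy $\tilde\beta'$ of $\beta'$ at the $\g^{k-1}$-level, descend the hypothesis to $\tilde\beta'\circ(\xi_{k-1}\circ\cdots\circ\xi_1)=0$ using the fact that $\xi_k$ and $\beta'$ act in disjoint regions of $D^2$ and therefore commute without affecting each other's local dividing-set pictures, apply the inductive hypothesis to obtain $\xi_{k-1}\circ\cdots\circ\xi_1=\tilde\beta\circ\zeta'$, and finally invoke (R$_1$) a second time to push $\tilde\beta$ past the disjoint $\xi_k$, producing $\xi=\beta\circ(\xi_k\circ\zeta')$.

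The main obstacle will be justifying the descent step: rigorously showing that when $\xi_k$ is disjoint from $U$, the hypothesis $\beta'\circ\xi=0$ forces $\tilde\beta'\circ(\xi_{k-1}\circ\cdots\circ\xi_1)=0$ rather than the overtwisting being produced jointly by $\xi_k$ and $\tilde\beta'$. This comes down to a bookkeeping argument on $\gamma$-component counts at the boundary of $D^2\times[0,1]$, using the chord-diagram fact that the three numbers $\#\gamma_{\g,\g'},\#\gamma_{\g,\g''},\#\gamma_{\g,\g'''}$ around the local triangle are either all equal or occur in the pattern $(n,n,n+1)$ in some order, together with the fact that a bypass move changes such a count only through its own local neighborhood. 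This same combinatorial input also handles the base case $k=1$ with $\xi$ disjoint from $U$: it forces $\#\gamma_{\g,\g'''}=1$, so the unique tight generator of $\Hom(\g,\g''')$ serves as $\zeta$.
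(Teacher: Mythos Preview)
Your second paragraph already \emph{is} the paper's proof, and it is rigorous without any induction. The paper argues in three lines: since $\xi$ is tight we have $\#\gamma_{\g,\g'}=1$, so $(D^2\times I,\xi)$ is the standard tight ball; since attaching $\beta'$ along $\delta'$ to its convex boundary sphere yields an overtwisted manifold, there exists an anti-bypass along the same arc $\delta'$ inside $(D^2\times I,\xi)$; peeling off this anti-bypass gives $\xi=\beta\circ\zeta$. The one point to sharpen in your formulation is that you need not locate the overtwisted disk near $B'$. Rather, on the boundary sphere $(S^2,\gamma_{\g,\g'})$ with its single dividing circle every bypass arc is either trivial or overtwisted; since the exterior attachment is overtwisted, $\delta'$ is an overtwisted arc from the outside, hence a trivial arc from the inside, and the standard ``trivial bypasses always exist'' principle furnishes the interior half-disk directly.

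The inductive scheme in your last two paragraphs is therefore unnecessary, and the obstacle you flag in the descent step is genuine. From $\xi_k$ disjoint from $U$ and $\beta'\circ\xi=0$ you obtain only $\xi_k\circ(\tilde\beta'\circ\xi_{k-1}\circ\cdots\circ\xi_1)=0$, and post-composing a tight morphism with the tight bypass $\xi_k$ can certainly annihilate it, so the inner composite need not vanish. Your proposed repair via chord-diagram component counts does not control this, because vanishing of $\Hom(\g,\g'')$ is about $\#\gamma_{\g,\g''}$, whereas vanishing of a specific composition in a nonzero $\Hom$ space is a different phenomenon. Closing this combinatorially would take real work; the direct anti-bypass argument makes it moot.
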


\begin{proof}
Since $\xi\in \Hom(\g,\g')\not=0$, we have $\#\gamma_{\g,\g'}=1$.   Since attaching $\beta'$ to $\bdry (D^2\times I)$ yields an overtwisted contact structure, there exists an anti-bypass along the same arc of attachment inside $(D^2\times I, \xi)$. This implies that $\xi$ can be factored into $\beta\circ \zeta$, where $\beta,\beta'$ are two consecutive bypasses of a bypass triangle.
\end{proof}

Using the same line of argument (details left to the reader), one can also prove the following:

\begin{lemma} \label{lemma: exact functor}
Let $\tilde \g$ be a nonzero dividing set.  Then $\Hom(\tilde\g,-) $ is an exact functor from $\cne$ to the the category of $\F$-vector spaces, i.e., it takes bypass exact triangles to short exact sequences.  Similarly, $\Hom(-,\tilde\g)$ is an exact functor from $\cne$ to $\F$-vector spaces.
\end{lemma}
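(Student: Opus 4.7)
The plan is to deduce exactness from Lemma~\ref{lemma: reduction 1}. Given a bypass exact triangle
$$\Gamma_1 \xrightarrow{\xi_1} \Gamma_2 \xrightarrow{\xi_2} \Gamma_3 \xrightarrow{\xi_3} \Gamma_1,$$
applying $\Hom(\tilde\Gamma,-)$ produces a $3$-periodic sequence of $\F$-vector spaces, and I must verify $\ker(\xi_{i+1}\circ) = \op{im}(\xi_i\circ)$ at each of the three positions. The inclusion $\op{im}(\xi_i\circ) \subseteq \ker(\xi_{i+1}\circ)$ is automatic, since two consecutive edges of a bypass triangle compose to the zero morphism.

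For the reverse inclusion, let $\xi \in \Hom(\tilde\Gamma, \Gamma_{i+1})$ be nonzero with $\xi_{i+1}\circ \xi = 0$. I apply Lemma~\ref{lemma: reduction 1} with $\g = \tilde\Gamma$, $\g' = \Gamma_{i+1}$, $\g'' = \Gamma_{i+2}$, and $\beta' = \xi_{i+1}$: this yields a factorization $\xi = \beta \circ \zeta$ where $\beta$ and $\xi_{i+1}$ are consecutive bypasses of a bypass triangle. Because a bypass triangle is uniquely determined by the arc of attachment of any of its edges, this triangle must coincide with the given one, so $\beta = \xi_i$ and $\xi = \xi_i \circ \zeta \in \op{im}(\xi_i\circ)$. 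The degenerate cases---such as the identity triangle $\Gamma\to\Gamma\to\mathbf{0}\to\Gamma$ or the fold-unfold triangle, in which some $\Gamma_j \cong \mathbf{0}$ and hence $\Hom(\tilde\Gamma,\Gamma_j)=0$---are handled separately by direct inspection: the nonzero edges become an isomorphism on the surviving factor while the remaining maps vanish, so exactness is immediate.

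The contravariant statement for $\Hom(-,\tilde\Gamma)$ is proved symmetrically, by appealing to the mirror of Lemma~\ref{lemma: reduction 1}: if $\xi$ is nonzero, $\beta$ is a nontrivial bypass, and $\xi \circ \beta = 0$, then $\xi$ factors as $\zeta \circ \beta''$ where $\beta$ and $\beta''$ are consecutive edges of a bypass triangle. This mirror follows from the same argument after swapping ``from the front'' and ``from the back'' bypass attachments (using that the anti-bypass of a nontrivial bypass is again nontrivial). The step I expect to require the most care is verifying the hypotheses of Lemma~\ref{lemma: reduction 1} each time it is invoked---in particular, that $\xi_{i+1}$ is a nontrivial bypass and that $\Gamma_{i+2}$ is a nonzero dividing set---and then pinning down $\beta = \xi_i$ unambiguously within the given bypass triangle; the surrounding degenerate cases, while straightforward, must be enumerated to cover the situations where these hypotheses fail.
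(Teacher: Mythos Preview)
Your argument is correct and is exactly what the paper has in mind—the paper merely says the result follows ``using the same line of argument'' as Lemma~\ref{lemma: reduction 1} and leaves the details to the reader. One small slip: the fold-unfold triangle has no zero vertex (all three dividing sets there are nonzero, though one morphism vanishes), and in any case it cannot occur in $\cne$ since every closed curve in $D^2$ is contractible. In fact, for the disk the only degenerate bypass triangles are rotations of the identity triangle: a nontrivial bypass arc meets three \emph{distinct} chords of $\Gamma$, and any re-matching of their six endpoints on $\partial D^2$ is again a chord diagram with no closed component—so your ``direct inspection'' of the degenerate case really does reduce to checking the identity triangle.
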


We also sketch the proof of Theorem~\ref{thm: relations in contact category} for $D^2_n$ and a nonzero $\xi_{\g,\g'}\in \Hom(\g,\g')$.  This will be used later in Section~\ref{Sec comp}.

\begin{proof}[Sketch of proof of Theorem~\ref{thm: relations in contact category} for $D^2_n$ and nonzero $\xi_{\g,\g'}\in \Hom(\g,\g')$.]
Let $\xi_{\g,\g'}$ be a tight contact structure on $D^2\times[0,k]$.  Suppose we are given a sequence of bypasses $ \xi_{\g^{0}, \g^{1}},\dots,\xi_{\g^{k-1}, \g^{k}}$ which compose to give $\xi_{\g,\g'}$.  Here $\g^0=\g$, $\g^k=\g'$, and $\g^i$ is a dividing set on $D^2\times\{i\}$.  Let $\delta_i\subset D^2\times \{i\}$ be the arc of attachment for $\xi_{\g^i,\g^{i+1}}$.

Let $\kappa_1$ be a boundary parallel component of $\g'$ and let ${\frak c}_1$ be a component of $(D^2\times\{k\})-\g'$ which is bounded by $\kappa_1$ and an arc $d_1$ of $\bdry (D^2\times\{k\})$. (This is unique if $n\geq 1$, which we assume.) Extend $d_1$ in the clockwise direction along $\bdry (D^2\times\{k\})$ until it reaches the next endpoint of $\g'$, and call it $d'_1$.   Let $\delta'_1$ be an arc of attachment obtained by slightly pushing $d'_1$ into $D^2\times\{k\}$ and let $\beta_1$ be the corresponding trivial bypass.  Now
$$\xi_{\g^{k-1}, \g^{k}}\circ\dots\circ \xi_{\g^{0}, \g^{1}}=\beta_1\circ \xi_{\g^{k-1}, \g^{k}}\circ\dots\circ \xi_{\g^{0}, \g^{1}},$$
where $=$ means equality as morphisms.  Since $\delta'_1$ is close to $\bdry D^2$, $\beta_1$ commutes with all the $\xi_{\g^i,\g^{i+1}}$, and
$$\beta_1\circ \xi_{\g^{k-1}, \g^{k}}\circ\dots\circ \xi_{\g^{0}, \g^{1}}=\xi_{\g^{k-1}, \g^{k}}\circ\dots\circ \xi_{\g^{0}, \g^{1}}\circ \beta_1.$$
Here we are abusing notation: there are analogous arcs $d_1'$ and $\delta_1'$ on each $D^2\times\{i\}$ and we also refer to a bypass attached along $\delta_1'\subset D^2\times\{i\}$ by $\beta_1$.

When we attach $\beta_1$ first along $D^2\times\{0\}$, we obtain a boundary parallel component of the dividing set which is unchanged through the attachments of all other
bypasses.  Hence this boundary parallel component can be removed from consideration, and the same construction can be applied to a dividing set with fewer components.
We can iteratively write down a sequence of bypasses $\beta_1,\dots,\beta_l$ so that
\begin{align}
\label{first line} \xi_{\g,\g'}=\xi_{\g^{k-1}, \g^{k}}\circ\dots\circ \xi_{\g^{0}, \g^{1}}&=\beta_l\circ\dots\circ\beta_1\circ \xi_{\g^{k-1}, \g^{k}}\circ\dots\circ \xi_{\g^{0}, \g^{1}}\\
\label{second line} &= \xi_{\g^{k-1}, \g^{k}}\circ\dots\circ \xi_{\g^{0}, \g^{1}}\circ \beta_l\circ\dots\circ\beta_1,
\end{align}
and $\xi_{\g,\g'}=\beta_l\circ\dots\circ \beta_1.$  This means that the bypasses corresponding to $\xi_{\g^i,\g^{i+1}}$ in Equation~\eqref{second line} are all trivial.  The theorem then follows.
\end{proof}

\subsection{Serre functors}

In this subsection we define endofunctors $\cs$ of $\cne$ and $\cstc$ of $\tcne$ which we call the {\em Serre functors} of $\cne$ and $\tcne$ by analogy with the Serre functors of triangulated categories introduced by Bondal and Kapranov \cite{BK}. The reader is referred to \cite{Ke} for more details on Serre functors.

\subsubsection{Serre functor of $\cne$}

\begin{defn}[Serre functor $\cs$] \label{def serre c}
The {\em Serre functor} $\cs$ is an endofunctor of $\cne$ which rotates dividing sets and contact structures by a counterclockwise angle of $\frac{2\pi}{n+1}$.  (Recall that we are assuming that $R_+(F)$ is evenly spaced around $\bdry D^2$.)
\end{defn}

See Figure \ref{figure:S-2} for an example of $\cs$ acting on a dividing set $\g$.
\begin{figure}[ht]
\begin{overpic}
[scale=0.3]{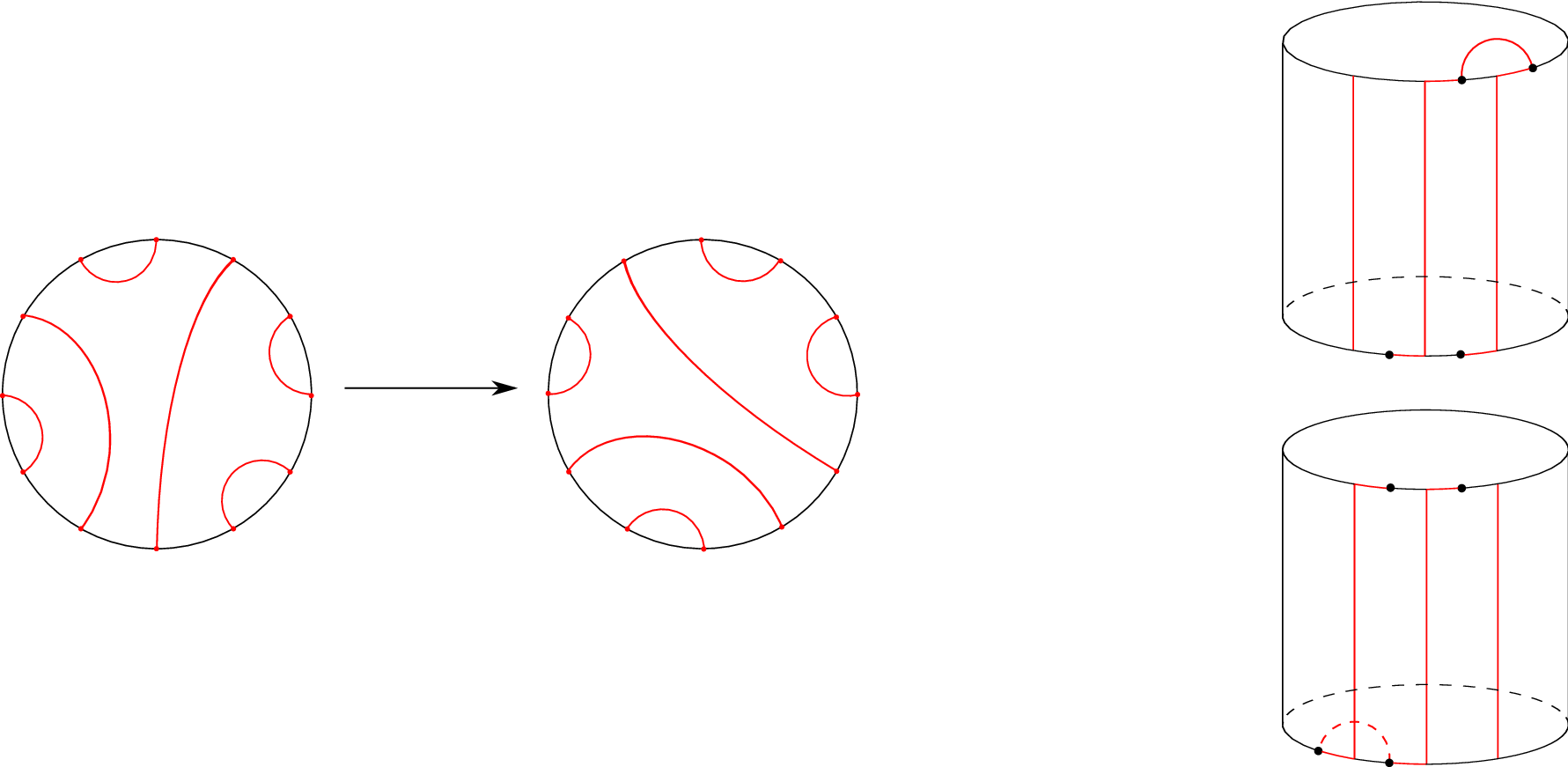}
\put(9,8){$\g$}
\put(40,8){$\cs(\g)$}
\put(25.7,26){$\cs$}
\put(76,0){$\g$}
\put(75.5,22){$\g'$}
\put(71,45){$\cs(\g)$}
\end{overpic}
\caption{An example of $\cs(\g)$ on the left; dividing sets $\gamma_{\g, \g'}$ and $\gamma_{\g', \cs(\g)}$ on the right, used in the proof of Lemma~\ref{lem serre c}.}
\label{figure:S-2}
\end{figure}

\begin{rmk} \label{rmk Mathews2}
This rotation operation was first studied in \cite{M1, M2}.
\end{rmk}

\begin{lemma} \label{lem serre c}
$\Hom(\g ,\g')\not=0$ if and only if $\Hom(\g' ,\cs(\g))\not=0$.  Hence, $\Hom(\g, \cs(\g))\not=0$.
\end{lemma}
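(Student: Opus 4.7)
The plan is to prove the sharper equality $\#\gamma_{\g,\g'}=\#\gamma_{\g',\cs(\g)}$, from which both the biconditional and the in-particular assertion follow: the latter by setting $\g'=\g$ and using that the identity contact structure on $D^2\times[0,1]$ is tight, so $\#\gamma_{\g,\g}=1$ for any nonzero object $\g$.

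For the combinatorial setup, label the $2n+2$ midpoints of the components of $\bdry D^2-F$ as $q_0,\dots,q_{2n+1}\in\overline{F}$; these are the feet of the vertical arcs $\overline{F}\times[0,1]$. A component of $\gamma_{\g,\g'}$ on $\bdry(D^2\times[0,1])$ is traced by alternating two operations: (a) following a $\g$-arc on the bottom face together with the two edge-rounding shifts at its endpoints, which carries one $q_j$ to another; and (b) the analogous step on the top face using a $\g'$-arc. Each operation defines a fixed-point-free involution of $\overline{F}$, call them $B_\g$ and $T_{\g'}$, and $\#\gamma_{\g,\g'}$ equals the number of orbits of the subgroup $\langle B_\g,T_{\g'}\rangle$ of the symmetric group on $\overline{F}$.

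The key observation is that the orientation reversal between the bottom and top faces introduces an asymmetric one-step shift: writing $s$ for the cyclic shift $q_j\mapsto q_{j+1}$ and $\sigma_{(-)}$ for the matching-involution on $F$ (identified with $\overline{F}$), the edge-rounding conventions yield $B_\g=s^{-1}\sigma_\g s$ and $T_{\g'}=\sigma_{\g'}$. Since $\cs$ acts on matchings by conjugation by $s^2$, the analogous pair for $\gamma_{\g',\cs(\g)}$ reads $(B_{\g'},T_{\cs(\g)})=(s^{-1}\sigma_{\g'}s,\,s^{-2}\sigma_\g s^2)$, which is precisely $s^{-1}$-conjugate to $(T_{\g'},B_\g)$. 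Conjugate subgroups of a symmetric group have identical orbit structure on the underlying set (via the relabeling by $s$), so $\#\gamma_{\g,\g'}=\#\gamma_{\g',\cs(\g)}$. This establishes both the biconditional and, by specializing to $\g'=\g$, the in-particular statement.

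The main obstacle is the careful bookkeeping of the edge-rounding and orientation conventions: the bottom/top asymmetry must contribute precisely the single $s$-shift that halves the $\cs$-conjugation, and an off-by-one in the convention would destroy the conjugacy. An alternative route to the in-particular statement---useful as a sanity check---is to exhibit the explicit \emph{rotation contact structure} on $D^2\times[0,1]$ with slice dividing set $R_{2\pi t/(n+1)}(\g)$, which is the restriction of an $\R$-invariant tight contact structure on $D^2\times\R$ (equivalently, lifts an $S^1$-invariant tight structure on a solid torus $D^2\times S^1$).
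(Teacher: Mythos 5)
Your argument is correct and reaches the same sharper conclusion $\#\gamma_{\g,\g'}=\#\gamma_{\g',\cs(\g)}$ as the paper, but by a genuinely different mechanism. The paper's proof is geometric and iterative: it matches each boundary-parallel component $\g_0$ of $\g$ (viewed on the bottom of $\gamma_{\g,\g'}$) with the rotated component $\cs(\g_0)$ of $\cs(\g)$ (on the top of $\gamma_{\g',\cs(\g)}$), observes that the local edge-rounded pictures agree, and then peels off these matched components one at a time. Your proof instead encodes the full edge-rounding combinatorics once and for all as a pair of fixed-point-free involutions $(B_\g,T_{\g'})$ on $\overline F$, identifies $\#\gamma_{\g,\g'}$ with the orbit count of $\langle B_\g,T_{\g'}\rangle$ (which is the right notion — not the cycle count of $B_\g T_{\g'}$ — because components of $\gamma_{\g,\g'}$ are unoriented circles and the dihedral-type orbits, not cycles of the composite permutation, are what match up), and then finishes by exhibiting an explicit conjugation by the half-step $s$ taking $(T_{\g'},B_\g)$ to $(B_{\g'},T_{\cs(\g)})$. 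The crux in both arguments is the same geometric fact, namely that the bottom and top edge-rounding shifts are opposite in the $D^2$-picture so that $\cs$'s two-step rotation is split evenly between them; you make this the algebraic heart of the proof, whereas the paper buries it inside the pictorial matching of rounded components. Your version is cleaner and self-contained once the convention is pinned down, avoids the terse induction the paper leaves implicit, and yields the conjugacy statement $\langle B_\g,T_{\g'}\rangle\sim\langle B_{\g'},T_{\cs(\g)}\rangle$ as a useful by-product; the price is exactly the convention bookkeeping you already flag, since a sign error in the direction of either shift would destroy the $s$-conjugacy. One caution: the closing "alternative route" via an explicit rotation contact structure on $D^2\times[0,1]$ is circular as stated — asserting that this structure is tight is equivalent to the claim $\Hom(\g,\cs(\g))\neq 0$, so it serves only as a heuristic consistency check, not an independent proof.
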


We denote the generator of $\Hom(\g, \cs(\g))$ by $\zeta(\g)$.

\begin{proof}
Consider the dividing sets $\gamma_{\g, \g'}$ and $\gamma_{\g', \cs(\g)}$. For any boundary parallel component $\g_0$ of $\g$, there is a corresponding boundary parallel component $\cs(\g_0)$ of $\cs(\g)$.  The results of edge rounding $\g_0$ in $\gamma_{\g, \g'}$ and $\cs(\g_0)$ in $\gamma_{\g', \cs(\g)}$ are the same; see Figure \ref{figure:S-2}.  By iterating the above procedure, we obtain $\#\gamma_{\g, \g'}=\#\gamma_{\g', \cs(\g)}$, which implies the lemma.
\end{proof}

\begin{lemma} \label{lemma: serre c 2}
If $\Hom(\g, \g')\not=0$, then the composition
$$\Hom(\g', \cs(\g)) \times \Hom(\g, \g') \ra \Hom(\g, \cs(\g))$$
is nontrivial.
\end{lemma}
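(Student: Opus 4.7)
The plan is to reduce the lemma to Lemma~\ref{lem stack} via Lemma~\ref{lem serre c}. Since $\Hom(\g,\g')\neq 0$, we have $\#\gamma_{\g,\g'}=1$, and by the discussion on generators and relations in the contact category, the tight contact structure $\xi_{\g,\g'}$ can be written as a composition of bypass morphisms
$$\xi_{\g,\g'}=\xi_{\g^{k-1},\g^k}\circ\cdots\circ \xi_{\g^0,\g^1},$$
where $\g^0=\g$ and $\g^k=\g'$. This provides the candidate sequence of dividing sets required by condition (2) of Lemma~\ref{lem stack}, with $\g''=\cs(\g)$.

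The only nonobvious hypothesis left to verify is $\Hom(\g^i,\cs(\g))\neq 0$ for each $0\leq i\leq k$. First I would observe that the partial composition
$$\eta_i:=\xi_{\g^{i-1},\g^i}\circ\cdots\circ\xi_{\g^0,\g^1}\in \Hom(\g,\g^i)$$
is tight: it embeds as an initial sub-cylinder of the tight contact manifold $(D^2\times[0,k],\xi_{\g,\g'})$, and an embedded submanifold of a tight contact manifold is tight. Hence $\Hom(\g,\g^i)\neq 0$. Now apply Lemma~\ref{lem serre c} with the roles $\g\leftrightarrow \g$ and $\g'\leftrightarrow \g^i$: the equivalence $\Hom(\g,\g^i)\neq 0 \Leftrightarrow \Hom(\g^i,\cs(\g))\neq 0$ gives exactly what we need.

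With all three hypotheses of condition (2) of Lemma~\ref{lem stack} verified, the implication (2)$\Rightarrow$(1) yields that the composition
$$\Hom(\g',\cs(\g))\times \Hom(\g,\g')\ra \Hom(\g,\cs(\g))$$
is nontrivial, proving the lemma.

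The argument is short because the heavy lifting has already been done in Lemma~\ref{lem stack} and Lemma~\ref{lem serre c}; the only conceptual step is recognizing that tightness of $\xi_{\g,\g'}$ is inherited by every initial sub-composition, which is what ensures the intermediate Hom spaces $\Hom(\g,\g^i)$ are all nonzero. I don't anticipate any real obstacle: the main thing to double-check is that the bypass decomposition of $\xi_{\g,\g'}$ really does give a sequence of objects (not just an isotopy class) that can be slotted directly into Lemma~\ref{lem stack}(2), which follows from the standard fact that any morphism is a composition of bypasses up to isotopy of dividing curves.
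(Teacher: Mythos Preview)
Your proposal is correct and follows essentially the same approach as the paper's proof: decompose $\xi_{\g,\g'}$ into bypasses, observe that each partial composition is tight so $\Hom(\g,\g^i)\neq 0$, apply Lemma~\ref{lem serre c} to get $\Hom(\g^i,\cs(\g))\neq 0$, and then invoke Lemma~\ref{lem stack}. The paper's version is terser but the logic is identical; your explicit justification of why the intermediate $\Hom(\g,\g^i)$ are nonzero is a welcome clarification.
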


\begin{proof}
We decompose $\xi_{\g,\g'}$ into a composition of bypasses $\xi_{\g^{k-1}, \g^{k}} \circ \cdots \circ \xi_{\g^{1}, \g^{2}}$, where $\g^1=\g$ and $\g^k=\g'$.
Then $\Hom(\g, \g^i)\not=0$ for $1 \leq i \leq k$. By Lemma~\ref{lem serre c}, $\Hom(\g^i, \cs(\g))\not=0$.
The lemma then follows from Lemma \ref{lem stack}.
\end{proof}

\subsubsection{Serre functor of $\tcne$}

\begin{defn}[Serre functor $\cstc$] \label{def serre tc}
The {\em Serre functor} $\cstc$ is an endofunctor of $\tcne$ which is defined on objects by $\cstc(\g, [\xi])=(\cs(\g), [\zeta(\g) \circ \xi])$ and on morphisms by rotating contact structures by a counterclockwise angle of $\frac{2\pi}{n+1}$.
\end{defn}

\begin{claim} \label{claim: well-definition of serre}
The Serre functor $\cstc$ is well-defined.
\end{claim}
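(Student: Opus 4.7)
The well-definedness of $\cstc$ amounts to three checks: (i) the object rule $(\g,[\xi])\mapsto(\cs\g,[\zeta(\g)\circ\xi])$ depends only on the class of $\xi$; (ii) the morphism rule --- rotate the underlying tight contact structure by $\tfrac{2\pi}{n+1}$ --- produces a morphism in $\tcne$ landing in the correct Hom-space; and (iii) composition and identities are preserved. Parts (i) and (iii) are formal. For (i), a homotopy through contact structures is a fortiori a homotopy of 2-plane fields, so the fixed homotopy class $\zeta(\g)$ produces a well-defined 2-plane-field concatenation with $[\xi]$. For (iii), functoriality descends from $\cs$ on $\cne$, which is the endofunctor induced by the diffeomorphism $\rho_1:D^2\to D^2$ rotating by $\tfrac{2\pi}{n+1}$ (preserving $F$), and which therefore tautologically preserves composition and the identity.

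The substance of the claim is (ii). Given a tight generator $\eta\in\Hom_{\tcne}\bigl((\g,[\xi]),(\g',[\eta\circ\xi])\bigr)$, the lift $\cs(\eta)$ starts at $\cstc(\g,[\xi])=(\cs\g,[\zeta(\g)\circ\xi])$ and hence ends at $(\cs\g',[\cs(\eta)\circ\zeta(\g)\circ\xi])$; this target must coincide with $\cstc(\g',[\eta\circ\xi])=(\cs\g',[\zeta(\g')\circ\eta\circ\xi])$. After cancelling $[\xi]$, (ii) reduces to the naturality identity
\[
[\cs(\eta)\circ\zeta(\g)] \;=\; [\zeta(\g')\circ\eta]
\]
as homotopy classes of 2-plane fields on $D^2\times I$ from $\g$ to $\cs(\g')$ with vertical dividing set on the side boundary.

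My plan for the naturality identity is a two-step reduction. Both sides are compositions of tight contact structures landing in $\Hom_{\cne}(\g,\cs\g')$, which by the dividing-set count of Lemma~\ref{lem serre c} is at most one-dimensional. The first step is to show both are nontrivial in $\cne$ whenever $\eta\ne 0$: decompose $\eta$ as a sequence of bypasses $\xi_{\g^{k-1},\g^k}\circ\cdots\circ\xi_{\g^0,\g^1}$ using Theorem~\ref{thm: relations in contact category}, invoke Lemma~\ref{lem serre c} to see $\Hom(\g^i,\cs\g^j)\ne 0$ for the relevant $i,j$, and then apply Lemma~\ref{lem stack} inductively, in the spirit of the proof of Lemma~\ref{lemma: serre c 2}, to conclude that $\cs(\eta)\circ\zeta(\g)$ and $\zeta(\g')\circ\eta$ are both nonzero. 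The second step promotes equality in $\cne$ to equality of 2-plane-field classes: once both are nonzero in a one-dimensional $\F$-vector space they coincide as morphisms in $\cne$, and any $\cne$-class of tight contact structures lies in a single homotopy class of 2-plane fields (since a homotopy through contact structures is a homotopy of 2-plane fields), so the identity lifts to $\tcne$. The main obstacle is the first step: establishing the tightness (equivalently, the count $\#\gamma_{\g,\cs\g'}=1$) of $\zeta(\g')\circ\eta$ and $\cs(\eta)\circ\zeta(\g)$ requires carefully tracking how the dividing set evolves under post-composition with the Serre rotation $\zeta$, which is one step beyond the statement of Lemma~\ref{lemma: serre c 2} and is where I expect the real work to lie.
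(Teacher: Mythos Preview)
Your reduction to the naturality identity $[\cs(\eta)\circ\zeta(\g)]=[\zeta(\g')\circ\eta]$ is exactly right, and it matches the paper's setup. However, your plan for establishing it has a real gap.

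You propose to show that both $\cs(\eta)\circ\zeta(\g)$ and $\zeta(\g')\circ\eta$ are nonzero in $\Hom_{\cne}(\g,\cs\g')$ and then conclude they coincide because that space is at most one-dimensional. But $\Hom_{\cne}(\g,\cs\g')$ is typically \emph{zero}: by Lemma~\ref{lem serre c} (applied twice, together with the fact that $\cs$ is an auto-equivalence), $\Hom(\g,\cs\g')\neq 0$ if and only if $\Hom(\g',\g)\neq 0$, and for a nontrivial bypass $\g\to\g'$ the latter usually vanishes. In that situation both compositions are overtwisted, and your step~2 breaks down as well: two overtwisted contact structures can lie in distinct homotopy classes of $2$-plane fields, so equality in $\cne$ (both being zero) does not lift to equality in $\tcne$. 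The induction you sketch ``in the spirit of Lemma~\ref{lemma: serre c 2}'' cannot succeed, because the conclusion you are after is simply false.

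The paper avoids this by inserting the diagonal. Since $\Hom(\g,\g')\neq 0$, Lemma~\ref{lem serre c} gives $\Hom(\g',\cs\g)\neq 0$; let $\xi_{\g',\cs\g}$ be its generator. Now apply Lemma~\ref{lemma: serre c 2} to each of the two triangles in the square
\[
\xymatrix{
\g \ar[r]^{\zeta(\g)} \ar[d]_{\eta} & \cs\g \ar[d]^{\cs(\eta)}\\
\g' \ar@{-->}[ur]^{\xi_{\g',\cs\g}} \ar[r]_{\zeta(\g')} & \cs\g'
}
\]
to get $\xi_{\g',\cs\g}\circ\eta=\zeta(\g)$ and $\cs(\eta)\circ\xi_{\g',\cs\g}=\zeta(\g')$, each as an equality of tight contact structures in a one-dimensional Hom-space that is genuinely nonzero ($\Hom(\g,\cs\g)$ and $\Hom(\g',\cs\g')$ respectively). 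These equalities therefore hold at the level of $2$-plane-field classes, and chaining them yields $[\zeta(\g')\circ\eta]=[\cs(\eta)\circ\xi_{\g',\cs\g}\circ\eta]=[\cs(\eta)\circ\zeta(\g)]$ without ever needing the composite $\g\to\cs\g'$ to be tight. This is not ``one step beyond'' Lemma~\ref{lemma: serre c 2}; it is two applications of it, made possible by the diagonal factorization.
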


\begin{proof}
It suffices to show the following diagram commutes:
$$\xymatrix@1{
(\g,[\xi_0]) \ar[r]^(.4){\zeta(\g)} \ar[d]_{\xi} & (\cs(\g),[\zeta(\g)\circ\xi_0]) \ar[d]^{\cstc(\xi)}\\
(\g',[\xi \circ \xi_0]) \ar@{-->}[ur] \ar[r]_(.4){\zeta(\g')}  & (\cs(\g'),[\zeta(\g')\circ\xi\circ\xi_0])
}$$
i.e., $[\zeta(\g')\circ\xi\circ\xi_0]=[\cstc(\xi)\circ\zeta(\g)\circ\xi_0]$. Here we are assuming that $\Hom(\g,\g')\not=0$.
By Lemma \ref{lem serre c}, $\Hom(\g', \csg)\not=0$ and is generated by $\xi_{\g',\csg}$.
By applying Lemma~\ref{lemma: serre c 2} to the lower and upper triangles in the diagram, we obtain
$$[\zeta(\g')\circ\xi\circ\xi_0]=[\cstc(\xi)\circ\xi_{\g',\csg}\circ\xi\circ\xi_0]=[\cstc(\xi)\circ\zeta(\g)\circ\xi_0].$$
This proves the claim.
\end{proof}

\subsubsection{Calabi-Yau property}
According to \cite{Ke}, a triangulated category $\cal{T}$ is {\em weakly d-Calabi-Yau} if it admits a Serre functor $\cs'$ and there is an isomorphism of functors $T^{d} \xra{\sim} \cs'$, where $d$ is an integer and $T$ is the shift functor on $\cal{T}$. The analogous result for $\cstc$ on $\tcne$ is the following:

\begin{lemma} \label{lem serre c deg}
The endofunctor $\cstc^{n+1}$ is isomorphic to $T^{e(n-e)}$ on $\tcne$, i.e., $\tcne$ is ``$d$-Calabi-Yau" for a fraction $d={e(n-e)\over n+1}$.
\end{lemma}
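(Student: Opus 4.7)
The plan is to split the argument into a structural reduction followed by a Hopf-invariant computation. First, since $\cs$ rotates $D^2$ by $2\pi/(n+1)$, the iterate $\cs^{n+1}$ is the full $2\pi$-rotation and acts as the identity on every dividing set of $\cne$. Consequently $\pi\circ\cstc^{n+1}=\cs^{n+1}\circ\pi=\pi$, so $\cstc^{n+1}$ is a deck transformation of the covering $\pi:\tcne\to\cne$. Because $H_1(D^2;\Z)=0$, the deck group of $\pi$ is $\Z$, generated by the shift functor $T$, so we automatically have $\cstc^{n+1}=T^{k}$ as endofunctors for a single integer $k$, and the statement reduces to showing $k=e(n-e)$.

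To determine $k$, I would fix any $\g\in\mathfrak{ob}(\cne)$ and examine the $2$-plane field
\[
\Phi(\g)\;:=\;\zeta(\cs^{n}(\g))\circ\zeta(\cs^{n-1}(\g))\circ\cdots\circ\zeta(\g)
\]
on $D^2\times[0,n+1]$ obtained by stacking the $n+1$ Serre-rotation contact structures. Its initial and terminal dividing sets are both $\g$, and by the definition of $\cstc$ we have $\cstc^{n+1}(\g,[\xi_0])=(\g,[\Phi(\g)\circ\xi_0])$. Hence $k$ (up to the sign convention under which $T$ drops the Hopf invariant by one) equals the Pontryagin--Thom Hopf-invariant component $\Theta_1(\Phi(\g))$ of $\Phi(\g)$ relative to the $[0,n+1]$-invariant reference on $\g$; the $\Theta_2$-part vanishes automatically since $H_1(D^2;\Z)=0$.

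The hard part is computing $\Theta_1(\Phi(\g))=e(n-e)$. My approach would be to pick a convenient $\g$ for which each $\zeta(\cs^{i}(\g))$ can be realized as an ambient contact isotopy coming from a diffeomorphism of $D^2\times[0,1]$ that rotates $D^2$ by $2\pi/(n+1)$. The concatenation $\Phi(\g)$ then models a full $2\pi$-rotation whose Pontryagin--Thom framed tangle is a pure braid built from the arcs of $\g$; the invariant $\Theta_1(\Phi(\g))$ equals the framing of the resulting framed unknot after handle slides. The arithmetic identity
\[
\chi_+(\g)\chi_-(\g)-(n+1)=(n-e+1)(e+1)-(n+1)=e(n-e)
\]
strongly suggests that this framing decomposes as a product of crossings between the $\chi_+(\g)$ positive and $\chi_-(\g)$ negative arcs of $\g$, corrected by a boundary contribution from the $n+1$ arcs of $R_+(F)$ on $\bdry D^2$. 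Making this count rigorous --- separating interior rotational contributions from boundary corrections --- is the main obstacle. A fallback, avoiding explicit braid combinatorics, is to induct on $e$ (or on $n-e$) using bypass exact triangles together with Huang's theorem (Theorem~\ref{thm: huang}) to accumulate the Hopf-invariant shift one bypass at a time, thereby recovering the formula $e(n-e)$ from the recursive structure of the dividing sets in $\cne$.
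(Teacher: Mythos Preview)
Your structural reduction is correct and essentially matches the paper: $\cs^{n+1}=\op{id}$ on $\cne$, so $\cstc^{n+1}$ covers the identity and is therefore a deck transformation $T^k$ for a single integer $k$ (since $H_1(D^2)=0$). The paper phrases this slightly differently, invoking the well-definedness of $\cstc$ (Claim~\ref{claim: well-definition of serre}) to see directly that the Hopf invariant of $\Phi(\g)$ is independent of $\g$; your deck-transformation argument is an equally valid and arguably cleaner way to reach the same conclusion.

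The gap is in the computation of $k$. Neither of your proposed routes is carried through: the rotation-isotopy/braid-framing picture is only heuristic (you acknowledge this), and the suggested induction on $e$ is not a workable scheme as stated, since changing $e$ moves you to a different category $\cne$ and there is no bypass triangle linking objects across different values of $e$. The identity $\chi_+\chi_--(n+1)=e(n-e)$ is suggestive but you have not connected it to an actual Hopf-invariant count.

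The paper's computation is quite different and fully explicit. It fixes the specific $\g\in\cne$ that has $n$ boundary-parallel components (unique up to rotation), and writes each $\zeta(\cs^i(\g))$ as a composition of $n-e$ \emph{disjoint} bypasses. All $(n-e)(n+1)$ bypasses can be drawn simultaneously on $\g$ (after radial isotopy), and they reorganize into $n-e$ concentric ``rings'', each ring consisting of $n+1$ bypasses arranged around $\bdry D^2$. In each ring, $n-e$ of the $n+1$ bypasses are trivial; the remaining $e+1$ form a pinwheel in the sense of \cite{HKM}. One then checks, by an induction internal to a single ring, that a pinwheel of $e+1$ overtwisted bypasses is equivalent (as a $2$-plane field rel boundary) to the composition of $e$ full bypass triangles, so by Huang's Theorem~\ref{thm: huang} its Hopf invariant is $-e$. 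Summing over the $n-e$ rings gives Hopf invariant $-e(n-e)$, i.e.\ $k=e(n-e)$. The key idea you are missing is this bypass decomposition for a well-chosen $\g$ and the recognition of the pinwheel structure that allows Huang's theorem to be applied directly.
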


\begin{proof}
For any dividing set $\g$, $\cs^{n+1}(\g)=\g$ since $\cs(\g)$ rotates $\g$ by $\frac{2\pi}{n+1}$.  Also
$$\cstc^{n+1}(\g, [\xi])=(\g, [\xi_n(\g)\circ \dots \circ \xi_0(\g)\circ\xi]),$$
where $\xi_i(\g)=\xi_{\cs^{i}(\g), \cs^{i+1}(\g)}$ for $0 \leq i \leq n$.  Let $k(\g)$ be minus the Hopf invariant of $\xi_n(\g)\circ\dots \circ\xi_0(\g)$.  The proof of Claim~\ref{claim: well-definition of serre} implies that $k(\g)$ is independent of $\g$.    Hence $\cstc^{n+1}$ is isomorphic to some $T^{k}$, where $k=k(\g)$ for any $\g\in {\frak ob}(\cne)$.

We compute $k$ by choosing a special $\g\in {\frak ob}(\cne)$ which has $n$ boundary parallel components; such a $\g$ is unique in ${\frak ob}(\cne)$ up to rotation.
The case of $n=5, e=3$ is depicted in Figure \ref{S-3}. 
We can write $\xi_0(\g)$ as a composition of $n-e$ bypasses, illustrated as in the upper left diagram of Figure~\ref{S-3}. The bypasses of $\xi_i(\g)$ are obtained from those of $\xi_{i-1}(\g)$ by a $\frac{2\pi}{n+1}$ rotation, followed by an isotopy in the radial direction so that they are closer to $\bdry D^2_{n+1}$; the $(n-e)(n+1)$ bypasses are then mutually disjoint.  The $(n-e)(n+1)$ bypasses can be grouped into $n-e$ copies of $n+1$ bypasses which are arranged in a circle; see the upper right diagram of Figure~\ref{S-3}.

\begin{figure}[ht]
\begin{overpic}
[scale=0.28]{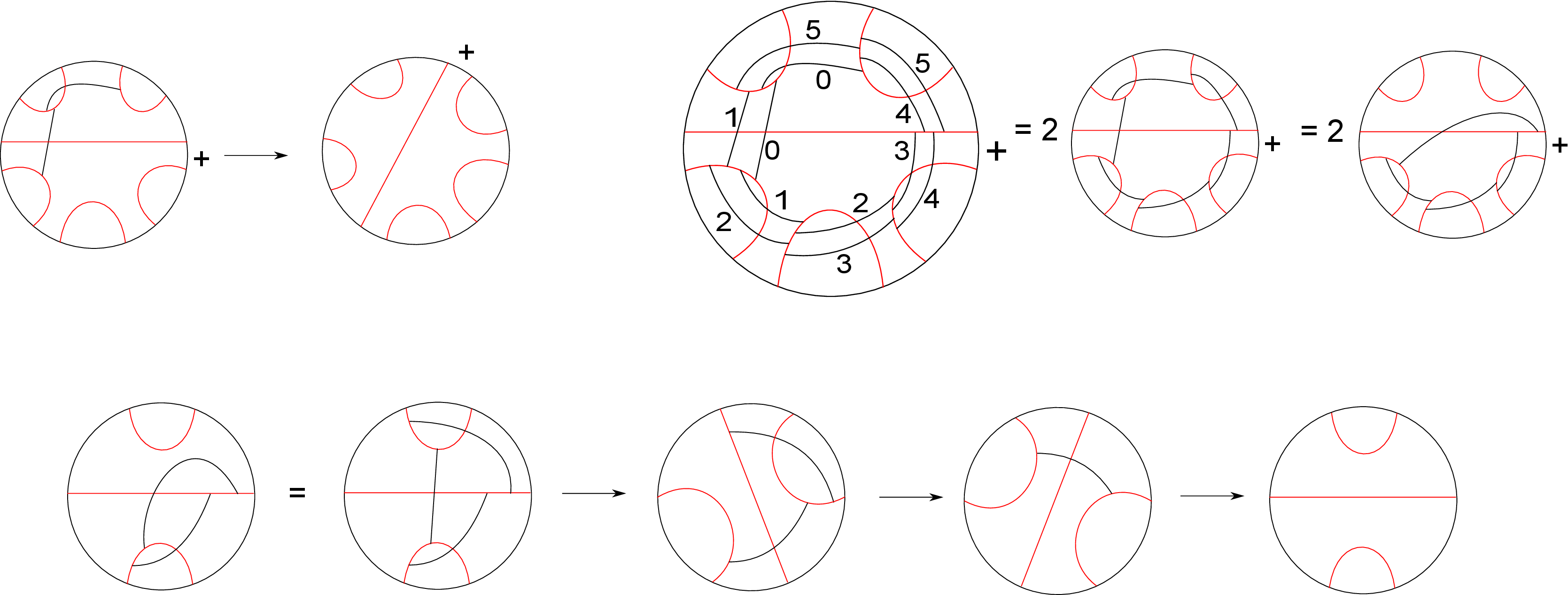}
\put(5,19){$\g$}
\put(24,19){$\cs(\g)$}
\put(15,30){$\xi_0$}
\end{overpic}
\caption{$\xi_0$ is written as a composition of $n-e=2$ bypasses on the upper left.  The $(n-e)(n+1)=2(5+1)$ bypasses for $\xi_n(\g)\circ \dots\circ \xi_0(\g)$ are drawn on the upper right, where the black arcs with label $i$ denote the bypasses of $\xi_i(\g)$ for $0 \leq i \leq n=5$.  The computation of the Hopf invariant is given on the bottom row for $e=1$.}
\label{S-3}
\end{figure}

It suffices to show that the Hopf invariant of the composition of the $n+1$ bypasses is $-e$.  Among the $n+1$ bypasses, $n-e$ of them are trivial.  After attaching the trivial bypasses we are left with $e+1$ overtwisted bypasses arranged in a pinwheel~\cite[Section 1]{HKM}; see the upper right diagram of Figure~\ref{S-3}.  When $e=1$ the bottom row of Figure~\ref{S-3} shows that attaching the $e+1=2$ overtwisted bypasses in pinwheel position is equivalent to the composition of the three bypasses in a bypass triangle.   Hence the Hopf invariant of the composition is $-1$ by Theorem~\ref{thm: huang}.  We can inductively write the pinwheel with $e+1$ bypasses into two pinwheels, one with two bypasses and another with $e$ bypasses (left to the reader).  Hence the Hopf invariant of $\xi_n(\g)\circ\dots\circ\xi_0(\g)$ is $-e$.
\end{proof}

\begin{rmk}
Fractional Calabi-Yau categories have recently been studied by Kuznetsov~\cite{Ku}.
\end{rmk}

In Section \ref{Sec serre d} we will show that $\tcne$ can be embedded into a triangulated category $\tdne$ which admits a Serre functor.
Moreover, the Serre functors commute with the embedding as proved in Proposition \ref{prop serre d}.

\section{Algebraic description of $\cne$} \label{section: algebraic description}

The contact category $\cne$ is defined over a disk $D^2_n$ with $2(n+1)$ marked points on the boundary, and the Euler number equal to $n-2e$.
In particular, $\chi_+=n-e+1$, and $\chi_-=e+1$.

\subsection{Notation for dividing sets}

In this subsection let $\g\in {\frak ob}(\cne)$ be a {\em nonzero} dividing set.
Such $\g$ is a crossingless matching of $2(n+1)$ marked points on $\bdry D^2_n$.
We introduce notation to algebraically encode $\g$.

A dividing set $\g$ is determined by its positive region $R_+(\g)$. Let $\pi_0(\rg)$ be the set of components of $\rg$. Each component ${\frak c}$ of $R_+(\g)$ is a (partially open) disk which intersects $\bdry D^2_n$ at one or more positive arcs and the labels of ${\frak c}\cap \bdry D^2_n$ form the set of labels of ${\frak c}$.  Observe that a component ${\frak c}$ is determined by its set of labels. A component which has only one label is said to be {\em boundary parallel}.

The relative positions of the components of $\rg$ are described by the following {\em nesting}  and {\em adjacency} relations:

\begin{defn}[Nesting and adjacency] \label{def nest g} Let ${\frak c}$ and ${\frak c}'$ be components of $\rg$. Then:
\be
\item ${\frak c}$ {\em nests inside} ${\frak c}'$ if any path $[0,1]\to D^2_n$ from ${\frak c}$  to  the component of $R_+(F)$ corresponding to the label $\ul{0}$ nontrivially intersects ${\frak c}'$. 
\item ${\frak c}$ and ${\frak c}'$ are {\em adjacent} if there is a path in $D^2_n$ between ${\frak c}$ and ${\frak c}'$ which does not intersect any other component of $\rg$. 
\item ${\frak c}$ {\em directly nests inside} ${\frak c}'$ if ${\frak c}$ nests inside ${\frak c}'$ and ${\frak c}$ and ${\frak c}'$ are adjacent.
\ee
\end{defn}

In particular, the component containing $\ul{0}$ does not nest inside any other component and there is no component which nests inside a boundary parallel component.

Let $\Z_+$ be the set of positive integers. Define
$$\vz=\bigsqcup\limits_{k\geq 0}\Z_+^k,$$
where $\Z_+^0=\{\ast\}$ is a set of one special element. The {\em dimension $\dim(\mf{v})$} of $\mf{v}=(v_1, \dots, v_k ) \in \Z_+^k$ is $k$ and the dimension of $\ast$ is $0$.

\begin{defn}[Direct nesting of vectors] \label{def nest v}
For $\mf{v} \in \vz, t \in \Z_+$, define $\mf{v} \sqcup t \in \vz$ by $\dim (\mf{v} \sqcup t)=\dim(\mf{v})+1$ and
$$ (\mf{v} \sqcup t)_j= \left\{
\begin{array}{cl}
v_j & \mbox{if} \hspace{0.1cm} 1 \leq j \leq \dim(\mf{v}), \\
t & \mbox{if} \hspace{0.1cm} j=\dim(\mf{v})+1.
\end{array}\right.
$$
The vector $\mf{v} \sqcup t$ is said to {\em directly nest inside} $\mv$.
\end{defn}

\s
\n {\bf The assignment $\Phi_\g$.}  We label regions of $\rg$ by some vectors in $\vz$. More precisely, we inductively define an injective map
$$\Phi_\g: \pi_0(\rg) \ra \vz.$$
We use the notation $\gv=\Phi_\g^{-1}(\mf{v})$ for $\mf{v} \in \op{Im}(\Phi_\g)$. The component which contains $\ul{0}$ is defined to be $\g_\ast$ and is called the {\em based component}. Next, given a component $\gv$, suppose there are $k$ components which directly nest inside $\gv$, arranged in clockwise order with respect to the label $\ul{0}$. The $t$-th component is then defined to be $\g_{\mf{v} \sqcup t}$ for $1 \leq t \leq k$. 
Note that the two notions of direct nesting --- for vectors in $\vz$ and for regions of $\rg$ --- agree under the map $\Phi_{\g}$.
We also sometimes mix up the notation and say that a region directly nests inside a vector.

\s
We now define
$$V(\g)=\op{Im}(\Phi_\g), \quad \tpv(\g)=V(\g) \backslash \{\ast\}, \quad \vnb(\g)=\{\mf{v} \in \tpv(\g)~|~ |\gv|>1\}.$$
By abuse of notation, we are using $\gv$ to denote its set of labels (a subset of $\{\ul{0},\dots,\ul{n}\}$) and $|\gv|$ to denote its cardinality.  Observe that the component $\gv$ is not boundary parallel for any $\mf{v} \in \vnb(\g)$. The cardinality $|V(\g)|$ is the number of components of $R_+(\g)$, which is equal to $\chi_+(\g)=n-e+1$ for $\g$ in $\cne$.

\begin{defn} \label{def bne}
A (nonzero) dividing set $\g$ is {\em basic} if $\vnb(\g)=\es$, i.e., every component $\gv \neq \g_{\ast}$ is boundary parallel.
\end{defn}

The set of all basic dividing sets in $\cne$ is denoted by $\bne$.
For $\ul{0} < \ul{s_1} < \cdots < \ul{s_e} \leq \ul{n}$, let $\g(\ul{s_1}, \dots, \ul{s_e})$ denote the basic dividing set $\g \in \bne$ such that $\g_{\ast}=\{\ul{0}, \ul{s_1}, \dots, \ul{s_e}\}$.  Any $\g \in \bne$ is determined by its based component $\g_{\ast}$ which is a subset of $\{\ul{0},\dots,\ul{n}\}$ containing $\ul{0}$.
Moreover, $(n+1-|\g_{\ast}|)+1=\chi_+(\g)=n-e+1$.
Hence $|\g_{\ast}|=e+1$ and $|B_{n,e}|=\binom{n}{e}$.


Let $l_{\gv}=|\gv|-1$. We order the elements of $\gv$ so that
$$\gv=\{ \gv(0), \dots, \gv(l_{\gv})\} \quad \mbox{and} \quad \gv(0) < \cdots < \gv(l_{\gv}).$$

\begin{example}
Let $\g$ be the dividing set in $\cne$ for $n=7, e=4$ as shown in Figure \ref{2-2-1}.
There are $4$ components of $\rg$:
$$\g_{\ast}=\{\ul{0},\ul{4}\}; \quad \g_{(1)}=\{\ul{1},\ul{3}\}, \quad \g_{(2)}=\{\ul{5},\ul{6},\ul{7}\}; \quad \g_{(1,1)}=\{\ul{2}\}.$$
The elements of $\g_{(2)}$ satisfy $\g_{(2)}(0)=\ul{5}$, $\g_{(2)}(1)=\ul{6}$, and $\g_{(2)}(2)=\ul{7}.$
\begin{figure}[ht]
\begin{overpic}
[scale=0.3]{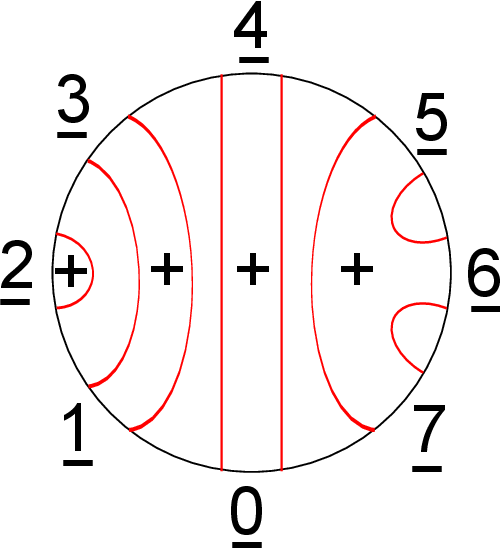}
\end{overpic}
\caption{The four components of $\rg$.}
\label{2-2-1}
\end{figure}
\end{example}

To summarize, we describe a dividing set $\g$ by a partition $\{\gv ~|~ \mf{v} \in V(\g)\}$ of $\{\ul{0},\dots,\ul{n}\}$, where each $\gv$ is a component of $R_+(\g)$.
We will write $\g=\{\gv\}$ for simplicity.
The collection $\{\gv\}$ satisfies the following:
\be
\item $V(\g)$ is a finite subset of $\vz$ such that $|V(\g)|=n-e+1$.
\item $\ast \in V(\g)$ for any $\g$ and $\ul{0} \in \g_{\ast}$.
\item If $\mf{v}, \mf{v} \sqcup t \in V(\g)$, then there exists unique $i \in \{0,1,\dots,l_{\gv}\}$ such that $\g_{\mf{v} \sqcup~ t}$ is a subset of an open interval $(\gv(i), \gv(i+1))$. Here $\gv(l_{\gv}+1)$ is understood to be $\ul{n+1}$.
\item If $\mf{v}, \mf{v} \sqcup t_0 \in V(\g)$, then $\mf{v} \sqcup t \in V(\g)$ for $1 \leq t \leq t_0$, and $\g_{\mf{v} \sqcup ~t}(l_{\g_{\mf{v} \sqcup~ t}}) < \g_{\mf{v} \sqcup ~t'}(0)$ for $1 \leq t < t' \leq t_0$.
\item $\bigsqcup\limits_{\mf{v} \in V(\g)} \gv=\{\ul{0},\dots,\ul{n}\}$.
\ee

\begin{rmk}
Properties (3) and (4) follow from the fact that a dividing set $\Gamma$ is properly embedded in $D^2$.
\end{rmk}

\subsection{Notation for bypasses} \label{subsection: notation for bypass}

In this subsection we introduce notation to describe bypasses.

Let $\g, \g'$ be nonzero objects of $\cne$, let $\beta \in \Hom(\g,\g')$ be a nontrivial bypass, and let $\delta=\delta_+ \cup \delta_-$ be the arc of attachment for $\beta$.  Since $\beta$ is nontrivial, $\delta$ intersects three distinct components of $\g$.  We position $\delta$ and the three components of $\g$ as in Figure~\ref{2-2-2} so that $\delta$ is vertical, $int(\delta_+)\subset R_+(\Gamma)$ is the lower subarc, and $int(\delta_-)\subset R_-(\Gamma)$ is the upper subarc.

\begin{notation} \label{notation: beta} $\mbox{}$
\begin{enumerate}
\item $\uv(\beta)$ is the vector in $V(\g)$ such that $int(\delta_+)$ is contained in the component $\g_{\uv(\beta)}$.
\item $\ov(\beta)$ is the vector in $V(\g)$ such that $int(\delta_-)$ connects the components $\g_{\uv(\beta)}$ and $\g_{\ov(\beta)}$.
\item $x(\beta), y(\beta)$ are elements of $\{0,\dots,l_{\g_{\uv(\beta)}}\}$ such that labels $\g_{\uv(\beta)}(x(\beta))$ and $\g_{\uv(\beta)}(y(\beta))$ appear at the bottom left and top left corners of $\g_{\uv(\beta)}$, respectively.
\item $z(\beta)$ is the element of $\{0,\dots,l_{\g_{\ov(\beta)}}\}$ such that the label $\g_{\ov(\beta)}(z(\beta))$ appears at the bottom left corner of $\g_{\ov(\beta)}$.
\end{enumerate}
Refer to the left-hand side of Figure~\ref{2-2-2} for an illustration.
\end{notation}

Observe that $\uv(\beta)\not =\ov(\beta)$ since $\beta$ is nontrivial. We will omit the variable $\beta$ and write $\uv,\ov,x,y,z$ for simplicity when $\beta$ is understood.

Depending on the position of the label $\ul{0}$, both $x\leq y$ and $x > y$ are possible. We use the notation $[[x,y]]$ for the generalized interval between $x,y \in \Z$ given by:
$$ [[x,y]]:= \left\{
\begin{array}{cl}
[x,y] & \mbox{if} \hspace{0.1cm} x \leq y; \\
(-\infty,y] \cup [x,+\infty) & \mbox{otherwise.}
\end{array}\right.
$$

\begin{notation} \label{notation: beta 2}
The component $\guv$ is cut into two parts $\guv^l$ and $\guv^r$ by $\delta_+$, where:
\be
\item $\guv^l:=\{\guv(i) ~|~ i \in [[x,y]]\}$ is the subset of $\guv$ which consists of labels to the left of $\delta_+$.
\item $\guv^r:=\guv \backslash \guv^l$.
\ee
\end{notation}

\begin{figure}[ht]
\begin{overpic}
[scale=0.5]{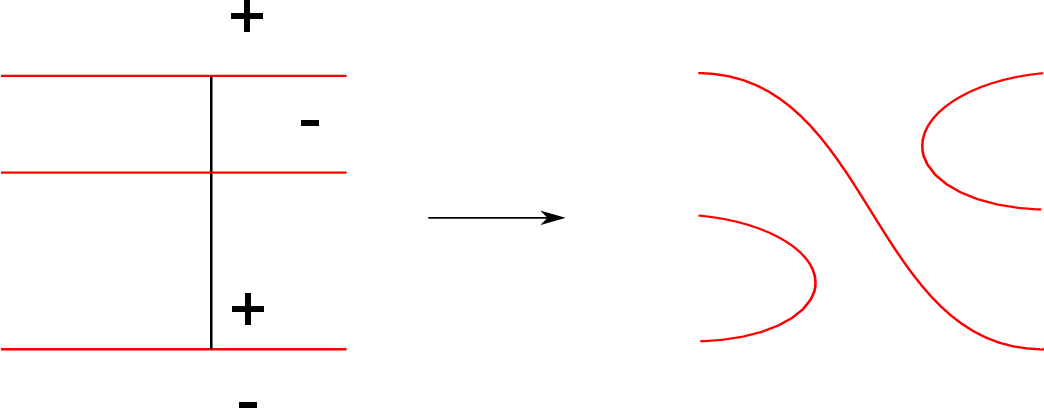}
\put(12,13){$\guv^l$}
\put(27,13){$\guv^r$}
\put(-3,8){$\guv(x)$}
\put(-3,18){$\guv(y)$}
\put(-3,35){$\gov(z)$}
\put(63,11.3){$\g'_{\beta(\uv)}$}
\put(79,32){$\g'_{\beta(\ov)}$}
\put(21,16){$\delta_+$}
\put(21,25){$\delta_-$}
\put(46,20){$\beta$}
\end{overpic}
\caption{Definitions of $\uv, \ov, x, y, z$, and the change of components of $R_+$ under a bypass $\beta$.}
\label{2-2-2}
\end{figure}

\n
{\bf The map $\beta$.} Given a nontrivial $\beta\in \Hom(\g,\g')$, by abuse of notation we write
\begin{equation} \label{eqn: beta}
\beta: V(\g) \ra V(\g')
\end{equation}
for the map which satisfies
$$ \g'_{\beta(\mf{v})}= \left\{
\begin{array}{cl}
\guv^l & \mbox{if} \hspace{0.1cm} \mf{v}=\uv, \\
\gov \sqcup \guv^r & \mbox{if} \hspace{0.1cm} \mf{v}=\ov, \\
\gv & \mbox{otherwise,}
\end{array}\right.
$$
as subsets of $\{\ul{0}, \dots, \ul{n}\}$ for $\mf{v} \in V(\g)$.  The map $\beta$ is a bijection. 

\begin{rmk} \label{rmk beta ast}
We have $\beta(\ast)=\ast \in V(\g')$ unless $\ul{0} \in \guv^r$; in that case $\beta(\ov)=\ast \in V(\g')$.
\end{rmk}

\section{Definition of $\tdne$} \label{section: defn of algebra}

Recall from Definition \ref{def bne} that $\bne$, $0\leq e\leq n$, is the set of basic dividing sets $\g$ in $\cne$ and that each basic dividing set is determined by $\g_{\ast}$.
Let $(\g)$ and $(\g|\g')$ denote the generators of $\op{End}(\g)$ and $\Hom(\g,\g')$ which are $1$-dimensional when $\Hom(\g,\g')\neq 0$.

For $0\leq e\leq n$, define the $\F$-algebra
$$\rne=\bigoplus\limits_{\g,\g' \in \bne} \Hom_{\cne}(\g, \g'),$$
where the multiplication $a \cdot b$ is given by the composition $b \circ a$ in $\cne$ for $a,b\in\{(\g), (\g'|\g'')\}$ if they are composable and zero otherwise.

\begin{prop} [Tightness criterion for basic dividing sets] \label{prop: tightness criterion}
For $\g,\g' \in \bne, \g \neq \g'$, the following are equivalent:
\be
\item $\Hom(\g, \g')\not=0$.
\item There exists a sequence of labels $\ul{0} < \ul{s_1} < \ul{s'_1} < \cdots <\ul{s_k} < \ul{s'_k}$ such that $\g_{\ast} \cap [\ul{s_i}, \ul{s'_i}]=\{\ul{s_i}\}$ for $1\leq i\leq k$ and $\g'_{\ast} = (\g_{\ast} \backslash \{\ul{s_1},\dots,\ul{s_k}\}) \cup \{\ul{s'_1},\dots,\ul{s'_k}\}$.
\ee
\end{prop}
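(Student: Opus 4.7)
The plan is to prove $(2)\Rightarrow(1)$ by explicit construction and $(1)\Rightarrow(2)$ by induction on the size of the symmetric difference.

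For $(2)\Rightarrow(1)$, I construct a nonzero $\xi\in\Hom(\g,\g')$ as an explicit composition of bypass morphisms. Because the intervals $[\ul{s_i},\ul{s'_i}]$ are pairwise disjoint and each contains only $\ul{s_i}$ from $\g_{\ast}$, the $k$ required shifts $\ul{s_i}\mapsto\ul{s'_i}$ localize to disjoint regions of $D^2$ and commute; it therefore suffices to handle the single-shift case $k=1$. For $k=1$ I build $\xi$ as a sequence of bypasses that moves $\ul{s_1}$ one boundary-parallel label at a time through the intermediate labels of the interval until it reaches $\ul{s'_1}$. The intermediate dividing sets are generally not basic, but each still satisfies an analogous (partially completed) interval description relative to $\g'$, so Lemma~\ref{lem stack} guarantees nontriviality of the overall composition.

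For $(1)\Rightarrow(2)$ I induct on $d:=|\g_{\ast}\setminus\g'_{\ast}|$; the case $d=0$ is vacuous ($k=0$). Otherwise, let $\ul{s_1}$ denote the smallest element of $\g_{\ast}\setminus\g'_{\ast}$ and let $\ul{s'_1}$ be the smallest element of $\g'_{\ast}$ strictly greater than $\ul{s_1}$, whose existence follows from $|\g_{\ast}|=|\g'_{\ast}|$ and $\ul{0}\in\g_{\ast}\cap\g'_{\ast}$. Two facts are then needed: first, $\g_{\ast}\cap[\ul{s_1},\ul{s'_1}]=\{\ul{s_1}\}$; and second, $\xi_{\g,\g'}$ factors through the basic dividing set $\g''$ with $\g''_{\ast}=(\g_{\ast}\setminus\{\ul{s_1}\})\cup\{\ul{s'_1}\}$ with both factors nonzero. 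Given both, $|\g''_{\ast}\setminus\g'_{\ast}|=d-1$ and $\Hom(\g'',\g')\neq 0$, so applying the inductive hypothesis to $(\g'',\g')$ supplies shifts $(\ul{s_i},\ul{s'_i})$ for $i\geq 2$ that combine with $(\ul{s_1},\ul{s'_1})$ to give the desired sequence.

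The main obstacle is establishing the first fact above, i.e., ruling out a ``skipped'' element $\ul{t}\in\g_{\ast}$ in the open interval $(\ul{s_1},\ul{s'_1})$. The plan is to argue by contradiction: such a $\ul{t}$ would necessarily lie in $\g_{\ast}\setminus\g'_{\ast}$ (by the minimality of $\ul{s'_1}$ in $\g'_{\ast}\cap(\ul{s_1},\infty)$), and in any bypass factorization of $\xi_{\g,\g'}$ that ultimately moves $\ul{s_1}$ past $\ul{t}$, the component of $R_+$ carrying $\ul{s_1}$ and the one carrying $\ul{t}$ must merge and then separate, producing a homotopically trivial component on $\bdry(D^2\times[0,1])$ after edge rounding and forcing $\#\gamma_{\g,\g'}>1$, contradicting $\Hom(\g,\g')\neq 0$. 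Producing the intermediate $\g''$ for the second fact is then carried out using Lemma~\ref{lemma: reduction 1} together with the rearrangement relations from Theorem~\ref{thm: relations in contact category}, both of which let us reorder the bypass decomposition of $\xi_{\g,\g'}$ so that the bypass performing the first shift $\ul{s_1}\mapsto\ul{s'_1}$ can be isolated as the initial segment.
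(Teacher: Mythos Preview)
Your overall architecture---do $(2)\Rightarrow(1)$ by exhibiting a tight contact structure, and do $(1)\Rightarrow(2)$ by induction on $|\g_\ast\setminus\g'_\ast|$---matches the paper's. But both halves of your argument have genuine gaps that the paper avoids by working directly with the combinatorics of the dividing set $\gamma_{\g,\g'}$ on $\partial(D^2\times[0,1])$ rather than with bypass factorizations.

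For $(2)\Rightarrow(1)$, your appeal to Lemma~\ref{lem stack} is circular. That lemma has as a \emph{hypothesis} that $\Hom(\g,\g')$, $\Hom(\g',\g'')$, and $\Hom(\g,\g'')$ are all nonzero; it then characterizes when the composition map is nontrivial. You are trying to \emph{establish} that $\Hom(\g,\g')\neq 0$, so you cannot invoke it. Even inducting on the interval length does not help: knowing $\Hom(\g^1,\g')\neq 0$ and $\Hom(\g,\g^1)\neq 0$ still does not let you feed $\Hom(\g,\g')\neq 0$ into the lemma. The paper sidesteps this entirely: it attaches one bypass per interval $[\ul{s_i},\ul{s'_i}]$ (the $k$ arcs of attachment are disjoint) and then cites \cite[Theorem~1.2]{HKM} for tightness of the resulting contact structure. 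Alternatively one can verify $\#\gamma_{\g,\g'}=1$ directly by edge rounding, but you do neither.

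For $(1)\Rightarrow(2)$, your contradiction argument for ``no skipped $\ul{t}\in\g_\ast\cap(\ul{s_1},\ul{s'_1})$'' does not work as stated. You write that ``the component of $R_+$ carrying $\ul{s_1}$ and the one carrying $\ul{t}$ must merge and then separate,'' but $\ul{s_1}$ and $\ul{t}$ both lie in the \emph{based} component $\g_\ast$ to begin with, so there is nothing to merge. More seriously, tracking an arbitrary bypass factorization of $\xi_{\g,\g'}$ and arguing about intermediate $R_+$-components is far too loose to pin down an extra loop in $\gamma_{\g,\g'}$. The paper instead identifies the closed loop \emph{directly} on $\partial(D^2\times[0,1])$: given the hypothetical $\ul{t}$, it names explicitly which boundary-parallel arcs of $\g$ and $\g'$, together with one arc of the based component, round off to form a closed curve, forcing $\#\gamma_{\g,\g'}>1$. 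The same edge-rounding technique handles the comparison $\ul{s_1}<\ul{s'_1}$ (with the paper's definition $\ul{s'_1}=\min(\g'_\ast\setminus\g_\ast)$, which is cleaner than yours). Likewise, for the factorization through $\g''$, the paper does not reorder a bypass decomposition via Lemma~\ref{lemma: reduction 1} or Theorem~\ref{thm: relations in contact category}; it simply observes that the bypass $(\g|\g'')$ is \emph{trivial} when viewed on $\partial(D^2\times[0,1])$ with dividing set $\gamma_{\g,\g'}$, and peels it off to get a tight contact structure generating $\Hom(\g'',\g')$.
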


We will refer to (2) as the {\em tightness condition.}

\begin{proof}
(2) $\Rightarrow$ (1):
The dividing set $\g'$ can be obtained from $\g$ by attaching $k$ disjoint bypasses corresponding to $k$ disjoint closed intervals $[\ul{s_i}, \ul{s'_i}]$ for $1 \leq i \leq k$; see the left-hand side of Figure \ref{3-1}. The resulting contact structure is tight by \cite[Theorem 1.2]{HKM}.

\begin{figure}[ht]
\begin{overpic}
[scale=0.3]{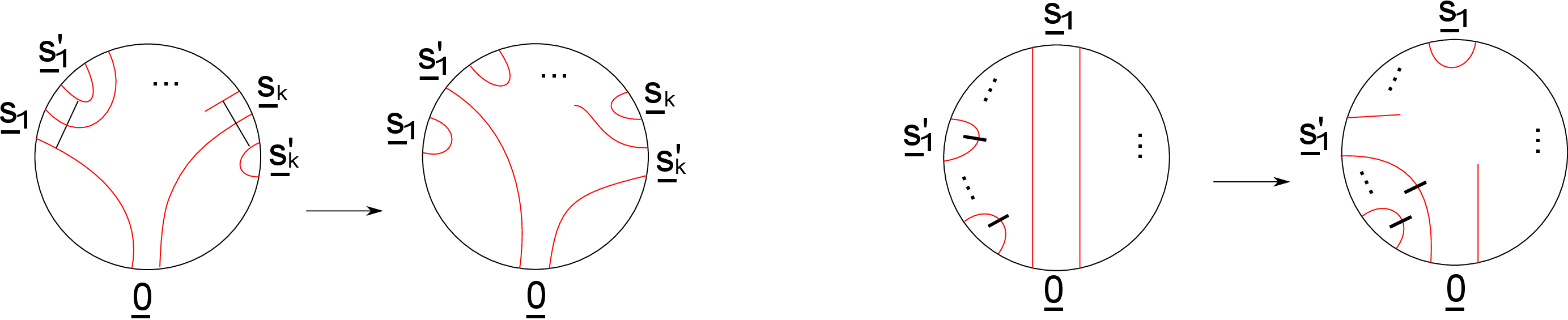}
\put(12,0){$\g$}
\put(38,0){$\g'$}
\put(71,0){$\g$}
\put(96,0){$\g'$}
\end{overpic}
\caption{The $k$ disjoint bypasses in the proof of $(2) \Rightarrow (1)$ on the left. The case of $\ul{s_1}>\ul{s'_1}$ in the proof of $(1) \Rightarrow (2)$ on the right. The components of $\g$ and $\g'$ with markings form a closed loop after edge rounding.}
\label{3-1}
\end{figure}

\s \n(1) $\Rightarrow$ (2):
Let $\ul{s_1}=\op{min}(\g_{\ast} \backslash \g'_{\ast}), \ul{s'_1}=\op{min}(\g'_{\ast} \backslash \g_{\ast})$.

We first prove that $\ul{s_1}<\ul{s'_1}$. Arguing by contradiction, suppose that $\ul{s_1}>\ul{s'_1}$.  (Note that $\ul{s_1}=\ul{s'_1}$ is not possible by definition.) Let $\ul{s_1}=\g_*(r_0), \ul{s_1'}=\g_*'(r_0)$, and $\g_*(r)=\g_*'(r)$ for $r \le r_0$.
The case of $r_0=1$ is depicted on the right-hand side of Figure~\ref{3-1}.  The dividing curve on $\bdry(D^2 \times [0,1])$ obtained by edge rounding (i) the boundary parallel components $\g_{(t)}=\{\ul{t}\}$ for $1 \leq t \leq s'_1$, (ii) the boundary parallel components $\g'_{(t')}=\{\ul{t'}\}$ for $1 \leq t' \leq s'_1-1$, and (iii) the arc of $\g'$ joining the labels $\ul{0}$ and $\ul{s'_1}$, forms a closed loop.  Hence $\gamma_{\g,\g'}$ has more than one component and $\Hom(\g,\g')=0$, which is a contradiction.
The case of $r_0$ in general is no more difficult and is left to the reader.

Next we prove that $\g_{\ast} \cap [\ul{s_1}, \ul{s'_1}]=\{\ul{s_1}\}$ and $\g'_{\ast} \cap [\ul{s_1}, \ul{s'_1}]=\{\ul{s'_1}\}$. Suppose that $\g_{\ast} \cap [\ul{s_1}, \ul{s'_1}]\neq\{\ul{s_1}\}$. Let $\ul{t_1}=\op{min}(\g_{\ast} \cap (\ul{s_1}, \ul{s'_1}))$. The dividing curve on $\bdry(D^2 \times [0,1])$ obtained by edge rounding (i) the components of $\g$ with both endpoints on arcs of $R_+(F)$ labeled $\ul{s_1}$ to $\ul{t_1}$ and (ii) the boundary parallel components of $\g'$ with both endpoints on arcs labeled $\ul{s_1}$ to $\ul{t_1-1}$ forms a closed loop.  Hence $\Hom(\g,\g')=0$ and we have a contradiction.  This implies that $\g_{\ast} \cap [\ul{s_1}, \ul{s'_1}]=\{\ul{s_1}\}$.  The proof of $\g'_{\ast} \cap [\ul{s_1}, \ul{s'_1}]=\{\ul{s'_1}\}$ is similar.

Let $\g^1 \in \bne$ such that $\g^1_{\ast}=\g_{\ast} \backslash \{\ul{s_1}\} \cup \{\ul{s'_1}\}$. We claim that $\Hom(\g^1,\g')\not=0$.
From the proof of (2) $\Rightarrow$ (1) above, $\Hom(\g,\g^1)\not=0$ and is generated by a bypass that we denote by $(\g|\g^1)$. By an argument similar to the proof of (2) $\Rightarrow$ (1) in Lemma \ref{lem stack}, the bypass $(\g|\g^1)$ is a trivial bypass when viewed as a bypass on $\bdry(D^2 \times [0,1])$ with dividing set $\gamma_{\g,\g'}$.  By peeling off the bypass $(\g|\g^1)$ from the contact structure which generates $\Hom(\g,\g')$, we obtain a tight contact structure which generates $\Hom(\g^1,\g')$. In particular, this implies the claim.

If $\g^1=\g'$, then we are done.  Otherwise, 
we inductively define $\ul{s_i}=\op{min}(\g^{i-1}_{\ast} \backslash \g'_{\ast}), \ul{s'_i}=\op{min}(\g'_{\ast} \backslash \g^{i-1}_{\ast})$, and $\g^i \in \bne$ such that $\g^i_{\ast}=\g^{i-1}_{\ast} \backslash \{\ul{s_i}\} \cup \{\ul{s'_i}\}$. After finitely many steps, we have $\g^k=\g'$ for some $k$, which implies (2).
\end{proof}

\begin{cor} \label{cor stack}
For $\g, \g', \g'' \in \bne$, if $\Hom(\g,\g'), \Hom(\g',\g'')$, and $\Hom(\g,\g'')$ are all nonzero, then $(\g|\g')(\g'|\g'')=(\g|\g'')$.
\end{cor}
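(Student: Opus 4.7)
The strategy is to reduce the corollary to Lemma \ref{lem stack}. Since $\Hom(\g,\g'')$ is nonzero and basic dividing set $\Hom$'s in $\cne$ are at most one-dimensional, showing that $(\g|\g')\cdot(\g'|\g'') = \xi_{\g',\g''}\circ\xi_{\g,\g'}$ is a nontrivial composition forces it to equal the generator $(\g|\g'')$. Thus it suffices to verify condition (2) of Lemma \ref{lem stack} for the triple $(\g,\g',\g'')$.

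First, I would build the required bypass chain from $\g$ to $\g'$. Applying Proposition \ref{prop: tightness criterion} to the pair $(\g,\g')$, one obtains labels $\ul{0}<\ul{s_1}<\ul{s'_1}<\cdots<\ul{s_k}<\ul{s'_k}$ with $\g_{\ast}\cap[\ul{s_i},\ul{s'_i}]=\{\ul{s_i}\}$ such that $\g'_{\ast}$ arises from $\g_{\ast}$ by swapping each $\ul{s_i}\mapsto \ul{s'_i}$. Define $\g^i\in\bne$ by $\g^i_{\ast}=(\g_{\ast}\setminus\{\ul{s_1},\dots,\ul{s_i}\})\cup\{\ul{s'_1},\dots,\ul{s'_i}\}$, so $\g=\g^0$ and $\g^k=\g'$. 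Each single-swap move $\g^{i-1}\to\g^i$ is realized by a single (disjoint) bypass attachment as in the proof of $(2)\Rightarrow(1)$ of Proposition \ref{prop: tightness criterion}, so $\Hom(\g^{i-1},\g^i)$ is nonzero and generated by a bypass.

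Second, I would verify $\Hom(\g^i,\g'')\neq 0$ for all $0\leq i\leq k$ by producing a label-swap witness in the sense of Proposition \ref{prop: tightness criterion}. The hypothesis yields witnesses $\{[\ul{t_j},\ul{t'_j}]\}_{j=1}^m$ for the pair $(\g,\g'')$ and $\{[\ul{u_l},\ul{u'_l}]\}_{l=1}^p$ for the pair $(\g',\g'')$. Since $\g^i_{\ast}$ agrees with $\g'_{\ast}$ on the labels $\{\ul{s_1},\dots,\ul{s_i}\}\cup\{\ul{s'_1},\dots,\ul{s'_i}\}$ and with $\g_{\ast}$ elsewhere, I would assemble a witness for $(\g^i,\g'')$ by taking the portion of the $(\g',\g'')$ witness that controls the labels already swapped and the portion of the $(\g,\g'')$ witness that controls the remaining labels. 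With $(\g^i,\g'')$ having a valid witness, Proposition \ref{prop: tightness criterion} then gives $\Hom(\g^i,\g'')\neq 0$, and Lemma \ref{lem stack}(2)$\Rightarrow$(1) completes the proof.

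The main obstacle is the combinatorial bookkeeping in the second step: one must check that the assembled collection of intervals for $(\g^i,\g'')$ is pairwise disjoint, properly ordered, and satisfies $\g^i_{\ast}\cap[\ul{t_j},\ul{t'_j}]=\{\ul{t_j}\}$. The key observation making this work is that the $(\g,\g')$-witness intervals $[\ul{s_j},\ul{s'_j}]$ are already pairwise disjoint and each contains exactly the one label $\ul{s_j}\in\g_{\ast}$; consequently, in passing from $\g$ to $\g^i$, the labels being moved do not collide with any $\ul{t_j}$ or $\ul{t'_j}$ outside the intervals $[\ul{s_j},\ul{s'_j}]$, and within each $[\ul{s_j},\ul{s'_j}]$ the relevant portion of the $(\g',\g'')$ or $(\g,\g'')$ witness applies verbatim. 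A careful induction on $i$ makes this precise.
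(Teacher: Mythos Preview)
Your approach is correct and is what the paper intends: the corollary is stated without proof immediately after Proposition~\ref{prop: tightness criterion}, and the natural argument is exactly to verify condition~(2) of Lemma~\ref{lem stack} using the single-bypass decomposition of $\xi_{\g,\g'}$ furnished by that proposition.

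The one place that deserves a bit more care than your sketch suggests is the second step. Your splicing heuristic (``use the $(\g',\g'')$-witness on the already-swapped labels and the $(\g,\g'')$-witness on the rest'') is the right idea, but a $(\g,\g'')$-witness interval can genuinely straddle the cut: since $s'_i\notin\g_\ast$, nothing forbids some $[t_l,t'_l]$ with $t_l=s_i$ and $t'_l>s'_i$. A cleaner organization is to first observe that $\g^i_\ast$ agrees with $\g'_\ast$ on $[\ul{0},\ul{s'_i}]$ and with $\g_\ast$ on $(\ul{s'_i},\ul{n}]$ (because the first $i$ swap-intervals lie in $[\ul{0},\ul{s'_i}]$ and contain no other elements of $\g_\ast\cup\g'_\ast$); since $s'_i\in\g'_\ast$, every $(\g',\g'')$-interval lies wholly below $s'_i$ or has left endpoint $\geq s'_i$, so the low $(\g',\g'')$-intervals and the high $(\g,\g'')$-intervals assemble into the desired witness once the single possible boundary interval is matched up. This is the ``careful induction on $i$'' you allude to, and the hypothesis $\Hom(\g,\g'')\neq 0$ is used essentially at that seam.
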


\begin{cor} \label{cor stack2}
Suppose $\Hom(\g,\g')$ and $\Hom(\g',\g'')$ are both nonzero for $\g,\g',\g'' \in \bne$.
If there exists $\ul{s}$ such that $\ul{s} \in \g_\ast \cap \g''_\ast$ but $\ul{s} \notin \g'_\ast$, then $\Hom(\g,\g'')=0$.
\end{cor}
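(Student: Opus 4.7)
The plan is to assume for contradiction that $\Hom(\g,\g'')\neq 0$ and derive a contradiction by tracking the invariant
\[
L(\Gamma):=|\Gamma_\ast\cap\{\ul{l}:\ul{0}<\ul{l}<\ul{s}\}|,
\]
computing $L(\g'')$ in two incompatible ways.

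First, I apply Proposition~\ref{prop: tightness criterion} to $\Hom(\g,\g'')\neq 0$ to obtain a sequence $\ul{0}<\ul{u_1}<\ul{u'_1}<\cdots<\ul{u_m}<\ul{u'_m}$ with $\g_\ast\cap[\ul{u_c},\ul{u'_c}]=\{\ul{u_c}\}$ and $\g''_\ast=(\g_\ast\setminus\{\ul{u_c}\})\cup\{\ul{u'_c}\}$. Since $\ul{s}\in\g_\ast\cap\g''_\ast$, the label $\ul{s}$ is not among the $\ul{u_c}$ or $\ul{u'_c}$; the constraint $\g_\ast\cap(\ul{u_c},\ul{u'_c}]=\emptyset$ combined with $\ul{s}\in\g_\ast$ then forces each interval $[\ul{u_c},\ul{u'_c}]$ to lie entirely on one side of $\ul{s}$. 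Each such move shifts one particle within a single side of $\ul{s}$, so $L$ is preserved, yielding $L(\g)=L(\g'')$.

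Next, I apply Proposition~\ref{prop: tightness criterion} to $\Hom(\g,\g')$ and $\Hom(\g',\g'')$, obtaining sequences $\{\ul{s_a}\},\{\ul{s'_a}\}$ and $\{\ul{t_b}\},\{\ul{t'_b}\}$. Since $\ul{s}\in\g_\ast\setminus\g'_\ast$ we have $\ul{s}=\ul{s_i}$ for some $i$ with $\ul{s'_i}>\ul{s}$; since $\ul{s}\in\g''_\ast\setminus\g'_\ast$ we have $\ul{s}=\ul{t'_j}$ for some $j$ with $\ul{t_j}<\ul{s}$. For $a\neq i$ the same reasoning as above shows $[\ul{s_a},\ul{s'_a}]$ lies on one side of $\ul{s}$ (otherwise $\ul{s}$ would lie in $(\ul{s_a},\ul{s'_a}]$, violating either $\g_\ast\cap[\ul{s_a},\ul{s'_a}]=\{\ul{s_a}\}$ or $\ul{s}\notin\g'_\ast$), and the $a=i$ move relocates the particle from $\ul{s}$ to $\ul{s'_i}$, neither of which lies in $(\ul{0},\ul{s})$; hence $L(\g')=L(\g)$. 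In the transition $\g'\to\g''$, the $b=j$ move sends the particle at $\ul{t_j}\in(\ul{0},\ul{s})$ (note $\ul{t_j}>\ul{0}$ by the convention $\ul{0}<\ul{t_1}$) to $\ul{s}\notin(\ul{0},\ul{s})$, decreasing $L$ by exactly $1$; the remaining moves either preserve $L$ or decrease it further when $\ul{t_b}<\ul{s}<\ul{t'_b}$ (which is no longer precluded because $\ul{s}\notin\g'_\ast$). Therefore $L(\g'')\leq L(\g')-1=L(\g)-1$.

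Combining the two calculations gives $L(\g)=L(\g'')\leq L(\g)-1$, a contradiction, so $\Hom(\g,\g'')=0$. The argument is short, and the only step requiring care is the verification that, under the hypothesis $\ul{s}\in\g_\ast\cap\g''_\ast$, every tight move in the $\g\to\g''$ sequence stays on a single side of $\ul{s}$; this is precisely where the constraint in Proposition~\ref{prop: tightness criterion} is used most essentially, and it is what fails in the $\g'\to\g''$ step once $\ul{s}$ has been removed from the dividing set.
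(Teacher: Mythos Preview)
Your proof is correct and essentially follows the paper's approach: the paper defines $\#(\tg,\ul{s})=|\{\ul{t}\in\tg_\ast:\ul{t}>\ul{s}\}|$, counting labels \emph{above} $\ul{s}$, and derives the same contradiction via $\#(\g,\ul{s})=\#(\g'',\ul{s})$ but $\#(\g'',\ul{s})\geq\#(\g',\ul{s})=\#(\g,\ul{s})+1$. Since $|\g_\ast|=e+1$ is fixed, your invariant $L$ (counting below $\ul{s}$) is complementary to $\#$, so the two arguments are dual; the paper's version is slightly more streamlined in that the inequality $\#(\g'',\ul{s})\geq\#(\g',\ul{s})$ follows immediately from the fact that all moves in Proposition~\ref{prop: tightness criterion} go rightward, without needing to single out the $b=j$ move.
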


Given $\tg\in \bne$, let us define $\#(\tg,\ul{s})=|\{\ul{t} \in \tg_\ast ~|~ \ul{t}>\ul{s}\}|$.

\begin{proof}
Suppose that $\ul{s} \in \g_\ast \cap \g''_\ast$, $\ul{s} \notin \g'_\ast$, and $\Hom(\g,\g'')\not=0$. Let $\ul{0} < \ul{s_1} < \ul{s''_1} < \cdots <\ul{s_k} < \ul{s''_k}$ be the sequence of labels in Proposition \ref{prop: tightness criterion} which correspond to $\Hom(\g,\g'')$. Since $\ul{s} \in \g_\ast \cap \g''_\ast$, $\ul{s} \notin [\ul{s_i}, \ul{s''_i}]$ for all $i$.  Then we immediately have $\#(\g,\ul{s})=\#(\g'',\ul{s})$.

On the other hand, $\#(\g',\ul{s})=\#(\g,\ul{s})+1$ since $\Hom(\g,\g')\not=0$ and $\ul{s} \in \g_\ast, \ul{s} \notin \g'_\ast$; and $\#(\g'',\ul{s})\geq \#(\g',\ul{s})$ since $\Hom(\g',\g'')\not=0$.  Hence $\#(\g'',\ul{s})>\#(\g,\ul{s})$, a contradiction.
\end{proof}

\begin{notation} \label{notation: gen rne}
We write $\g \xra{s} \g'$ for $\g, \g' \in \bne, s \in \{1,\dots,n-1\}$ if $\ul{s} \in \g_{\ast}, \ul{s+1} \notin \g_{\ast}$ and $\g'_{\ast}=\g \backslash \{\ul{s}\} \cup \{\ul{s+1}\}$. In this case $\Hom(\g,\g')\not=0$.
\end{notation}

The bypasses $\g \xra{s} \g'$ are the elementary blocks of tight contact structures between basic dividing sets.

\begin{lemma} \label{lem rne}
The algebra $\rne$ has idempotents $(\g)$, generators $(\g|\g')$ where $\g, \g' \in \bne, \g \xra{s} \g'$ for some $s$, and relations:
\begin{gather} 
(\g)(\g') = \delta_{\g,\g'}; \\
(\g)(\g|\g')=(\g|\g')(\g')=(\g|\g');\\
(\g|\g')(\g'|\g'')=0 ~~\mbox{if}~ \g \xra{s} \g', \g' \xra{s-1} \g''; \label{eq rel}\\
(\g|\g')(\g'|\g'')=(\g|\g''')(\g'''|\g'') ~~\mbox{if}~ \g \xra{s} \g', \g' \xra{t} \g'', \g \xra{t} \g''', \g''' \xra{s} \g'' ~\mbox{for}~|s-t|>1.
\end{gather}
\end{lemma}

\begin{proof}
Suppose that $\Hom(\g, \g')\not=0$ and $\g \neq \g'$.
Let $\ul{s_1}=\op{min}(\g_{\ast} \backslash \g'_{\ast})$.
Define $\tg \in \bne$ such that $\g \xra{s_1} \tg$.
By Proposition \ref{prop: tightness criterion}, $\Hom(\g, \tg)$ and $\Hom(\tg, \g')$ are nonzero.
By Lemma \ref{lem stack}, the composition $\Hom(\tg, \g') \times \Hom(\g, \tg) \ra \Hom(\g, \g')$ is nontrivial since the generator $(\g|\tg)$ is a bypass.
By an iterated peeling off of bypasses, one can prove that $\{(\g), (\g|\g') ~|~ \g \xra{s} \g' ~\mbox{for some}~ s\}$ generate $\rne$ as an algebra.

The first two relations of $\rne$ are immediate from the definition of $\rne$.


For a composition of two bypasses $\g \xra{s} \g', \g' \xra{t} \g''$, there are $3$ possibilities:
\be
\item If $t=s-1$, then $\g_{\ast} \cap [\ul{s-1}, \ul{s+1}]=\{\ul{s-1},\ul{s}\}$ and $\g''_{\ast} \cap [\ul{s-1}, \ul{s+1}]=\{\ul{s},\ul{s+1}\}$. Hence $\Hom(\g,\g'')=0$ by Corollary~\ref{cor stack2}, implying the third relation of $\rne$.
\item If $t=s+1$, then $\Hom(\g,\g'')\not=0$ and the product $(\g|\g')(\g'|\g'')$ is the generator of $\Hom(\g,\g'')$. $\g_{\ast}\cap [\ul{s},\ul{s+2}]=\{\ul{s}\}$ and $\g''_{\ast}\cap [\ul{s},\ul{s+2}]=\{\ul{s+2}\}$ and there is no relation in this case.
\item If $|s-t|>1$, then $\g' \xra{t} \g''$ induces a bypass $\g \xra{t} \g'''$ on $\g$ which is disjoint from the bypass $\g \xra{s} \g'$.  We have $\g_{\ast}\cap [\ul{s},\ul{s+1}]=\{\ul{s}\}$, $\g_{\ast}\cap [\ul{t},\ul{t+1}]=\{\ul{t}\}$, $\g''_{\ast}\cap [\ul{s},\ul{s+1}]=\{\ul{s+1}\}$, and $\g''_{\ast}\cap [\ul{t},\ul{t+1}]=\{\ul{t+1}\}$. The last relation of $\rne$ follows from the commutativity of a pair of disjoint bypasses.
\ee

Now let $\rnea$ denote the algebra with the generators and defining relations as in the lemma. The discussion above gives a homomorphism of algebras $\phi: \rnea \ra \rne$, which is obviously surjective. To prove the injectivity, it suffices to show that 
$$(\g)\cdot \rnea \cdot (\g')\cong (\g)\cdot \rne \cdot (\g') \quad \forall \g, \g'.$$ 
By Proposition~\ref{prop: tightness criterion}, $(\g)\cdot \rne \cdot (\g')$ is one-dimensional if the tightness condition (Proposition~\ref{prop: tightness criterion}(2)) holds; otherwise, it is zero.  If the tightness condition does not hold, then either
\begin{itemize}
\item[(i)] there is no path from $\g$ to $\g'$, i.e., a sequence $\g=\g_1,\dots, \g_n=\g'$ such that $(\g_i|\g_{i+1})$ is a generator (this is the case if and only if there exists $r_0$ such that $\g_*(r)\leq \g_*'(r)$ for $r=0,\dots,r_0-1$ and $\g_*(r_0)> \g_*'(r_0)$) or
\item[(ii)]  there exist $\g_i \xra{s} \g_{i+1}, \g_{i+1} \xra{s-1} \g_{i+2}$ such that 
\begin{equation} \label{eqn: factorization}
(\g)\cdot \rnea \cdot (\g') \cong \left((\g)\cdot \rnea \cdot (\g_i)\right) \cdot \left((\g_i)\cdot \rnea \cdot (\g_{i+2})\right) \cdot \left((\g_{i+2})\cdot \rnea \cdot (\g')\right).
\end{equation}
\end{itemize}
But then \eqref{eqn: factorization} is zero since $(\g_i)\cdot \rnea \cdot (\g_{i+2})=0$ by \eqref{eq rel}.  If the tightness condition holds, then no factorization of $(\g)\cdot \rnea \cdot (\g')$ contains $(\g_{i})\cdot \rnea \cdot (\g_{i+2})$ satisfying $\g_i \xra{s} \g_{i+1}, \g_{i+1} \xra{s-1} \g_{i+2}$. Hence $(\g)\cdot \rnea \cdot (\g')$ is nonzero and one-dimensional. 
\end{proof}

\begin{rmk} \label{rmk strands}
The algebra $\rne$ is isomorphic to the homology of a {\em strands algebra} of a disk which is a differential graded algebra.
Relationships between the contact category and bordered/sutured Heegaard Floer homology have been studied in \cite{Za,M1,M2,M3,M4,Co}.
\end{rmk}

\n
{\bf The quiver $Q_{n,e}$.} Let $\qne$ be the oriented quiver whose set of vertices is $V(\qne)=\bne$ and whose set of arrows is $I(\qne)=\{\g \xra{s} \g' ~\mbox{for some}~s\}.$  A path in $\qne$ from $\g$ to $\g'$ is said to be {\em nonzero} if $\Hom(\g,\g')\not=0$ and a nonzero path is denoted by $\g \ra \g'$.  We define a partial order ``$\leq$" on the set of all nonzero paths: $(\g_1 \ra \g'_1)\leq(\g_2 \ra \g'_2)$ if $\g_1 \ra \g'_1$ can be extended to $\g_2 \ra \g'_2$ in $\qne$.
The partial order motivates the constructions in Section \ref{section: defn of functor}; see Remarks \ref{rmk diff path} and \ref{rmk beta path}.

\s
The finite dimensional algebra $\rne$ is isomorphic to a quotient of the path algebra $\F\qne$ of $\qne$. We refer to \cite{ASS} for an introduction to the representation theory of finite dimensional algebras and quivers. In particular, by \cite[Section I.4]{ASS},  $\{(\g)~|~\g\in \bne \}$ is a complete set of primitive orthogonal idempotents in $\rne$ and $\{P(\g)=\rne (\g) ~|~ \g \in \bne \}$ forms a complete set of non-isomorphic indecomposable projective left $\rne$-modules.
A nice property of the finite quiver $\qne$ is that it has no oriented cycles. 
It implies that any simple module has a finite projective resolution. Hence the algebra $\rne$ has finite global dimension.

Define $\tdne$ as the homotopy category of bounded cochain complexes of finitely generated projective left $\rne$-modules.
By a standard result in homological algebra $\tdne$ is equivalent to the bounded derived category $\mf{D}^b(\rne)$ of finitely generated left $\rne$-modules as triangulated categories.  
The Grothendieck group $K_0(\tdne)$ is isomorphic to $\Z^{\oplus \binom{n}{e}}$.

Let $\dne$ be the ungraded version of $\tdne$, whose objects are the same as $\tdne$ and whose morphisms are given by
$$\Hom_{\dne}(M,N):=\bigoplus\limits_{n\in \Z}\Hom_{\tdne}(M,N[n]).$$

\section{The functors $\fne$} \label{section: defn of functor}

In this section, we define a family of functors $\fne: \cne \ra \dne$ for $0 \leq e \leq n$. We write $\cal{F}$ for $\fne$ when $n,e$ are understood.  Since the definition of $\cal{F}$ is highly technical, we first give some motivating examples in Section \ref{Sec motivation}. In Section \ref{Sec dividing}, we define a complex $\cal{F}(\g)$ in $\dne$ for each dividing set $\g$ in $\cne$. In Section \ref{Sec bypass chain}, we define a chain map $\cal{F}(\beta) \in \Hom(\cal{F}(\g), \cal{F}(\g'))$ for any nontrivial bypass morphism $\beta \in \Hom(\g,\g')$ and then define $\cal{F}(\xi)$ in general as a composition of chain maps corresponding to bypasses.  In Section \ref{Sec comp}, we show that the functor $\cal{F}$ is well-defined.

\subsection{Motivation from $\cne$} \label{Sec motivation}

The goal of this subsection is to give some motivating examples.

We say that $\g$ is {\em represented} by $\g', \g''$ if there exists a bypass triangle $\g \ra \g' \ra \g''$ in $\cne$. The idea for constructing $\cal{F}(\g)$ is to iteratively represent $\g$ by basic dividing sets using iterated bypass triangles, and then form a complex of (left) projective $\rne$-modules corresponding to the basic dividing sets.

Recall that the indecomposable projective $\rne$-modules are of the form $\rne(\g)$ for $\g \in \bne$.  Using the notation $\g(\ul{s_1}, \dots, \ul{s_e})$ for the basic dividing set in $\bne$ satisfying $\g(\ul{s_1}, \dots, \ul{s_e})_{\ast}=\{\ul{0}, \ul{s_1}, \dots, \ul{s_e}\}$, we write $P(\ul{s_1}, \dots, \ul{s_e})$ for the projective module corresponding to $\g(\ul{s_1}, \dots, \ul{s_e})$.

\begin{example} \label{ex g1}
There are three dividing sets $\g, \g (\ul{1}), \g (\ul{2})$ in $\mathfrak{ob}(\cal{C}_{2,1})$ as shown in Figure \ref{4-1-1}. Among them $\g$ is not basic since $\g_{\ast}=\{\ul{0}\}$ and $ \g_{(1)}=\{\ul{1},\ul{2}\}$.
There is a bypass triangle $\g \ra \g (\ul{1}) \ra \g (\ul{2})$ in $\cal{C}_{2,1}$ which is induced by a bypass $\beta(\g) \in \Hom(\g , \g(\ul{1}))$.
Here $\beta(\g)$ is the unique nontrivial bypass on $\g$ whose arc of attachment $\delta$ intersects $\g_{\ast}$ at one point and $\delta_+ \subset \g_{(1)}$.
Hence $\g$ is represented by the basic dividing sets $\g (\ul{1}), \g (\ul{2})$.
The bypass in $\Hom(\g (\ul{1}), \g (\ul{2}))$ gives a generator of $R_{2,1}$.
We define $\cal{F}(\g) \in \cal{D}_{2,1}$ as the cochain complex $P (\ul{1}) \ra P(\ul{2})$, where the differential is the multiplication by the generator of $R_{2,1}$ and $P(\ul{2})$ is at degree $0$.
\begin{figure}[ht]
\begin{overpic}
[scale=0.25]{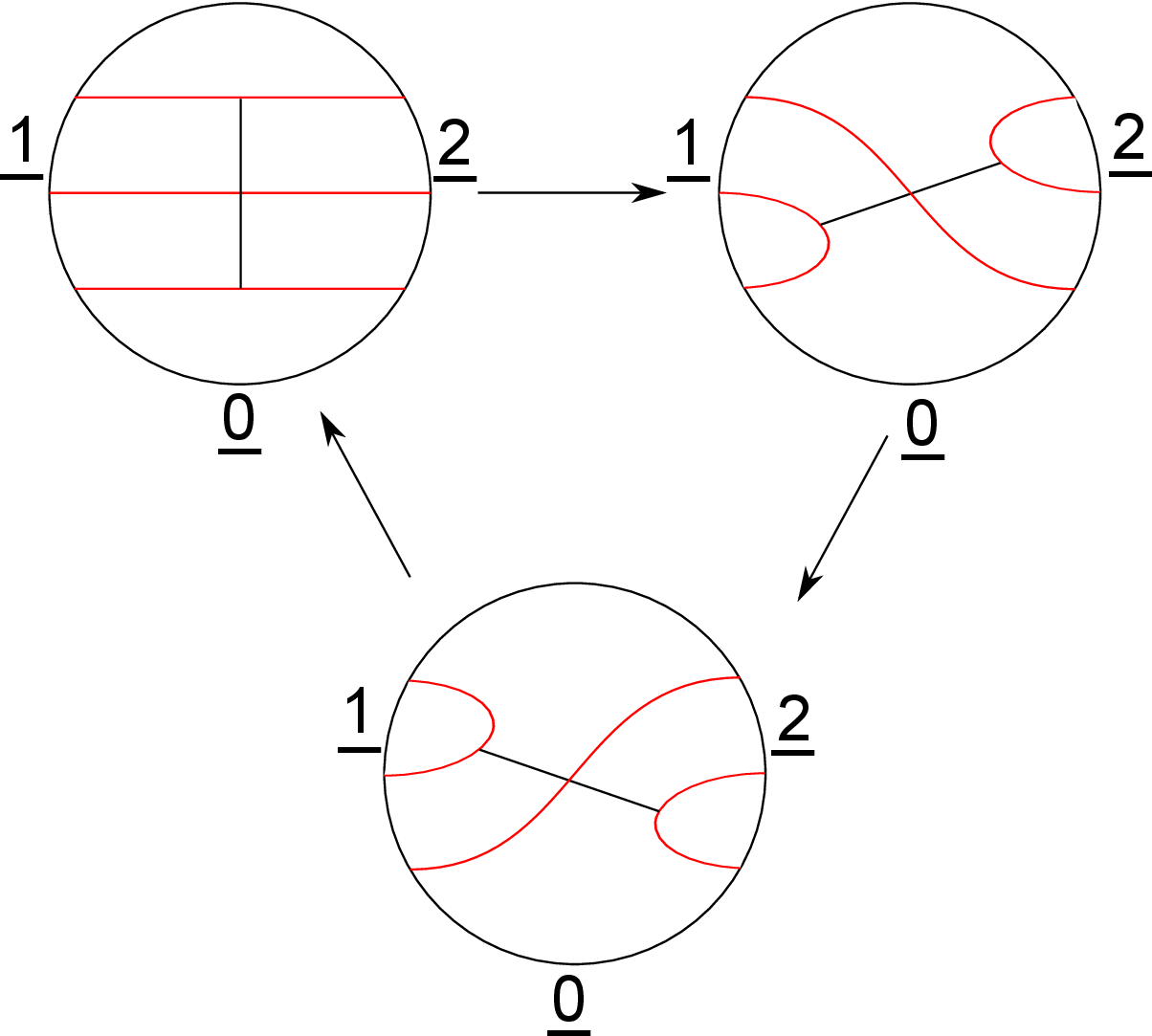}
\put(90,50){$\g (\ul{1})$}
\put(0,50){$\g$}
\put(60,0){$\g (\ul{2})$}
\put(44,78){${\scriptstyle \beta(\g)}$}
\put(18,59){${\scriptstyle \g_{\ast}}$}
\put(8,76){${\scriptstyle \g_{(1)}}$}
\end{overpic}
\caption{$\cal{F}(\g)$ for a non-basic $\g$ in $\cal{C}_{2,1}$.}
\label{4-1-1}
\end{figure}

\end{example}

\s\n {\bf Representing non-basic dividing sets.}
Before proceeding to the next examples, we describe the bypass triangles we choose to iteratively represent a non-basic dividing set by basic ones.  Given a non-basic $\g$, the set $\{i\in \Z_+ ~|~ (i) \in \vnb(\g)\}$ is nonempty.  Let $i_0$ be the smallest element of this set. Let $By(\g)$ denote the set of nontrivial bypasses on $\g$ whose arcs of attachment $\delta$ intersect the closure of $\g_{\ast}$ at one point and satisfy $\delta_+ \subset int( \g_{(i_0)})$.  The set $By(\g)$ is nonempty since the component $\g_{(i_0)}$ is not boundary parallel.  We make the following choice:

\begin{defn}[Choice of $\beta(\g)$] \label{def beta g}
Given a non-basic $\g$, let $\beta(\g)$ be the first bypass in the clockwise direction starting from $\ul{0}$ in $By(\g)$.
As the based arc $\ul{0}$ is usually put at the bottom, $\beta(\g)$ is called the {\em leftmost} bypass in $By(\g)$. 
\end{defn}

Let $\g \xra{\beta(\g)} \g' \ra \g''$ be the triangle induced by $\beta(\g)$. One immediately sees that $\g_{\ast}$ can be viewed as a proper subset of $\g'_{\ast}$ and $\g''_{\ast}$  (in fact this holds for any bypass in $By(\g)$). In other words, $\g'$ and $\g''$ are ``closer" to being basic. If $\g'$ or $\g''$ is not basic, we can further represent it using a triangle induced by $\beta(\g')$ or $\beta(\g'')$. After finitely many steps we can iteratively represent $\g$ by basic dividing sets.

\s
In Example \ref{ex g1}, $\g$ is not basic, $\tpv(\g)=\{(1)\}$, and $l_{\g_{(1)}}=1$.  In the next example we have $\tpv(\g)=\{(1)\}$ and $l_{\g_{(1)}}=2$.

\begin{example} \label{ex g2}
Let $\g\in\mathfrak{ob}(\cal{C}_{4,3})$ such that $\g_{\ast}=\{\ul{0},\ul{4}\}, \g_{(1)}=\{\ul{1},\ul{2},\ul{3}\}$; see Figure \ref{4-1-2}.
The bypasses $\beta(\g)$ and $\beta(\g')$ induce two triangles:
$$\g \xra{\beta(\g)} \g' \ra \g (\ul{2},\ul{3},\ul{4}), \quad \g' \xra{\beta(\g')} \g (\ul{1},\ul{2},\ul{4}) \ra \g (\ul{1},\ul{3},\ul{4}),$$
where $\g'$ is not basic: $\g'_{\ast}=\{\ul{0},\ul{1},\ul{4}\}, \g'_{(1)}=\{\ul{2},\ul{3}\}$.
The non-basic dividing set $\g$ is iteratively represented by basic dividing sets $\g (\ul{1},\ul{2},\ul{4}), \g (\ul{1},\ul{3},\ul{4})$, and $\g (\ul{2},\ul{3},\ul{4})$.

Each of $\Hom(\g(\ul{1},\ul{2},\ul{4}),\g(\ul{1},\ul{3},\ul{4}))$ and $\Hom(\g(\ul{1},\ul{3},\ul{4}),\g(\ul{2},\ul{3},\ul{4}))$ is generated by a nontrivial bypass and their composition is zero by the tightness criterion (Proposition~\ref{prop: tightness criterion}). We define $\cal{F}(\g) \in \cal{D}_{4,3}$ as the cochain complex
$$P (\ul{1},\ul{2},\ul{4}) \ra P(\ul{1},\ul{3},\ul{4}) \ra P (\ul{2},\ul{3},\ul{4}),$$
where the differentials are given by the bypasses and $ P (\ul{2},\ul{3},\ul{4})$ is at degree $0$.
\begin{figure}[ht]
\begin{overpic}
[scale=0.22]{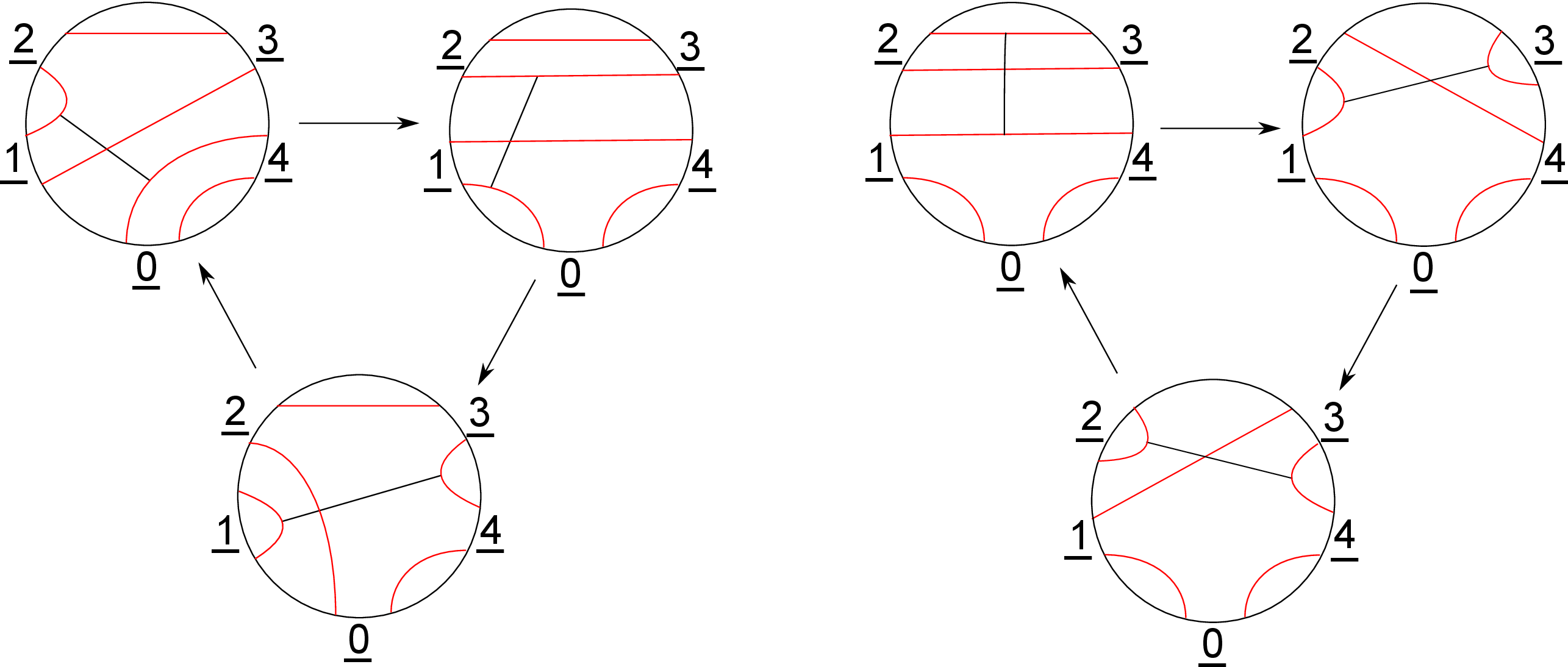}
\put(3,22){$\g$}
\put(40,22){$\g'$}
\put(58,22){$\g'$}
\put(94,22){$\g (\ul{1},\ul{2},\ul{4})$}
\put(84,0){$\g(\ul{1},\ul{3},\ul{4})$}
\put(30,0){$\g(\ul{2},\ul{3},\ul{4})$}
\put(20,37){${\scriptstyle \beta(\g)}$}
\put(74,37){${\scriptstyle \beta(\g')}$}
\end{overpic}
\caption{Two triangles to represent a non-basic $\g$ in $\cal{C}_{4,3}$.}
\label{4-1-2}
\end{figure}
\end{example}

In Examples \ref{ex g1} and \ref{ex g2}, $|\tpv(\g)|=1$.
In the following two examples, we consider the case $|\tpv(\g)|=2$.

\begin{example} \label{ex g3}
Consider $\g\in \mathfrak{ob}(\cal{C}_{4,2})$ such that $\g_{\ast}=\{\ul{0}\}, \g_{(1)}=\{\ul{1},\ul{2}\}, \g_{(2)}=\{\ul{3},\ul{4}\}$; see Figure \ref{4-1-3}.
The bypass $\beta(\g)$ induces a triangle $\g \xra{\beta(\g)} \g' \ra \g''$, where
$$\g'_{\ast}=\{\ul{0},\ul{1}\}, \g'_{(1)}=\{\ul{2}\}, \g'_{(2)}=\{\ul{3},\ul{4}\}; \quad \g''_{\ast}=\{\ul{0},\ul{2}\}, \g'_{(1)}=\{\ul{1}\}, \g'_{(2)}=\{\ul{3},\ul{4}\}.$$
Since $\g'$ and $\g''$ are not basic, there are two more triangles induced by $\beta(\g')$ and $\beta(\g'')$:
$$ \g' \xra{\beta(\g')} \g (\ul{1},\ul{3}) \ra \g (\ul{1},\ul{4}), \quad \g'' \xra{\beta(\g'')} \g (\ul{2},\ul{3}) \ra \g (\ul{2},\ul{4}).$$

Each of the nontrivial morphisms $\g (\ul{1},\ul{3}) \ra \g (\ul{1},\ul{4})$, $\g (\ul{1},\ul{3}) \ra \g (\ul{2},\ul{3})$, $\g (\ul{1},\ul{4}) \ra \g (\ul{2},\ul{4})$, $\g (\ul{2},\ul{3}) \ra \g (\ul{2},\ul{4})$ is given by a nontrivial bypass and the
two ways of composing the bypasses in $\Hom(\g (\ul{1},\ul{3}),\g (\ul{2},\ul{4}))$ commute.  We define $\cal{F}(\g) \in \cal{D}_{4,2}$ as the cochain complex
$$P(\ul{1},\ul{3}) \ra (P(\ul{1},\ul{4})\oplus P(\ul{2},\ul{3})) \ra P(\ul{2},\ul{4}),$$
where the differentials are induced by the bypasses and $P(\ul{2},\ul{4})$ is at degree $0$.
\begin{figure}[ht]
\begin{overpic}
[scale=0.22]{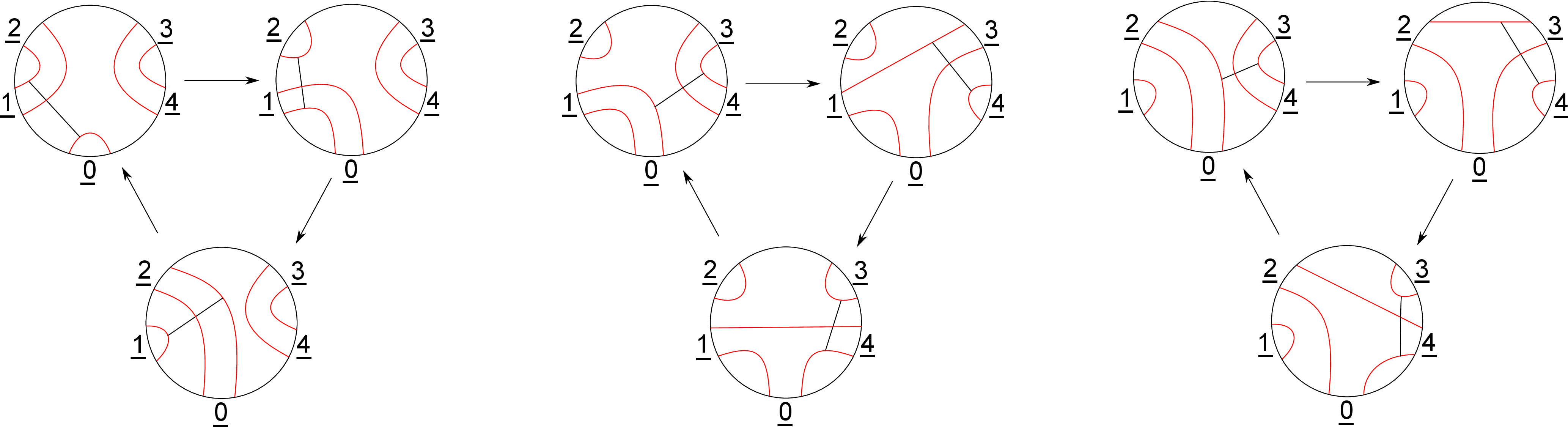}
\put(2,14){$\g$}
\put(25,14){$\g'$}
\put(36,14){$\g'$}
\put(60,14){$\g (\ul{1},\ul{3})$}
\put(72,14){$\g''$}
\put(97,14){$\g (\ul{2},\ul{3})$}
\put(18,0){$\g''$}
\put(55,0){$\g (\ul{1},\ul{4})$}
\put(90,0){$\g (\ul{2},\ul{4})$}
\put(12,23){${\scriptstyle \beta(\g)}$}
\put(48,23){${\scriptstyle \beta(\g')}$}
\put(84,23){${\scriptstyle \beta(\g'')}$}
\end{overpic}
\caption{Three triangles to represent a non-basic $\g$ in $\cal{C}_{4,2}$.}
\label{4-1-3}
\end{figure}

\end{example}

\begin{example} \label{ex g4}
Consider $\g\in\mathfrak{ob}(\cal{C}_{4,2})$ such that $\g_{\ast}=\{\ul{0}\}, \g_{(1)}=\{\ul{1},\ul{4}\}, \g_{(1,1)}=\{\ul{2},\ul{3}\}$; see Figure \ref{4-1-4}.
The bypass $\beta(\g)$ induces a triangle $\g \xra{\beta(\g)} \g' \ra \g''$, where
$$\g'_{\ast}=\{\ul{0},\ul{1}\}, \g'_{(1)}=\{\ul{2},\ul{3}\}, \g'_{(2)}=\{\ul{4}\}; \quad \g''_{\ast}=\{\ul{0},\ul{4}\}, \g'_{(1)}=\{\ul{1}\}, \g'_{(2)}=\{\ul{2},\ul{3}\}.$$
Since $\g'$ and $\g''$ are not basic, there are two more triangles induced by $\beta(\g')$ and $\beta(\g'')$:
$$ \g' \xra{\beta(\g')} \g(\ul{1},\ul{2}) \ra \g(\ul{1},\ul{3}), \quad \g'' \xra{\beta(\g'')} \g(\ul{2},\ul{4}) \ra \g(\ul{3},\ul{4}).$$

There is a tight contact structure in $\Hom(\g(\ul{1},\ul{3}),\g(\ul{2},\ul{4}))$ which is a composition of two bypasses.
The spaces $\Hom(\g(\ul{1},\ul{2}), \g(\ul{2},\ul{4}))$ and $\Hom(\g(\ul{1},\ul{3}), \g(\ul{3},\ul{4}))$ are zero.
We then define $\cal{F}(\g) \in \cal{D}_{4,2}$ as the cochain complex
\begin{gather} \label{ex g4 cx}
\mathcal{N}:= (P(\ul{1},\ul{2}) \ra P(\ul{1},\ul{3}) \ra P(\ul{2},\ul{4}) \ra P(\ul{3},\ul{4})),
\end{gather}
where the differentials are induced by the tight contact structures and $P(\ul{3},\ul{4})$ is at degree $0$.
\begin{figure}[ht]
\begin{overpic}
[scale=0.22]{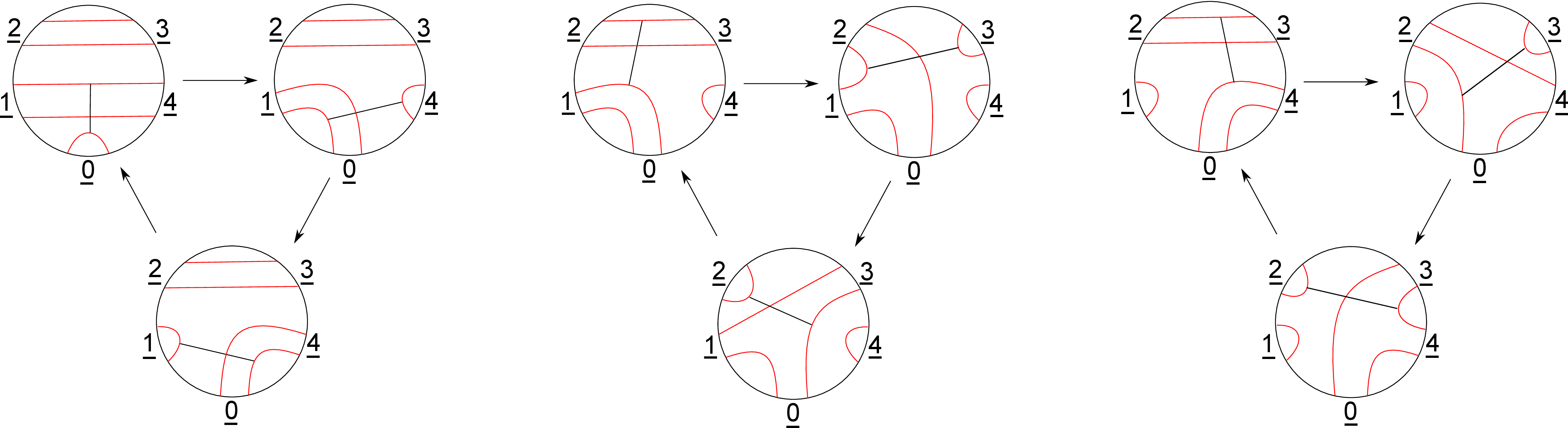}
\put(2,14){$\g$}
\put(25,14){$\g'$}
\put(36,14){$\g'$}
\put(60,14){$\g(\ul{1},\ul{2})$}
\put(72,14){$\g''$}
\put(95,14){$\g(\ul{2},\ul{4})$}
\put(18,0){$\g''$}
\put(55,0){$\g(\ul{1},\ul{3})$}
\put(90,0){$\g(\ul{3},\ul{4})$}
\put(12,23){${\scriptstyle \beta(\g)}$}
\put(48,23){${\scriptstyle \beta(\g')}$}
\put(84,23){${\scriptstyle \beta(\g'')}$}
\end{overpic}
\caption{Three triangles to represent a non-basic $\g$ in $\cal{C}_{4,2}$.}
\label{4-1-4}
\end{figure}
\end{example}

In Example \ref{ex g3}, $V(\g)=\{\ast, (1), (2)\}$, where $\g_{(1)}$ and $\g_{(2)}$ directly nest inside $\g_\ast$.
In Example \ref{ex g4}, $V(\g)=\{\ast, (1), (1,1)\}$ and $\g_{(1,1)}$ directly nests inside $\g_{(1)}$ which in turn directly nests inside $\g_\ast$.
As we will see in Equation~\eqref{eqn: def of cal F Gamma in general}, the differentials in $\cal{F}(\g)$ are defined differently for the two examples.

\subsection{Definition of $\cal{F}(\g)$} \label{Sec dividing}


In this subsection we define $\cal{F}(\g)$ for $\g\in \mathfrak{ob}(\cne)$.  If $\g \in \bne$, then we set
\begin{equation} \label{eqn: def of cal F for basic Gamma}
\cal{F}(\g):=P(\g) \in {\frak{ob}}(\dne),
\end{equation}
viewed as a complex centered at degree $0$, and if $\g$ is a zero object, then we set $\cal{F}(\g)=0$.

{\em In the rest of this subsection suppose $\g$ is nonzero.} We use bypass triangles in $\cne$ and construct $\cal{F}(\g) \in {\frak{ob}}(\dne)$ in $3$ steps:
\be
\item[{\em Step 1.}] Make a list of projective $\rne$-modules that appear in $\cal{F}(\g)$.
\item[{\em Step 2.}] Define the cohomological degree for each term in Step 1.
\item[{\em Step 3.}] Define the differential using Steps 1 and 2.
\ee

\s
The following definition generalizes the ad hoc definitions in Examples \ref{ex g1}, \ref{ex g2}, \ref{ex g3}, \ref{ex g4}.

\s
\n{\em Step 1.} The list of projective $\rne$-modules that appear in $\cal{F}(\g)$ is given by $\{\g(\mf{i}) \in \bne ~|~ \mf{i} \in \oi(\g)\}$, which we describe now.

In the examples from Section \ref{Sec motivation}, every $\g$ was represented by an iterated cone of certain basic dividing sets.
The key observation is that the based component of each of these basic dividing sets was obtained from the total set $\{\ul{0},\dots,\ul{n}\}$ by omitting one label from each component in $\tpv(\g)$.

\begin{defn}[Omitting index] \label{def oi}
Let
$$\oi(\g)=\prod\limits_{\mf{v} \in \tpv(\g)}\{0,\dots,l_{\gv}\}.$$
An element $\mi=\lan \iv \ran \in \oi(\g)$ is called an {\em omitting index} of $\g$ and $\iv$ is called the {\em entry of $\mi$ corresponding to $\mv$} (or the ``$\mv$-entry of $\mi$", for short).  Also, the set $\{\g_\mv(\iv)\}_{\mv\in \tpv(\g)}$ is called the {\em set of omitting labels for $\mi$}.
\end{defn}

We define ${\bf 0} \in \oi(\g)$ to be the omitting index such that ${\bf 0}_{\mf{v}}=0$ for any $\mf{v} \in \tpv(\g)$.

\begin{rmk} \label{rmk OI RPV}
Any $\mf{i} \in \oi(\g)$ is determined by $\{ \iv ~|~ \mf{v} \in \vnb(\g) \}$ since $\iv=0$ for all $\mf{v} \in \tpv(\g) \backslash \vnb(\g)$.
\end{rmk}

Given $\mf{i} \in \oi(\g)$, define $\g(\mf{i}) \in \bne$ such that
\begin{equation}
\g(\mf{i})_{\ast}=\g_{\ast}\cup\bigsqcup_{\mf{v} \in \tpv(\g)}(\gv \backslash \{\gv(i_{\mf{v}})\})=\{\ul{0},\dots,\ul{n}\}\backslash \{\gv(i_{\mf{v}})\}_{\mf{v} \in \tpv(\g)} .
\end{equation}
Since $|V(\g)|=n-e+1, |\tpv(\g)|=n-e$, and $\bigsqcup_{\mf{v} \in V(\g)} \gv=\{\ul{0},\dots,\ul{n}\}$, it follows that $|\g(\mf{i})_{\ast}|=e+1$.
Observe that (i) if $\g \in \bne$, then we have $\oi(\g)=\{{\bf 0}\}$ and $\g({\bf 0})=\g$, since $l_{\gv}=0$ for all $\mf{v} \in \tpv(\g)$, and (ii) if $\g \notin \bne$ and $\mi \in \oi(\g)$, then $\g(\mi)_{\ast}$ always contains $\g_{\ast}$ as a proper subset.

\begin{rmk}
We have two ways of describing basic dividing sets; they are complementary in some sense. The first one $\g(\ul{s_1},\dots,\ul{s_e})$ describes the labels that are contained in the based component and is mainly used in examples. The second one $\g(\mi)$ emphasizes the labels that are omitted (i.e., the set $\mi \in \oi(\g)$) and is our choice for most of the paper.\footnote{The notations are similar, but we note that the former has entries that are underlined.}
\end{rmk}

\s \n{\em Step 2.} We define the cohomological degree $h(\mf{i})$ for each $\mf{i} \in \oi(\g)$.

\begin{defn} \label{def contain}$\mbox{}$
\be
\item[(i)] Given $\mv \in V(\g)$ and $0 \leq i \leq l_{\gv}$, a vector $\mf{w} \in \vnb(\g)$ {\em nests inside $\mf{v}$ up to $i$} if $\gw \subset (\gv(0), \gv(i))$.
\item[(ii)] Given $\mf{v} \in V(\g)$ and $0 < i \leq l_{\gv}$, a vector $\mf{w} \in \vnb(\g)$ is a {\em direct nesting vector} of $(\mf{v},i)$ if $\gw$ directly nests inside $\gv$ and $\gw \subset (\gv(i-1),\gv(i))$.
\ee
\end{defn}


Let $\nv(\mf{v},i)$ denote the set of vectors in $\vnb(\g)$ that nest inside $\mf{v}$ up to $i$ and let $\dnv(\mf{v},i)$ denote the set of direct nesting vectors of $(\mf{v},i)$.

\begin{defn}[Nesting degree] \label{def nest degree}
Given $\mv \in V(\g)$ and $0 \leq i \leq l_{\gv}$, the {\em nesting degree} $c_{\mf{v}}(i)$ is given by
$$c_{\mf{v}}(i)=\sum\limits_{\mf{w} \in \nv(\mf{v}, i)}l_{\gw}=\sum\limits_{\mf{w} \in \nv(\mf{v}, i)}(|\gw|-1),$$
if $\nv(\mf{v},i)\neq\es$ and is zero otherwise. 
\end{defn}


\begin{defn}[Cohomological degree] \label{def coh degree}
The {\em cohomological degree $h(\mf{i})$} of $\mf{i} \in \oi(\g)$ is given by
$$h(\mf{i})=\sum\limits_{\mf{v} \in \tpv(\g)}h(\mf{i},\mf{v}),\quad h(\mf{i},\mf{v})=\iv + c_{\mf{v}}(\iv).$$
\end{defn}

\begin{rmk}
Since $\nv(\mf{v},0)=\es$, we have $c_{\mf{v}}(0)=0$ and $h({\bf 0})=0$.
\end{rmk}

The nesting degree is trivial in Examples \ref{ex g1}, \ref{ex g2}, and \ref{ex g3}.  Hence $h(\mf{i})$ is simply the sum of all the entries of $\mf{i}$.

\s
\n{\em Example \ref{ex g4} revisited.}
We have $\tpv(\g)=\{(1),(1,1)\}$ and $l_{\gv}=1$ for $\mf{v}\in \tpv(\g)$.
The only nonzero nesting degree is $c_{(1)}(1)=1$ since $\g_{(1,1)}$ nests inside $\g_{(1)}$ up to $1$: $$\g_{(1,1)}=\{\ul{2},\ul{3}\} \subset (\ul{1},\ul{4}) =(\g_{(1)}(0),\g_{(1)}(1)).$$
Writing $\mi=\lan \mi_{(1)}, \mi_{(1,1)} \ran$ for $\mf{i} \in \oi(\g)$, we have:
$$h\lan 1,1 \ran=1+1+c_{(1)}(1)=3, \quad h \lan 1,0 \ran=1+0+c_{(1)}(1)=2, \quad h\lan 0,1 \ran=0+1=1, \quad h\lan 0,0 \ran=0,$$
which agree with the negatives of the degrees in the complex $\mathcal{N}$ from \eqref{ex g4 cx}.

\s
\n{\em Step 3.} We define the differential, which is induced by the morphisms between basic dividing sets in $\cne$.

\s\n {\em Example \ref{ex g4} revisited.}  We have
$$\g \lan 1,1 \ran=\g(\ul{1},\ul{2}), \quad \g \lan 1,0 \ran=\g (\ul{1},\ul{3}),\quad  \g \lan 0,1 \ran=\g(\ul{2},\ul{4}),\quad \g \lan 0,0 \ran=\g(\ul{3},\ul{4}).$$
There are two types of morphisms between the $\g(\mf{i})$'s for $\mf{i}=\lan \mi_{(1)}, \mi_{(1,1)} \ran \in OI(\g)$: \be \item[(SL)] $\Hom(\g \lan i,1 \ran,\g \lan i,0 \ran)$ for $i=0,1$;
\item[(SH)] $\Hom(\g\lan 1,0 \ran ,\g \lan 0,1 \ran)$.
\ee

For Type (SL), only the $(1,1)$-entry of $\mf{i}=\lan i,1 \ran$ decreases by $1$ and the other entry is left unchanged.
In Definition \ref{def diff vector} the vector $(1,1)$ is called an {\em sliding vector} of $\mf{i}$.

For Type (SH), the $(1)$-entry of $\mf{i}=\lan 1,0 \ran$ decreases from $1$ to $0$ and the $(1,1)$-entry increases from $0$ to $1=l_{\g_{(1,1)}}$.
In this case $(1,1)$ directly nests inside $(1)$.
In Definition \ref{def diff vector} the vector $(1)$ is called a {\em shuffling vector} of $\mf{i}=\lan 1,0 \ran$.

\begin{defn}[Sliding and shuffling vectors] \label{def diff vector}  $\mbox{}$
\be
\item A vector $\mf{v} \in \tpv(\g)$ is a {\em sliding vector} of $\mf{i} \in \oi(\g)$ if $\iv>0$ and $\dnv(\mf{v},\mi_{\mf{v}})=\es$.
\item A vector $\mf{v} \in \tpv(\g)$ is a {\em shuffling vector} of $\mf{i} \in \oi(\g)$ if $\iv>0, \dnv(\mf{v},\mi_{\mf{v}})\neq\es$ and $\mi_{\mf{w}}=0$ for all $\mf{w} \in \dnv(\mf{v},\iv)$.
\ee
\end{defn}

Let $\slv(\mf{i})$ denote the set of sliding vectors of $\mf{i}$, let $\shv(\mf{i})$ denote the set of shuffling vectors of $\mf{i}$, and let $\sv(\mf{i})= \slv(\mf{i})\cup \shv(\mf{i})$.
Note that not every vector in $\tpv(\g)$ is a sliding vector or a shuffling vector.

\begin{defn}[Modified omitting index] \label{def diff ind}
Given $\mf{i} \in \oi(\g)$ and $\mf{v} \in \sv(\mf{i})$, the {\em $\mf{v}$-modified omitting index $\mf{v} | \mf{i} \in \oi(\g)$} satisfies
$$ (\mf{v} | \mf{i})_{\mf{w}}= \left\{
\begin{array}{cl}
\mi_{\mf{v}}-1 & \mbox{if}~ \mf{w} = \mf{v}, \\
\mi_{\mf{w}} & \mbox{otherwise,}
\end{array}\right.
$$
if $\mf{v} \in \slv(\mf{i})$;
$$ (\mf{v} | \mf{i})_{\mf{w}}= \left\{
\begin{array}{cl}
\mi_{\mf{v}}-1 & \mbox{if}~ \mf{w} = \mf{v}, \\
l_{\gw} & \mbox{if}~ \mf{w} \in \dnv(\mf{v},\mi_{\mf{v}}), \\
\mi_{\mf{w}} & \mbox{otherwise,}
\end{array}\right.
$$
if $\mf{v} \in \shv(\mf{i})$.
\end{defn}

Vectors in $\dnv(\mf{v},\mi_{\mf{v}}) \subset \vnb(\g)$ must be indices of non-boundary-parallel regions, so that the $l_{\gw}$ are positive for all $\mf{w} \in \dnv(\mf{v},\mi_{\mf{v}})$.

\begin{rmk} \label{rmk diff}
As we change from $\mi$ to $\vi$,
\begin{itemize}
\item[(i)] the $\mv$-entry of $\mi$ is the only entry which decreases;
\item[(ii)] a $\mf{w}$-entry of $\mi$ increases if and only if $\mf{v} \in \shv(\mf{i})$ and $\mf{w} \in \dnv(\mf{v},\mi_{\mf{v}})$; and
\item[(iii)] all other entries of $\mi$ are left unchanged.
\end{itemize}
\end{rmk}

\begin{lemma} \label{lem diff path}
If $\mf{v} \in \sv(\mf{i})$, then $\Hom(\g(\mf{i}), \g(\mf{v} | \mf{i}))\not=0$.
\end{lemma}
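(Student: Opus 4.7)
The plan is to invoke the Tightness Criterion (Proposition~\ref{prop: tightness criterion}) and exhibit an explicit sequence of label pairs $\ul{s_1}<\ul{s'_1}<\cdots<\ul{s_k}<\ul{s'_k}$ that converts $\g(\mi)_\ast$ into $\g(\vi)_\ast$. By Remark~\ref{rmk diff}, passing from $\mi$ to $\vi$ alters very few entries: only the $\mv$-entry decreases by one, and in the shuffling case the entries $\mi_{\mw}$ for $\mw\in\dnv(\mv,\iv)$ jump from $0$ to $l_{\g_{\mw}}$. Thus $\g(\mi)_\ast$ and $\g(\vi)_\ast$ differ in a small and geometrically localized set of labels, all clustering around the interval $[\gv(\iv-1),\gv(\iv)]$.

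For the sliding case, where $\dnv(\mv,\iv)=\es$, I would take $k=1$ with $\ul{s_1}=\gv(\iv-1)$ and $\ul{s'_1}=\gv(\iv)$; the identity $\g(\vi)_\ast=(\g(\mi)_\ast\setminus\{\gv(\iv-1)\})\cup\{\gv(\iv)\}$ is then immediate from the definitions of $\g(\mi)_\ast$ and $\g(\vi)_\ast$. The remaining condition is $\g(\mi)_\ast\cap[\gv(\iv-1),\gv(\iv)]=\{\gv(\iv-1)\}$, i.e., no non-omitted label lies in the open interval $(\gv(\iv-1),\gv(\iv))$. Any label in that open interval belongs to a component of $R_+(\g)$ nesting inside $\gv$; taking the direct nester in the chain up to $\gv$, this direct nester is either boundary parallel (so its unique label is omitted from $\g(\mi)_\ast$) or lies in $\vnb(\g)$ and hence in $\dnv(\mv,\iv)$, contradicting the sliding hypothesis.

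For the shuffling case I would enumerate $\dnv(\mv,\iv)=\{\mw_1,\dots,\mw_m\}$ in left-to-right order inside $(\gv(\iv-1),\gv(\iv))$ and take $k=m+1$ with
\[
(\ul{s_1},\ul{s'_1})=(\gv(\iv-1),\g_{\mw_1}(0)),\qquad (\ul{s_{m+1}},\ul{s'_{m+1}})=(\g_{\mw_m}(l_{\g_{\mw_m}}),\gv(\iv)),
\]
and $(\ul{s_j},\ul{s'_j})=(\g_{\mw_{j-1}}(l_{\g_{\mw_{j-1}}}),\g_{\mw_j}(0))$ for $2\le j\le m$. Since the shuffling hypothesis forces $\mi_{\mw_j}=0$, the labels $\g_{\mw_j}(0)$ are omitted from $\g(\mi)_\ast$, while $\g_{\mw_j}(l_{\g_{\mw_j}})$ lies in $\g(\mi)_\ast$; deleting the $\ul{s_j}$'s from $\g(\mi)_\ast$ and inserting the $\ul{s'_j}$'s produces exactly the prescribed change of the $\mv$-entry and the $\mw_j$-entries, giving $\g(\vi)_\ast$. \textbf{The main obstacle} is verifying $\g(\mi)_\ast\cap[\ul{s_j},\ul{s'_j}]=\{\ul{s_j}\}$, i.e., that every open gap $(\ul{s_j},\ul{s'_j})$ contains no non-omitted label. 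This reduces to two geometric observations: the left-to-right ordering of the $\mw_j$'s forces any non-boundary-parallel direct nester of $\gv$ in that gap to coincide with some $\mw_i$ (but each such $\mw_i$ has all its labels outside the gap by the ordering), while labels of components nesting strictly inside $\g_{\mw_j}$ all lie in $(\g_{\mw_j}(0),\g_{\mw_j}(l_{\g_{\mw_j}}))$, which is disjoint from every chosen $[\ul{s_i},\ul{s'_i}]$. Once these observations are in place, Proposition~\ref{prop: tightness criterion} yields $\Hom(\g(\mi),\g(\vi))\ne 0$.
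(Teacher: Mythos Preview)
Your proposal is correct and follows essentially the same approach as the paper's proof: both split into the sliding and shuffling cases, in the shuffling case enumerate $\dnv(\mv,\iv)$ in left-to-right order to produce the same $m+1$ intervals, and then invoke Proposition~\ref{prop: tightness criterion}. The paper is terser and simply asserts $\g(\mi)_\ast\cap[a,b]=\{a\}$ for each interval, whereas you spell out the nesting argument behind that claim; this extra detail is fine and the logic is sound.
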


Given $\mf{v} \in \sv(\mf{i})$, let $r(\mf{i},\mf{v}) \in \rne$ be the generator of $\Hom(\g(\mf{i}), \g(\mf{v} | \mf{i}))$.

\begin{proof}
If $\mf{v} \in \slv(\mf{i})$, then $\g(\mf{i})_{\ast} \cap [\gv(\mi_{\mf{v}}-1),\gv(\mi_{\mf{v}})]=\{\gv(\mi_{\mf{v}}-1)\}$ and
$$\g(\mf{v} | \mf{i})_{\ast}=\g(\mf{i})_{\ast} \backslash \{\gv(\mi_{\mf{v}}-1)\} \cup \{\gv(\mi_{\mf{v}})\}.$$
Then $\Hom(\g(\mf{i}), \g(\mf{v} | \mf{i}))\not=0$ by Proposition \ref{prop: tightness criterion}.

If $\mf{v} \in \shv(\mf{i})$, then we can write $\dnv(\mf{v},\mi_{\mf{v}})=\{\mf{u}^1, \dots, \mf{u}^k\}$ so that
$$\gv(\mi_{\mf{v}}-1) < \g_{\mf{u}^{1}}(0); \quad \g_{\mf{u}^{j}}(l_{\g_{\mf{u}^{j}}})<\g_{\mf{u}^{j+1}}(0),~ 1 \leq j \leq k-1; \quad \g_{\mf{u}^{k}}(l_{\g_{\mf{u}^{k}}}) < \gv(\mi_{\mf{v}}).$$
If $[a,b]$ is any of the corresponding $k+1$ closed disjoint intervals:
\begin{equation} \label{eqn: shuffling intervals}
[\gv(\mi_{\mf{v}}-1) , \g_{\mf{u}^{1}}(0)]; \quad [\g_{\mf{u}^{j}}(l_{\g_{\mf{u}^{j}}}),\g_{\mf{u}^{j+1}}(0)], ~1 \leq j \leq k-1; \quad [\g_{\mf{u}^{k}}(l_{\g_{\mf{u}^{k}}}) , \gv(\mi_{\mf{v}})],
\end{equation}
the intersections $\g(\mf{i})_{\ast} \cap [a,b]=\{a\}$. We have
\begin{gather*}
\g(\mf{v} | \mf{i})_{\ast}=\g(\mf{i})_{\ast} \backslash \left(  \{ \gv(\mi_{\mf{v}}-1)\}\cup \{\g_{\mf{u}^{j}}(l_{\g_{\mf{u}^{j}}})\}_{j=1}^k \right) \cup \left(\{\g_{\mf{u}^{j}}(0)\}_{j=1}^k \cup \{\gv(\mi_{\mf{v}})\}\right).
\end{gather*}
Hence $\Hom(\g(\mf{i}), \g(\mf{v} | \mf{i}))\not=0$ by Proposition \ref{prop: tightness criterion}. \end{proof}

\begin{rmk} \label{rmk diff path}
With a little more work one can show that
$\{\g(\mi) \ra \g(\vi) ~|~ \mf{v} \in \sv(\mf{i})\}$
coincides with the set of minimal elements of $\{\mbox{nonzero path}~~ \g(\mi) \ra \g(\mj) ~|~ \mi,\mj \in \oi(\g)\}$
with respect to the partial order $\leq$ from Section~\ref{section: defn of algebra}.
This is actually the motivation behind Definitions \ref{def diff vector} and \ref{def diff ind}.
\end{rmk}


We now define $\cal{F}(\g)$ for $\g$ in general:
\begin{equation}\label{eqn: def of cal F Gamma in general}
\cal{F}(\g)=\left(\bigoplus\limits_{\mf{i} \in \oi(\g)}P(\g(\mf{i}))[h(\mf{i})], ~~~ d_{\g}=\sum\limits_{\mf{i} \in \oi(\g)}\sum\limits_{\mf{v}}d(\mf{i},\mf{v})\right),
\end{equation}
where the second summation is taken over $\mv \in \sv(\mf{i})$ and  $d(\mf{i},\mf{v}): P(\g(\mf{i})) \ra P(\g(\mf{v}|\mf{i}))$
is given by right multiplication by $r(\mf{i},\mf{v})$.

\begin{rmk} $\mbox{}$
\be
\item If $\g \in \bne$, then $\oi(\g)=\{{\bf 0}\}, \g({\bf 0})=\g$, and $h({\bf 0})=0$.  Hence $\cal{F}(\g)=(P(\g), d_{\g}=0)$, which agrees with $\cal{F}(\g)$ from Equation~\eqref{eqn: def of cal F for basic Gamma}.
\item By the usual grading shift convention, $P(\g(\mf{i}))[h(\mf{i})]$ is at degree $-h(\mi)$.  Since $h(\mi)$ is nonnegative, the highest degree term of $\cf(\g)$ has degree $0$.
\ee
\end{rmk}

We will write $d$ for $d_{\g}$ when $\g$ is understood. By definition $d: \cal{F}(\g) \ra \cal{F}(\g)$ is a map of $\rne$-modules. It remains to verify that $d^2=0$ and $d$ is homogeneous of degree $1$; they are proved in Lemmas~\ref{lem dv} and \ref{lem comp triangle}.

\s\n
{\bf Interpretation in terms of negative regions.}  We now give a slightly more unified way of describing $d=d_\g$ in terms of the negative region $R_-(\g)$.  Let $\mf{i}\in \oi(\g)$ and let ${\frak c}$ be a component of $R_-(\g)$ such that:
\be
\item[(*)] if the component $\g_{\mv}$ of $R_+(\g)$ has (nonempty) boundary $\gamma_{\mv}$ in common with ${\frak c}$, then $\mv\not=\ast$.
\ee 
Then we say $\mf{i}$ is {\em ${\frak c}$-admissible} if
\be
\item[(**)] the omitting label of $\mv$ satisfying (*) is the label of the interval of $\bdry D^2$ which is adjacent to the initial point of $\gamma_{\mv}$.   (Recall that $\Gamma$ is oriented as the boundary of $R_+(\g)$.) 
\ee
 If $\mf{i}$ is ${\frak c}$-admissible, then ${\frak c}|\mf{i}$ is obtained from $\mf{i}$ by replacing the label of each $\mv$ satisfying (*) by the label which is adjacent to the terminal point of $\gamma_{\mv}$; see Figure \ref{negative}.
Observe that if $\mf{i}\in \oi(\g)$ and $\mv\in \sv(\mf{i})$, then there is a unique ${\frak c}$ such that $\mf{i}$ is ${\frak c}$-admissible and $\mv|\mf{i}={\frak c}|\mf{i}$.  In such a case we write $r(\mf{i},{\frak c})=r(\mf{i},\mf{v})$ and $d(\mf{i},{\frak c})=d(\mf{i},\mf{v})$.  Hence $d(\mf{i},{\frak c})$ can be viewed as a refinement of $d(\mf{i},\mf{v})$.

\begin{figure}[ht]
\begin{overpic}
[scale=0.25]{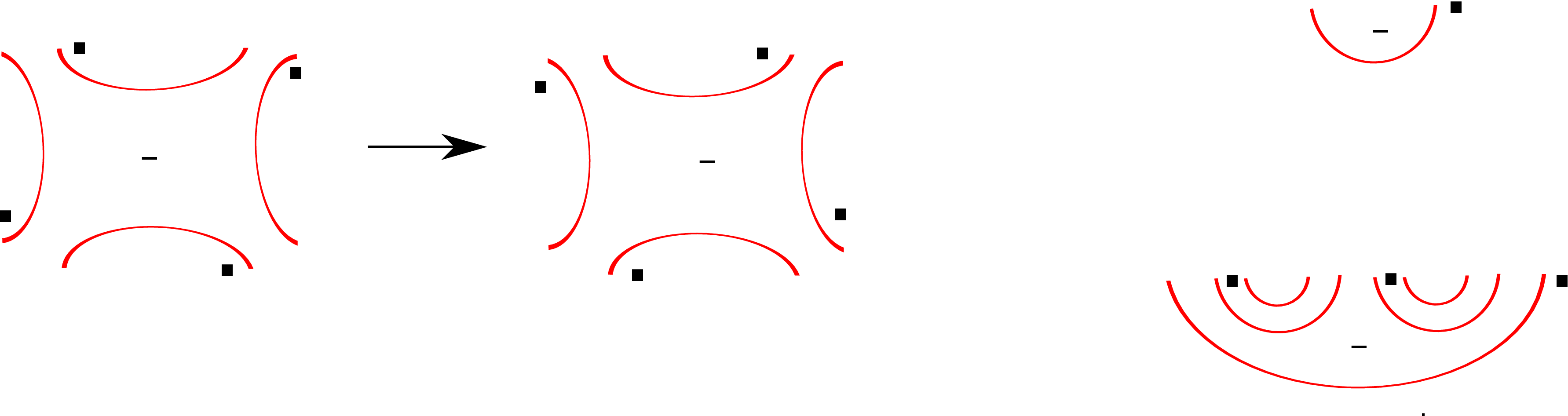}
\put(10,5){$\mf{i}$}
\put(42,5){${\frak c}|\mf{i}$}
\put(12,15){${\frak c}$}
\put(85,24){${\frak c}$}
\put(82,22){$\mf{v}$}
\put(82,3){${\frak c}$}
\put(80,0){$\mf{v}$}
\end{overpic}
\caption{The picture on the left describes $d(\mf{i},{\frak c})$, where dividing sets are $\gamma_{\mv}$, and black boxes are locations of omitting labels for $\mf{i}$ and ${\frak c}|\mf{i}$. The pictures on the right are ${\frak c}$-admissible omitting indices $\mi$ such that $\vi={\frak c}|\mf{i}$, where $\mv \in \slv(\mi)$ and $\shv(\mi)$, respectively.}
\label{negative}
\end{figure}

\s
Given ${\frak c} \in \pi_0(R_-(\g))$, let us define
\begin{equation} \label{eqn: def of dc}
d_{\frak c}=\sum_{\mi}d(\mf{i},{\frak c}),
\end{equation}
where the summation in the first equation is taken over $\mi \in \oi(\g)$ such that $\mi$ is ${\frak c}$-admissible.  We immediately see that $d=\sum_{{\frak c}\in \pi_0(R_-(\g))} d_{\frak c}$.  Let us also write
\begin{equation} \label{eqn: def of dv}
d_{\mv}=\sum_{\frak c} d_{\frak c},
\end{equation}
where ${\frak c}$ has a boundary component in common with $\mv$ and $\mv$ is closer to the based component.

\begin{lemma} \label{lem dv} $\mbox{}$
\be
\item If ${\frak c} \in \pi_0(R_-(\g))$, then $d_{\frak c}^2=0$.
\item If ${\frak c}, {\frak c}' \in \pi_0(R_-(\g))$ and ${\frak c}\not={\frak c}'$, then $d_{{\frak c}}d_{{\frak c}'}=d_{\frak c}d_{{\frak c}'}$.
\item $d^2=0$.
\ee
\end{lemma}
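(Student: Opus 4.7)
My plan is to exploit the decomposition $d=\sum_{{\frak c}\in\pi_{0}(R_{-}(\g))}d_{\frak c}$ and handle the three parts in turn. Each $d_{\frak c}$ is a local operation that, for every vector $\mv$ adjacent to ${\frak c}$, advances the omitting label from the initial-adjacent to the terminal-adjacent position along $\gamma_{\mv}$. For (1), I would take a ${\frak c}$-admissible $\mi$ and show that ${\frak c}|\mi$ is not ${\frak c}$-admissible, so $d_{\frak c}({\frak c}|\mi)=0$. Condition (2) of admissibility supplies a non-boundary-parallel $\mv$ adjacent to ${\frak c}$; for this $\mv$ the initial- and terminal-adjacent positive arcs of $\gamma_{\mv}$ are distinct labels of $\gv$, so after one application the $\mv$-entry already sits at the terminal label and can no longer match the initial-adjacent requirement. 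Equivalently, Corollary~\ref{cor stack2} supplies the obstruction directly: the label originally omitted by $\mi$ lies in $\g(\mi)_{\ast}\cap\g({\frak c}|({\frak c}|\mi))_{\ast}$ but not in $\g({\frak c}|\mi)_{\ast}$, killing the relevant $\Hom$.

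For (2) I would run a case analysis on the outer bounding vectors $\mv=\mv({\frak c})$ and $\mv'=\mv({\frak c}')$, i.e., the unique vectors whose positive components lie directly adjacent to ${\frak c},{\frak c}'$ on the $\g_{\ast}$-side. When $\mv\neq\mv'$ and neither directly nests in the other through ${\frak c}$ or ${\frak c}'$, the two differentials modify disjoint entries of $\mi$ at disjoint locations on $\bdry D^{2}$, the admissibility conditions decouple, no Corollary~\ref{cor stack2} obstruction arises, and both compositions are the unique generator of the same one-dimensional $\Hom$ space---this is precisely the $|s-t|>1$ commutation relation of Lemma~\ref{lem rne}. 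In the remaining interacting configurations---$\mv=\mv'$ with ${\frak c},{\frak c}'$ sitting in different intervals of $\gv$, or one of $\mv,\mv'$ being a direct nesting vector of the other via the opposite region---admissibility allows at most one of the compositions to be nonzero, but a careful bookkeeping produces a witness label (typically $\gv(\iv_{\frak c})$ for a slide or $\gw(0)$ for a shuffle) in $\g(\mi)_{\ast}\cap\g(\text{final})_{\ast}\setminus\g(\text{intermediate})_{\ast}$, so Corollary~\ref{cor stack2} forces the surviving composition to vanish as well. Hence $d_{\frak c}d_{{\frak c}'}=d_{{\frak c}'}d_{\frak c}$ in every case.

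For (3), writing $d^{2}=\sum_{\frak c}d_{\frak c}^{2}+\sum_{{\frak c}\neq{\frak c}'}d_{\frak c}d_{{\frak c}'}$, the first sum vanishes by (1), and (2) groups the second sum into pairs $d_{\frak c}d_{{\frak c}'}+d_{{\frak c}'}d_{\frak c}=2\,d_{\frak c}d_{{\frak c}'}=0$ over $\F$. The main technical obstacle is (2): uniformly constructing the Corollary~\ref{cor stack2} witness label across all sliding/shuffling and nesting configurations, and verifying that the shuffling modifications on inner components do not accidentally realign admissibility in a way that resurrects a nontrivial composition.
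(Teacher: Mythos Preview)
Your proposal is correct and follows essentially the same approach as the paper: (1) via non-${\frak c}$-admissibility of ${\frak c}|\mi$, (2) via Corollary~\ref{cor stack2} in the interacting case and index-level commutativity otherwise, and (3) by summing. The only difference worth noting is organizational: the paper's case split for (2) is simply ``${\frak c}$ and ${\frak c}'$ adjacent'' (i.e., share a common positive component $\mv$) versus ``not adjacent'', which is cleaner than your analysis via outer bounding vectors and nesting configurations. In the adjacent case the paper observes directly that $\mi$ cannot be simultaneously ${\frak c}$- and ${\frak c}'$-admissible (the $\mv$-entry can only sit at one position), so one composition vanishes immediately and the other is killed by Corollary~\ref{cor stack2}; in the non-adjacent case ${\frak c}|({\frak c}'|\mi)={\frak c}'|({\frak c}|\mi)$ on the nose. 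Your more elaborate case list collapses to exactly this dichotomy.
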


\begin{proof}
(1) This is immediate from observing that ${\frak c}|\mf{i}$ is not ${\frak c}$-admissible.

\s\n
(2) Suppose ${\frak c}$ and ${\frak c}'$ are adjacent, i.e., there is a component of $R_+(\g)$ labeled by $\mv$ which has boundary in common with both ${\frak c}$ and ${\frak c}'$.  If $\mf{i}$ is not ${\frak c}$- or ${\frak c}'$-admissible, then $d(\mf{i},{\frak c})=d(\mf{i},{\frak c}')=0$.  If $\mf{i}$ is ${\frak c}$-admissible, then $\mf{i}$ cannot be ${\frak c}'$-admissible and $d(\mf{i},{\frak c}')=0$.  If ${\frak c}|\mf{i}$ is ${\frak c}'$-admissible, then there are components of $\bdry D^2\cap {\frak c}$ and $\bdry D^2\cap {\frak c}'$ that are adjacent to a label of $\mv$; however, $d({\frak c}|\mf{i},{\frak c}')\circ d(\mf{i},{\frak c})=0$ by Corollary~\ref{cor stack2}. In any case $d_{\frak c}d_{{\frak c}'}=d_{\frak c'}d_{\frak c}=0$.

Suppose ${\frak c}$ and ${\frak c}'$ are not adjacent.  Then clearly ${\frak c}|({\frak c}'|\mi)={\frak c}'|({\frak c}|\mi)$ (if either side exists) and  $d_{\frak c}d_{{\frak c}'}=d_{\frak c'}d_{\frak c}$.

\s\n
(3) It follows from (1), (2), and $d=\sum _{\frak c} d_{\frak c}$.
\end{proof}

\begin{lemma} \label{lem rel nc}
If $\mf{v} \in V(\g)$ and $0 < i \leq l_{\gv}$, then the following holds as subsets of $\vnb(\g)$:
$$\nv(\mf{v},i) = \nv(\mf{v},i-1) \sqcup \dnv(\mf{v},i) \sqcup \bigsqcup\limits_{\mf{w}\in \dnv(\mf{v},i)} \nv(\mf{w}, l_{\gw}).$$
\end{lemma}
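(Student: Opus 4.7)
The plan is to prove the set equality by establishing both inclusions together with pairwise disjointness, using the planar/nesting properties (3) and (4) of dividing sets listed in Section 3.

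For the easy inclusion $\supseteq$: each summand on the right-hand side visibly lies in $\nv(\mv,i)$. Anything in $\nv(\mv,i-1)$ has labels in $(\gv(0),\gv(i-1))\subset(\gv(0),\gv(i))$; anything in $\dnv(\mv,i)$ has labels in $(\gv(i-1),\gv(i))\subset(\gv(0),\gv(i))$; and for $\mw\in\dnv(\mv,i)$, any $\mf{u}\in\nv(\mw,l_{\gw})$ satisfies $\g_{\mf{u}}\subset(\gw(0),\gw(l_{\gw}))\subset(\gv(i-1),\gv(i))$.

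For the inclusion $\subseteq$, I would argue as follows. Take any $\mw\in\nv(\mv,i)$. Since $\gw$ is the label set of a component of $R_+(\g)$ and $\{\gv\}$ partitions $\{\ul 0,\ldots,\ul n\}$, no label of $\gw$ equals any $\gv(j)$. Now trace the nesting chain of $\gw$ back to $\gv$: there is a unique $\mw'\in V(\g)$ with $\gw'$ directly nesting inside $\gv$ and with $\gw\subseteq\gw'$ as sets of labels (possibly $\mw=\mw'$). By property (3), $\gw'\subset(\gv(j),\gv(j+1))$ for a unique $j\in\{0,\dots,l_\gv\}$. Because $\mw\in\nv(\mv,i)$ forces the labels of $\gw'$ into $(\gv(0),\gv(i))$ as well, we must have $j\leq i-1$. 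If $j\leq i-2$, then $\mw\in\nv(\mv,i-1)$; if $j=i-1$ and $\mw=\mw'$, then $\mw\in\dnv(\mv,i)$; and if $j=i-1$ and $\mw\neq\mw'$, then $\mw'\in\dnv(\mv,i)$ and $\mw\in\nv(\mw',l_{\gw'})$. This exhausts the cases.

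For disjointness: $\nv(\mv,i-1)$ is separated from $\dnv(\mv,i)$ and from every $\nv(\mw,l_{\gw})$ with $\mw\in\dnv(\mv,i)$ because the former has labels strictly less than $\gv(i-1)$ while the latter two have labels strictly greater than $\gv(i-1)$. An element of $\dnv(\mv,i)$ cannot lie in $\nv(\mw,l_\gw)$ for $\mw\in\dnv(\mv,i)$, since elements of the latter set nest inside $\gw$, not directly inside $\gv$. Finally, for distinct $\mw,\mw'\in\dnv(\mv,i)$, the sets $\nv(\mw,l_\gw)$ and $\nv(\mw',l_{\g_{\mw'}})$ are disjoint because (by property (4) applied within the sub-interval $(\gv(i-1),\gv(i))$) $\gw$ and $\g_{\mw'}$ occupy disjoint sub-intervals, so their nested labels cannot overlap.

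The whole proof is essentially bookkeeping; the only subtle step is justifying the existence of the direct-nester ancestor $\mw'$ and extracting the integer $j$, which is where the planarity axiom (3) is doing the real work. I expect no substantive obstacle, just care in writing out the case split.
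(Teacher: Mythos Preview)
Your proof is correct and follows essentially the same approach as the paper's, which simply splits $\nv(\mv,i)$ according to whether the labels fall in $(\gv(0),\gv(i-1))$ or $(\gv(i-1),\gv(i))$ and then handles the latter via the direct-vs-nested dichotomy. One small fix: where you write ``$\gw\subseteq\gw'$ as sets of labels'' you actually mean that $\gw$ nests inside $\gw'$ (distinct components of $R_+(\g)$ have disjoint label sets), but the intent is clear from the phrase ``trace the nesting chain'' and the rest of the argument is unaffected.
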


\begin{proof}
By Definition \ref{def contain},
$$\nv(\mf{v},i) = \nv(\mf{v},i-1) \sqcup \{\mf{u} \in \vnb(\g) ~|~ \g_{\mf{u}} \subset (\gv(i-1),\gv(i))\}.$$
For any $\mf{u}$ satisfying $\g_{\mf{u}} \subset (\gv(i-1),\gv(i))$, either $\gu$ directly nests inside $\gv$, i.e., $\mf{u} \in \dnv(\mf{v},i)$; or $\gu$ nests inside $\gw$ for a unique $\mf{w} \in \dnv(\mf{v},i)$, i.e., $\mf{u} \in \nv(\mf{w}, l_{\gw})$.
\end{proof}

\begin{lemma} \label{lem d degree}
The degree of $d$ is $1$.
\end{lemma}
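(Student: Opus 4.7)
The plan is to reduce the claim to the pointwise identity
$$ h(\mf{v}\,|\,\mf{i}) = h(\mf{i}) - 1 $$
for every $\mf{i}\in \oi(\g)$ and every $\mf{v}\in\sv(\mf{i})$. Under the shift convention consistent with the paper (where $P(\g(\mf{i}))[h(\mf{i})]$ sits in cohomological degree $-h(\mf{i})$, which is how ``the highest term is in degree $0$'' and $h(\mf{i})\ge 0$ can both hold), this identity says that each summand $d(\mf{i},\mf{v}):P(\g(\mf{i}))[h(\mf{i})]\to P(\g(\mf{v}\,|\,\mf{i}))[h(\mf{v}\,|\,\mf{i}))]$ raises cohomological degree by exactly $1$, which is the desired conclusion.

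I would split into the two cases of Definition~\ref{def diff vector}. In the sliding case $\mf{v}\in\slv(\mf{i})$, the only entry of $\mf{i}$ that changes is $\iv$, which drops by $1$, so only the summand $h(\mf{i},\mf{v})=\iv+c_\mv(\iv)$ of $h(\mf{i})$ can move. Since $\dnv(\mf{v},\iv)=\es$ by definition of a sliding vector, Lemma~\ref{lem rel nc} collapses to $\nv(\mf{v},\iv)=\nv(\mf{v},\iv-1)$, hence $c_\mv(\iv)=c_\mv(\iv-1)$. Therefore $h(\mf{v}\,|\,\mf{i},\mf{v})-h(\mf{i},\mf{v}) = (\iv-1)-\iv = -1$, and all other $\mf{u}$-contributions are unchanged, so $h(\mf{v}\,|\,\mf{i})=h(\mf{i})-1$.

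In the shuffling case $\mf{v}\in\shv(\mf{i})$, three kinds of entries change: $\iv\mapsto \iv-1$; each $\mf{w}\in\dnv(\mf{v},\iv)$ has $\mi_\mf{w}=0$ by hypothesis and moves to $l_\gw$; all remaining entries are fixed. The change in the $\mf{v}$-contribution is $-1 - (c_\mv(\iv)-c_\mv(\iv-1))$, while the change in the $\mf{w}$-contribution is $(l_\gw+c_\mf{w}(l_\gw)) - (0+c_\mf{w}(0)) = l_\gw+c_\mf{w}(l_\gw)$, using $\nv(\mf{w},0)=\es$. Lemma~\ref{lem rel nc} applied to the pair $(\mf{v},\iv)$ gives
$$ c_\mv(\iv)-c_\mv(\iv-1) \;=\; \sum_{\mf{w}\in\dnv(\mf{v},\iv)}\bigl(l_\gw+c_\mf{w}(l_\gw)\bigr), $$
so the sum of the positive $\mf{w}$-contributions cancels the parenthesized term in the $\mf{v}$-contribution, leaving a net change of $-1$ as required.

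I do not expect any real obstacle here; both cases are bookkeeping that is driven entirely by Lemma~\ref{lem rel nc}, and the only thing to be careful about is the sign convention for the shift $[-]$, which is fixed by the already-noted fact that the highest term of $\cf(\g)$ sits in cohomological degree $0$.
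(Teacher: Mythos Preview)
Your proposal is correct and follows essentially the same approach as the paper: reduce to showing $h(\mf{v}\,|\,\mf{i})=h(\mf{i})-1$, handle the sliding case by noting $c_\mf{v}(\iv)=c_\mf{v}(\iv-1)$ via Lemma~\ref{lem rel nc} with $\dnv(\mf{v},\iv)=\es$, and handle the shuffling case by using Lemma~\ref{lem rel nc} to balance the drop in the $\mf{v}$-contribution against the gains in the $\mf{w}$-contributions for $\mf{w}\in\dnv(\mf{v},\iv)$. The computations match the paper's line by line.
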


\begin{proof}
Since the term $P(\g(\mf{i}))[h(\mf{i})]$ is at cohomological degree $-h(\mf{i})$, it suffices to show that $h(\mf{v} | \mf{i})=h(\mf{i})-1$ for $\mf{v} \in \sv(\mf{i})$.

If $\mf{v} \in \slv(\mf{i})$, then $\mf{i}\mapsto \mf{\vi}$ leaves all the entries of $\mf{i}$ unchanged except for the $\mf{v}$-entry.
In particular, $h(\vi, \mf{w})=h(\mf{i}, \mf{w})$ for $\mf{w} \neq \mf{v}$.
Since $\dnv(\mf{v},\mi_{\mf{v}})=\es$, $\nv(\mf{v},\mi_{\mf{v}}-1)=\nv(\mf{v},\mi_{\mf{v}})$ by Lemma \ref{lem rel nc}.
Hence $c_{\mf{v}}(\mi_{\mf{v}}-1)=c_{\mf{v}}(\mi_{\mf{v}})$ and $h(\mf{v} | \mf{i}, \mf{v})=h(\mf{i}, \mf{v})-1$.  This implies that $h(\mf{v} | \mf{i})=h(\mf{i})-1$.

If $\mf{v} \in \shv(\mf{i})$, then the entries of $\mf{i}$ that are unchanged by $\mf{i}\mapsto \mf{\vi}$ are those of $\mf{w} \notin \{\mf{v}\}\cup \dnv(\mf{v},\mi_{\mf{v}}) $.  Hence $h(\vi, \mf{w})=h(\mf{i}, \mf{w})$ for $\mf{w} \notin \{\mf{v}\}\cup \dnv(\mf{v},\mi_{\mf{v}}) $.
It remains to show that
$$\left(\sum\limits_{\mf{w} \in \dnv(\mf{v},\mi_{\mf{v}})}h(\vi,\mf{w})\right)+h(\vi,\mf{v}) =\left(\sum\limits_{\mf{w} \in \dnv(\mf{v},\mi_{\mf{v}})}h(\mf{i},\mf{w})\right)+h(\mf{i},\mf{v})-1.$$
By Definitions \ref{def nest degree} and \ref{def coh degree} this can be rewritten as
\begin{align*}
 \sum\limits_{\mf{w} \in \dnv(\mf{v},\mi_{\mf{v}})}(c_{\mf{w}}(l_{\gw})+l_{\gw})+c_{\mf{v}}(\mi_{\mf{v}}-1)+\mi_{\mf{v}}-1 & =\sum\limits_{\mf{w} \in \dnv(\mf{v},\mi_{\mf{v}})}(c_{\mf{w}}(0)+0)+c_{\mf{v}}(\mi_{\mf{v}})+\mi_{\mf{v}}-1,\\
 \sum\limits_{\mf{w} \in \dnv(\mf{v},\mi_{\mf{v}})}(c_{\mf{w}}(l_{\gw})+l_{\gw})+c_{\mf{v}}(\mi_{\mf{v}}-1) & =c_{\mf{v}}(\mi_{\mf{v}}).
\end{align*}
The last equation follows from Lemma \ref{lem rel nc}.
\end{proof}

\subsection{Definition of $\cal{F}(\beta)$} \label{Sec bypass chain}

In this subsection we define $\cal{F}(\beta) \in \Hom(\cal{F}(\g), \cal{F}(\g'))$ for any nontrivial bypass $\beta \in \Hom(\g,\g')$. Recall from Notation~\ref{notation: beta} and Notation~\ref{notation: beta 2} (see also Figure~\ref{2-2-2}) that a bypass $\beta$ is described by two vectors $\uv, \ov \in V(\g)$, three integers $x,y,z$, and a partition $\guv=\guv^l \sqcup \guv^r$.

\subsubsection{Identity and shuffling indices}

Given $\mf{i} \in \oi(\g)$ and $\mf{j}\in \oi(\g')$, the restriction of $\cal{F}(\beta)$ to $P(\g(\mf{i}))\to P(\g'(\mf{j}))$ will be the zero map or one of two types:
\be
\item[(Id)] the identity map $P(\g(\mf{i})) \ra P(\g'(\mf{j}))$ for a unique $\mf{j} \in \oi(\g')$;
\item[(Sh)] a nonzero map $P(\g(\mf{i})) \ra P(\g'(\mf{j}))$ for a unique $\mf{j} \in \oi(\g')$.
\ee
For each $\mf{i}\in \oi(\g)$, the unique $\mf{j}\in \oi(\g')$, if it exists, satisfies the condition that the path from $\g(\mf{i})$ to $\g'(\mf{j})$ (possibly the identity path) is the shortest nonzero path in $\qne$ starting from $\g(\mf{i})$.
The main distinction between Types (Id) and (Sh) is whether there exists $\mf{j} \in \oi(\g')$ such that $\g'(\mf{j}) = \g(\mf{i})$.
In each type there are two subcases $\uv=\ast$ or $\uv \neq \ast$; see Figures \ref{4-2-1} and \ref{4-2-2}.

\s\n {\bf Type (Id).}

\begin{defn}[Identity index] \label{def ii}
An omitting index $\mf{i} \in \oi(\g)$ is called an {\em identity index} of $\beta$ if either
\be
\item[(a)] $\ul{0} \in \guv^l$; or
\item[(b)] $\ul{0} \notin \guv$ and $\mi_{\uv} \in [[x,y]]$ (i.e., $\guv(\mi_{\uv}) \in \guv^l$).
\ee
Let $II(\beta) \subset \oi(\g)$ denote the set of identity indices of $\beta$.
\end{defn}

If $\mf{i}\in II(\beta)$, then there exists a unique $\mf{j} \in \oi(\g')$ such that $\g'(\mf{j}) = \g(\mf{i})$.

\s\s
\begin{figure}[ht]
\begin{overpic}
[scale=0.3]{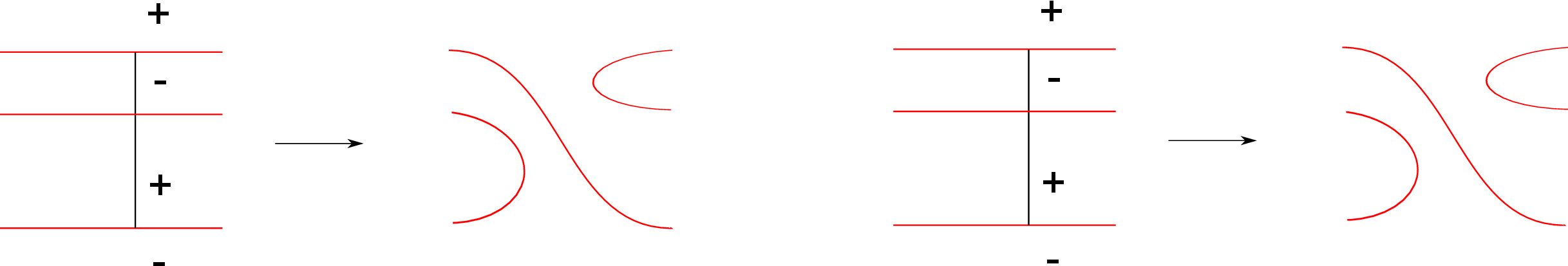}
\put(0,5){$\ul{0}$}
\put(28,5){$\ul{0}$}
\put(5,16){$\mi_{\ov}$}
\put(34,16){$\mj_{\beta(\ov)}$}
\put(57,5){$\mi_{\uv}$}
\put(83,5){$\mj_{\beta(\uv)}$}
\put(62,16){$\mi_{\ov}$}
\put(91,16){$\mj_{\beta(\ov)}$}
\end{overpic}
\caption{Type (Id).  The case $\ul{0} \in \guv^l$ is on the left and the case $\ul{0} \notin \guv$ and $\mi_{\uv} \in [[x,y]]$ is on the right. Here $\mi_{\mf{v}}, \mj_{\mf{w}}$ denote entries of $\mf{i} \in \oi(\g), \mf{j} \in \oi(\g')$ and $\ul{0}$ is the label on $\bdry D^2$.}
\label{4-2-1}
\end{figure}

\s
\n {\bf Type (Sh).}  We first require that $\mf{i} \in \oi(\g)$ satisfies: (1) either $\ul{0} \in \guv^r$; or $\ul{0} \notin \guv$ and $\mi_{\uv} \notin [[x,y]]$; and (2A) $\ov \in \tpv(\g)$. Additional conditions are more involved and will be motivated in the following several paragraphs; the full description will then be given in Definition \ref{def si}.

We first observe that if  $\mf{i} \in \oi(\g)$ satisfies (1), then there is no $\mf{j}\in \oi(\g')$ such that $\g'(\mf{j}) =\g(\mf{i})$: this is because there is no $\g'_{\beta(\uv)}(\mf{j}_{\beta(\uv)})$ if $\g'(\mf{j}) =\g(\mf{i})$. 

\s\s

\begin{figure}[ht]
\begin{overpic}
[scale=0.3]{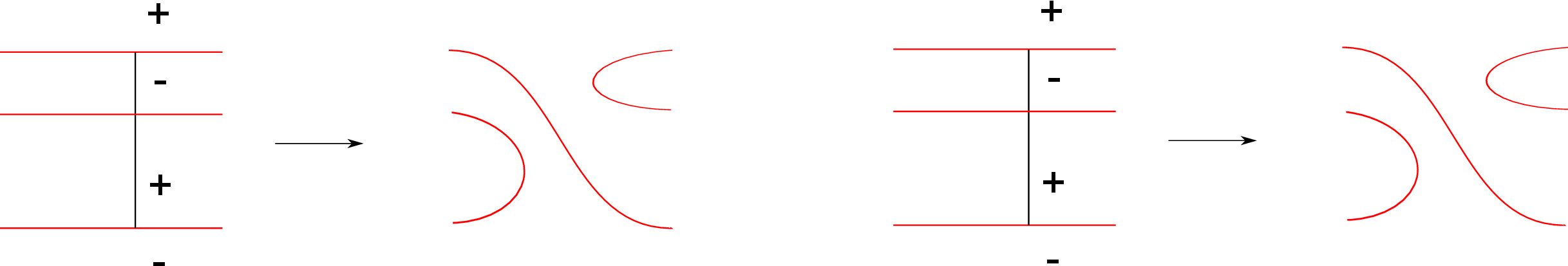}
\put(14,5){$\ul{0}$}
\put(42,5){$\ul{0}$}
\put(0,16){${\scriptstyle z=\mi_{\ov}}$}
\put(0,3){${\scriptstyle x}$}
\put(0,7){${\scriptstyle y}$}
\put(-2,11){${\scriptstyle LSV(\beta)}$}
\put(27,7){${\scriptstyle \mj_{\beta(\uv)}}$}
\put(70,5){${\scriptstyle \mi_{\uv}}$}
\put(98,5){${\scriptstyle \mj_{\beta(\ov)}}$}
\put(56,16){${\scriptstyle z=\mi_{\ov}}$}
\put(84,7){${\scriptstyle \mj_{\beta(\uv)}}$}
\put(56,3){${\scriptstyle x}$}
\put(56,7){${\scriptstyle y}$}
\put(54,11){${\scriptstyle LSV(\beta)}$}
\end{overpic}
\caption{Type (Sh). The case $\ul{0} \in \guv^r$ is on the left and the case $\ul{0} \notin \guv$ and $\mi_{\uv} \notin [[x,y]]$ is on the right. In both cases $\ov \neq \ast$. See Definition \ref{def lsv} for $LSV(\beta)$.}
\label{4-2-2}
\end{figure}

If $\mi\in  \oi(\g)$ satisfies (1) and (2A), the most efficient way to move omitting labels of $\mi$ to omitting labels of $\mj$ is to send $\g_{\ov}(z)$ to $\g_{\uv}(y)$ and leave the other labels intact. In particular, this means that $\mi_{\ov}=z$ and $\g'_{\beta(\uv)}(\mj_{\beta(\uv)})=\guv(y)$; at the same time the omitting labels in $\gw$ may be moved for $\gw$ lying between $\guv(y)$ and $\gov(z)$.

\begin{defn} \label{def lsv}
A vector $\mf{w} \in \tpv(\g)$ is called a {\em left shuffling vector} of $\beta$ if $l_{\gw}>0$, the component $\gw$ is adjacent to $\guv$ and $\gov$, and $\gw \subset [[\guv(y), \gov(z)]]$. Let $LSV(\beta)$ denote the set of left shuffling vectors of $\beta$.
\end{defn}

The condition $\gw \subset [[\guv(y), \gov(z)]]$ implies that the component $\gw$ is to the left to the arc of attachment $\delta$, assuming that $\delta$ is positioned as in Figure~\ref{2-2-2}.


\begin{defn}[Shuffling types] \label{def beta shuff} $\mbox{}$
\be
\item[(Y)] A bypass $\beta \in \Hom(\g,\g')$ is of {\em shuffling type} (Y) if $\guv(y) < \gov(z)$.
\item[(Z)] A bypass $\beta \in \Hom(\g,\g')$ is of {\em shuffling type} (Z) if $\guv(y)> \gov(z)$ and there exist $\mf{w}(\beta) \in LSV(\beta)$ and $0<k(\beta)\leq l_{\g_{\mf{w}(\beta)}}$ such that $\guv \cup \gov \subset (\g_{\mf{w}(\beta)}(k(\beta)-1), \g_{\mf{w}(\beta)}(k(\beta)))$.
\ee
\end{defn}

For type (Y), neither $\guv$ nor $\gov$ directly nests inside any $\mf{w} \in LSV(\beta)$.  For type (Z), $\guv$ and $\gov$ directly nest inside a unique $\mf{w}(\beta) \in LSV(\beta)$.

\begin{rmk} \label{rmk: not always in Y and Z} $\mbox{}$
\be
\item The two shuffling types (Y) and (Z) are mutually exclusive but {\em some bypasses do not belong to either type} when the conditions $\mf{w}\in \tpv(\g)$ and $l_{\gw}>0$ in the definition of a left shuffling vector are not met.   This happens when $\Gamma_\ast$ is adjacent to $\guv$ and $\gov$ and $\Gamma_\ast \subset [[\guv(y), \gov(z)]]$.

\item If $\beta$ is of shuffling type (Z), then the pair $(\mf{w}(\beta), k(\beta))$ is unique. We use $(\mf{w}(\beta), k(\beta))$ to denote this pair.
\ee
\end{rmk}

\begin{defn}[Shuffling index] \label{def si}
An omitting index $\mf{i} \in OI(\g)$ is a {\em shuffling index} of $\beta$ if the following conditions hold:
\be
\item either
\be
\item $\ul{0} \in \guv^r$; or
\item $\ul{0} \notin \guv$ and $\mi_{\uv} \notin [[x,y]]$ (i.e., $\guv(\mi_{\uv}) \in \guv^r$);
\ee
\item $\ov \in \tpv(\g)$ and $\mi_{\ov}=z$;
\item $\mi_{\mf{w}}=0$ for all $\mf{w} \in LSV(\beta)$ if $\beta$ is of shuffling type (Y);
\item $\mi_{\mf{w}(\beta)}=k(\beta)$ and $\mi_{\mf{w}}=0$ for all $\mf{w} \in LSV(\beta) \backslash \{\mf{w}(\beta)\}$ if $\beta$ is of shuffling type (Z).
\ee
Let $SI(\beta) \subset \oi(\g)$ denote the set of shuffling indices of $\beta$.
\end{defn}

\begin{figure}[ht]
\begin{overpic}
[scale=0.2]{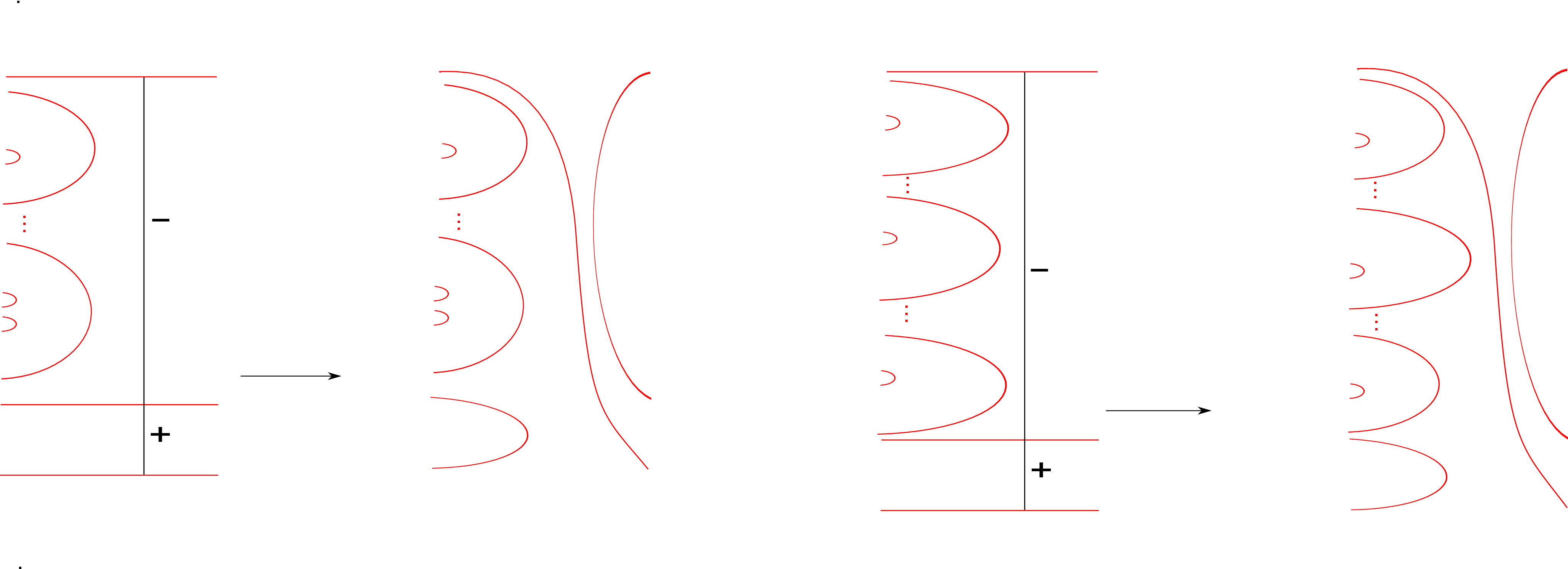}
\put(5,2){$\g(\mi)$}
\put(30.5,2){$\g'(\bi)$}
\put(61,0){$\g(\mi)$}
\put(90,0){$\g'(\bi)$}
\put(0,9){$\scriptstyle{y}$}
\put(-4,32){$\scriptstyle{\mi_{\ov}=z}$}
\put(13,8){$\scriptstyle{\mi_{\uv}}$}
\put(41,8){$\scriptstyle{\bi_{\beta(\ov)}}$}
\put(69,5){$\scriptstyle{\mi_{\uv}}$}
\put(100,5){$\scriptstyle{\bi_{\beta(\ov)}}$}
\put(2,15){$\scriptstyle{\mw^1}$}
\put(2,27){$\scriptstyle{\mw^k}$}
\put(0,13){$\scriptscriptstyle{0}$}
\put(0,24){$\scriptscriptstyle{0}$}
\put(25,9){$\scriptscriptstyle{\bi_{\beta(\uv)}}$}
\put(28,19){$\scriptscriptstyle{l^1}$}
\put(28,28){$\scriptscriptstyle{l^k}$}
\put(58,20){$\scriptstyle{\mw(\beta)}$}
\put(58,11){$\scriptstyle{\mw^{s+1}}$}
\put(59,28){$\scriptstyle{\mw^{s}}$}
\put(54,18){$\scriptscriptstyle{k(\beta)}$}
\put(56,7){$\scriptstyle{y}$}
\put(56,9){$\scriptscriptstyle{0}$}
\put(56,25.5){$\scriptscriptstyle{0}$}
\put(53,33){$\scriptstyle{\mi_{\ov}=z}$}
\put(82,6){$\scriptscriptstyle{\bi_{\beta(\uv)}}$}
\put(85,12.5){$\scriptscriptstyle{l^{s+1}}$}
\put(84,21){$\scriptscriptstyle{k(\beta)-1}$}
\put(87,29){$\scriptscriptstyle{l^{s}}$}
\end{overpic}
\caption{Examples of $SI(\beta)$, where $\beta(\mi)$ is defined in Definition \ref{def beta ind}.
A bypass of shuffling type (Y), $\guv(y) < \gov(z)$, is on the left, where $LSV(\beta)=\{\mw^1, \dots, \mw^k\}$ and $l^t=l_{\g_{\mw^t}}$.
A bypass of shuffling type (Z), $\guv(y) > \gov(z)$, with the pair $(\mw(\beta),k(\beta))$ is on the right, where $LSV(\beta)=\{\wb, \mf{w}^1,\dots, \mf{w}^s,\dots, \mf{w}^{k-1}\}$ and $l^t=l_{\g_{\mw^t}}$.  The entries $\mi_{\mv}$ and $\bi_{\beta(\mv)}$ are drawn for $\mv \in LSV(\beta) \sqcup \{\uv, \ov\}$ for both types of shuffling.}
\label{4-2-7}
\end{figure}

In the special case where $\ul{0} \in \guv$ or $\gov$, the following descriptions of $II(\beta)$ and $SI(\beta)$ are straightforward.

\begin{lemma} \label{lem IS ind}$\mbox{}$
\be
\item If $\ul{0} \in \guv^l$ then $II(\beta)=OI(\g)$ and $SI(\beta)=\es$.
\item If $\ul{0} \in \guv^r$, then $II(\beta)=\es$.
\item If $\ul{0} \in \gov$, then $SI(\beta)=\es$.
\ee
\end{lemma}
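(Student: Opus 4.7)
The plan is to prove all three parts by direct unpacking of Definitions~\ref{def ii} and~\ref{def si}, exploiting the fact that the based component $\g_\ast$ is by definition the unique component of $R_+(\g)$ containing $\ul{0}$, so any component containing $\ul{0}$ must have vector $\ast \notin \tpv(\g)$. I do not expect a genuine obstacle here; the only thing to be careful about is that $\guv^l$ and $\guv^r = \guv \setminus \guv^l$ form a disjoint partition of $\guv$ (by Notation~\ref{notation: beta 2}), so the three hypotheses $\ul{0} \in \guv^l$, $\ul{0} \in \guv^r$, $\ul{0} \in \gov$ are mutually exclusive and each automatically determines which defining condition of $II(\beta)$ or $SI(\beta)$ is forced to hold or fail.

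For part~(1), I would observe that condition (a) of Definition~\ref{def ii} depends only on $\beta$, not on $\mi$, so under the hypothesis $\ul{0} \in \guv^l$ every $\mi \in \oi(\g)$ automatically lies in $II(\beta)$; hence $II(\beta) = \oi(\g)$. For the second assertion, since $\guv^l \cap \guv^r = \es$ we have $\ul{0} \notin \guv^r$, ruling out Definition~\ref{def si}(1a); and since $\ul{0} \in \guv$ we have $\ul{0} \in \guv$, ruling out Definition~\ref{def si}(1b). Thus no $\mi$ can satisfy condition~(1) of Definition~\ref{def si}, giving $SI(\beta) = \es$.

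For part~(2), the hypothesis $\ul{0} \in \guv^r$ gives simultaneously $\ul{0} \notin \guv^l$ (so Definition~\ref{def ii}(a) fails) and $\ul{0} \in \guv$ (so Definition~\ref{def ii}(b) fails). Hence $II(\beta) = \es$. For part~(3), the hypothesis $\ul{0} \in \gov$ forces $\gov = \g_\ast$, i.e.\ $\ov = \ast$; but condition~(2) of Definition~\ref{def si} demands $\ov \in \tpv(\g) = V(\g) \setminus \{\ast\}$, which is impossible. Hence $SI(\beta) = \es$, completing the proof.
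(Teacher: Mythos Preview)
Your proof is correct and is precisely the direct verification from Definitions~\ref{def ii} and~\ref{def si} that the paper has in mind; the paper in fact omits the proof, describing the lemma as ``straightforward''. Each step of your argument unpacks the defining conditions exactly as intended, and your observation that $\ul{0} \in \gov$ forces $\ov = \ast \notin \tpv(\g)$ is the key point for part~(3).
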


By Definitions \ref{def ii} and \ref{def si}(1), $II(\beta) \cap SI(\beta)=\es$.
The disjoint union $II(\beta) \sqcup SI(\beta)$ is always nonempty, but is not equal to $\oi(\g)$ in general.


\subsubsection{Definition of the chain map} \label{subsubsection: defn of chain map}

The bypass $\beta$ changes omitting indices in $II(\beta) \sqcup SI(\beta) \subset OI(\g)$ to those in $OI(\g')$.

\begin{notation}
We abuse notation and use $\beta$ to denote three related things:
\be
\item a bypass;
\item the map $V(\g)\to V(\g')$ from Equation~\eqref{eqn: beta}; and
\item the map $II(\beta) \sqcup SI(\beta) \ra OI(\g')$, defined below.
\ee
\end{notation}

\begin{defn} \label{def beta ind}
For $\beta \in \Hom(\g,\g')$, define $\beta: II(\beta) \sqcup SI(\beta) \ra OI(\g')$ by:

\s
\n {\bf (C1).} If $\mf{i} \in II(\beta)$, then define $\beta(\mf{i}) \in OI(\g')$ by $\beta(\mf{i})_{\beta(\mf{v})} \in \Z_+$ for $\beta(\mf{v})\neq \ast$ such that
$$\g'_{\beta(\mf{v})}(\beta(\mf{i})_{\beta(\mf{v})})=\gv(\mi_{\mf{v}}).$$

\n {\bf (C2).} If $\mf{i} \in SI(\beta)$ and $\beta$ is of shuffling type (Y), then define $\beta(\mf{i}) \in OI(\g')$ by $\beta(\mf{i})_{\beta(\mf{v})} \in \Z_+$ for $\beta(\mf{v}) \neq \ast$ such that
$$ \g'_{\beta(\mf{v})}(\beta(\mf{i})_{\beta(\mf{v})})= \left\{
\begin{array}{cl}
\guv(y) & \mbox{if} \hspace{0.1cm} \mf{v}=\uv, \\
\guv(\mi_{\uv}) & \mbox{if} \hspace{0.1cm} \mf{v}=\ov \mbox{ and } \beta(\ov) \neq \ast,\\
\gv(l_{\gv}) & \mbox{if} \hspace{0.1cm} \mf{v} \in LSV(\beta), \\
\gv(\mi_{\mf{v}}) & \mbox{otherwise.}
\end{array}\right.
$$

\n {\bf (C3).} If $\mf{i} \in SI(\beta)$ and $\beta$ is of shuffling type (Z), then define $\beta(\mf{i}) \in OI(\g')$ by $\beta(\mf{i})_{\beta(\mf{v})} \in \Z_+$ for $\beta(\mf{v}) \neq \ast$ such that
$$ \g'_{\beta(\mf{v})}(\beta(\mf{i})_{\beta(\mf{v})})= \left\{
\begin{array}{cl}
\guv(y) & \mbox{if} \hspace{0.1cm} \mf{v}=\uv, \\
\guv(\mi_{\uv}) & \mbox{if} \hspace{0.1cm} \mf{v}=\ov \mbox{ and } \beta(\ov) \neq \ast,\\
\g_{\mf{w}(\beta)}(k(\beta)-1) & \mbox{if} \hspace{0.1cm} \mf{v} =\mf{w}(\beta), \\
\gv(l_{\gv}) & \mbox{if} \hspace{0.1cm} \mf{v} \in LSV(\beta), \mf{v} \neq \mf{w}(\beta),\\
\gv(\mi_{\mf{v}}) & \mbox{otherwise.}
\end{array}\right.
$$
\end{defn}

See Figure \ref{4-2-7} for examples. We verify that Definition \ref{def beta ind} is well-defined, i.e., $\iv$ exists (equivalently $\mf{v}\not=\ast$) on the right-hand side of the equations. For (C1), $\ul{0} \notin \guv^r$ since $\mf{i} \in II(\beta)$. Hence $\beta(\ast)=\ast$ by Remark \ref{rmk beta ast}. Then $\beta(\mf{v})\neq \ast$ implies that $\mf{v} \neq \ast$.  For the second rows of (C2) and (C3), $0 \notin \guv^r$ since $\beta(\ov)\neq \ast$; and $0 \notin \guv^l$ since $\mf{i} \in SI(\beta)$.
Hence $\uv \neq \ast$.

\begin{rmk} \label{rmk beta ind} $\mbox{}$
\be
\item[(i)] 
An effective way to understand $\beta$ is to track the movement of the omitting labels from $\g(\mi)$ to $\g'(\bi)$.
\item[(ii)] Since $\beta: V(\g) \ra V(\g')$ only changes the two components $\guv, \gov$ by definition, we have
$$\beta(\mf{i})_{\beta(\mf{v})}=\mi_{\mf{v}}$$
for $\mf{v} \notin \{\uv,\ov\}$ if $\mf{i} \in II(\beta)$, and for $\mf{v} \notin \{\uv,\ov\}\cup LSV(\beta)$ if $\mf{i} \in SI(\beta)$.
\item[(iii)] The map $\beta: II(\beta) \sqcup SI(\beta) \ra OI(\g')$ is injective.
\ee
\end{rmk}

\n
{\bf Interpretation in terms of negative regions.} For $\mi\in SI(\beta)$ we give a description of $\beta(\mi)$ in terms of the component $\overline{\frak c}$ of $R_-(\g)$ that lies between $\uv$ and $\ov$.  Let $\overline{\frak c}^l$ be the left-hand side of $\overline{\frak c}$ cut along the arc of attachment for $\beta$, i.e., the region that lies between $\guv^l$ and $\gov$ as in Figure \ref{2-2-2}. 
Let $\mi\in \oi(\g)$.  Then $\mi\in SI(\beta)$ if and only if the following hold:
\be
\item[(N1)] if $\g_{\mv}\not=\g_{\uv}$ is a component of $R_+(\g)$ which has boundary $\gamma_{\mv}$ in common with $\overline{\frak c}^l$, then $\mv\not=\ast$;
\item[(N2)] the omitting label of any $\mv$ satisfying (N1) is the label of the interval of $\bdry D^2$ which is adjacent to the initial point of $\gamma_{\mv}$;
\item[(N3)] neither $\ul{0}$ nor the omitting label of $\uv$ is on the left-hand side of $\uv$.
\ee
If $\mf{i}\in \oi(\g)$, then $\beta(\mf{i})$ is obtained from $\mf{i}$ by removing the label of each $\mv$ satisfying (N1) and, for each $\mv\not=\ov$ which has boundary  $\gamma_{\mv}$ in common with $\overline{\frak c}^l$, adding the label which is adjacent to the terminal point of $\gamma_\mv$.

\s\n
{\bf A closer look at $SI(\beta)$.}
The conditions $\mi_{\mw}=0$ in Definitions \ref{def diff vector}(2) and \ref{def si}(3),(4) are similar, and there is a good reason for this. 
Let $\g''$ be the third dividing set in the bypass triangle
$$\g \xra{\beta} \g' \xra{\beta'} \g''$$
induced by $\beta$. For each $\mf{i} \in SI(\beta)$ there exists a unique $\mf{k} \in OI(\g'')$ such that $\g''(\mf{k})=\g(\mf{i})$; see Lemma \ref{lem ind triangle}. We will see in the proof of Lemma \ref{lem comp triangle} that there exists $\mf{u} \in \sv_{\g''}(\mf{k})$ such that $\g''(\mf{u} | \mf{k})=\g'(\beta(\mf{i}))$. As a result, the restriction $\cf(\beta)|_{\g(\mi)}: P(\g(\mi)) \ra P(\g'(\bi))$ coincides with part of the differential $d_{\g''}(\mf{k}, \mf{u}): P( \g''(\mf{k})) \ra P(\g''(\mf{u}|\mf{k}))$. This is the key to proving that
$$\cf(\g) \xra{\cf(\beta)} \cf(\g') \xra{\cf(\beta')} \cf(\g'')$$
is a distinguished triangle in Proposition \ref{prop triangle}; see Examples \ref{ex t1} and \ref{ex t2}.

\begin{lemma}  \label{lem beta path} $\mbox{}$
\begin{enumerate}
\item If $\mf{i} \in II(\beta)$, then $\g'(\bi)=\g(\mf{i})$.
\item If $\mf{i} \in SI(\beta)$, then $\Hom(\g(\mf{i}), \g'(\bi))\not=0$.
\ee
\end{lemma}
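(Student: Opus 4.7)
Part~(1) will follow at once from the definitions. Since $\mf{i}\in II(\beta)$ excludes $\ul{0}\in\guv^r$, Remark~\ref{rmk beta ast} gives $\beta(\ast)=\ast$, and so $\beta$ restricts to a bijection $\tpv(\g)\to\tpv(\g')$. Clause~(C1) of Definition~\ref{def beta ind} then reads $\g'_{\beta(\mv)}(\bi_{\beta(\mv)})=\gv(\mi_{\mv})$ for every $\mv\in\tpv(\g)$, so the two sets of omitting labels
\[
\{\gv(\mi_{\mv}):\mv\in\tpv(\g)\}\quad\text{and}\quad\{\g'_{\mw}(\bi_{\mw}):\mw\in\tpv(\g')\}
\]
agree as subsets of $\{\ul{0},\dots,\ul{n}\}$. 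Taking complements gives $\g(\mi)_\ast=\g'(\bi)_\ast$, and since a basic dividing set is determined by its based component, $\g(\mi)=\g'(\bi)$.

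For part~(2) the plan is to apply the Tightness Criterion (Proposition~\ref{prop: tightness criterion}): because both $\g(\mi)$ and $\g'(\bi)$ lie in $\bne$, it suffices to exhibit a sequence $\ul{0}<\ul{s_1}<\ul{s'_1}<\cdots<\ul{s_k}<\ul{s'_k}$ with $\g(\mi)_\ast\cap[\ul{s_i},\ul{s'_i}]=\{\ul{s_i}\}$ and $\g'(\bi)_\ast=(\g(\mi)_\ast\setminus\{\ul{s_i}\})\cup\{\ul{s'_i}\}$. Reading off Definition~\ref{def beta ind}(C2) or (C3) position by position and cancelling the entries common to $\mf{i}$ and $\bi$, the symmetric difference computes to
\begin{align*}
\g(\mi)_\ast\setminus\g'(\bi)_\ast &= \{\guv(y)\}\cup\{\g_{\mw}(l_{\g_{\mw}}):\mw\in LSV(\beta)\},\\
\g'(\bi)_\ast\setminus\g(\mi)_\ast &= \{\gov(z)\}\cup\{\g_{\mw}(0):\mw\in LSV(\beta)\}
\end{align*}
in type~(Y), with an additional pair $\g_{\wb}(k(\beta))\leftrightarrow\g_{\wb}(k(\beta)-1)$ in type~(Z). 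This identification must be checked in the four subcases distinguished by whether $\ul{0}\in\guv^r$ and whether $\beta(\ov)=\ast$; the bijectivity of $\beta\colon V(\g)\to V(\g')$ and Remark~\ref{rmk beta ast} ensure that the sizes match in every case.

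The pairing then sweeps these labels left-to-right. For type~(Y) the adjacency requirement of Definition~\ref{def lsv} together with $\g_{\mw}\subset(\guv(y),\gov(z))$ force the components $LSV(\beta)=\{\mw^1,\dots,\mw^k\}$ to interleave linearly,
\[
\guv(y)<\g_{\mw^1}(0)\le\g_{\mw^1}(l_{\g_{\mw^1}})<\g_{\mw^2}(0)<\cdots<\g_{\mw^k}(l_{\g_{\mw^k}})<\gov(z),
\]
producing the pairs $(\guv(y),\g_{\mw^1}(0))$, $(\g_{\mw^t}(l_{\g_{\mw^t}}),\g_{\mw^{t+1}}(0))$ for $1\le t\le k-1$, and $(\g_{\mw^k}(l_{\g_{\mw^k}}),\gov(z))$. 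The vanishing $\mi_{\mw}=0$ for $\mw\in LSV(\beta)$ from Definition~\ref{def si} guarantees that every intermediate label $\g_{\mw}(j)$, $1\le j\le l_{\g_{\mw}}$, belongs to $\g(\mi)_\ast$, so each pair interval meets $\g(\mi)_\ast$ only at its left endpoint. Type~(Z) is handled by splitting the main pair across $\wb$: since $\guv\cup\gov\subset(\g_{\wb}(k(\beta)-1),\g_{\wb}(k(\beta)))$ by Definition~\ref{def beta shuff}(Z), the replacement pairs $(\g_{\wb}(k(\beta)),\guv(y))$ and $(\gov(z),\g_{\wb}(k(\beta)-1))$ are each increasing, and the remaining elements of $LSV(\beta)\setminus\{\wb\}$ are treated exactly as in type~(Y) on the two segments flanking $\g_{\wb}$. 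The main technical obstacle I anticipate is the bookkeeping of labels belonging to components nested strictly inside some $\mw^t$: these deeper components are not constrained by Definition~\ref{def si}, so one must use planarity of $D^2$ to argue that all their labels lie in a single closed subinterval $[\g_{\mw^t}(j-1),\g_{\mw^t}(j)]$ and hence cannot intrude into the open pair intervals constructed above. Once this is checked, the Tightness Criterion delivers $\Hom(\g(\mi),\g'(\bi))\neq 0$.
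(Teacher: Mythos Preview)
Your approach is essentially the paper's own: for part~(1) you unpack Definition~\ref{def beta ind}(C1) (the paper just says ``immediate''), and for part~(2) you apply the Tightness Criterion after computing the symmetric difference of $\g(\mi)_\ast$ and $\g'(\bi)_\ast$ and chaining through the left shuffling vectors. Your identification of the open interval condition for deeper nested components as the one genuinely nontrivial verification is apt; the paper simply asserts $\g(\mf{i})_\ast\cap[a,b]=\{a\}$ without comment.

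There is, however, a slip in your type~(Z) discussion. You write that the replacement pairs $(\g_{\wb}(k(\beta)),\guv(y))$ and $(\gov(z),\g_{\wb}(k(\beta)-1))$ are ``each increasing,'' but since $\guv\cup\gov\subset(\g_{\wb}(k(\beta)-1),\g_{\wb}(k(\beta)))$ we have $\guv(y)<\g_{\wb}(k(\beta))$ and $\g_{\wb}(k(\beta)-1)<\gov(z)$, so both pairs as you wrote them are decreasing. The correct chain endpoints (in the order $(\text{removed},\text{added})$ required by Proposition~\ref{prop: tightness criterion}) are $(\g_{\wb}(k(\beta)-1),\gov(z))$ and $(\guv(y),\g_{\wb}(k(\beta)))$; the remaining $\mw\in LSV(\beta)\setminus\{\wb\}$ then interpolate within these two chains exactly as in the paper's Equation~\eqref{eqn: chain 2}. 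Relatedly, your phrasing ``an additional pair'' for $\wb$ is slightly off: $\wb\in LSV(\beta)$, so the pair for $\wb$ is not additional but rather \emph{replaces} the would-be pair $(\g_{\wb}(l_{\g_{\wb}}),\g_{\wb}(0))$ from the type~(Y) template, because $\mi_{\wb}=k(\beta)$ rather than $0$. Once these orientations are corrected, your argument goes through and matches the paper's.
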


\begin{proof}
(1) is immediate from Definition \ref{def beta ind}(C1).

\s\n
(2) Suppose that $\beta$ is of shuffling type (Y).
Let $LSV(\beta)=\{\mf{w}^1,\dots, \mf{w}^k\}$ such that
$$\guv(y) < \g_{\mf{w}^{1}}(0); \quad \g_{\mf{w}^{j}}(l_{\g_{\mf{w}^{j}}})<\g_{\mf{w}^{j+1}}(0), ~1 \leq j \leq k-1; \quad
\g_{\mf{w}^{k}}(l_{\g_{\mf{w}^{k}}}) < \gov(z).$$
There are $k+1$ disjoint closed intervals:
\begin{equation} \label{eqn: chain 1}
[\guv(y) , \g_{\mf{w}^{1}}(0)]; \quad [\g_{\mf{w}^{j}}(l_{\g_{\mf{w}^{j}}}),\g_{\mf{w}^{j+1}}(0)], ~1 \leq j \leq k-1; \quad [\g_{\mf{w}^{k}}(l_{\g_{\mf{w}^{k}}}) , \gov(z)].
\end{equation}
The intersections $\g(\mf{i})_{\ast} \cap [a,b]=\{a\}$, where $[a,b]$ is any of the $k+1$ intervals. We have
\begin{gather*}
\g'(\bi)_{\ast}=\g(\mf{i})_{\ast} \backslash  \left(\{\guv(y)\} \cup \{\g_{\mf{w}^{j}}(l_{\g_{\mf{w}^{j}}})\}_{j=1}^k\right) \cup \left(\{\g_{\mf{w}^{j}}(0)\}_{j=1}^k \cup \{\gov(z)\}\right)
\end{gather*}
and $\Hom(\g(\mf{i}), \g'(\bi)\not=0$ by Proposition \ref{prop: tightness criterion}.

Suppose that $\beta$ is of shuffling type (Z).
Let $LSV(\beta)=\{\wb, \mf{w}^1,\dots, \mf{w}^s,\dots, \mf{w}^{k-1}\}$ such that $\g_{\mf{w}^{j}}(l_{\g_{\mf{w}^{j}}})<\g_{\mf{w}^{j+1}}(0)$
for $1 \leq j \leq k-2$, $\g_{\mf{w}^{s}}(l_{\g_{\mf{w}^{s}}}) < \gov(z)$, and $\guv(y) < \g_{\mf{w}^{s+1}}(0)$.
Moreover, $\g_{\wb}(k(\beta)-1) < \g_{\mf{w}^{1}}(0)$, and $\g_{\mf{w}^{k-1}}(l_{\g_{\mf{w}^{k-1}}}) < \g_{\wb}(k(\beta)).$
Then there are $k+1$ disjoint closed intervals:
\begin{gather}\label{eqn: chain 2}
[\g_{\wb}(k(\beta)-1) , \g_{\mf{w}^{1}}(0)]; \quad [\g_{\mf{w}^{j}}(l_{\g_{\mf{w}^{j}}}),\g_{\mf{w}^{j+1}}(0)], ~1 \leq j \leq s-1; \quad [\g_{\mf{w}^{s}}(l_{\g_{\mf{w}^{s}}}) , \gov(z)];\\
\nonumber
[\guv(y) , \g_{\mf{w}^{s+1}}(0)]; \quad [\g_{\mf{w}^{j}}(l_{\g_{\mf{w}^{j}}}),\g_{\mf{w}^{j+1}}(0)], ~s+1 \leq j \leq k-2; \quad [\g_{\mf{w}^{k-1}}(l_{\g_{\mf{w}^{k-1}}}) , \g_{\wb}(k(\beta))].
\end{gather}
Again $\Hom(\g(\mf{i}), \g'(\bi)\not=0$ follows from Proposition~\ref{prop: tightness criterion}.
\end{proof}

\begin{rmk} \label{rmk beta path}
With a little more work one can show that the nonzero path $\g(\mi) \ra \g'(\bi)$ is the unique minimal element of $\{\mbox{nonzero path}~~ \g(\mi) \ra \g'(\mj) ~|~ \mi \in \oi(\g), \mj \in \oi(\g')\}$.
This is actually the motivation behind Definitions \ref{def si} and \ref{def beta ind}.
\end{rmk}

The $k+1$ closed intervals in Equation~\eqref{eqn: chain 1} or \eqref{eqn: chain 2} are called the {\em chain intervals} of $\beta$.

Let $t(\beta,\mf{i}) \in \rne$ denote the idempotent of $\g(\mf{i})$ if $\mf{i} \in II(\beta)$, or the generator of $\Hom(\g(\mf{i}), \g'(\bi))$ if $\mf{i} \in SI(\beta)$.

\begin{defn} \label{def fb}
For a nontrivial bypass morphism $\beta \in \Hom(\g,\g')$, define a map of $\rne$-modules $\cal{F}(\beta): \cal{F}(\g) \ra \cal{F}(\g')$ by
$$\cal{F}(\beta)=\sum\limits_{\mf{i}\in II(\beta) \sqcup SI(\beta)}\cal{F}(\beta,\mf{i}),$$
where $\cal{F}(\beta,\mf{i}): P(\g(\mf{i})) \ra P(\g'(\bi))$ is the right multiplication by $t(\beta,\mf{i})$ for $\mf{i} \in II(\beta) \sqcup SI(\beta)$.
\end{defn}

\subsubsection{Some examples}

Before proving that $\cal{F}(\beta)$ is a chain map, we look at some examples.

\begin{example} \label{ex t1}
Consider the bypass triangle $\g \xra{\beta} \g' \xra{\beta'} \g'' \xra{\beta''} \g $ in Figure \ref{4-2-3}.
By definition,
$$\xymatrix{
\cf(\g): \ar[d]_{\cf(\beta)} & P(\ul{1},\ul{2}) \ar[r] \ar[rd] \ar @{} [drr] |{(A), {\underline{\frak c}}} & P(\ul{1},\ul{3}) \ar @{} [drr] |{(B), {\overline{\frak c}}} \ar[rd] \ar[r] & P(\ul{2},\ul{4}) \ar[rd] \ar[r] & P(\ul{3},\ul{4}) \\
\cf(\g'): \ar[d]_{\cf(\beta')}& & P(\ul{1},\ul{2}) \ar @{} [dr] |{(B), {\overline{\frak c}}} \ar[r] \ar[d] & P(\ul{1},\ul{4}) \ar @{} [dr] |{(A), {\underline{\frak c}}} \ar[r] \ar[d] & P(\ul{2},\ul{4}) \ar[d] \\
\cf(\g''): \ar[d]_{\cf(\beta'')}& & P(\ul{1},\ul{3}) \ar @{} [dr] |{(A), {\underline{\frak c}}}\ar[d] \ar[r] & P(\ul{1},\ul{4}) \ar @{} [dr] |{(B), {\overline{\frak c}}} \ar[d] \ar[r] & P(\ul{3},\ul{4}) \ar[d] \\
\cf(\g):  & P(\ul{1},\ul{2}) \ar[r] & P(\ul{1},\ul{3}) \ar[r] & P(\ul{2},\ul{4}) \ar[r] & P(\ul{3},\ul{4})
}$$

The labels (e.g., (A), $\underline{\frak c}$) above correspond to the cases (e.g., Case (A), ${\frak c}=\underline{\frak c}$) in the proof of Lemma \ref{lem beta chain}. All the squares are commutative by the commutativity relation of $\rne$.  Therefore, $\cf(\beta)$, $\cf(\beta')$, and $\cf(\beta'')$ are all chain maps and the sum of their degrees is $1$. Moreover, $\cf(\g) \ra \cf(\g') \ra\cf(\g'')$ is a distinguished triangle in $\dne$ up to grading shift.

\begin{figure}[ht]
\begin{overpic}
[scale=0.28]{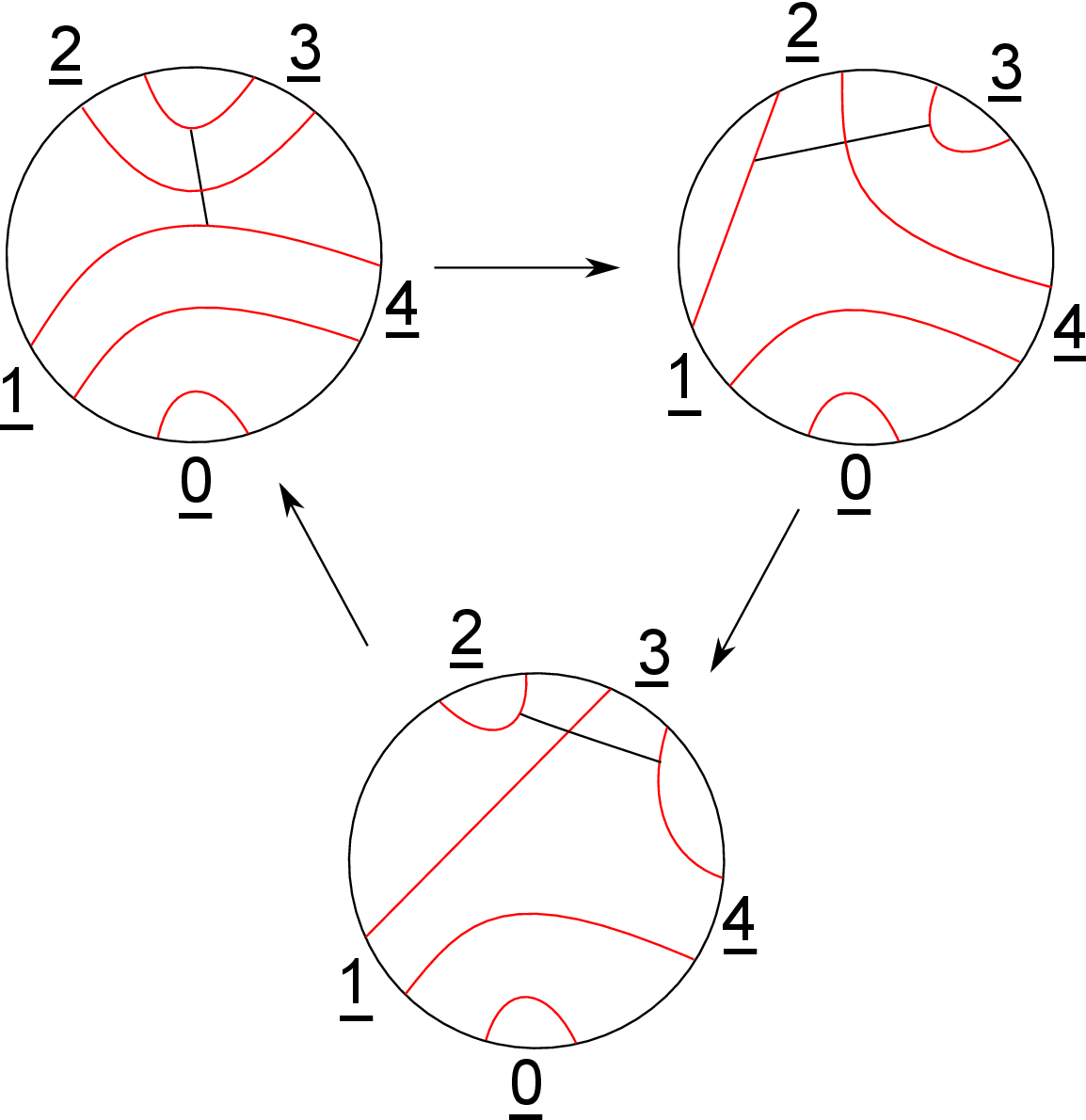}
\put(6,50){$\g$}
\put(90,50){$\g'$}
\put(36,-2){$\g''$}
\put(45,80){$\beta$}
\put(70.5,42){$\beta'$}
\put(21,41.5){$\beta''$}
\end{overpic}
\caption{}
\label{4-2-3}
\end{figure}

We now check the definition of $\cf(\beta)$ in more detail.
We have $\guv^l=\{\ul{3}\}, \guv^r=\{\ul{2}\}, \gov=\{\ul{1},\ul{4}\}$, and $x=y=z=1$.
By definition $\beta$ is of shuffling type (Y) since $\guv(1) < \gov(1)$. 
\be
\item
For $P(\ul{1},\ul{2}), P(\ul{2},\ul{4}) \in \cf(\g)$, the corresponding $\mf{i} \in II(\beta)$ since $\guv(\mi_{\uv})=\ul{3} \in \guv^l$.
\item
For $P(\ul{1},\ul{3}) \in \cf(\g)$, the corresponding $\mf{i} \in SI(\beta)$ since $\guv(\mi_{\uv})=\ul{2} \in \guv^r$, $\gov(\mi_{\ov})=\ul{4}=\gov(z)$ and $LSV(\beta)=\es$.
The restriction $\cf(\beta)|_{P(\ul{1},\ul{3})}: P(\ul{1},\ul{3}) \ra P(\ul{1},\ul{4})$ corresponds to $d_{\g''}(\mf{k}, \mf{u})$, where $\g''(\mf{k})=\g(\mf{i})$ and $\mf{u}=\beta''^{-1}(\ov)$.
\item
For $P(\ul{3},\ul{4}) \in \cf(\g)$, the corresponding $\mf{i} \notin II(\beta)$ since $\guv(\mi_{\uv})=\ul{2} \in \guv^r$, and $\mf{i} \notin SI(\beta)$ since $\gov(\mi_{\ov})=\ul{1} \neq \gov(z)$. Hence $\mf{i} \notin II(\beta) \sqcup SI(\beta)$.
\ee
\end{example}

All the three bypasses in Example \ref{ex t1} are of shuffling type (Y).
We consider a bypass of shuffling type (Z) in the next example.

\begin{example} \label{ex t2}
Consider the bypass triangle $\g \xra{\beta} \g' \xra{\beta'} \g'' \xra{\beta''} \g $ in Figure \ref{4-2-4}.
By definition,
$$\xymatrix{
\cf(\g): \ar[d]_{\cf(\beta)}  & P(\ul{1},\ul{3}) \ar[r] \ar[rd] & P(\ul{1},\ul{4}) \ar[r] \ar[rd] & P(\ul{3},\ul{5}) \ar[rd] \ar[r] & P(\ul{4},\ul{5}) \\
\cf(\g'): \ar[d]_{\cf(\beta')} & P(\ul{1},\ul{2}) \ar[d] \ar[r] & P(\ul{1},\ul{3}) \ar[d] \ar[r] & P(\ul{2},\ul{5}) \ar[d] \ar[r] & P(\ul{3},\ul{5}) \ar[d]\\
\cf(\g''): \ar[d]_{\cf(\beta'')} & P(\ul{1},\ul{2}) \ar[r]  \ar[d] & P(\ul{1},\ul{4})  \ar[d] \ar[r] & P(\ul{2},\ul{5})  \ar[d] \ar[r] & P(\ul{4},\ul{5}) \ar[d] \\
\cf(\g): & P(\ul{1},\ul{3}) \ar[r]  & P(\ul{1},\ul{4}) \ar[r] & P(\ul{3},\ul{5}) \ar[r] & P(\ul{4},\ul{5})
}$$

The maps $\cf(\beta), \cf(\beta'), \cf(\beta'')$ are chain maps, the sum of their degrees is $1$, and $\cf(\g) \ra \cf(\g') \ra\cf(\g'')$ is a distinguished triangle in $\dne$ up to grading shift.
\begin{figure}[ht]
\begin{overpic}
[scale=0.25]{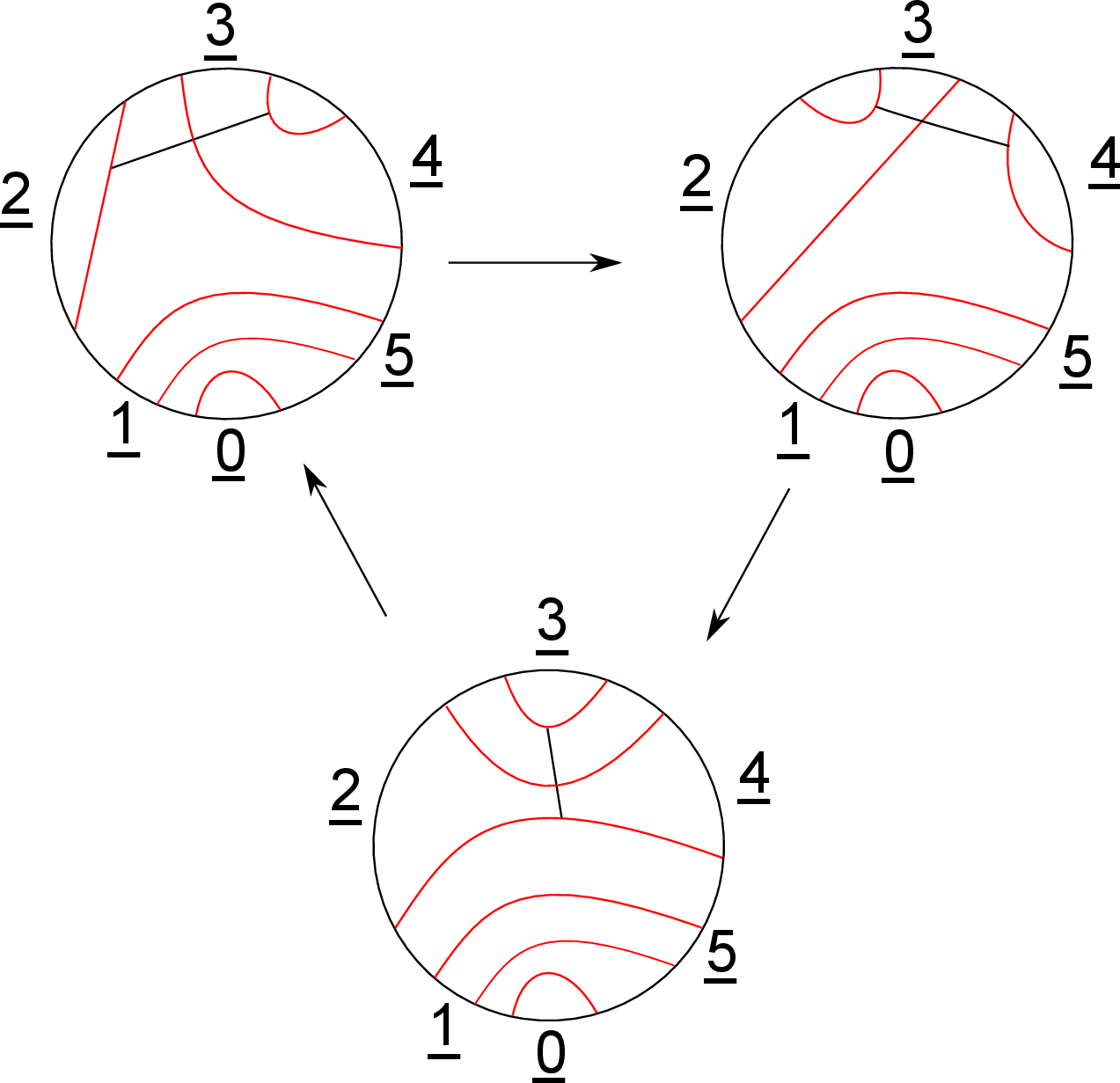}
\put(2,50){$\g$}
\put(93,50){$\g'$}
\put(56,-2){$\g''$}
\put(45.5,77){$\beta$}
\put(70.2,40){$\beta'$}
\put(22.7,40){$\beta''$}
\end{overpic}
\caption{}
\label{4-2-4}
\end{figure}

For the bypass $\beta\in \Hom(\g, \g')$, we have $\guv^l=\{\ul{4}\}, \guv^r=\{\ul{3}\}, \gov=\{\ul{2}\}$, and $x=y=1, z=0$. Also $\guv, \gov \subset (\ul{1},\ul{5})=(\g_{\mf{w}(\beta)}(0), \g_{\mf{w}(\beta)}(1))$.  Hence $\beta$ is of shuffling type (Z), where $\g_{\mf{w}(\beta)}=\{\ul{1},\ul{5}\}$ and $k(\beta)=1$.
\be
\item
For $P(\ul{1},\ul{3}), P(\ul{3},\ul{5}) \in \cf(\g)$, the corresponding $\mf{i} \in II(\beta)$ since $\guv(\mi_{\uv})=\ul{4} \in \guv^l$.
\item
For $P(\ul{1},\ul{4}) \in \cf(\g)$, the corresponding $\mf{i} \in SI(\beta)$ since $\guv(\mi_{\uv})=\ul{3} \in \guv^r$, $\gov(\mi_{\ov})=\ul{2}=\gov(z)$, $\g_{\wb}(\mi_{\wb})=\ul{5}=\g_{\wb}(k(\beta))$, and $LSV(\beta)\backslash \wb =\es$.
The restriction $\cf(\beta)|_{P(\ul{1},\ul{4})}: P(\ul{1},\ul{4}) \ra P(\ul{2},\ul{5})$ corresponds to $d_{\g''}(\mf{k}, \mf{u})$, where $\g''(\mf{k})=\g(\mf{i})$ and $\mf{u}=\beta''^{-1}(\wb)$.
\item
For $P(\ul{4},\ul{5}) \in \cf(\g)$, the corresponding $\mf{i} \notin II(\beta)$ since $\guv(\mi_{\uv})=\ul{3} \in \guv^r$, and $\mf{i} \notin SI(\beta)$ since $\g_{\wb}(\mi_{\wb})=1 \neq \g_{\wb}(k(\beta))$. Hence $\mf{i} \notin II(\beta) \sqcup SI(\beta)$.
\ee
\end{example}

\subsubsection{Proof that $\cal{F}(\beta)$ is a chain map}

\begin{prop} \label{prop beta}
If $\beta \in \Hom(\g,\g')$ is a nontrivial bypass morphism, then $d_{\g'}\circ\cal{F}(\beta)=\cal{F}(\beta)\circ d_\g.$
\end{prop}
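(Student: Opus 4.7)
My plan is to verify the identity $d_{\g'}\circ\cal{F}(\beta)=\cal{F}(\beta)\circ d_\g$ by expanding both sides using the negative-region decomposition $d_\g=\sum_{{\frak c}\in\pi_0(R_-(\g))}d_{{\frak c}}$ from Equation~\eqref{eqn: def of dc} (and the analogous decomposition for $d_{\g'}$) and then checking the matrix entries from $P(\g(\mf{i}))$ to $P(\g'(\mf{j}))$ pair by pair. Outside a small disk neighborhood $U$ of the arc of attachment $\delta$, the dividing sets $\g$ and $\g'$ agree, so there is a canonical bijective correspondence between components of $\pi_0(R_-(\g))$ disjoint from $U$ and components of $\pi_0(R_-(\g'))$ disjoint from $U$. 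The components meeting $U$ must be treated separately: the component $\overline{{\frak c}}$ between $\guv$ and $\gov$ is split by $\delta$ into a left piece (which becomes a new component of $R_-(\g')$ after edge-rounding) and a right piece (which merges with the negative component $\underline{{\frak c}}$ adjacent to $\guv^r$).

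For a component $\frak c$ of $R_-(\g)$ whose closure is disjoint from $U$, let $\frak c'$ denote the corresponding component of $R_-(\g')$. I claim $d_{{\frak c}'}\circ\cf(\beta)=\cf(\beta)\circ d_{{\frak c}}$. Since $\frak c$ is disjoint from $\delta$, the admissibility conditions and the omitting-label move defining ${\frak c}|\mf{i}$ involve only components $\mw\in V(\g)$ distinct from $\uv,\ov$ and distinct from every $\mw\in LSV(\beta)$, so $\beta$ commutes with the label move and $\beta({\frak c}|\mf{i})={\frak c}'|\beta(\mf{i})$ whenever both sides are defined. The two compositions therefore act by right multiplication by $r(\mf{i},{\frak c})\cdot t(\beta,{\frak c}|\mf{i})$ and $t(\beta,\mf{i})\cdot r(\beta(\mf{i}),{\frak c}')$ respectively, and these agree because the underlying bypasses are disjoint in $D^2$, so the relation $(\g|\g')(\g'|\g'')=(\g|\g''')(\g'''|\g'')$ of Lemma~\ref{lem rne} applies directly in $\rne$.

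The main obstacle is the case of the components $\overline{\frak c}$ and $\underline{\frak c}$ meeting $\delta$, which is where the Type~(Id) versus Type~(Sh) dichotomy of $\cf(\beta)$ interacts nontrivially with the sliding and shuffling differentials. I would split on whether $\mf{i}\in II(\beta)$, $\mf{i}\in SI(\beta)$, or $\mf{i}\in \oi(\g)\setminus(II(\beta)\sqcup SI(\beta))$, and further on whether $\beta$ is of shuffling type (Y) or (Z). In each case, the differential intervals for $(\mf{i},\mv)$ in Equation~\eqref{eqn: shuffling intervals} must be compared with the chain intervals of $\beta$ in Equations~\eqref{eqn: chain 1}–\eqref{eqn: chain 2}. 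The combinatorial content is that when a sliding or shuffling vector $\mv$ of $\mf{i}$ does not survive to a sliding/shuffling vector of $\beta(\mf{i})$, the ``lost'' contribution $r(\mf{i},\mv)\cdot t(\beta,\mv|\mf{i})$ to $\cf(\beta)\circ d_{\g}$ is exactly compensated by a term $t(\beta,\mf{i})\cdot r(\beta(\mf{i}),\mv')$ in $d_{\g'}\circ \cf(\beta)$, where $\mv'$ is the unique left-shuffling or sliding vector of $\beta(\mf{i})$ whose associated move lands in $\g'(\beta(\mv|\mf{i}))$. This matching is enforced by the third dividing set $\g''$ in the bypass triangle $\g\xra{\beta}\g'\to\g''$: as sketched in the paragraph preceding Lemma~\ref{lem beta path}, for each $\mf{i}\in SI(\beta)$ there exists $\mf{k}\in\oi(\g'')$ with $\g''(\mf{k})=\g(\mf{i})$ and a distinguished $\mf{u}\in\sv_{\g''}(\mf{k})$ with $\g''(\mf{u}|\mf{k})=\g'(\beta(\mf{i}))$, and the examples in Examples~\ref{ex t1} and~\ref{ex t2} show how the needed commutative squares materialize.

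Once the correspondence between missing sliding/shuffling terms and Type~(Sh) chain map terms is established in each of these cases, the final equality on $\Hom(P(\g(\mf{i})),P(\g'(\mf{j})))$ reduces to a single identity in $\rne$ of the form $(\g(\mf{i})\mid \g(\mv|\mf{i}))(\g(\mv|\mf{i})\mid \g'(\mf{j}))=(\g(\mf{i})\mid\g'(\beta(\mf{i})))(\g'(\beta(\mf{i}))\mid\g'(\mf{j}))$, which holds by the uniqueness of tight contact structures between basic dividing sets (Proposition~\ref{prop: tightness criterion}) together with the composition rule of Corollary~\ref{cor stack}. The hardest sub-case will be shuffling type~(Z), where the vector $\mw(\beta)$ plays a double role as both a left-shuffling vector for $\beta$ and as a potential shuffling vector for $d_\g$; carefully tracking the unique integer $k(\beta)$ through the shuffling differential intervals of $(\mf{i},\mw(\beta))$ will be the main bookkeeping challenge.
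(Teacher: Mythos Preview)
Your approach is essentially the same as the paper's: both decompose $d_\g$ and $d_{\g'}$ over components of $R_-(\g)$ and $R_-(\g')$, then match nonzero terms of $\cf(\beta)\circ d_\g$ and $d_{\g'}\circ\cf(\beta)$ by a case analysis on the position of ${\frak c}$ relative to the distinguished components $\overline{\frak c}$ and $\underline{\frak c}$, with the equality of matrix entries in $\rne$ following from Corollary~\ref{cor stack}. The paper packages the matching as a bijection statement (Lemma~\ref{lem beta chain}) between pairs $(\mf{i},{\frak c})$ contributing to $\cf(\beta)\circ d_\g$ and pairs $(\mf{i},{\frak c}')$ contributing to $d_{\g'}\circ\cf(\beta)$, splitting on whether $\mi_{\uv}\in[[x,y]]$; the injectivity of $\beta:II(\beta)\sqcup SI(\beta)\to\oi(\g')$ then finishes the argument.

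There is one genuine inaccuracy in your second paragraph. Your claim that for ${\frak c}$ disjoint from the disk $U$, the ${\frak c}$-admissibility conditions and the move $\mf{i}\mapsto{\frak c}|\mf{i}$ ``involve only components $\mw$ distinct from $\uv,\ov$ and distinct from every $\mw\in LSV(\beta)$'' is false. A component ${\frak c}\ne\overline{\frak c},\underline{\frak c}$ can lie above $\ov$, to the left of $\uv$, or share boundary with some $\mw\in LSV(\beta)$; in each of these cases the move ${\frak c}|\mf{i}$ \emph{does} change an entry in $\{\uv,\ov\}\cup LSV(\beta)$, and the naive commutativity $\beta({\frak c}|\mf{i})={\frak c}'|\beta(\mf{i})$ fails. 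The paper's Lemma~\ref{lem beta chain} handles these subcases explicitly: in Case~(B) (where $\mf{i}\in SI(\beta)$), if ${\frak c}$ lies above $\ov$ or to the left of $\overline{\frak c}$ adjacent to some $\mw\in LSV(\beta)$, then ${\frak c}$-admissibility of $\mf{i}$ is incompatible with $\mf{i}\in SI(\beta)$, and when ${\frak c}|\mf{i}\in II(\beta)\sqcup SI(\beta)$ the path $\g(\mf{i})\to\g({\frak c}|\mf{i})\to\g'(\beta({\frak c}|\mf{i}))$ vanishes in $\rne$ by Corollary~\ref{cor stack2}. So these terms contribute zero on both sides rather than matching nontrivially. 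Your outline also does not address what happens when $\mf{i}\notin II(\beta)\sqcup SI(\beta)$; the paper's bijection statement handles this by contrapositive (if the composition through ${\frak c}|\mf{i}$ is nonzero then $\mf{i}$ was already in $II(\beta)\sqcup SI(\beta)$). Finally, your invocation of $\g''$ and the paragraph preceding Lemma~\ref{lem beta path} is unnecessary here---that observation is used for exactness (Lemma~\ref{lem comp triangle}), not for the chain-map identity.
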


\begin{proof}
Observe that $\beta: II(\beta) \sqcup SI(\beta) \ra OI(\g')$ is injective by Remark \ref{rmk beta ind}(iii) and any two paths with the same endpoints in $\qne$ give the same element of $\rne$. Hence the proposition is a consequence of the following lemma.
\end{proof}

\begin{lemma}  \label{lem beta chain} Let $\mf{i} \in OI(\g)$.
\be
\item If $\mi$ is ${\frak c}$-admissible for ${\frak c}\in \pi_0(R_-(\g))$, ${\frak c}|\mi \in II(\beta) \sqcup SI(\beta)$, and the path $\g(\mf{i}) \ra \g({\frak c}|\mi) \ra \g'(\beta({\frak c}|\mi))$ is nonzero, then $\mf{i} \in II(\beta) \sqcup SI(\beta)$ and there exists a unique ${\frak c}'\in\pi_0(R_-(\g'))$ such that $\beta(\mi)$ is ${\frak c}'$-admissible and ${\frak c}'| \beta(\mi)=\beta({\frak c}|\mi)$.
\item If $\mf{i} \in II(\beta) \sqcup SI(\beta)$, there exists ${\frak c}'\in\pi_0(R_-(\g'))$ such that $\beta(\mi)$ is ${\frak c}'$-admissible, and the path $\g(\mf{i}) \ra  \g'(\beta(\mf{i})) \ra \g'({\frak c}'|\bi)$ is nonzero, then there exists ${\frak c}\in \pi_0(R_-(\g))$ such that $\mi$ is ${\frak c}$-admissible, ${\frak c}|\mi \in II(\beta) \sqcup SI(\beta)$, and $\beta({\frak c}|\mi)={\frak c}' | \bi$.
\ee
\end{lemma}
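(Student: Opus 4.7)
Both statements of the lemma assert the existence of a matching negative region (namely ${\frak c}'$ in (1) and ${\frak c}$ in (2)) together with an endpoint identity $\beta({\frak c}|\mi)={\frak c}'|\beta(\mi)$. Granting this identity, the two compositions $d_{{\frak c}'}\circ\cal{F}(\beta,\mi)$ and $\cal{F}(\beta,{\frak c}|\mi)\circ d(\mi,{\frak c})$ both target $P(\g'(\beta({\frak c}|\mi)))$ and are realized by paths in $\qne$ with the same endpoints, hence coincide in $\rne$ by Lemma~\ref{lem rne}. So my approach is to reduce the entire claim to producing the matching component and verifying that the relevant admissibility conditions are preserved by $\beta$.

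First I would set up a natural bijection between those components of $R_-(\g)$ and $R_-(\g')$ that do not meet a small neighborhood $U$ of the bypass arc $\delta$: outside $U$ the two dividing sets agree, so each such ${\frak c}$ pairs canonically with a ${\frak c}'$, and both the admissibility condition and the omitting-label movement are unaffected by $\beta$. This disposes of most cases immediately. The remaining subcases involve (a) the unique component $\overline{\frak c}\in\pi_0(R_-(\g))$ that contains the interior of $\delta_-$, lying between $\guv$ and $\gov$, and (b) the component(s) of $R_-(\g)$ adjacent to the endpoints of $\delta_+$ on the two sides of $\guv$. For each local configuration I would read off ${\frak c}'$ (or ${\frak c}$) explicitly from the bypass picture and compare the omitting labels of ${\frak c}|\mi$ and ${\frak c}'|\beta(\mi)$ using Definition~\ref{def beta ind} together with the negative-region criteria (N1)--(N3) for $SI(\beta)$.

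Next I would handle Part (1). The hypothesis ${\frak c}|\mi\in II(\beta)\sqcup SI(\beta)$ controls the positions of the omitting labels of ${\frak c}|\mi$ near $\delta$; inverting $d_{\frak c}$ then recovers the analogous constraints on $\mi$ itself. The interesting subcase is when $d_{\frak c}$ moves an omitting label across the arc $\delta_+$, so that $\mi$ and ${\frak c}|\mi$ belong to different classes (one identity, one shuffling). The nonzero-path hypothesis $\g(\mi)\to\g({\frak c}|\mi)\to\g'(\beta({\frak c}|\mi))$ will force precisely the correct left/right position of the omitting label inside $\guv$. Part (2) I would do by a symmetric argument, now using its own nonzero-path hypothesis to rule out the degenerate ${\frak c}'$ which have no preimage in $\pi_0(R_-(\g))$, namely regions created from scratch by edge-rounding at the bypass.

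The hard part will be the shuffling case combined with ${\frak c}=\overline{\frak c}$: here $\cal{F}(\beta,\mi)$ already moves omitting labels across the $k+1$ chain intervals of Equations~\eqref{eqn: chain 1} or~\eqref{eqn: chain 2}, and one must verify that composing this with a differential through the split negative region commutes, in the above sense, with performing the operations in the opposite order. I would reduce this to a direct comparison of chain intervals with differential intervals, using Corollary~\ref{cor stack} (to freely compose intermediate $\rne$-elements) and the nesting decomposition of Lemma~\ref{lem rel nc} (to ensure that a label movement through $\overline{\frak c}$ interacts correctly with the $LSV(\beta)$-shuffling of the bypass).
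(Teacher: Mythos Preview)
Your plan is correct and matches the paper's approach: both proofs set ${\frak c}'={\frak c}$ for components away from the bypass and then treat the two special components $\underline{\frak c}(\beta)$ (below $\guv$) and $\overline{\frak c}(\beta)$ (between $\guv$ and $\gov$) by hand, tracking omitting labels. The paper organizes the case split slightly differently---leading with Case~(A) $\mi_{\uv}\in[[x,y]]$ (so $\mi\in II(\beta)$) versus Case~(B) $\mi_{\uv}\notin[[x,y]]$, and only then branching on ${\frak c}$---but the content is the same.

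One remark: the case you flag as hardest ($\mi\in SI(\beta)$ with ${\frak c}=\overline{\frak c}$) is simpler than you anticipate. Since $\mi$ is $\overline{\frak c}$-admissible one has $\mi_{\ov}=z$ and $(\overline{\frak c}|\mi)_{\uv}=y\in[[x,y]]$, so $\overline{\frak c}|\mi$ lands in $II(\beta)$, not $SI(\beta)$; the matching component is then simply ${\frak c}'=\underline{\frak c}(\beta')$, and the endpoint identity is read off directly from the picture without invoking Lemma~\ref{lem rel nc} or any interval comparison. Symmetrically, in Case~(A) with ${\frak c}=\underline{\frak c}$ one gets ${\frak c}|\mi\in SI(\beta)$ and ${\frak c}'=\overline{\frak c}(\beta')$.
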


\begin{proof}
(1) We assume that $\uv,\ov \neq \ast$. The proofs of other cases are similar.
The main idea is to track the movements of the omitting labels under $\cal{F}(\beta)$ and the differentials. Note that the uniqueness of ${\frak c}'\in\pi_0(R_-(\g'))$ follows immediately from the existence by Remark~\ref{rmk diff}(i).

Let $\overline{\frak c}(\beta)$ be the component of $R_-(\beta)$ that lies between $\uv(\beta)$ and $\ov(\beta)$ and let $\underline{\frak c}(\beta)$ be the component of $R_-(\g)$ that lies directly below $\uv(\beta)$.  If we omit $\beta$, it is understood that $\overline{\frak c}=\overline{\frak c}(\beta)$, etc. Let $\g \xra{\beta} \g' \xra{\beta'} \g''$ be the bypass triangle starting with $\beta$.

Suppose ${\frak c}$ satisfies the assumptions of (1).

\s\n{\bf Case (A).}  Suppose that $\mi_{\uv} \in [[x,y]]$, i.e., the label is on the left-hand side of $\uv$. Then $\mf{i} \in II(\beta)$ and $\g'(\beta(\mi))=\g(\mi)$.

If ${\frak c}\not =\ul{\frak c},\overline{\frak c}$, then $({\frak c}|\mi)_{\uv}\in [[x,y]]$ and ${\frak c}|\mi\in II(\beta)$.  We can take ${\frak c}'={\frak c}$, viewed as an element of $\pi_0(R_-(\g'))$, and it is immediate that ${\frak c}'| \beta(\mi)=\beta({\frak c}|\mi)$.  See Figure \ref{4-2-5} for two possible locations for ${\frak c}$.

If ${\frak c}= \ul{\frak c}$ and $\mi$ is ${\frak c}$-admissible, then $\mi_{\uv}=x$ and $({\frak c}|\mi)_{\uv}\not \in [[x,y]]$. Since we are assuming that ${\frak c}|\mi \in SI(\beta)$, (N1)--(N3) from Section~\ref{subsection: notation for bypass} must hold. If we take ${\frak c}'=\overline{\frak c}(\beta')$ (this is $\overline{\frak c}$ with respect to $\beta'$), then $\beta(\mi)$ is ${\frak c}'$-admissible and ${\frak c}'| \beta(\mi)=\beta({\frak c}|\mi)$.

If ${\frak c}=\overline{\frak c}$, then $\mi$ is not ${\frak c}$-admissible.

\s
\begin{figure}[ht]
\begin{overpic}
[scale=0.19]{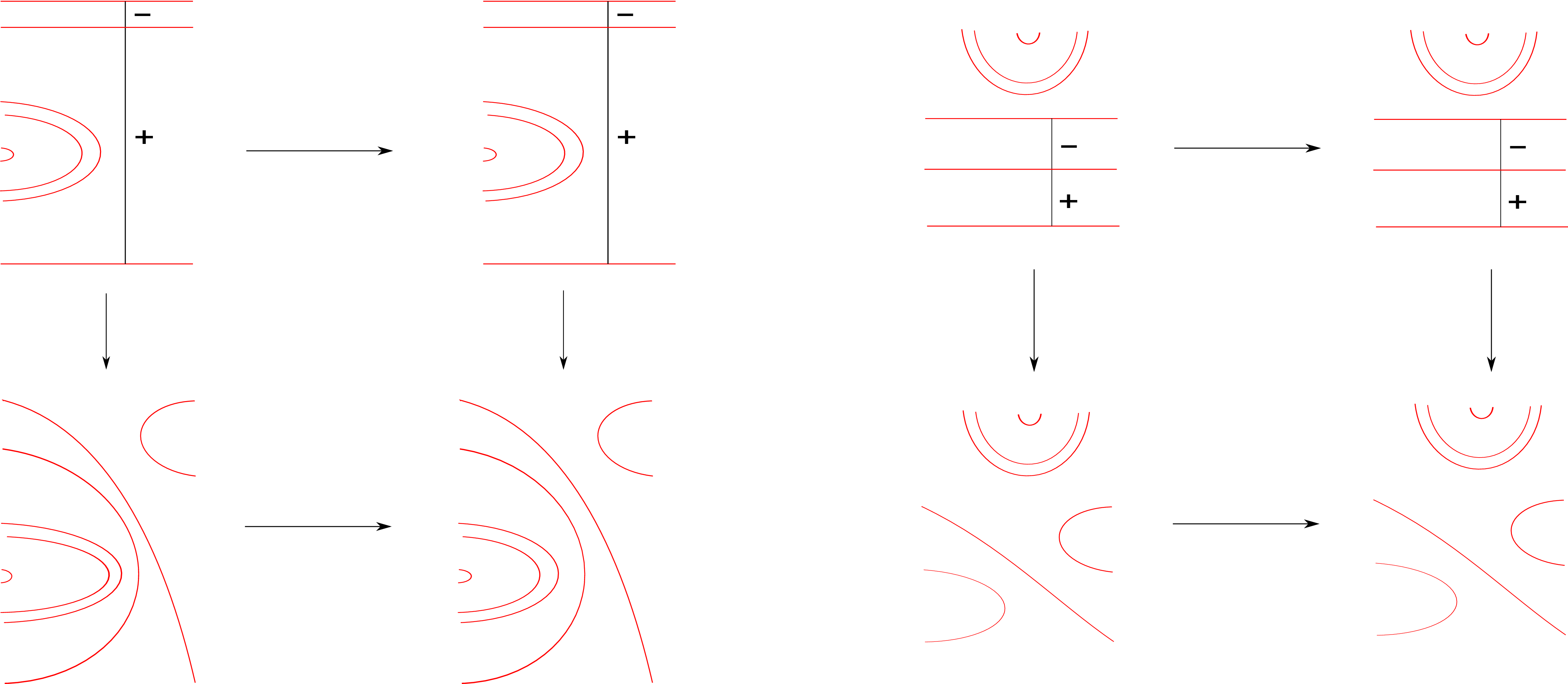}
\put(12,24){$\g(\mi)$}
\put(41,24){$\g({\frak c}|\mi)$}
\put(10,-2.7){$\g'(\bi)$}
\put(38,-2.7){$\g'({\frak c}'|\bi)$}
\put(0,38){${\scriptstyle{\mi_{\uv}}}$}
\put(-2,11.5){$\scriptstyle{{\bi}_{\beta(\uv)}}$}
\put(30,29){${\scriptstyle{({\frak c}|\mi)_{\uv}}}$}
\put(5,29){${\scriptstyle \uv}$}
\put(5.3,33.2){${\scriptstyle {\frak c}}$}
\put(0,2){$\scriptstyle{\beta(\uv)}$}
\put(6.9,6.5){$\scriptstyle{{\frak c}'}$}
\put(8,21){$\beta$}
\put(37,21){$\beta$}
\put(67,21){$\beta$}
\put(96,21){$\beta$}
\put(70,26){$\g(\mi)$}
\put(96,26){$\g({\frak c}|\mi)$}
\put(67,-.7){$\g'(\bi)$}
\put(92,-.7){$\g'({\frak c}'|\bi)$}
\put(70,41){${\scriptstyle{\mi_{\ov}}}$}
\put(84,40){${\scriptstyle{({\frak c}|\mi)_{\ov}}}$}
\put(61,36.5){${\scriptstyle{\ov}}$}
\put(61.5,11){${\scriptstyle{\beta(\ov)}}$}
\put(70,17){$\scriptstyle{{\bi}_{\beta(\ov)}}$}
\put(65.5,37.5){$\scriptstyle{{\frak c}}$}
\put(65.2,13.3){$\scriptstyle{{\frak c}'}$}
\end{overpic}
\s\s
\caption{${\frak c}$ is to the left of $\uv$ in the left-hand figure and is above $\ov$ in the right-hand figure.}
\label{4-2-5}
\end{figure}

\s
\n{\bf Case (B).} Suppose that $\mi_{\uv} \notin [[x,y]]$.  Then $\mi\in SI(\beta)$.

If ${\frak c}=\ul{\frak c}$, then $\mi$ is not ${\frak c}$-admissible.

If ${\frak c}=\overline{\frak c}$  and $\mi$ is ${\frak c}$-admissible, then $\mi_{\ov}=z$ and $({\frak c}|\mi)_{\uv}=y \in [[x,y]]$. Hence ${\frak c}|\mi\in II(\beta)$.  If we take ${\frak c}'=\ul{\frak c}(\beta')$, then $\beta(\mi)$ is ${\frak c}'$-admissible and ${\frak c}'| \beta(\mi)=\beta({\frak c}|\mi)$; see Figure~\ref{4-2-6}. In this case, we say that ${\frak c}=\overline{\frak c}$ lies below and shares a common boundary with $\ov$. This convention on the relative positions of the different regions (i.e., the positioning as in Figure~\ref{4-2-6}) will be used for the rest of the proof. 

If ${\frak c}\not =\ul{\frak c},\overline{\frak c}$, then $({\frak c}|\mi)_{\uv}\not\in [[x,y]]$ and ${\frak c}|\mi\in SI(\beta)$. Suppose ${\frak c}$ lies above and shares a common boundary with $\ov$, or ${\frak c}$ is to the left of $\overline{\frak c}$ and shares a common boundary with some $\mf{w}\in LSV(\beta)$. 
Since $\mi$ is ${\frak c}$-admissible, we have $\mi\not\in SI(\beta)$ by (N1)--(N3), a contradiction; on the other hand, by Corollary~\ref{cor stack2}, $\g(\mf{i}) \ra \g({\frak c}|\mi) \ra \g'(\beta({\frak c}|\mi))$ is zero, which is consistent. If ${\frak c}$ is to the left of and shares a common boundary with $\uv$, then $\mi$ is not ${\frak c}$-admissible since $\mi_{\uv} \notin [[x,y]]$.   In the remaining cases of $\mi$-admissible ${\frak c}\not =\ul{\frak c},\overline{\frak c}$, we can take ${\frak c}'={\frak c}$, viewed as an element of $\pi_0(R_-(\g'))$, and it is immediate that ${\frak c}'| \beta(\mi)=\beta({\frak c}|\mi)$.


\begin{figure}[ht]
\hskip-.7in
\begin{overpic}
[scale=0.18]{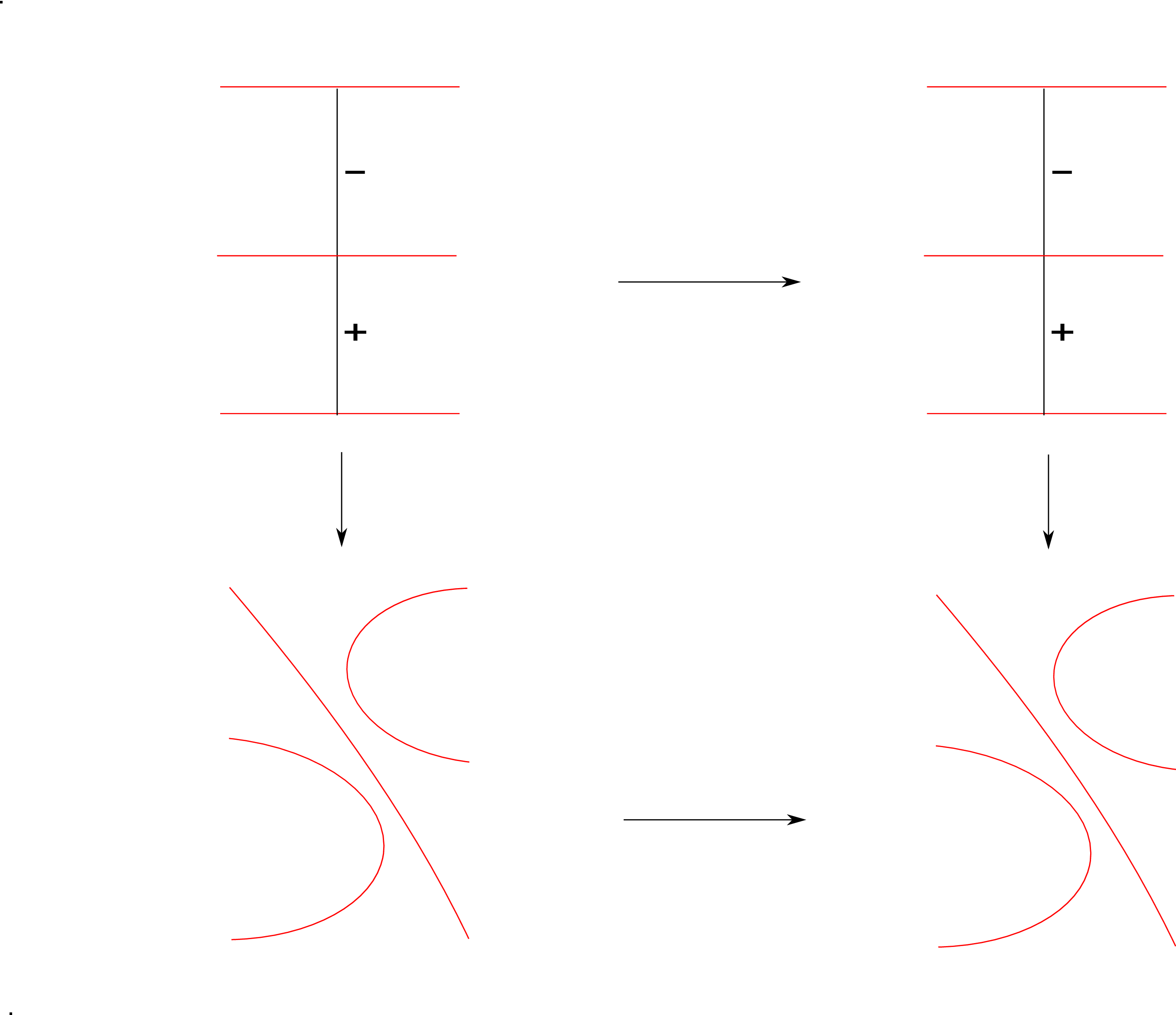}
\put(36,46){$\g(\mi)$}
\put(95,46){$\g({\frak c}|\mi)$}
\put(22,0){$\g'(\beta(\mi))$}
\put(80,0){$\g'({\frak c}'|\bi)$}
\put(16,81){$\scriptstyle{\mi_{\ov}=z}$}
\put(95,82){$\scriptstyle{({\frak c}|\mi)_{\ov}}$}
\put(15,70){$\scriptstyle{{\frak c}=\overline{\frak c}}$}
\put(36,60){$\scriptstyle{\mi_{\uv}}$}
\put(72,60){$\scriptstyle{({\frak c}|\mi)_{\uv}=y}$}
\put(12,18){$\scriptstyle{\beta(\mi)_{\beta(\uv)}}$}
\put(36,18){$\scriptstyle{\beta(\mi)_{\beta(\ov)}}$}
\put(35,28){$\scriptstyle{{\frak c}'}$}
\put(68,18){$\scriptscriptstyle{({\frak c}'|\bi)_{\beta(\uv)}}$}
\put(93,38){$\scriptscriptstyle{({\frak c}'|\bi)_{\beta(\ov)}}$}
\end{overpic}
\caption{} 
\label{4-2-6}
\end{figure}

\s
\n(2) The proof is similar to that of (1) and is left to the reader.
\end{proof}

\begin{rmk}\label{rmk beta chain}
Observe that we set ${\frak c}'={\frak c}$ except in Case (A) when ${\frak c}=\underline{\frak c}$ and Case (B) when ${\frak c}=\overline{\frak c}$; see the labels of the commuting squares in Example \ref{ex t1} for example.
\end{rmk}


We now complete the definition of $\cf: \cne \ra \dne$. If $\g \stackrel\xi\to \g'$ is a zero morphism, then $\cal{F}(\xi)$ is defined to be the zero morphism; in particular this is the case when $\g$ or $\g'$ is the zero object and $\cal{F}(\g)$ or $\cal{F}(\g')=0$. Any nonzero morphism $\xi$ in $\cne$ can be written as a composition $\beta_k\circ\dots\circ \beta_1$ of nontrivial bypass morphisms and we define $\cal{F}(\xi)$ as the composition $\cal{F}(\beta_k)\circ\dots \circ \cal{F}(\beta_1)$. If $\xi$ is an identity morphism (i.e., induced by a trivial bypass), then we set $\cal{F}(\xi)=\op{id}$. In Section \ref{Sec comp} we will show that $\cal{F}$ is well-defined.

\subsection{Well-definition of the composition} \label{Sec comp}

In order to prove that $\cf: \cne \ra \dne$ is well-defined it suffices to show the following:
\be
\item $\cal{F}(\xi)$ is independent of the choice of decomposition of any nonzero morphism $\xi$ into a sequence of nontrivial bypass morphisms.
\item If the composition of a sequence $\beta_1,\dots,\beta_k$ of nontrivial bypasses is a zero morphism, then the composition $\cf(\beta_k)\circ\dots\circ\cf(\beta_1)=0$ in $\dne$.
\ee

By Theorem~\ref{thm: relations in contact category}, (1) can be reduced to the case where $\xi$ is a composition of two disjoint bypasses.  Here the bypasses may be trivial.

By Lemma~\ref{lemma: reduction 1}, (2) can be reduced to the case where the composition is $\beta' \circ \beta$ for two consecutive (nonzero, non-identity) bypasses $\beta, \beta'$ in any bypass triangle: Let $\beta'=\beta_k$ and $\xi=\beta_{k-1}\circ\dots\circ\beta_1$.  If $\xi$ is a zero morphism, then we can replace $\beta_1,\dots,\beta_k$ by $\beta_1,\dots,\beta_{k-1}$.  If $\xi$ is nonzero, then Lemma~\ref{lemma: reduction 1} implies that $\xi$ can be factored into $\beta\circ \zeta$, where $\beta,\beta'$ are two consecutive bypasses of a bypass triangle.

\subsubsection{Composition in bypass triangles} \label{Sec comp triangle}

We fix the notation $\g \xra{\beta} \g' \xra{\beta'} \g'' \xra{\beta''} \g$ for a bypass triangle throughout this section. We also use the notation $\uv(\beta)$ and $\underline{\frak c}(\beta)$ to mean $\uv$ and $\underline{\frak c}$ for $\beta$.

In Examples \ref{ex t1} and \ref{ex t2}, each projective $\rne$-module in $\cf(\g)$ appears either in $\cf(\g')$ or in $\cf(\g'')$.  This observation can be generalized as follows.

\begin{lemma} \label{lem ind triangle}
For any $\mf{i} \in OI(\g)$, one of the following holds:
\be
\item if $\mf{i} \in II(\beta)$, then $\mf{j}:=\beta(\mf{i})$ is not in $II(\beta')$ and satisfies $\g'(\mf{j})=\g(\mf{i})$;
\item if $\mf{i} \notin II(\beta)$, then there exists a unique $\mf{k} \in II(\beta'')$ such that $\g''(\mf{k})=\g(\mf{i})$.
\ee
\end{lemma}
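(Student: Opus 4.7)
The argument is a local analysis in a disk neighborhood $U \subset D^2$ containing all three arcs of attachment $\delta, \delta', \delta''$ of $\beta, \beta', \beta''$; outside $U$ the dividing sets $\g, \g', \g''$ coincide. Two preliminary injectivity observations will play a constant role: (i) the assignment $\mi\mapsto\g(\mi)$ on $\oi(\g)$ is injective, because the set of omitting labels uniquely determines both the labels and the components of $R_+(\g)$ containing them; and (ii) $\beta''$ is injective as a map $II(\beta'')\sqcup SI(\beta'')\to\oi(\g)$ by Remark~\ref{rmk beta ind}(iii). These two facts together immediately give uniqueness of $\mf{k}$ in (2): if $\mf{k}\in II(\beta'')$ satisfies $\g''(\mf{k})=\g(\mi)$, then Lemma~\ref{lem beta path}(1) applied to $\beta''$ yields $\g(\beta''(\mf{k}))=\g''(\mf{k})=\g(\mi)$, so $\beta''(\mf{k})=\mi$, and $\mf{k}$ is pinned down by the injectivity of $\beta''$.

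For the existence statements in both parts, I would track the omitting label of $\mi$ at the component $\uv(\beta)$ (or the label $\ul{0}$ itself when $\ul{0}\in\guv$) through the bypass triangle, using the formula for the induced bijection $\beta\colon V(\g)\to V(\g')$ together with the rotational geometry of $\delta, \delta', \delta''$ inside $U$. Recall that $\mi\in II(\beta)$ means this label (or $\ul{0}$) lies in $\guv^l$, whereas $\mi\notin II(\beta)$ means it lies in $\guv^r$. For part (1), Lemma~\ref{lem beta path}(1) yields $\g'(\bi)=\g(\mi)$; writing $\uv(\beta')$ and $\ov(\beta')$ as explicit components of $R_+(\g')$ assembled from $\guv^l, \guv^r, \gov$, a case split on whether $\ul{0}\in\guv$, $\ul{0}\in\gov$, or $\ul{0}$ lies outside both shows that the corresponding omitting label (or $\ul{0}$) of $\bi$ ends up on the right-hand side of the cut by $\delta'_+$, so $\bi\notin II(\beta')$. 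For part (2), the complementary placement in $\guv^r$ is exactly the condition needed to define $\mf{k}\in\oi(\g'')$ by transporting the omitting labels of $\g(\mi)$ over to $\g''$; the same rotational analysis, now applied to $\delta''$, shows that each such label sits on the left-hand side of the cut by $\delta''_+$, i.e.\ $\mf{k}\in II(\beta'')$.

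The main technical obstacle is the explicit identification of $\uv(\beta')$, $\ov(\beta')$, $\uv(\beta'')$, $\ov(\beta'')$ together with their left/right partitions in terms of the components of $R_+(\g)$, which depends delicately on the cyclic ordering of the six endpoints of $\g\cap\bdry U$ and on the position of $\ul{0}$ relative to $U$. The payoff once this bookkeeping is in place is the clean combinatorial statement underlying the lemma: as the arc of attachment rotates from $\delta$ to $\delta'$ to $\delta''$ in $U$, each omitting label (or $\ul{0}$) crosses from the left-hand side to the right-hand side of the cut for exactly one of the three arcs, so the identity-index condition is satisfied for exactly one of $\beta$ and $\beta''$, which is precisely the dichotomy expressed by (1) and (2).
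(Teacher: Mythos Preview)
Your approach is workable but misses the single structural observation that makes the paper's proof almost immediate. The paper begins by noting that
\[
\g_{\ov(\beta)} \;=\; {\g'}_{\uv(\beta')}^r \;=\; {\g''}_{\uv(\beta'')}^l,
\]
i.e.\ the component $\gov$ of $\g$ is exactly the right half of $\g'_{\uv(\beta')}$ and the left half of $\g''_{\uv(\beta'')}$. With this in hand, part (1) is one line: for $\mi\in II(\beta)$ the $\uv(\beta')$-entry of $\bi$ has omitting label $\gov(\mi_{\ov})\in\gov={\g'}^r_{\uv(\beta')}$ (or $\ul{0}\in{\g'}^r_{\uv(\beta')}$ when $\ov=\ast$), so $\bi\notin II(\beta')$. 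Part (2) is equally short: once $\mf{k}$ is constructed, its $\uv(\beta'')$-entry has omitting label in $\gov={\g''}^l_{\uv(\beta'')}$, so $\mf{k}\in II(\beta'')$.

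Your proposal to track the $\uv(\beta)$-omitting-label is slightly misdirected for the $II(\beta')$ and $II(\beta'')$ conclusions: it is the $\ov(\beta)$-label (or $\ul{0}$ when $\ov=\ast$) that lands in $\g'_{\uv(\beta')}$ and $\g''_{\uv(\beta'')}$ and hence governs those membership tests. The $\uv(\beta)$-label is, however, exactly what controls the \emph{existence} of $\mf{k}$ in (2): when that label sits in $\guv^r$, the two omitting labels coming from $\uv$ and $\ov$ land in the two distinct $\g''$-components $\g''_{\ov(\beta'')}=\guv^r$ and $\g''_{\uv(\beta'')}=\gov\sqcup\guv^l$, rather than colliding in the latter. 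So both labels play a role, and your case analysis would eventually recover the paper's identity (and its two cyclic shifts $\g'_{\ov(\beta')}=\guv^l$ and $\g''_{\ov(\beta'')}=\guv^r$), but stating that identity up front eliminates the casework on the position of $\ul{0}$ entirely. Your uniqueness argument for (2) via the injectivity of $\mi\mapsto\g(\mi)$ and of $\beta''$ on $II(\beta'')$ is fine and a bit more explicit than the paper's treatment.
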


\begin{proof}
First observe that $\g_{\ov(\beta)}={\g'}_{\uv(\beta')}^r={\g''}_{\uv(\beta'')}^l$.

If $\mf{i} \in II(\beta)$, then $\mf{j}:=\beta(\mf{i})$ satisfies $\g'(\mf{j})=\g(\mf{i})$ by definition of $II(\beta)$. If $\ov(\beta)\not=\ast$, then $$\g'_{\uv(\beta')}(\mj_{\uv(\beta')})=\g_{\ov(\beta)}(\mi_{\ov(\beta)}) \in \g_{\ov(\beta)}={\g'}_{\uv(\beta')}^r,$$
and, if $\ov(\beta)=*$, then $\ul{0}\in {\g'}_{\uv(\beta')}^r$.  In both cases $\mf{j}\notin II(\beta')$.

If $\mf{i} \notin II(\beta)$, then each label which is omitted in $\g(\mf{i})_{\ast}$ appears exactly once in $\g''_{\mf{w}}$ for some $\mf{w} \neq \ast$.
Hence there exists $\mf{k} \in OI(\g'')$ such that $\g''(\mf{k})=\g(\mf{i})$. If  $\ov(\beta)\not=\ast$, then
$$\g''_{\uv(\beta'')}(\mf{k}_{~\uv(\beta'')})=\g_{\ov(\beta)}(\mi_{\ov(\beta)}) \in \g_{\ov(\beta)}={\g''}_{\uv(\beta'')}^l,$$
and, if $\ov(\beta)=*$, then $\ul{0}\in {\g''}_{\uv(\beta'')}^l$. In both cases $\mf{k} \in II(\beta'')$.
\end{proof}

In view of Lemma~\ref{lem ind triangle}(2), there exists a map
$$\gamma(\beta''): OI(\g) \backslash II(\beta) \ra II(\beta'') \subset OI(\g'')$$
such that $\g''(\gamma(\beta'')(\mf{i}))=\g(\mf{i})$ for $\mf{i} \notin II(\beta)$.
Then define the map
\begin{equation}\label{eqn: def gamma}
\cf(\gamma(\beta'')): \cf(\g) \ra \cf(\g'')
\end{equation}
of $\rne$-modules as the direct sum of identity morphisms $P(\g(\mf{i})) \ra P(\g''(\gamma(\beta'')(\mf{i})))$ for $\mf{i} \notin II(\beta)$.

The following lemma implies that $\cf(\beta')\circ \cf(\beta)=0 \in \Hom_{\dne}(\cf(\g), \cf(\g''))$, i.e., in the (ungraded) homotopy category.

\begin{lemma} \label{lem comp triangle}
$d\cf(\gamma(\beta''))=\cf(\beta')\circ \cf(\beta)$ as $\rne$-module maps from $\cf(\g)$ to $\cf(\g'')$.
\end{lemma}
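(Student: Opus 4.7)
The plan is to verify the equality of $\rne$-module maps summand-by-summand on the domain $\cf(\g)=\bigoplus_{\mf{i}\in OI(\g)} P(\g(\mf{i}))$. Since each $\Hom_{\rne}(P(\g(\mf{i})), P(\g''(\mf{k}')))$ is at most one-dimensional, by Proposition~\ref{prop: tightness criterion} combined with Corollary~\ref{cor stack}, it will suffice to identify the codomain summands $P(\g''(\mf{k}'))$ that receive nonzero contributions on each side and to check that the generators match. The analysis splits into three cases according to whether $\mf{i}$ lies in $II(\beta)$, in $SI(\beta)$, or in $OI(\g)\setminus(II(\beta)\cup SI(\beta))$.

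In Case A ($\mf{i}\in II(\beta)$) the LHS vanishes on $P(\g(\mf{i}))$ because $\gamma(\beta'')$ is only defined on $OI(\g)\setminus II(\beta)$; for the RHS, $\cf(\beta)|_{P(\g(\mf{i}))}$ is the identity into $P(\g'(\beta(\mf{i})))$, and $\beta(\mf{i})\notin II(\beta')$ by Lemma~\ref{lem ind triangle}(1). I plan to check directly from Definition~\ref{def si} that the condition $\guv(\mi_{\uv})\in\guv^l$ defining $II(\beta)$ is incompatible with the $SI(\beta')$ positional conditions relative to the bypass arc for $\beta'$, so that $\beta(\mf{i})\notin SI(\beta')$ either and the RHS also vanishes. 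In Case C ($\mf{i}\notin II(\beta)\cup SI(\beta)$), the RHS vanishes by Definition~\ref{def fb}; with $\mf{k}=\gamma(\beta'')(\mf{i})$, I plan to show that the failure of $\mf{i}$ to satisfy either the $II$- or the $SI$-conditions for $\beta$ translates, under the identification $\g''(\mf{k})=\g(\mf{i})$, into $\sv_{\g''}(\mf{k})=\es$, so that $d|_{P(\g''(\mf{k}))}=0$ and the LHS also vanishes; concretely, these are the summands sitting at the top of the cohomological grading of $\cf(\g'')$, as visible in Examples~\ref{ex t1} and~\ref{ex t2}.

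The main case is Case B ($\mf{i}\in SI(\beta)$). Setting $\mf{k}=\gamma(\beta'')(\mf{i})\in II(\beta'')$ so $\g''(\mf{k})=\g(\mf{i})$, the LHS on $P(\g(\mf{i}))$ equals $\sum_{\mf{u}\in \sv_{\g''}(\mf{k})} d(\mf{k},\mf{u})$, while the RHS factors through the single summand $P(\g'(\beta(\mf{i})))$. The central combinatorial claim is that $\sv_{\g''}(\mf{k})$ is a singleton $\{\mf{u}_0\}$, that $\g''(\mf{u}_0|\mf{k})=\g'(\beta(\mf{i}))$, and that $\beta(\mf{i})\in II(\beta')$ so that $\cf(\beta')|_{P(\g'(\beta(\mf{i})))}$ acts as the identity; once this is established, both $d(\mf{k},\mf{u}_0)$ and $\cf(\beta')\cf(\beta)|_{P(\g(\mf{i}))}$ are nonzero generators of the same one-dimensional Hom-space and hence agree. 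I plan to establish the singleton claim by matching the chain intervals of $\beta$ (Equations~\eqref{eqn: chain 1} and~\eqref{eqn: chain 2}) with the sliding or shuffling differential intervals of $(\mf{k},\mf{u}_0)$ in $\g''$, using the negative-region description of the differential (Equations~\eqref{eqn: def of dc} and~\eqref{eqn: def of dv}) together with the natural correspondence between $\pi_0(R_-(\g))$ and $\pi_0(R_-(\g''))$ away from the common bypass disk. The principal obstacle is this combinatorial matching, which requires a case analysis along the shuffling types (Y) and (Z) of $\beta$ from Definition~\ref{def beta shuff}; Examples~\ref{ex t1} (type (Y)) and~\ref{ex t2} (type (Z)) already exhibit both flavours and guide the general argument.
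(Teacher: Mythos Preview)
There is a genuine gap: you have misread the left-hand side. By definition $d\cf(\gamma(\beta''))=d_{\g''}\circ\cf(\gamma)+\cf(\gamma)\circ d_{\g}$ (the homotopy differential), and you have systematically dropped the second term $\cf(\gamma)\circ d_{\g}$. This makes all three of your case analyses incorrect.

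In your Case A ($\mf{i}\in II(\beta)$), the LHS does \emph{not} vanish on $P(\g(\mf{i}))$: while $\cf(\gamma)|_{P(\g(\mf{i}))}=0$, applying $d_{\g}$ first can move $\mf{i}$ to some ${\frak c}|\mf{i}\notin II(\beta)$, after which $\cf(\gamma)$ is nonzero. Correspondingly, your claim that $\beta(\mf{i})\notin SI(\beta')$ is false: when $\mi_{\uv(\beta)}=x(\beta)$ and $\mf{i}$ is $\underline{\frak c}(\beta)$-admissible, one has $\beta(\mf{i})\in SI(\beta')$, so $\cf(\beta')\circ\cf(\beta)|_{P(\g(\mf{i}))}\neq 0$. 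These two nonzero terms are exactly what cancel in the paper's argument.

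In your Cases B and C, the claims that $\sv_{\g''}(\mf{k})$ is a singleton or empty are false in general. Any component of $R_-(\g)$ disjoint from the bypass disk survives unchanged in $\g''$ and contributes to $d_{\g''}$ on $P(\g''(\mf{k}))$; these ``far-away'' differential directions are cancelled not within $d_{\g''}\circ\cf(\gamma)$ alone but against the matching $\cf(\gamma)\circ d_{\g}$ terms (this is the content of the paper's observation that for ${\frak c}\neq\underline{\frak c}(\beta),\overline{\frak c}(\beta)$ one may take ${\frak c}''={\frak c}$). Examples~\ref{ex t1} and~\ref{ex t2} are too small to see this: take any $\g$ with an additional non-boundary-parallel component away from the bypass arc to produce a counterexample to your singleton claim. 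The paper's proof organizes the necessary cancellation via the $\pi_0(R_-)$-decomposition of both $d_{\g}$ and $d_{\g''}$; you will need to reinstate the $\cf(\gamma)\circ d_{\g}$ term and carry out a similar matching.
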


\begin{proof}
We write $\gamma$ for $\gamma(\beta'')$ during the proof.
By definition $d\cf(\gamma)=d_{\g''} \circ \cf(\gamma) + \cf(\gamma) \circ d_{\g}$.
We assume that $\uv(\beta), \ov(\beta) \neq \ast$; the proofs of the other cases are easier.

We will show that  
\begin{equation} \label{eqn: exact}
(\cf(\beta')\circ \cf(\beta) + d_{\g''} \circ \cf(\gamma) + \cf(\gamma) \circ d_{\g})|_{P(\g(\mf{i}))}=0
\end{equation}
for any $\mf{i} \in OI(\g)$.

\s \n{\bf Case A.} Suppose $\mf{i} \in II(\beta)$, i.e., $\mi_{\uv}\in[[x(\beta),y(\beta)]]$. Then $\cf(\gamma)|_{P(\g(\mf{i}))}=0$ and Equation~\eqref{eqn: exact} becomes $(\cf(\beta')\circ \cf(\beta) + \cf(\gamma) \circ d_{\g})|_{P(\g(\mf{i}))}=0$.  Also $\bi \notin II(\beta')$ by Lemma \ref{lem ind triangle}(1). 

We first consider $\cf(\beta')\circ \cf(\beta)|_{P(\g(\mf{i}))}$. If $\cf(\beta')\circ \cf(\beta)|_{P(\g(\mf{i}))}\not=0$, then $\bi \in SI(\beta')$ since $\bi \notin II(\beta')$. This forces $\mi_{\uv(\beta)}=x(\beta)$ and $\mf{i}$ to be $\underline{\frak c}(\beta)$-admissible, and we have $\g''(\beta'(\beta(\mf{i})))=\g(\underline{\frak c}(\beta) | \mf{i})$. Note that, by Remark~\ref{rmk: not always in Y and Z}(1), $\underline{\frak c}(\beta)$ cannot share a common boundary with $\ast$. 

Recall that $d_{\g}=\sum_{\frak c}d_{\frak c}$, where the summation is over $\pi_0(R_-(\g))$. Hence  $\cf(\gamma) \circ d_{\g}|_{P(\g(\mf{i}))}$ is the sum of $\cf(\gamma) \circ d_{\frak c}|_{P(\g(\mf{i}))}$, where $\mi$ is ${\frak c}$-admissible. If ${\frak c}\not=\underline{\frak c}(\beta), \overline{\frak c}(\beta)$, then ${\frak c}|\mi \in II(\beta)$ and $\cf(\gamma)\circ d_{\frak c}|_{P(\g(\mi))}=0$.  If ${\frak c}=\overline{\frak c}(\beta)$, then $\mi$ is not ${\frak c}$-admissible.  If ${\frak c}=\underline{\frak c}(\beta)$, then the ${\frak c}$-admissibility of $\mi$ implies that $\mi_{\uv(\beta)}=x(\beta)$ and $\g''(\beta'(\beta(\mf{i})))=\g({\frak c} | \mf{i})$. See Figure~\ref{4-3-1}.
\begin{figure}[ht]
\begin{overpic}
[scale=0.25]{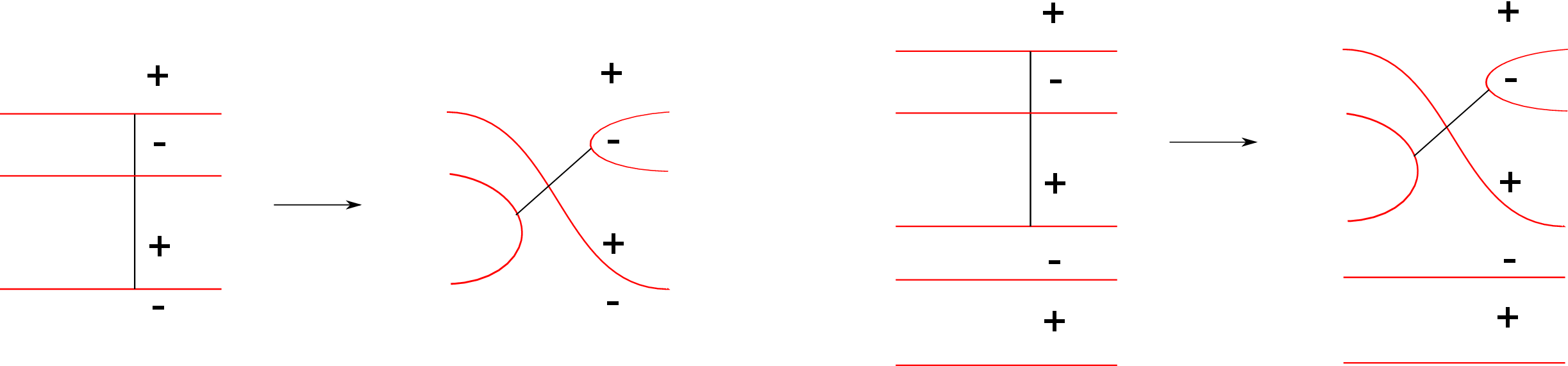}
\put(19,13){$\beta$}
\put(76,17){$\beta$}
\put(-4,0){${\scriptstyle \dnv(\uv(\beta), \mi_{\uv(\beta)})}$}
\put(30,0){${\scriptstyle LSV(\beta')}$}
\put(-1,7){${\scriptstyle x(\beta)}$}
\put(25,7){${\scriptstyle z(\beta')}$}
\put(58,12){${\scriptstyle \uv(\beta)}$}
\put(61,2){${\scriptstyle \mf{v}}$}
\put(70,3){${\scriptstyle i_0}$}
\put(86,2){${\scriptstyle \mf{w}(\beta')}$}
\put(99,3){${\scriptstyle k(\beta')}$}
\end{overpic}
\caption{Two subcases of ${\frak c}=\underline{\frak c}(\beta)$: $\uv(\beta)=\mf{v}$ on the left and $\uv(\beta) \in \dnv(\mf{v},i_0)$ on the right.}
\label{4-3-1}
\end{figure}

\s\n {\bf Case B.} Suppose $\mf{i} \notin II(\beta)$, i.e., $\mi_{\uv}\not\in[[x(\beta),y(\beta)]]$.  

We first consider $\cf(\beta')\circ \cf(\beta)|_{P(\g(\mf{i}))}$.  Note that $\cf(\beta')\circ \cf(\beta)|_{P(\g(\mf{i}))}\not=0$ if and only if $\mi \in SI(\beta)$, since $\bi\in II(\beta')$ is automatic.  

Next let ${\frak c}\in \pi_0(R_-(\g))$. If ${\frak c}\not=\underline{\frak c}(\beta), \overline{\frak c}(\beta)$, then there exists ${\frak c}''={\frak c}$, viewed as an element of $\pi_0(R_-(\g''))$, such that $d_{{\frak c}''} \circ \cf(\gamma)|_{P(\g(\mi))} = \cf(\gamma) \circ d_{\frak c}|_{P(\g(\mi))}$.  (This takes care of all ${\frak c}''\not=\underline{\frak c}(\beta''), \overline{\frak c}(\beta'')$.) If ${\frak c}=\overline{\frak c}(\beta)$ and $\mi$ is not ${\frak c}$-admissible, then $\cf(\gamma) \circ d_{\frak c}|_{P(\g(\mi))}=0$. If ${\frak c}=\overline{\frak c}(\beta)$ and $\mi$ is ${\frak c}$-admissible, then ${\frak c}| \mi\in II(\beta)$ and $\cf(\gamma) \circ d_{\frak c}|_{P(\g(\mi))}=0$. If ${\frak c}=\underline{\frak c}(\beta)$, then $\mi$ is not ${\frak c}$-admissible and $\cf(\gamma) \circ d_{\frak c}|_{P(\g(\mi))}=0$.  If ${\frak c}''=\overline{\frak c}(\beta'')$, then $\gamma(\beta'')(\mi)$ is not ${\frak c}''$-admissible and $d_{{\frak c}''} \circ \cf(\gamma)|_{P(\g(\mi))} = 0$.  Finally, if ${\frak c}''=\underline{\frak c}(\beta'')$, then $d_{{\frak c}''} \circ \cf(\gamma)|_{P(\g(\mi))} \not= 0$ if and only if $\mi\in SI(\beta)$.  Moreover, when this happens, $\beta(\mi)\in II(\beta')$ and $d_{{\frak c}''} \circ \cf(\gamma)|_{P(\g(\mi))}=\cf(\beta')\circ \cf(\beta)|_{P(\g(\mi))}$.
\end{proof}

Lemma \ref{lem comp triangle} will be the key to proving that $\cf$ is exact, i.e.,  maps bypass triangles to distinguished triangles; see Proposition \ref{prop triangle}.

\subsubsection{Disjoint pairs} \label{Sec disjoint}

For $s=0,1$, let $\beta^s \in \Hom(\g, \g^s)$ be a pair of bypass morphisms whose arcs of attachment are disjoint. Let $\wt{\g}$ be the resulting dividing set after attaching both bypasses to $\g$. We assume that the composition of the two bypass morphisms is nonzero; in particular we are assuming that $\wt{\g}$ does not contain contractible components. Let $\tb^s \in \Hom(\g^{1-s}, \wt{\g})$, $s=0,1$, be bypass morphisms such that $\tb^1 \circ \beta^0=\tb^0 \circ \beta^1 \in \Hom(\g, \wt{\g})$.

The goal of this subsection is to prove:

\begin{lemma} \label{lem disjoint}
Given a pair of disjoint bypasses $\beta^s$, $s=0,1$, on $\g$, and $\wt{\g}, \tb^s$, $s=0,1$, as above, we have
$$\cf(\tb^1) \circ \cf(\beta^0) = \cf(\tb^0) \circ \cf(\beta^1) \in \Hom_{\dne}(\cf(\g), \cf(\wt{\g})).$$
\end{lemma}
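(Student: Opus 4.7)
The plan is to verify the equality of chain maps $\cf(\tb^1) \circ \cf(\beta^0) = \cf(\tb^0) \circ \cf(\beta^1)$ direct summand by direct summand. For each $\mi \in OI(\g)$, both sides restrict to $\rne$-module maps from $P(\g(\mi))$ into $\cf(\wt\g) = \bigoplus_{\mk \in OI(\wt\g)} P(\wt\g(\mk))[h(\mk)]$, and by construction each component into $P(\wt\g(\mk))$ is right multiplication by an element of $\rne$. Since $\Hom_{\cne}(\g(\mi), \wt\g(\mk))$ is at most one-dimensional over $\F$, each such component is either zero or the unique generator $r(\g(\mi),\wt\g(\mk))$; it therefore suffices to check that both sides have nonzero contribution into $P(\wt\g(\mk))$ for the same set of $\mk$'s.

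Fix $\mi \in OI(\g)$. A nonzero contribution of $\cf(\tb^1)\circ \cf(\beta^0)$ at $P(\wt\g(\mk))$ comes from some $\mj \in OI(\g^0)$ with $\mi \in II(\beta^0)\sqcup SI(\beta^0)$, $\mj = \beta^0(\mi)$, $\mj \in II(\tb^1) \sqcup SI(\tb^1)$, and $\tb^1(\mj) = \mk$; a symmetric statement gives the contribution of $\cf(\tb^0)\circ \cf(\beta^1)$ via some $\mj' \in OI(\g^1)$. Because the arcs of attachment of $\beta^0$ and $\beta^1$ are disjoint, the data $(\uv(\beta^s), \ov(\beta^s), x, y, z, LSV(\beta^s))$ for $s=0,1$ involve components of $R_+(\g)$ that are either disjoint or nested in a way that does not interfere. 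Using the interpretation of $\cf(\beta)$ in terms of negative regions from Section~\ref{Sec bypass chain}, $\cf(\beta^s)$ moves the omitting labels of $\mi$ along the negative region $\overline{\frak c}(\beta^s)$, while $\cf(\tb^{1-s})$ subsequently moves labels along a different region $\overline{\frak c}(\tb^{1-s}) \in \pi_0(R_-(\g^s))$, and disjointness of arcs of attachment forces these two negative regions to correspond under the identification $\pi_0(R_-(\g^0)) \simeq \pi_0(R_-(\g^1))$ outside the attachment loci.

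From this I deduce that the existence of the intermediate index $\mj$ and the resulting $\mk$ depends only on combined ``two-region'' data symmetric in $s=0,1$: there is a single condition on $\mi$ and a single $\mk \in OI(\wt\g)$, obtained by performing both label-shuffles on $\mi$ in either order, such that both compositions hit $P(\wt\g(\mk))$ when the condition holds, and neither does otherwise. When the condition holds, by the one-dimensionality remark both maps equal right multiplication by $r(\g(\mi),\wt\g(\mk))$ (this element factors through each of $\g^0(\mj)$ and $\g^1(\mj')$ by the fourth relation in Lemma~\ref{lem rne}, which says disjoint bypasses commute in $\rne$). Summing over $\mi \in OI(\g)$ yields the lemma.

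The main obstacle is converting the geometric disjointness of the arcs of attachment into the combinatorial statement that $II(\beta^0)\sqcup SI(\beta^0)$ and $II(\tb^1) \sqcup SI(\tb^1)$ interact compatibly, and symmetrically on the other side. This requires a finite but nontrivial case split according to (i) whether $\mi$ is in $II$ or $SI$ of $\beta^s$, (ii) whether each bypass is of shuffling type (Y) or (Z), and (iii) the possible relative configurations of $\{\uv(\beta^0), \ov(\beta^0)\}$ and $\{\uv(\beta^1), \ov(\beta^1)\}$ in $V(\g)$. The key combinatorial fact to be extracted is that disjointness implies $\{\uv(\beta^{1-s}), \ov(\beta^{1-s})\} \cap LSV(\beta^s)$ is either empty or contributes only in the $\mi_{\mf{w}}=0$ component, so the shuffling prescriptions do not conflict; once this is in hand, the unique generator argument finishes the proof.
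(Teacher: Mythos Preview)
Your argument would, if it worked, establish that $\cf(\tb^1) \circ \cf(\beta^0) = \cf(\tb^0) \circ \cf(\beta^1)$ as $\rne$-module maps, summand by summand. But this is simply false in general: the paper exhibits an explicit counterexample (Example~\ref{ex h}, Case (2-1), where $\ov(\beta^0)=\uv(\beta^1)$) in which the two compositions disagree as maps and only become equal after passing to the homotopy category $\dne$. Concretely, in that example the summand $P(\ul{1},\ul{4})$ of $\cf(\g)$ is sent by one composition to $P(\ul{3},\ul{4})\subset\cf(\wt\g)$ and by the other to $P(\ul{2},\ul{4})\subset\cf(\wt\g)$; these are different projective summands, so the claim that ``both sides have nonzero contribution into $P(\wt\g(\mk))$ for the same set of $\mk$'s'' fails outright.

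The flaw in your reasoning is the assertion that disjointness of the arcs of attachment forces the data $BV(\beta^0)$ and $BV(\beta^1)$ to be combinatorially independent. Disjointness of arcs in $D^2$ does \emph{not} prevent $\uv(\beta^0),\ov(\beta^0)$ from coinciding with $\uv(\beta^1),\ov(\beta^1)$ as components of $R_+(\g)$; when they do coincide (the paper's Cases (2-1), (2-2), (5-4)), the two shuffling prescriptions genuinely interfere and the target summands differ. The paper handles these cases by constructing an explicit homotopy $\cf(h):\cf(\g)\to\cf(\wt\g)$ (built from identity maps $P(\g(\mi))\to P(\wt\g(h(\mi)))$ on certain ``type $(r,r)$'' indices) and checking $d\cf(h)=\cf(\tb^1)\circ\cf(\beta^0)+\cf(\tb^0)\circ\cf(\beta^1)$ in those cases. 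Your proof is missing this homotopy entirely.
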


Since the reduction to disjoint pairs of bypasses at the beginning of Section~\ref{Sec comp} allows for any of $\beta^s$, $\widetilde{\beta}^s$ to be trivial, we first consider the situation where at least one of $\beta^s, \tb^s$ is trivial.  We can enumerate all the possible relative positions of $\beta^0$ and $\beta^1$, assuming $\beta^0$ is a fixed trivial bypass.  The enumeration is left to the reader, but we almost always have $\beta^0$ and $\tb^0$ trivial and $\beta^1=\tb^1$. The only (nontrivial, nonzero) exception is given in Figure~\ref{rotate2}, which is equivalent to a bypass rotation relation (R$_2'$) right below Theorem~\ref{thm: relations in contact category}.

\begin{figure}[ht]
\begin{overpic}[width=3.5cm]{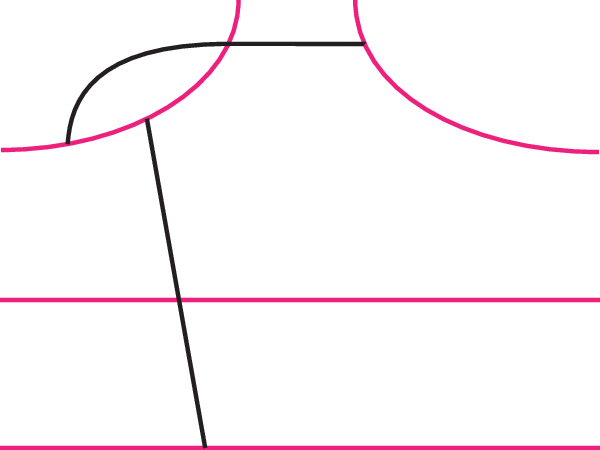}
\put(29,42){\tiny $\beta^1$} \put(48,59){\tiny $\beta^0$}
\end{overpic}
\caption{}
\label{rotate2}
\end{figure}

Next we enumerate all the cases where all of $\beta^s$, $\tb^s$ are nontrivial and nonzero.  For any nontrivial morphism $\beta$, $\cf(\beta)$ is determined by the maps $\beta: II(\beta) \sqcup SI(\beta) \ra OI(\g)$. Let
$$BV(\beta)=\{\uv(\beta), \ov(\beta)\} \cup LSV(\beta).$$
Then $\beta(\mf{i})_{\beta(\mf{v})}=\mi_{\mf{v}}$ for $\mf{v} \notin BV(\beta)$ by Remark \ref{rmk beta ind}(ii). Our proof is based on a case-by-case analysis of the relative positions of $BV(\beta^0)$ and $BV(\beta^1)$.  The following cases cover all the possibilities, after possibly switching $\beta^0$ and $\beta^1$:
\be
\item $BV(\beta^0) \cap BV(\beta^1)=\es$;
\item $\ov(\beta^0)=\uv(\beta^1)$;
\item $\ov(\beta^0)=\ov(\beta^1)$;
\item $\uv(\beta^0)=\uv(\beta^1)$;
\item $LSV(\beta^0) \cap BV(\beta^1) \neq \es$.
\ee
See Figure \ref{4-3-2} for a full list of Cases (2)-(5).
Each red arc is assumed to be a distinct arc of the dividing set, except for Case (4) where it is only required that the two positive regions belong to the same component.
The list for Case (5) does not include cases that were already listed (e.g., Case (2-1)).
Note that the various cases may have overlaps.
\begin{figure}[ht]
\begin{overpic}
[scale=0.25]{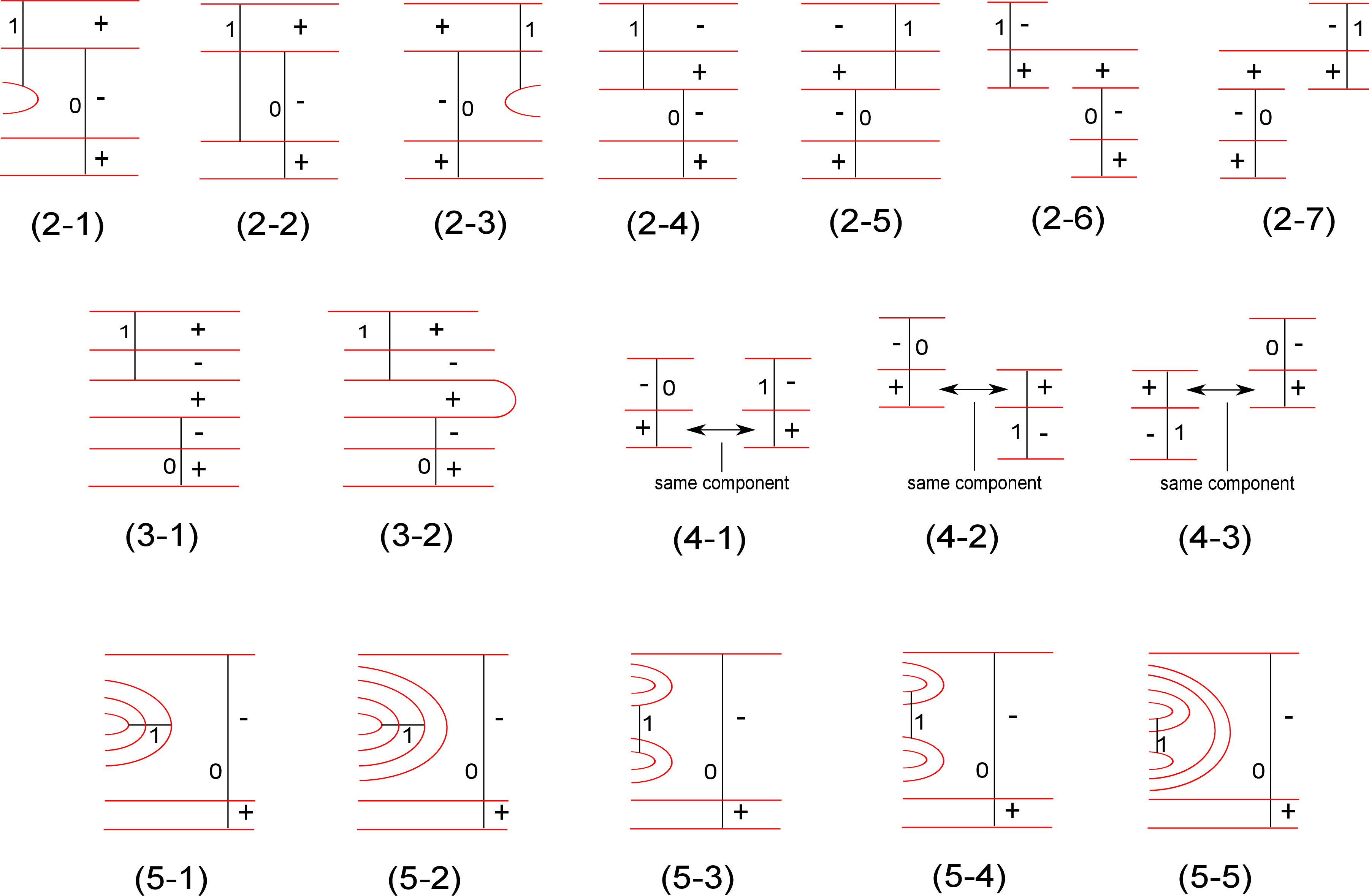}
\end{overpic}
\caption{Black arcs with labels $s=0,1$ are the arcs where bypasses $\beta^s$ are attached.}
\label{4-3-2}
\end{figure}

Before we discuss the general case, we look at an example in Case (2-1) where $\cf(\tb^1) \circ \cf(\beta^0) \neq \cf(\tb^0) \circ \cf(\beta^1)$ as $\rne$-linear maps from $\cf(\g)$ to $\cf(\wt{\g})$.  This illustrates the necessity of working in the homotopy category $\dne$.

\begin{example} \label{ex h}
Consider the bypasses in Figure \ref{4-3-3}, where $\ov(\beta^0)=\uv(\beta^1)$.
\begin{figure}[ht]
\begin{overpic}
[scale=0.2]{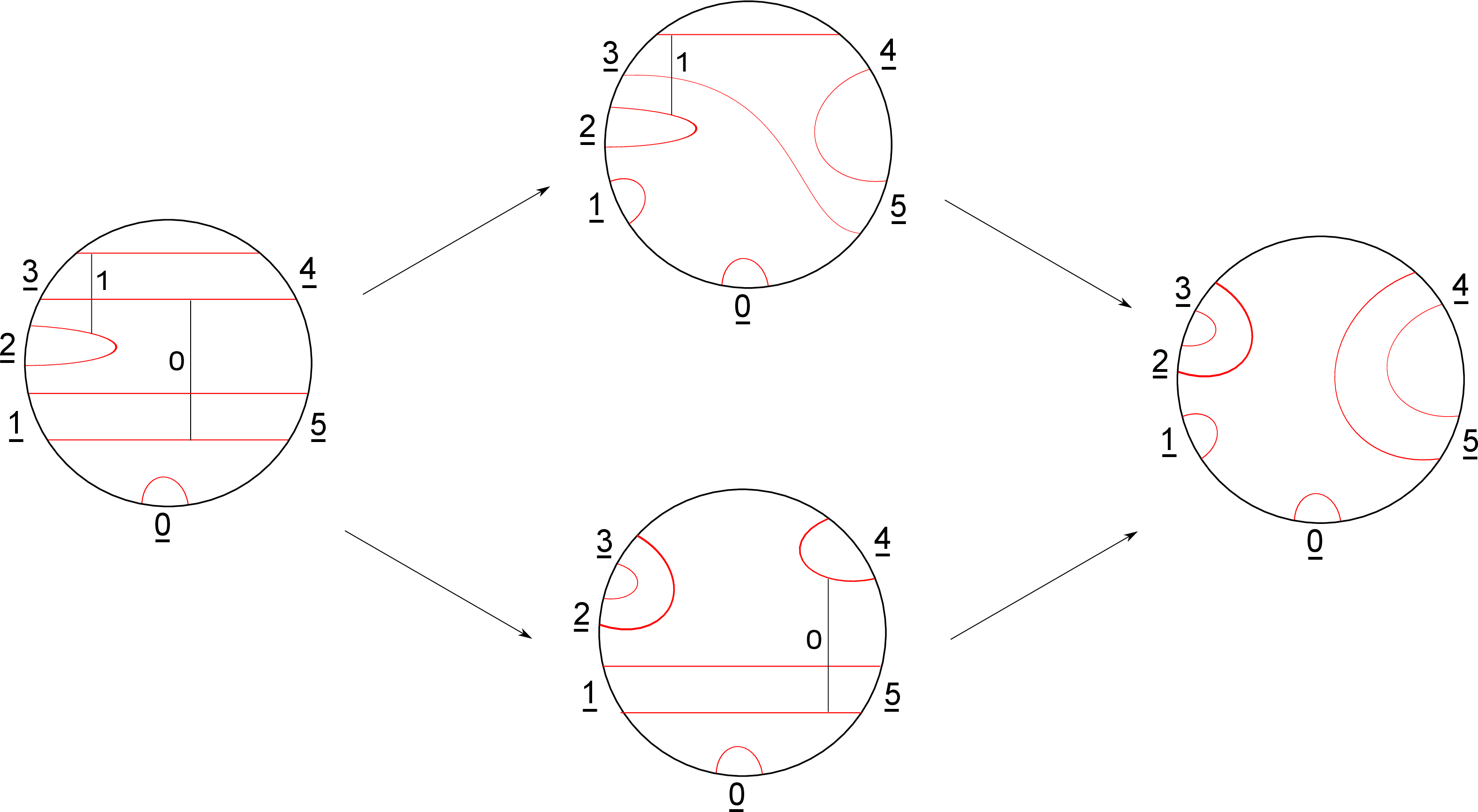}
\put(28,40){$\beta^0$}
\put(70,40){$\tb^1$}
\put(28,18){$\beta^1$}
\put(70,18){$\tb^0$}
\put(12,14){$\g$}
\put(56,30){$\g^0$}
\put(56,0){$\g^1$}
\put(92,14){$\wt{\g}$}
\end{overpic}
\caption{A pair of disjoint bypasses in Case (2-1).}
\label{4-3-3}
\end{figure}
\end{example}
By definition, the two compositions are:
$$\xymatrix{
\cf(\g): \ar[d]_{\cf(\beta^0)} & P(\ul{1},\ul{3})  \ar[r]  & P(\ul{1},\ul{4})  \ar[d] \ar[r]        & P(\ul{3},\ul{5})  \ar[d]  \ar[r]                & P(\ul{4},\ul{5})   \ar[d]\\
\cf(\g^0): \ar[d]_{\cf(\tb^1)} &          & P(\ul{3},\ul{4})    \ar[r] \ar[dr] & P(\ul{3},\ul{5})    \ar[r] \ar[dr]          & P(\ul{4},\ul{5})  \\
\cf(\wt{\g}):                  &          & P(\ul{2},\ul{4})  \ar[r]        & (P(\ul{3},\ul{4})   \oplus P(\ul{2},\ul{5})  ) \ar[r]  & P(\ul{3},\ul{5})   \\
\cf(\g): \ar[d]_{\cf(\beta^1)} & P(\ul{1},\ul{3})  \ar[r]\ar[dr]  & P(\ul{1},\ul{4})  \ar[dr] \ar[r]        & P(\ul{3},\ul{5})   \ar[dr]  \ar[r]                & P(\ul{4},\ul{5})   \\
\cf(\g^1): \ar[d]_{\cf(\tb^0)} & P(\ul{1},\ul{2})  \ar[r]  & P(\ul{1},\ul{3})  \ar[d] \ar[r]        & P(\ul{2},\ul{5})    \ar[d]  \ar[r]                & P(\ul{3},\ul{5})   \ar[d]\\
\cf(\wt{\g}):                  &          & P(\ul{2},\ul{4})  \ar[r]        & (P(\ul{3},\ul{4})   \oplus P(\ul{2},\ul{5})  ) \ar[r]  & P(\ul{3},\ul{5})
}$$

For $P(\ul{3},\ul{5})$, the corresponding index $\mf{i}$ is in $II(\beta^0) \cap II(\beta^1)$. Hence both compositions are identity morphisms when restricted to $P(\ul{3},\ul{5})$.

Let $\cf(h)$ be the following map:
$$\xymatrix{
\cf(\g): \ar[d]_{\cf(h)} & P(\ul{1},\ul{3}) \ar[r]  & P(\ul{1},\ul{4})  \ar[d] \ar[r]        & P(\ul{3},\ul{5})    \ar[r]                & P(\ul{4},\ul{5}) \\
\cf(\wt{\g}):                  &          & P(\ul{2},\ul{4})  \ar[r]        & (P(\ul{3},\ul{4})\oplus P(\ul{2},\ul{5})) \ar[r]  & P(\ul{3},\ul{5})
}$$
We can easily verify that $\cf(\tb^1) \circ \cf(\beta^0) + \cf(\tb^0) \circ \cf(\beta^1)=d\cf(h)$ as maps.
Hence $\cf(\tb^1) \circ \cf(\beta^0) = \cf(\tb^0) \circ \cf(\beta^1) \in \Hom_{\dne}(\cf(\g), \cf(\wt{\g}))$.

\begin{proof}[Proof of Lemma~\ref{lem disjoint}]
The following claims imply Lemma~\ref{lem disjoint}:

\begin{claimA}
With the exception of Cases (2-1), (2-2), and (5-4), in Cases (1)--(5),
$$\cf(\tb^1) \circ \cf(\beta^0) = \cf(\tb^0) \circ \cf(\beta^1)$$
as maps from $\cf(\g)$ to $\cf(\wt{\g})$.
\end{claimA}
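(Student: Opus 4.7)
The plan is to carry out a case-by-case analysis, tracking what each of the two compositions $\cf(\tb^1)\circ \cf(\beta^0)$ and $\cf(\tb^0)\circ \cf(\beta^1)$ does to each summand $P(\g(\mi))$ of $\cf(\g)$. By Definition~\ref{def fb}, $\cf(\beta)$ is a direct sum of right-multiplications by the elements $t(\beta,\mi)\in \rne$, one for each $\mi\in II(\beta)\sqcup SI(\beta)$, and similarly for the other morphisms. Since any two parallel paths in $\qne$ represent the same element of $\rne$ (by the commutativity relation of Lemma~\ref{lem rne}), it suffices to show, for each $\mi\in \oi(\g)$, that the two routes produce the same target index in $\oi(\wt\g)$ and that they are both nonzero (or both zero) on that summand.

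First I would handle Case (1), $BV(\beta^0)\cap BV(\beta^1)=\es$. Here the conditions (a)--(b) of Definition~\ref{def ii} and conditions (1)--(4) of Definition~\ref{def si} are imposed on disjoint entries of $\mi$, so
\[
\mi\in II(\beta^0)\sqcup SI(\beta^0) \text{ and }\beta^0(\mi)\in II(\tb^1)\sqcup SI(\tb^1)
\]
is symmetric in $0\leftrightarrow 1$, and Remark~\ref{rmk beta ind}(ii) gives $\tb^1(\beta^0(\mi))=\tb^0(\beta^1(\mi))$ entrywise; hence both compositions restrict to right-multiplication by the same element of $\rne$. Cases (3), (4), and the non-excluded sub-cases of (2) and (5) are variations on the same theme: in each one, the two bypasses share at most one vector of $BV$ (at $\uv$, at $\ov$, or in an $LSV$), and I would verify, using the explicit formulas (C1)--(C3) of Definition~\ref{def beta ind} together with the description of chain/differential intervals in Lemma~\ref{lem beta path}, that the intermediate admissibility conditions transfer symmetrically and that the final target index $\tb^1(\beta^0(\mi))=\tb^0(\beta^1(\mi))$ is well-defined and independent of the order. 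A clean way to organize this is to use the ``negative-region'' reformulation from Section~\ref{Sec bypass chain}: the support of $\cf(\beta)$ corresponds to choosing a component $\overline{\frak c}(\beta)$ of $R_-(\g)$ and ``sliding/shuffling'' omitting labels across its left half, and in the non-excluded cases the two relevant negative components remain disjoint or adjoin in a way that makes the two orders of sliding produce the identical set of omitting labels.

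The key structural input, reducing the combinatorics, is the following observation I would isolate as a lemma: outside of Cases (2-1), (2-2), (5-4), there is a natural bijection between the sources on which $\cf(\tb^1)\circ \cf(\beta^0)$ is nonzero and those on which $\cf(\tb^0)\circ \cf(\beta^1)$ is nonzero; moreover, on such a source, the two composite paths in $\qne$ from $\g(\mi)$ to $\wt\g(\tb^1\beta^0(\mi))$ differ only by a sequence of far-commutativities of the elementary arrows $\ul\cdot\xrightarrow{s}\ul\cdot$, and hence represent the same element of $\rne$ by the last relation of Lemma~\ref{lem rne}. With this lemma, the equality of maps is reduced to a finite, local check at each of the configurations pictured in Figure~\ref{4-3-2}.

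I expect the main obstacle to be the bookkeeping when the shared vector is an $LSV$ (Case (5)) and when $\beta^1$ or $\tb^1$ is of shuffling type (Z), because then the formula (C3) introduces an extra label change at $\mf{w}(\beta)$, and the identity $\tb^1(\beta^0(\mi))=\tb^0(\beta^1(\mi))$ must be checked taking into account how $\mf{w}(\beta^0)$ and $\mf{w}(\beta^1)$ shift as one passes from $\g$ to $\g^s$ to $\wt\g$. This is exactly the place where Case (5-4) breaks down and a chain-homotopy correction (as in Example~\ref{ex h}) must be used in Claim~B rather than Claim~A; isolating precisely the geometric configuration that fails --- namely, a label that would be ``slid'' by both bypasses across the same component of $R_-$ --- is what singles out the three exceptional cases (2-1), (2-2), (5-4) and confirms the equality in all remaining cases.
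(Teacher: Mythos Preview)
Your proposal is correct and follows essentially the same approach as the paper: a case-by-case analysis tracking, for each $\mi\in\oi(\g)$, whether $\mi$ lies in $II\sqcup SI$ for each bypass and verifying (via the formulas (C1)--(C3) and the commutativity relation in $\rne$) that the two composite target indices coincide. The paper likewise only writes out Case~(1) in detail and declares the remaining non-excluded cases similar; your treatment of Case~(1) matches theirs almost verbatim, and your organizing ``lemma'' about the bijection of nonzero sources plus far-commutativity in $\qne$ is exactly the mechanism implicit in the paper's argument.
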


\begin{claimB}
In Cases (2-1), (2-2), and (5-4), there exists $\cf(h): \cf(\g) \ra \cf(\wt{\g})$ such that
\begin{gather} \label{eq dh} \tag{H}
(\cf(\tb^1) \circ \cf(\beta^0) + \cf(\tb^0) \circ \cf(\beta^1)+d\cf(h))|_{P(\g(\mf{i}))}=0,
\end{gather}
for any $\mf{i} \in OI(\g)$.
\end{claimB}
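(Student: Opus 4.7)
The plan is to mimic the homotopy construction of Example~\ref{ex h} systematically in each of the three exceptional cases. First I would identify the ``defect locus'' $E\subset \oi(\g)$, consisting of those $\mf{i}$ on which $\cf(\tb^1)\cf(\beta^0)|_{P(\g(\mf{i}))}$ and $\cf(\tb^0)\cf(\beta^1)|_{P(\g(\mf{i}))}$ fail to agree. In Cases~(2-1) and~(2-2), $\ov(\beta^0)=\uv(\beta^1)$, so an omitting label sitting in this shared component can be moved by either bypass but not by both, making $\mf{i}\in II(\beta^0)\cap SI(\beta^1)$ or vice versa for the bad indices; Definitions~\ref{def ii} and~\ref{def si} give an explicit description. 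Case~(5-4) is analogous with $\uv(\beta^1)\in LSV(\beta^0)$ replacing $\uv(\beta^1)=\ov(\beta^0)$, so the bad indices are those tied to shuffling along the shared $LSV$-vector.

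Next, for each $\mf{i}\in E$ I would define the restriction of $\cf(h)$ to $P(\g(\mf{i}))$ to be right-multiplication by the generator of $\Hom(\g(\mf{i}),\wt\g(\mf{l}(\mf{i})))$, where $\mf{l}(\mf{i})\in \oi(\wt\g)$ is obtained from $\mf{i}$ by sliding the shared-component omitting label past both bypass arcs at once. The resulting $\mf{l}(\mf{i})$ is determined by a Tightness Criterion check (Proposition~\ref{prop: tightness criterion}), and the cohomological degree satisfies $h(\mf{l}(\mf{i}))=h(\mf{i})-1$ by the nesting-degree identity of Lemma~\ref{lem rel nc}, so $\cf(h)$ has degree $-1$ as required.

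The third step is to verify the identity $\cf(\tb^1)\cf(\beta^0)+\cf(\tb^0)\cf(\beta^1)=d\cf(h)$ on each $P(\g(\mf{i}))$. The cleanest bookkeeping uses the splitting $d=\sum_{{\frak c}} d_{\frak c}$ from Equations~\eqref{eqn: def of dc}--\eqref{eqn: def of dv}: the pieces of $d_{\wt\g}\cf(h)+\cf(h) d_\g$ indexed by the components of $R_-$ adjacent to the shared region cancel against the off-diagonal pieces of the two compositions, while the other pieces vanish by Corollary~\ref{cor stack2}. The resulting equalities in $\rne$ then follow from the stacking relation of Corollary~\ref{cor stack} together with the commutativity relation in Lemma~\ref{lem rne}.

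The main obstacle will be Case~(5-4), where $\beta^0$ can be of shuffling type~(Y) or (Z) and the $LSV$-sets of all four bypasses $\beta^0,\beta^1,\tb^0,\tb^1$ differ in subtle ways. One must stratify by the shuffling types and carefully track how $\mf{w}(\beta)$ and $k(\beta)$ change when passing from $\beta^0$ to $\tb^0$; the homotopy $\cf(h)$ may itself split into multiple summands of Type~(Id) and Type~(Sh) in the sense of Section~\ref{subsubsection: defn of chain map}, and verifying the degree-shift and cancellation identities within each substratum is where the most calculation will be required.
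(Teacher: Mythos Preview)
Your overall plan---construct $\cf(h)$, then verify (H) index-by-index using the $d_{\frak c}$ decomposition---matches the paper's strategy, and your anticipation that Case~(5-4) is the most involved is reasonable. However, there is a genuine gap in how you identify the domain of $\cf(h)$.

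The paper (which treats Case~(2-2) in detail) does \emph{not} define $\cf(h)$ on the defect locus. It partitions $\oi(\g)$ into four types $(a^0,a^1)$, $a^s\in\{l,r\}$, according to whether $\g_{\uv^s}(\mi_{\uv^s})\in\g_{\uv^s}^{a^s}$, and defines $\cf(h)$ \emph{only} on type $(r,r)$. For such $\mi$ one has $\mi\notin II(\beta^0)\cup II(\beta^1)$, and the key observation is that $\g(\mi)$ already coincides with some $\tg(h(\mi))$; the homotopy is then the \emph{identity} map $P(\g(\mi))\to P(\tg(h(\mi)))$, not a nontrivial multiplication. The defect locus is strictly larger: for type $(l,r)$, for instance, $\mi\in II(\beta^0)$ but the paper shows $\mi\notin SI(\beta^1)$ (contrary to your characterization ``$\mi\in II(\beta^0)\cap SI(\beta^1)$''), so $\cf(\beta^1)|_{P(\g(\mi))}=0$ while $\cf(\tb^1)\cf(\beta^0)|_{P(\g(\mi))}$ need not vanish. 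This defect is cancelled not by $\cf(h)|_{P(\g(\mi))}$ (which is zero) but by $\cf(h)\circ d_{\underline{\frak c}^0}|_{P(\g(\mi))}$, which lands on a neighbouring $(r,r)$ index.

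If you instead placed $\cf(h)$ on the full defect locus, then $d\cf(h)$ would acquire nonzero terms on type-$(l,l)$ indices, where both compositions already agree and there is nothing to cancel against; (H) would fail there. So the choice of domain is not a bookkeeping detail but the crux of the construction. The same principle governs Cases~(2-1) and~(5-4): one looks for the sublocus of $\oi(\g)$ whose basic dividing sets $\g(\mi)$ survive into $\cf(\tg)$ and takes $\cf(h)$ to be the corresponding inclusion, letting $d\cf(h)$ propagate the cancellations outward.
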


\begin{claimC}
Referring to Figure~\ref{rotate}, if $\beta^0$ and $\beta^1$ are bypass morphisms corresponding to $\delta_0$ and $\delta_1$, then
$$\cf(\tb^1)\circ \cf(\beta^0)=\cf(\beta^1).$$
\end{claimC}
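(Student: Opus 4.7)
The plan is to establish Claim C as a \emph{strict} equality of $\rne$-module maps (no chain homotopy is required, unlike in Claim B), by checking that $\cf(\tb^1)\circ\cf(\beta^0)$ and $\cf(\beta^1)$ agree on each summand $P(\g(\mf{i}))$ of $\cf(\g)$ indexed by $\mf{i}\in\oi(\g)$. Note that $\tb^1\circ\beta^0=\beta^1$ already holds as a morphism in $\cne$ by bypass rotation (relation (R$_2'$)); the task is to lift this to chain-level equality under $\cf$.

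First I would read off the algebraic data from Figure~\ref{rotate} using Notations~\ref{notation: beta}--\ref{notation: beta 2}. Since $\delta_0$ is obtained from $\delta_1$ by left rotation, the two arcs share one endpoint and the bypass $\tb^1$ on $\g^0$ combines the ``rotated'' component with the one freed up by $\beta^0$. Concretely this gives identifications relating $\uv(\beta^s), \ov(\beta^s)$ for $s=0,1$ with $\uv(\tb^1), \ov(\tb^1)$, and shows that $LSV(\beta^1)$ agrees with $LSV(\beta^0)\sqcup LSV(\tb^1)$ up to the one extra intermediate component introduced by $\beta^0$. Similarly the triple $(x,y,z)$ of $\beta^1$ is determined by those of $\beta^0$ and $\tb^1$.

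Next I would run through the trichotomy for $\mf{i}\in\oi(\g)$ with respect to $\beta^1$. If $\mf{i}\in II(\beta^1)$, the position of the omitting label $\guv(\beta^1)(\mi_{\uv(\beta^1)})$ in $\guv(\beta^1)^l$ forces $\mf{i}\in II(\beta^0)$ and $\beta^0(\mf{i})\in II(\tb^1)$, so both $\cf(\beta^1)|_{P(\g(\mf{i}))}$ and the composite are the identity map onto $P(\wt\g(\beta^1(\mf{i})))$. If $\mf{i}\in SI(\beta^1)$, splitting further by whether $\mf{i}\in II(\beta^0)$ or $SI(\beta^0)$ (which in turn governs whether $\beta^0(\mf{i})\in II(\tb^1)$ or $SI(\tb^1)$), I would trace omitting labels by Definition~\ref{def beta ind} and verify that $\tb^1(\beta^0(\mf{i}))=\beta^1(\mf{i})$ in $\oi(\wt\g)$ as an index. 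The associated elements of $\rne$ coincide by Corollary~\ref{cor stack}: both products give nonzero paths in $\qne$ between the same pair of basic dividing sets, and such a generator is unique. Finally, when $\mf{i}\notin II(\beta^1)\sqcup SI(\beta^1)$, a direct check of Definitions~\ref{def ii} and~\ref{def si} --- in particular conditions (3), (4) on $LSV(\beta^1)$ and the position of $\mi_{\ov(\beta^1)}$ --- shows that one of the intermediate conditions fails, so either $\mf{i}\notin II(\beta^0)\sqcup SI(\beta^0)$ or $\beta^0(\mf{i})\notin II(\tb^1)\sqcup SI(\tb^1)$, and the composite vanishes in agreement with $\cf(\beta^1)$.

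The main obstacle is the bookkeeping in the shuffling subcases, especially negotiating shuffling types (Y) and (Z) from Definition~\ref{def beta shuff} as one passes between the three bypasses: the vector $\wb$ and integer $k(\beta)$ for $\beta^1$ of type (Z) may come from either $\beta^0$ or $\tb^1$ individually, and the constraints $\mi_{\mw}=0$ for $\mw\in LSV(\beta^s)$ must be compatible across the two steps. Once these combinatorial identifications are in place, the equality of $\rne$-elements is automatic --- any nonzero path in $\qne$ between two basic dividing sets represents the unique generator of their one-dimensional Hom-space --- so no explicit ``product-of-generators'' computation is needed. Together with Claims A and B, this completes the verification that $\cf$ respects disjoint-pair compositions and is therefore well-defined as a functor $\cne\ra\dne$.
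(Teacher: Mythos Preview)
Your approach is correct and is precisely what the paper has in mind: the paper does not write out a proof of Claim C but states that ``the proofs of the other cases are similar and are left to the reader,'' pointing back to the index-by-index analysis carried out for Claim A (Case (1)) and Claim B (Case (2-2)). Your organization by the trichotomy $\mf{i}\in II(\beta^1)$, $\mf{i}\in SI(\beta^1)$, or neither, together with tracking the intermediate status of $\mf{i}$ under $\beta^0$ and of $\beta^0(\mf{i})$ under $\tb^1$, is exactly the template used throughout Section~\ref{Sec comp}, and your appeal to Corollary~\ref{cor stack} for the uniqueness of nonzero paths in $\qne$ is the right way to avoid explicit product computations in $\rne$.
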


We only prove Claim A for Case (1) and Claim B for Case (2-2). The proofs of the other cases are similar and are left to the reader.

\s
\n{\bf Case (1).} Since $BV(\beta^0) \cap BV(\beta^1)=\es$, the maps $\cf(\beta^0)$ and $\cf(\beta^1)$ do not affect each other. More precisely, for $\mf{i} \in XI(\beta^0) \cap YI(\beta^1)$ where $XI, YI \in \{II, SI\}$, we have $\beta^0(\mf{i}) \in YI(\tb^1), \beta^1(\mf{i}) \in XI(\tb^0)$, and $\tb^1(\beta^0(\mf{i}))=\tb^0(\beta^1(\mf{i}))$.
For $\mf{i} \notin (II(\beta^0) \sqcup SI(\beta^0))\cap (II(\beta^1) \sqcup SI(\beta^1))$, $$\cf(\tb^1) \circ \cf(\beta^0)|_{P(\g(\mf{i}))}= \cf(\tb^0) \circ \cf(\beta^1)|_{P(\g(\mf{i}))}=0.$$

\s
\n{\bf Case (2-2).} Suppose that $\uv(\beta^s) = \ov(\beta^{1-s})$. Assume that $\uv, \ov \neq \ast$ for simplicity. Let $\uv^s$, $\ov{}^s$, $\underline{\frak c}^s$, $\overline{\frak c}^s$ denote $\uv$, $\ov$, $\underline{\frak c}$, $\overline{\frak c}$ for $\beta^s$, where $s=0,1$. In this case $\overline{\frak c}^0=\overline{\frak c}^1$.
We say $\mf{i} \in OI(\g)$ is of {\em type $(a^0, a^1)$} for $a^s \in \{l, r\}$ if $\g_{\uv^s}(i_{\uv^s}) \in \g_{\uv^s}^{a^s}$; see Figure \ref{4-3-4}.

For $\mf{i}$ of type $(r,r)$, there exists $\mf{j} \in OI(\tg)$ such that $\tg(\mf{j})=\g(\mf{i})$.
We denote $\mf{j}$ by $h(\mf{i})$ for $\mf{i}$ of type $(r,r)$.
Define $\cf(h): \cf(\g) \ra \cf(\tg)$ as the sum of identity morphisms $P(\g(\mf{i})) \ra  P(\tg(h(\mf{i})))$ for $\mf{i}$ of type $(r,r)$.

\begin{figure}[ht]
\begin{overpic}
[scale=0.2]{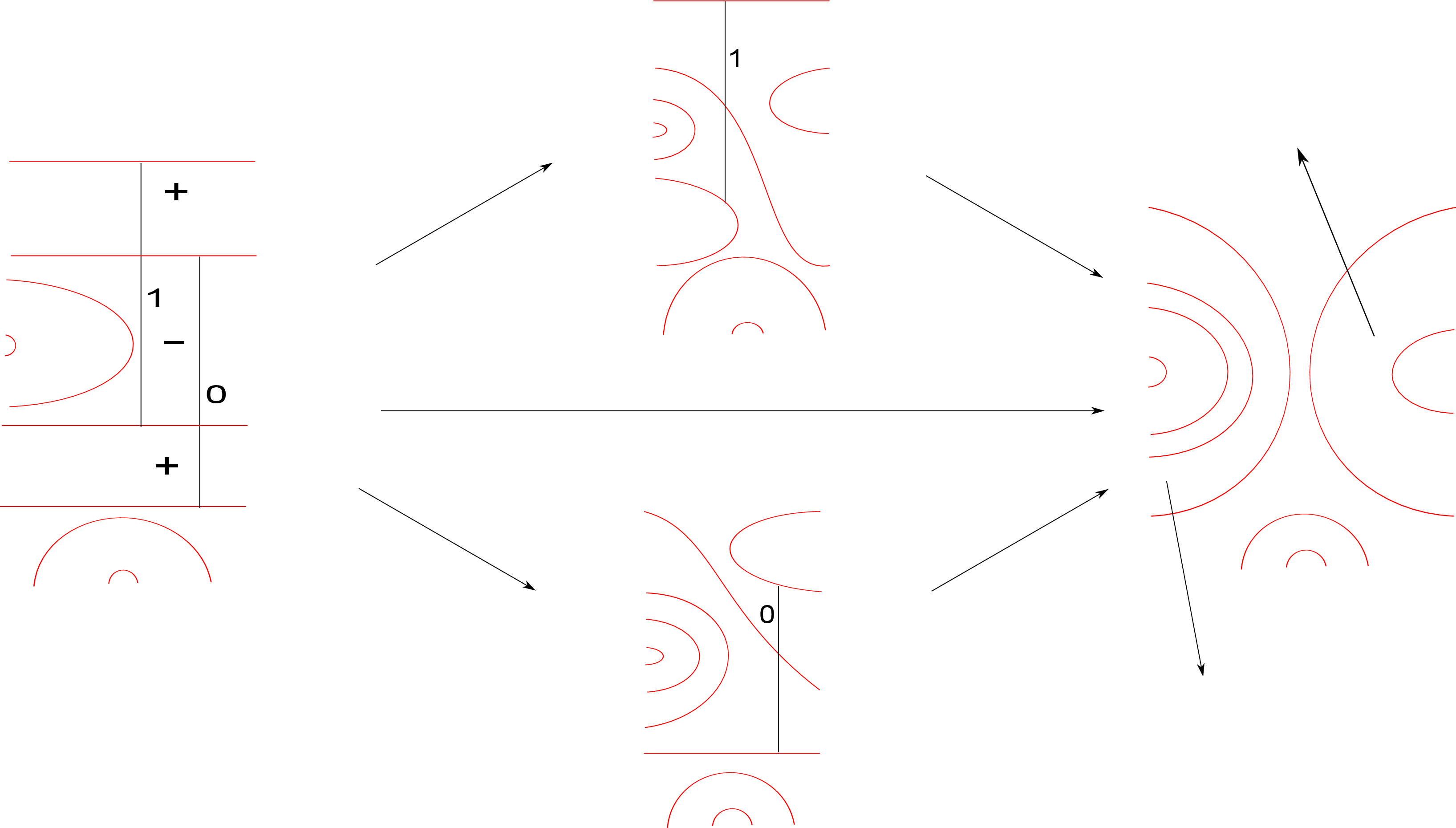}
\put(28,44){$\beta^0$}
\put(68,44){$\tb^1$}
\put(28,13){$\beta^1$}
\put(68,13){$\tb^0$}
\put(16,13){$\g$}
\put(60,33){$\g^0$}
\put(60,0){$\g^1$}
\put(98,10){$\wt{\g}$}
\put(40,26){$h$}
\put(0,23){$l^0$}
\put(0,41){$r^1$}
\put(16,23){$r^0$}
\put(16,41){$l^1$}
\put(43,40){$l^0$}
\put(43,53){$r^1$}
\put(56,6){$r^0$}
\put(56,17){$l^1$}
\put(78,38){$r^1$}
\put(99,24){$r^0$}
\put(0,18){$\small{\underline{\frak c}^0}$}
\put(15,33){$\small{\overline{\frak c}^0=\overline{\frak c}^1}$}
\put(9,47){$\small{\underline{\frak c}^1}$}
\put(83.7,30){$\small{{\frak c}'_1}$}
\put(96,18){$\small{{\frak c}'_2}$}
\put(99,30.5){$\small{{\frak c}'_3}$}
\put(80,6){${\scriptstyle \tb^1(\beta^0(\uv^0))}$}
\put(82,48){${\scriptstyle \tb^1(\beta^0(\uv^1))}$}
\end{overpic}
\caption{The letters $l^s, r^s$ on $\g$ indicate types of $\mf{i} \in OI(\g)$; $l^s, r^{1-s}$ on $\g^s$ indicate $\beta^s(\mf{i})$ for $\mf{i} \in II(\beta^s)$; $r^0, r^1$ on $\tg$ indicate $h(\mf{i})$ for $\mf{i}$ of type $(r,r)$.}
\label{4-3-4}
\end{figure}

\s
\n {\bf Case (2-2-A).}
Suppose $\mf{i}$ is of type $(l,l)$. Then $\mf{i} \in II(\beta^0) \cap II(\beta^1)$ and $\beta^s(\mf{i}) \in II(\tb^{1-s})$ and $\tb^1(\beta^0(\mf{i}))=\tb^0(\beta^1(\mf{i}))$. Hence $(\cf(\tb^1) \circ \cf(\beta^0) + \cf(\tb^0) \circ \cf(\beta^1))|_{P(\g(\mf{i}))}=0$ by the commutativity.

We have $\cf(h)|_{P(\g(\mi))}=0$ by the definition of $\cf(h)$; hence $d_{\widetilde\Gamma}\circ \cf(h)|_{P(\g(\mi))}=0$.

It remains to show that $\cf(h)\circ d_\g|_{P(\g(\mi))}=0$. This follows from observing that there is no ${\frak c}\in \pi_0(R_-(\g))$ such that $\mi$ is ${\frak c}$-admissible and ${\frak c}|\mi$ is of type $(r,r)$: This is clear if ${\frak c}\not=\overline{\frak c}^0, \underline{\frak c}^0, \underline{\frak c}^1$ since the labels in $\uv^0$ and $\ov^0$ are not moved. If ${\frak c}=\overline{\frak c}^0=\overline{\frak c}^1$, then $\mi$ is not ${\frak c}$-admissible. If ${\frak c}=\underline{\frak c}^0$, then the label of $\ov^0$ is not moved, and if ${\frak c}=\underline{\frak c}^1$, then the label of $\uv^0$ is not moved.

Summing all the above terms gives (H) for $\mi$ of type $(l,l)$.

\s \n {\bf Case (2-2-B).} Suppose $\mf{i}$ is of type $(l,r)$.
Then $\mf{i} \in II(\beta^0)$, $\mf{i} \notin II(\beta^1)$ and $\beta^0(\mf{i}) \notin II(\tb{}^1)$. Since $\g_{\uv^0}(\mi_{\uv^0}) \in \g_{\uv^0}^l$, we have $\mf{i} \notin SI(\beta^1)$ and $\cf(\beta^1)|_{P(\g(\mf{i}))}=0$.

We have $\cf(h)|_{P(\g(\mi))}=0$ by the definition of $\cf(h)$; hence $d_{\widetilde\Gamma}\circ \cf(h)|_{P(\g(\mi))}=0$.

Let ${\frak c}\in \pi_0(R_-(\g))$. If ${\frak c}\not=\overline{\frak c}^0, \underline{\frak c}^0, \underline{\frak c}^1$, then ${\frak c}|\mi$ cannot be of type $(r,r)$ even if it exists.  If ${\frak c}=\overline{\frak c}^0=\overline{\frak c}^1$ or $\underline{\frak c}^1$, then $\mi$ is not ${\frak c}$-admissible.  Hence $\cf(h)\circ d_{\frak c}|_{P(\g(\mi))}=0$ for ${\frak c}\not=\underline{\frak c}^0$.

Finally, if ${\frak c}=\underline{\frak c}^0$, then $\mi$ is ${\frak c}$-admissible if and only if ${\frak c}|\mi$ is of type $(r,r)$.  This holds precisely when $\beta^0(\mf{i}) \in SI(\tb^1)$. Hence $(\cf(\tb^1) \circ \cf(\beta^0) + \cf(h)\circ d_{\underline{\frak c}^0})|_{P(\g(\mf{i}))}=0$.

Summing all the above terms gives (H) for $\mi$ of type $(l,r)$.

\s
\n {\bf Case (2-2-C).} Suppose $\mf{i}$ is of type $(r,l)$. The proof is the same as that of type $(l,r)$, with $l$ and $r$ reversed.

\s
\n {\bf Case (2-2-D).} Suppose $\mf{i}$ is of type $(r,r)$. Then $\mf{i} \notin II(\beta^0) \cup II(\beta^1)$.


Let ${\frak c}\in \pi_0(R_-(\g))$. If ${\frak c}\not=\overline{\frak c}^0, \underline{\frak c}^0, \underline{\frak c}^1$, then the labels in $\uv^0$ and $\ov^0$ are not moved and there exists ${\frak c}'={\frak c}$, viewed as an element of $\pi_0(R_-(\widetilde\g))$, such that $\cf(h)\circ d_{\frak c}+ d_{\frak c'}\circ \cf(h)=0$ on $P(\g(\mi))$. If ${\frak c}=\underline{\frak c}^0$ or $\underline{\frak c}^1$, then $\mi$ is not ${\frak c}$-admissible.  If ${\frak c}=\overline{\frak c}^0$ and $\mi$ is ${\frak c}$-admissible, then ${\frak c}|\mi$ is of type $(l,l)$ and $\cf(h)\circ d_{\frak c}=0$ on $P(\g(\mi))$.

There are three components ${\frak c}'$ of $\pi_0(R_-(\widetilde\g))$ that are not of the form ${\frak c}'={\frak c}$; they will be denoted by ${\frak c}'_1, {\frak c}'_2, {\frak c}'_3$, in order from left to right in the right-hand diagram of Figure~\ref{4-3-4}. $h(\mi)$ is not ${\frak c}'_2$-admissible. One easily checks that
$$(d_{{\frak c}'_1}\circ\cf(h)+\cf(\tb^1) \circ \cf(\beta^0))|_{P(\g(\mi))}=0, \quad (d_{{\frak c}'_3}\circ \cf(h)+\cf(\tb^0) \circ \cf(\beta^1))|_{P(\g(\mi))}=0.$$

Summing all the above terms gives (H) for $\mi$ of type $(r,r)$.
\end{proof}

The well-definition of the functor $\cf: \cne \ra \dne$ follows from Lemmas \ref{lem comp triangle} and \ref{lem disjoint}.

\section{The functors $\tfne$} \label{section: functors on universal cover}

In this section we extend $\cf: \cne \ra \dne$ to $\tf: \tcne \ra \tdne$ by relating the homotopy gradings on both sides.

\subsection{Degree of $\cf(\beta)$}

Let $\beta\in \Hom(\g,\g')$ be a nontrivial, nonzero bypass.  If $h(\mf{i})-h(\beta(\mf{i}))$ is the same for all $\mf{i} \in II(\beta) \sqcup SI(\beta)$, then we say that $\cf(\beta)$ is {\em homogeneous} and define the {\em degree of $\cf(\beta)$} to be $\deg(\cf(\beta))=h(\mf{i})-h(\beta(\mf{i}))$ for any $\mi$.   The goal of this subsection is to show that $\cf(\beta)$ is homogeneous and to compute $\deg(\cf(\beta))$.

The arc of attachment of a nontrivial bypass $0\not=\beta \in \Hom(\g,\g')$, together with the three components of $\Gamma$ that it intersects, cuts the disk $D^2$ into $6$ components.  The components are labeled $P_i(\beta)$, $i=1,\dots,6$, where $P_1(\beta)$ is the bottom component and $i$ increases as we go clockwise around $\bdry D^2$; see the top left diagram of Figure \ref{5-1-1}.  $P_i(\beta')$ and $P_i(\beta'')$ are defined analogously; see the top right and bottom diagrams of Figure~\ref{5-1-1}. $P_i(\beta)$ will be abbreviated as $P_i$ if $\beta$ is understood.  We write $\ul{0} \in P_i$ if the boundary arc with label $\ul{0}$ is contained in $P_i$.
\begin{figure}[ht]
\begin{overpic}
[scale=0.28]{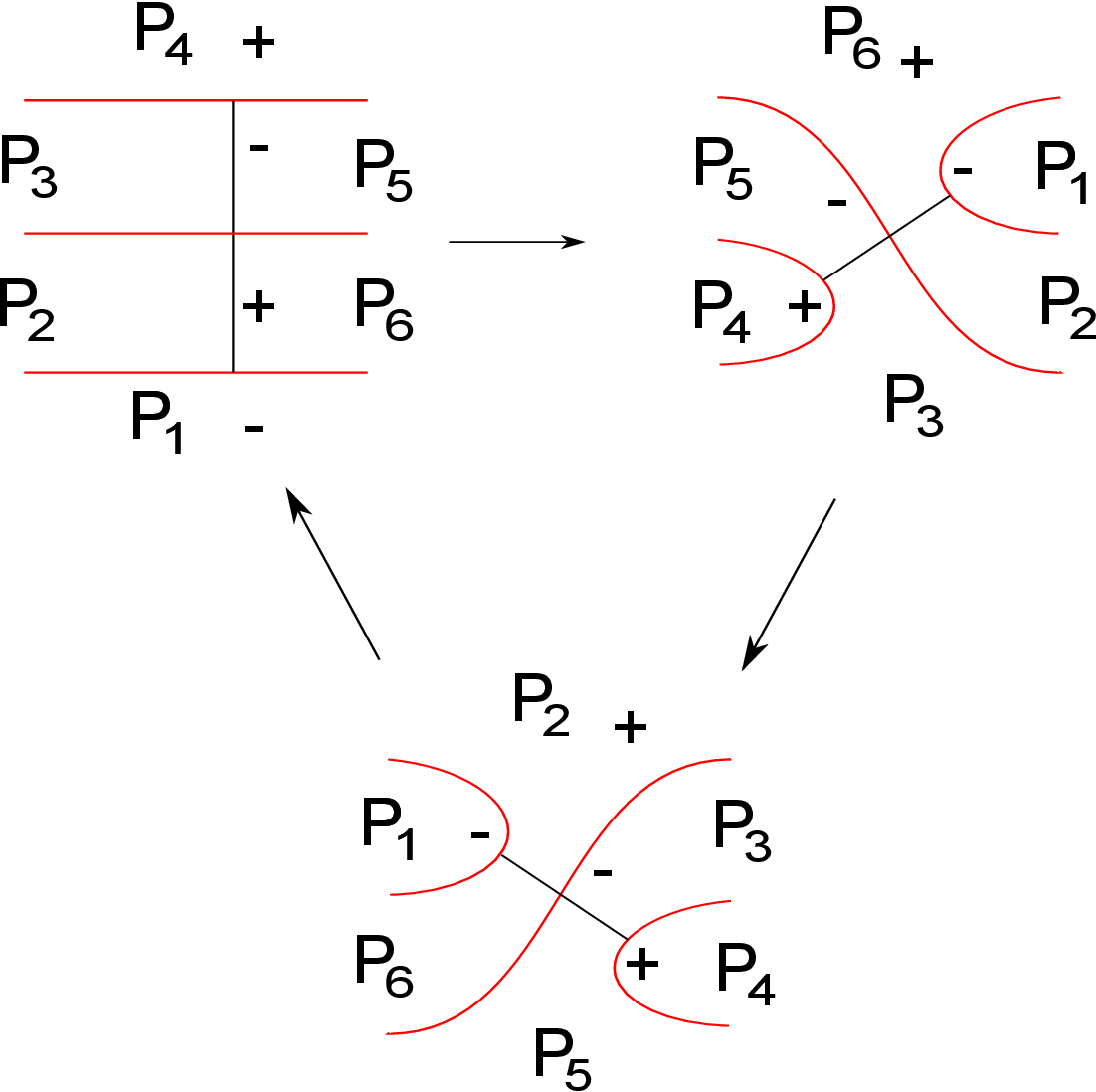}
\put(45,82){$\beta$}
\put(76,38){$\beta'$}
\put(17,40){$\beta''$}
\end{overpic}
\caption{}
\label{5-1-1}
\end{figure}

For convenience we will write
\begin{equation} \label{eqn: l of A}
l_{\g}(A)=\sum_{\gv \subset A}l_{\gv},
\end{equation}
where $A$ is a subset of $\R$.

\begin{lemma} \label{lem deg} $\mbox{}$
\begin{enumerate}
\item  If $\beta \in \Hom(\g,\g')$ is a nontrivial bypass, then $\cf(\beta)$ is homogeneous and 
\begin{equation} \label{eqn: formula for deg}
\deg(\cf(\beta))= \left\{
\begin{array}{cl}
0 & \mbox{if} \hspace{0.1cm} \ul{0} \in P_1, P_2; \\
|\guv^r|+l_{\g}((\guv(0), \guv(x))) & \mbox{if} \hspace{0.1cm} \ul{0} \in P_3, P_4; \\
1-|\guv^l|-l_{\g}((\guv(x), \gov(z))) & \mbox{if} \hspace{0.1cm} \ul{0} \in P_5, P_6.
\end{array}\right.
\end{equation}
\item The sum of degrees of the three bypasses in a bypass triangle is $1$.
\end{enumerate}
\end{lemma}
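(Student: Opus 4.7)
The plan is to prove part~(1) by a direct case analysis on where the basepoint label $\ul{0}$ lies among the six regions $P_i(\beta)$ of the disk and on whether the omitting index $\mi$ belongs to $II(\beta)$ or to $SI(\beta)$. In each case I would write down the difference $h(\mi)-h(\beta(\mi))$ explicitly, verify that the $\mi$-dependent terms cancel, and read off the remaining geometric expression as one of the three values appearing in~\eqref{eqn: formula for deg}; homogeneity and the formula then fall out simultaneously.

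For $\mi\in II(\beta)$ the underlying basic dividing set is unchanged ($\g(\mi)=\g'(\beta(\mi))$), but the two cohomological degrees are computed in complexes with different ambient structure. By Remark~\ref{rmk beta ind}(ii) every $\mv$-entry with $\mv\notin\{\uv,\ov\}$ transfers unchanged to its $\beta(\mv)$-counterpart, so only the nesting degrees $c_\mv(\cdot)$ can contribute. Lemma~\ref{lem rel nc} implies that all nesting degrees outside small neighborhoods of $\uv$ and $\ov$ are preserved, so the change localizes there; a finite sub-case analysis on cases~(a),~(b) of Definition~\ref{def ii} together with the exceptional possibilities $\uv=\ast$ or $\ov=\ast$ then reduces the difference to the value claimed in~\eqref{eqn: formula for deg}. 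For $\mi\in SI(\beta)$ the shuffle described by the chain intervals~\eqref{eqn: chain 1} (type~(Y)) or~\eqref{eqn: chain 2} (type~(Z)) actually changes the basic dividing set, but the net redistribution of nesting degrees is again controlled by Lemma~\ref{lem rel nc}, and a parallel local computation reproduces the same answer as in the $II(\beta)$ case.

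Part~(2) is a direct consequence of~(1). In a bypass triangle the three arcs of attachment live in a common local disk and rotate cyclically through it (Figure~\ref{bypass-triangle}), so the regions $P_i(\beta)$, $P_i(\beta')$, $P_i(\beta'')$ and the associated data $(\uv,\ov,x,y,z)$ cycle in a controlled way depending on the location of $\ul{0}$. Applying~\eqref{eqn: formula for deg} to each of $\beta,\beta',\beta''$ and using the identity $|\guv^l|+|\guv^r|=|\guv|$ together with the complementary decomposition of the ambient intervals featured in the second and third lines of~\eqref{eqn: formula for deg}, the three degrees telescope to exactly~$1$.

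The main obstacle is purely combinatorial bookkeeping. The nesting degrees $c_\mv(\cdot)$ are sensitive to the full configuration of $\g$, and the components that originally nested inside $\guv$ or $\gov$ must be redistributed to $\guv^l$ and $\gov\sqcup\guv^r$ when passing to $\g'$, with a further analogous cyclic redistribution in the bypass triangle. Organizing these redistributions so that the $\mi$-dependent contributions cancel uniformly across $II(\beta)\sqcup SI(\beta)$ and across the exceptional configurations $\uv=\ast$, $\ov=\ast$ and shuffling types~(Y) and~(Z) is where the bulk of the argument lies.
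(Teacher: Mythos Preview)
Your approach is essentially the one the paper takes: a direct case analysis on the position of $\ul{0}$ among the $P_i$, splitting into the $II(\beta)$ and $SI(\beta)$ subcases and computing $h(\mi)-h(\beta(\mi))$ locally near $\uv,\ov$ and $LSV(\beta)$; the paper in fact only writes out the case $\ul{0}\in P_1$ and leaves the others as similar. For part~(2) the paper likewise uses the cyclic identification $P_i(\beta)=P_{i+2}(\beta')=P_{i+4}(\beta'')$, normalizes to $\ul{0}\in P_1(\beta)$, and then matches the $P_3$-formula applied to $\beta'$ against the $P_5$-formula applied to $\beta''$, which is precisely your telescoping. One small inaccuracy: for $\mi\in II(\beta)$ it is not only the nesting degrees that can change---the entry $\beta(\mi)_{\beta(\uv)}$ is the position of the label $\guv(\mi_{\uv})$ inside the shorter component $\guv^l$, so in general $\beta(\mi)_{\beta(\uv)}\neq \mi_{\uv}$ (and similarly for $\ov$); in the paper's worked case $\ul{0}\in P_1$ one has $x=0$ so $\guv^l$ is an initial segment and the entries do agree, but in the other cases both the entries and the nesting degrees shift and must be tracked together.
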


\begin{proof}
(1) Suppose $\ul{0} \in P_1$. The other cases are similar and are left to the reader.

If $\mf{i} \in II(\beta)$, then $\bi_{\beta(\mf{v})}=\mi_{\mf{v}}$ for any $\mf{v} \in \tpv(\g)$.
Moreover, the nesting degree is unchanged: $c_{\beta(\mf{v})}(\bi_{\beta(\mf{v})})=c_{\mf{v}}(\iv)$ for any $\mf{v}$ since $\ul{0} \in P_1$.
Hence $h(\bi)=h(\mf{i})$ and $\deg(\cf(\beta))=0$.

If $\mf{i} \in SI(\beta)$, then the bypass is of shuffling type (Y);  see Figure \ref{5-1-2}.
Note that
\begin{align}\label{eqn: difference}
h(\bi)-h(\mf{i})&=(h(\bi, \beta(\ov))-h(\mf{i}, \uv)) + (h(\bi, \beta(\uv))-h(\mi,\ov)) \\
\nonumber & \qquad \qquad + \sum_{\mf{w} \in LSV(\beta)}\left(h(\bi, \beta(\mf{w}))-h(\mi,\mf{w})\right),
\end{align}
since the only regions that are modified are $\ov$, $\uv$, and $\mf{w}\in LSV(\beta)$.  We have $x=z=0, y=|\guv^l|-1$, and
\begin{itemize}
\item[(a)] $\bi_{\beta(\ov)}-\mi_{\uv}=l_{\g_{\ov}}-|\guv^l|+1$;
\item[(b)] $\mi_{\ov}=0$ and $\bi_{\beta(\uv)}=|\guv^l|-1$;
\item[(c)] $\mi_{\mf{w}}=0$ and $\bi_{\beta(\mf{w})}=l_{\gw}$ for $\mf{w} \in LSV(\beta).$
\end{itemize}
Using (a)--(c) we compute each of the three terms on the right-hand side of Equation~\eqref{eqn: difference}:
\begin{align*}
h(\bi, \beta(\ov))-h(\mf{i}, \uv)&=(\bi_{\beta(\ov)}-\mi_{\uv})+ l_{\g}((\gov(0), \guv(\mi_{\uv}))) - l_{\g}((\guv(0), \guv(\mi_{\uv}))) \\
&=(l_{\g_{\ov}}-|\guv^l|+1) - (l_{\g}((\guv(0), \gov(0))) + l_{\g_{\ov}}) \\
&=1-|\guv^l|-l_{\g}((\guv(0), \gov(0))),
\end{align*}
$$h(\bi, \beta(\uv))-h(\mi,\ov)=|\guv^l|-1+l_{\g}((\guv(0), \guv(y))),$$
$$\sum_{\mf{w} \in LSV(\beta)}(h(\bi, \beta(\mf{w}))-h(\mi,\mf{w}))=l_{\g}((\guv(y), \gov(0))).$$
Summing the three terms gives:
$$h(\bi)-h(\mf{i})=l_{\g}((\guv(0), \guv(y))) + l_{\g}((\guv(y), \gov(0))) - l_{\g}((\guv(0), \gov(0)) )=0.$$

\begin{figure}[ht]
\begin{overpic}
[scale=0.2]{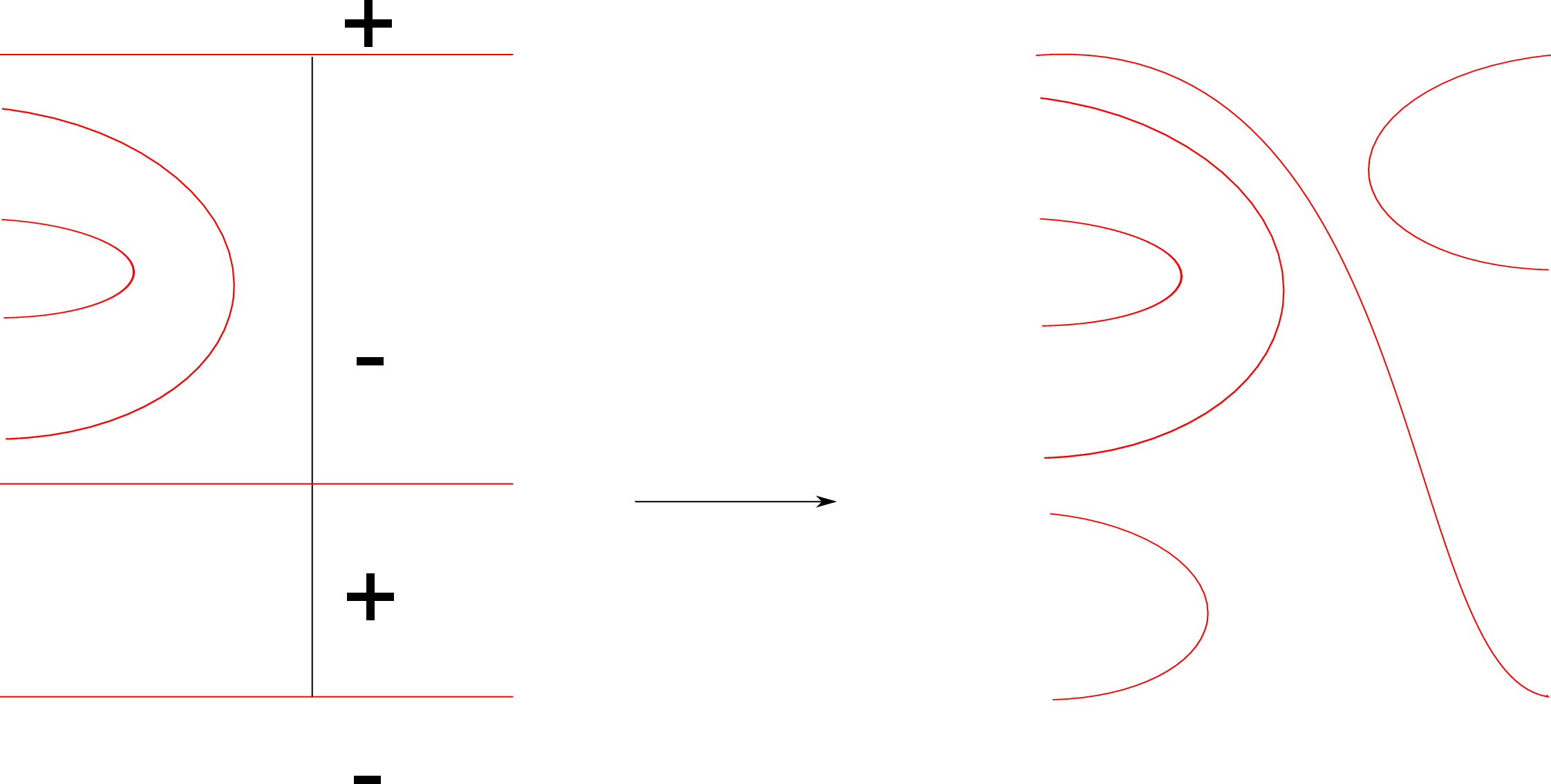}
\put(10,0){$\ul{0}$}
\put(45,20){$\beta$}
\put(30,10){$i_{\uv}$}
\put(0,25){$i_{\mf{w}}$}
\put(0,7){$x$}
\put(0,16){$y$}
\put(0,50){$i_{\ov}=z$}
\put(98,10){${\scriptstyle \bi_{\beta(\ov)}}$}
\put(61,13){${\scriptstyle \bi_{\beta(\uv)}}$}
\put(61,39){${\scriptstyle \bi_{\beta(\mf{w})}}$}
\end{overpic}
\caption{A shuffling index $\mf{i} \in SI(\beta)$, where $\ul{0} \in P_1$.}
\label{5-1-2}
\end{figure}

\s\n
(2) For a bypass triangle $\g \xra{\beta} \g' \xra{\beta'} \g'' \xra{\beta''} \g$, $P_i(\beta)=P_{i+2}(\beta')=P_{i+4}(\beta'')$ where the subscripts are viewed mod $6$. Since any triangle is invariant under rotation, we may assume that $\ul{0} \in P_1(\beta)$ or $P_2(\beta)$. By Equation~\eqref{eqn: formula for deg}, we may further assume that $\ul{0} \in P_1(\beta)$. Then $\ul{0} \in P_3(\beta')=P_5(\beta'')$. Note that the terms $\uv, \ov, x, y, z$ that appear in Equation~\eqref{eqn: formula for deg} are those for $\beta$.  We can verify that:
\begin{gather*}
\g_{\uv(\beta')}^r=\g_{\uv(\beta'')}^l, \quad \g_{\uv(\beta')}(0)=\g_{\uv(\beta'')}(x(\beta'')), \quad \g_{\uv(\beta')}(x(\beta'))=\g_{\ov(\beta'')}(z(\beta'')); \\
l_{\g'}((\g_{\uv(\beta')}(0), \g_{\uv(\beta')}(x(\beta'))))=l_{\g''}((\g_{\uv(\beta'')}(x(\beta'')), \g_{\ov(\beta'')}(z(\beta'')))).
\end{gather*}
Hence $\deg(\cf(\beta))+\deg(\cf(\beta'))+\deg(\cf(\beta''))=1$ by Equation~\eqref{eqn: formula for deg}.
\end{proof}

\subsection{Definition of $\tf$} \label{Sec grading}

Each indecomposable object of $\tcne$ is a pair $(\g, [\xi])$ consisting of a dividing set $\g$ in $\cne$ and a homotopy grading. {\em From now on the source $\g^0 \in \bne$ of the quiver $\qne$, i.e., $\g^0_{\ast}=\{0,1,\dots,e\}$, will be the basepoint of $\tcne$.}

\subsubsection{Definition of $[\xi(\g)]$} \label{subsubsection: xi g}

We first choose a ``canonical" homotopy grading $[\xi(\g)]$ for each $\g\in\cne$.  It is defined by induction on $m(\g)=e+1-|\g_{\ast}|$. Note that $m(\g)=0$ if and only if $\g \in \bne$ is basic.

For any $\g^b \in \bne$, choose a path from $\g^0$ to $\g^b$ in $\qne$ and let $\xi^b$ denote the composition of bypasses corresponding to the path. The homotopy grading $[\xi^b]$ is independent of the choice of path and we define $[\xi(\g^b)]=[\xi^b]$.  (Note that $\xi^b$ may not be a tight contact structure.)

Next suppose that $m(\g)>0$.  Recall $\beta(\g)$ from Definition \ref{def beta g} for any non-basic $\g$. There is a bypass triangle
\begin{equation} \label{eqn: triangle with ob}
\g \xra{\beta(\g)} \g' \ra \g'' \xra{\ob(\g)} \g.
\end{equation}
By the construction of $\beta(\g)$, we have $m(\g'),  m(\g'') < m(\g)$.  Let $\ob(\g)$ denote the bypass from $\g''$ to $\g$.  We then define $[\xi(\g)]=[\ob(\g) \circ \xi(\g'')]$.

\subsubsection{Definition of $\tf$} \label{subsubsection: def of tf}

We define $\tf: \tcne \ra \tdne$ as follows: We set $\tf(\g, [\xi(\g)])=\cf(\g)$ and extend $\tf$ to any object so that it commutes with the shift functors on both sides, i.e., $\tf((\g,[\xi])[i])=\tf(\g,[\xi])[i]$.  Here $(\g,[\xi])[i]=T^i(\g,[\xi])$, where $T$ is the shift functor on $\tcne$. Next, suppose $\beta\in \Hom(\widetilde\g,\g)$ is nonzero, where {\em $\beta$ is not necessarily a bypass.}   Let $c(\beta)$ be the integer encoding minus the Hopf invariant and satisfying
\begin{equation} \label{eqn: c beta}
[\beta\circ \xi(\widetilde\g)]=[\xi(\g)][c(\beta)],
\end{equation}
where $[c(\beta)]$ refers to shifting by $c(\beta)$.
Then we define
$$\tf((\widetilde\g,[\xi(\widetilde\g)])\stackrel\beta\to (\g,[\beta\circ\xi(\widetilde\g)]))= (\cf(\widetilde\g)\stackrel{\cf(\beta)[c(\beta)]}{\xrightarrow{\hspace{1.5cm}}} \cf(\g)[c(\beta)]),$$
where $\cf(\beta)[c(\beta)]$ is $\cf(\beta)$ postcomposed with the shift $[c(\beta)]$.



\subsubsection{Well-definition}

In this subsection we will abuse notation and not distinguish between $\beta\in \Hom(\tg,\g)$ and $\beta\in  \Hom_{\tcne}((\tg,[\xi]),(\g,[\beta\circ \xi]))$.


\begin{prop} \label{prop grading}
The functor $\tf$ is well-defined, i.e., $\deg(\tf(\beta))=0$ for any nonzero $\beta \in \Hom(\tg,\g)$.
\end{prop}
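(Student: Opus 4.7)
The plan is to reformulate the assertion as a purely combinatorial identity and then verify it by induction on bypass triangles. By the definition of $\tf$ in Section~\ref{subsubsection: def of tf}, one has $\tf(\beta) = \cf(\beta)[c(\beta)]$, and the shift $[c(\beta)]$ on the target changes the cohomological degree of a chain map by $-c(\beta)$. Hence $\deg(\tf(\beta))=0$ is equivalent to the identity
$$
\deg(\cf(\beta)) = c(\beta)
\eqno{(\star)}
$$
for every nonzero bypass morphism $\beta:\tg\to\g$.

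The first step is to check that both sides of $(\star)$ satisfy the same additivity around a bypass triangle $\tg\xra{\beta}\g\xra{\beta'}\g''\xra{\beta''}\tg$. For the left-hand side this is exactly Lemma~\ref{lem deg}(2), which asserts that $\deg(\cf(\beta))+\deg(\cf(\beta'))+\deg(\cf(\beta''))=1$. For the right-hand side, unwinding the definition~\eqref{eqn: c beta} shows that $c(\beta)+c(\beta')+c(\beta'')$ equals the integer by which $[\beta''\circ\beta'\circ\beta\circ\xi(\tg)]$ differs from $[\xi(\tg)]$ in the fiber $\pi^{-1}(\tg)$; Huang's Theorem~\ref{thm: huang} says this composition equals $T[\xi(\tg)]=[\xi(\tg)][1]$, so the sum is also~$1$.

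Next I would verify the base case: $(\star)$ for bypasses $\beta:\tg\to\g$ between basic dividing sets, i.e., $\tg\xra{s}\g$. Here the definition of $[\xi(\g^b)]$ via paths in $\qne$ gives $c(\beta)=0$ directly. On the other side, the based component always contains $\ul{0}$, so a direct inspection of the arc of attachment for $\tg\xra{s}\g$ places $\ul{0}\in P_1\cup P_2$, and Lemma~\ref{lem deg}(1) yields $\deg(\cf(\beta))=0$. Then, for each non-basic $\g$, the inductive definition $[\xi(\g)]=[\ob(\g)\circ\xi(\g'')]$ gives $c(\ob(\g))=0$ by construction, and I would verify $\deg(\cf(\ob(\g)))=0$ by a case-by-case application of Lemma~\ref{lem deg}(1) to the canonical triangle $\g\xra{\beta(\g)}\g'\to\g''\xra{\ob(\g)}\g$ determined by the leftmost bypass $\beta(\g)$ of Definition~\ref{def beta g}.

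Finally, for an arbitrary nonzero bypass $\beta:\tg\to\g$ one inducts on the complexity $m(\tg)+m(\g)$, where $m(\g)=e+1-|\g_\ast|$. Embed $\beta$ in a bypass triangle, use the preceding step to control the two canonical edges $\beta(-)$ and $\ob(-)$, and use bypass rotation (R$_2'$) together with the additivity from the first step to reduce the computation of $c(\beta)$ and $\deg(\cf(\beta))$ to the already-handled canonical and basic cases. The main obstacle is the explicit verification that $\deg(\cf(\ob(\g)))=0$ for \emph{every} canonical anti-bypass: this requires tracking the precise position of $\ul{0}$ and of the data $\guv^l,\guv^r,x,y,z$ attached to $\ob(\g)$ and matching the three-case formula of Lemma~\ref{lem deg}(1) with the nesting combinatorics that define $\beta(\g)$. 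All subsequent steps are formal consequences of this calculation together with the additivity and base case already established.
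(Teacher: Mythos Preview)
Your reformulation $(\star)$, the additivity around bypass triangles, and the base cases (bypasses $\tg\xra{s}\g$ between basic dividing sets, and the canonical anti-bypasses $\ob(\g)$) all match the paper, and the verification $\deg(\cf(\ob(\g)))=0$ is in fact immediate: by the construction of $\beta(\g)$ one has $\ul{0}\in\g''_{\uv(\ob(\g))}{}^l$, hence $\ul{0}\in P_2(\ob(\g))$ and Lemma~\ref{lem deg}(1) gives degree zero in one line. So this is not the main obstacle.

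The real gap is in your final inductive step. The complexity $m(\tg)+m(\g)$ does not control the problem: for a general bypass $\beta:\tg\to\g$, the third vertex $\g''$ of the triangle containing $\beta$ is forced, and $m(\g'')$ need not be smaller than $m(\tg)$ or $m(\g)$, so additivity alone does not let you induct. Nor does the canonical triangle $\g\xra{\beta(\g)}\g'\to\g''\xra{\ob(\g)}\g$ help directly, since $\beta$ is not one of its edges, and ``bypass rotation'' by itself does not produce the needed reduction. What the paper actually does is a case analysis on which region $P_i(\beta)$ contains $\ul{0}$, with a \emph{different} complexity measure and a \emph{different} commutative-square construction in each case: when $\ul{0}\in P_1\cup P_2$ one builds squares
\[
\xymatrix{ \tg \ar[r]^{\beta} & \g \\ \tg'' \ar[u]^{\ob(\tg)} \ar[r]^{\beta''} & \g'' \ar[u]_{\ob(\g)} }
\]
and inducts on $\kappa(\beta)=\dim\uv(\beta)+l_{\tg}(0,\tg_{\ov(\beta)}(0))$; when $\ul{0}\in P_5\cup P_6$ one introduces an auxiliary bypass $\alpha$ disjoint from $\beta$ to build a different commutative square and inducts on $\deg(\cf(\beta))\leq 0$; and the case $\ul{0}\in P_3\cup P_4$ then follows from additivity. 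Your sketch does not supply these constructions, and without them the induction does not close.
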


\begin{proof}
It suffices to prove that
\begin{gather} \label{eq deg}
[\beta \circ \xi(\tg)]= [\xi(\g)][\deg(\cf(\beta))]
\end{gather}
for a nontrivial bypass $\beta$.  Comparing with Equation~\eqref{eqn: c beta},  $c(\beta)=\deg(\cf(\beta))$ and $\deg(\tf(\beta))=\deg(\cf(\beta))-c(\beta)=0$.

We first prove Equation \eqref{eq deg} for $\beta=\ob(\g) \in \Hom(\widetilde\g, \g)$. By Definition \ref{def beta g}, $\ul{0} \in \guv^l$, where $\uv=\uv(\beta)$.
Hence $\ul{0} \in P_2(\beta)$, which implies that $\deg(\cf(\beta))=0$ by Lemma \ref{lem deg}(1).
Equation \eqref{eq deg} is immediate from the definition of $[\xi(\g)]$ as $[\xi(\g)]=[\ob(\g) \circ \xi(\tg)]=[\beta \circ \xi(\tg)]$.

For a general nontrivial bypass $\beta$, we use $\ob(\g)$ and $\ob(\tg)$, defined as in Equation~\eqref{eqn: triangle with ob}, to simplify $\beta$.

\s
\n{\bf Case 1.} $\ul{0} \in P_1(\beta) \cup P_2(\beta)$. 
Equation~\eqref{eq deg} will be proved by induction on 
$$\kappa(\beta):=\dim \uv(\beta) + l_{\tg}((0,\tg_{\ov(\beta)}(0))).$$
If $\kappa(\beta)\geq 1$, then there exists a commutative diagram:
\begin{equation} \label{commut diag}
\xymatrix{
\tg \ar[r]^{\beta} & \g \\
\tg'' \ar[r]^{\beta''} \ar[u]^{\ob(\tg)}  & \g'' \ar[u]_{\ob(\g)} }
\end{equation}
where $\kappa(\beta'') < \kappa(\beta) $ and $\ul{0} \in P_1(\beta'') \cup P_2(\beta'')$.  (In some cases, e.g., Figure~\ref{5-1-3}, there may be degeneracies.)
Since we have already shown that Equation \eqref{eq deg} holds for $\ob(\g)$ and $\ob(\tg)$, it suffices to prove Equation \eqref{eq deg} for $\beta''$.
We then reduce to the case where $\kappa(\beta)=0$, i.e., $l_{\tg}((0,\tg_{\ov(\beta)}(0)))=0$, $\dim \uv(\beta) =0$, and $\ul{0} \in \tg_{\uv(\beta)}^l$.
\begin{figure}[ht]
\begin{overpic}
[scale=0.25]{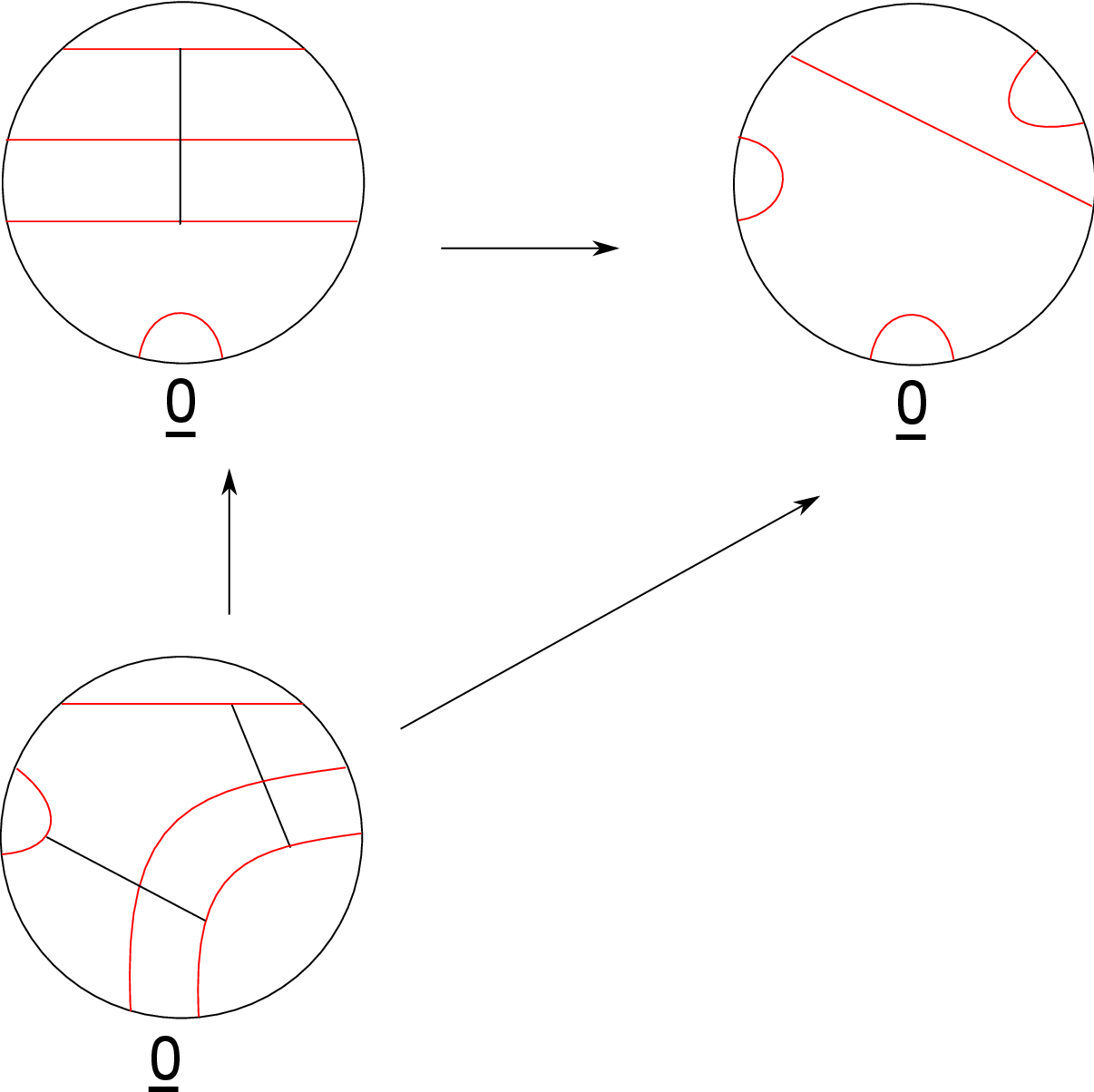}
\put(45,80){$\beta$}
\put(23,45){$\ob(\tg)$}
\put(60,35){$\beta''$}
\put(30,0){$\tg''$}
\put(30,60){$\tg$}
\put(95,60){$\g$}
\end{overpic}
\caption{The case $\ul{0} \in P_1(\beta)$ and $\dim \uv(\beta)=1$.}
\label{5-1-3}
\end{figure}

Let $\tg\stackrel\beta\to \g$ be a bypass satisfying $\kappa(\beta)=0$. If $l_{\tg_{\ov(\beta)}}=0$, then $\beta=\ob(\g)$ and Equation \eqref{eq deg} holds. If $l_{\tg_{\ov(\beta)}}>0$, then we apply the same commutative diagram~\eqref{commut diag} with $\beta''$ trivial to iteratively reduce to the case where $\beta=\ob(\g)$.
\s
\n{\bf Case 2.} $\ul{0} \in P_5(\beta) \cup P_6(\beta)$. Then $\deg(\cf(\beta))\leq 0$ by Lemma \ref{lem deg}(1).
We use bypass triangles to reduce to the case where $\deg(\cf(\beta))=0$.

Suppose that $\deg(\cf(\beta))<0$ for $\beta \in \Hom(\tg, \g)$. Then by Lemma~\ref{lem deg} there exists a nontrivial bypass $\alpha \in \Hom(\g, \g')$ such that $\ov(\alpha)=\beta(\ov(\beta))$ and $\g_{\uv(\alpha)} \subset [\tg_{\uv(\beta)}(x), \tg_{\ov(\beta)}(z)]$; see Figure \ref{5-1-4} for an example.
There exists a nontrivial bypass $\wt{\alpha} \in \Hom(\tg, \tg')$ whose arc of attachment is disjoint from that of $\beta$; attaching the bypasses in different orders, we obtain a bypass $\beta' \in \Hom(\tg', \g')$ such that $\beta' \circ \wt{\alpha} = \alpha \circ \beta \in \Hom(\tg, \g')$.
Note that $\beta'$ is a trivial bypass if $\uv(\alpha)=\beta(\uv(\beta))$. We also use the bypass triangle $\g \xra{\alpha} \g' \xra{\alpha'} \g'' \xra{\alpha''} \g$.
\begin{figure}[ht]
\begin{overpic}
[scale=0.2]{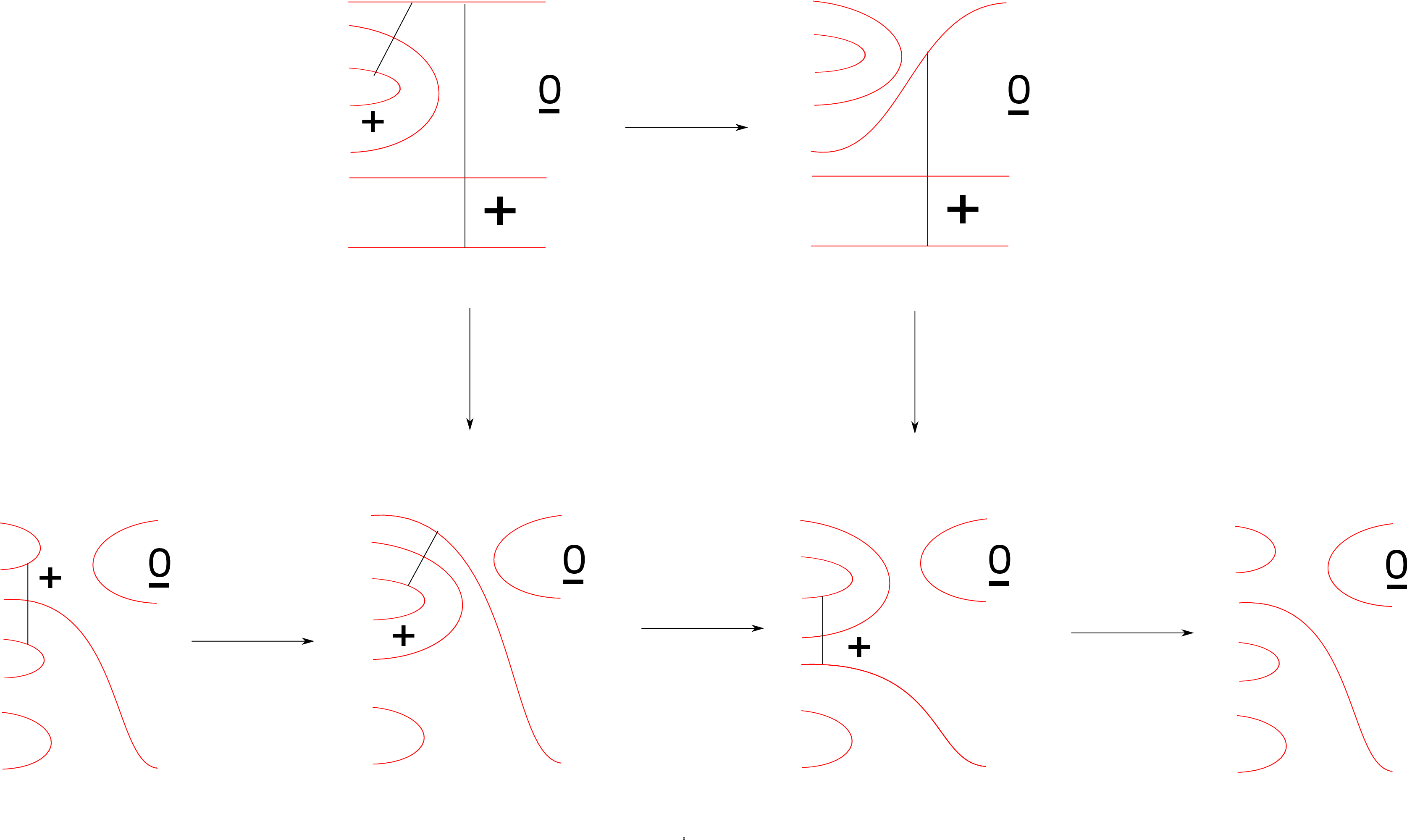}
\put(25,36){$\tg$}
\put(60,36){$\tg'$}
\put(48,52){$\wt{\alpha}$}
\put(0,0){$\g''$}
\put(30,0){$\g$}
\put(60,0){$\g'$}
\put(90,0){$\g''$}
\put(35,32){$\beta$}
\put(67,32){$\beta'$}
\put(48,17){$\alpha$}
\put(80,16){$\alpha'$}
\put(18,16){$\alpha''$}
\end{overpic}
\caption{The case $\ul{0} \in P_5(\beta)$ and $\deg(\cf(\beta))<0$.}
\label{5-1-4}
\end{figure}

By the definition of $\alpha$, we have $\ul{0} \in P_4(\alpha)$, which implies that $\ul{0} \in P_2(\alpha'')$ and $\ul{0} \in P_6(\alpha')$. Note that $\ul{0} \in P_5(\wt{\alpha}) \cup P_6(\wt{\alpha})$ and $\ul{0} \in P_5(\beta') \cup P_6(\beta')$. By Lemma \ref{lem deg}(1),(2),
\begin{align*}
\deg(\cf(\beta))&=\deg(\cf(\wt{\alpha}))+\deg(\cf(\beta'))-\deg(\cf(\alpha)) \\
&=\deg(\cf(\wt{\alpha}))+\deg(\cf(\beta'))+\deg(\cf(\alpha'))+\deg(\cf(\alpha''))-1 \\
&=\deg(\cf(\wt{\alpha}))+\deg(\cf(\beta'))+\deg(\cf(\alpha'))-1.
\end{align*}

Suppose by induction Equation \eqref{eq deg} holds for any $\wt{\beta}$ such that $\deg(\cf(\wt{\beta}))>\deg(\cf(\beta))$ and $\ul{0} \in P_5(\wt{\beta}) \cup P_6(\wt{\beta})$. In particular, it holds for $\wt{\beta} \in \{\wt{\alpha}, \beta', \alpha'\}$.  Then
\begin{align*}
[\alpha \circ \beta \circ \xi(\tg)]=[\beta' \circ \wt{\alpha} \circ \xi(\tg)]&=[\xi(\g')][\deg(\cf(\beta'))+\deg(\cf(\wt{\alpha}))]\\
&=[\xi(\g')][\deg(\cf(\alpha ))+\deg(\cf(\beta))].
\end{align*} By composing with $\alpha'' \circ \alpha'$, we obtain
\begin{align*}
[\beta \circ \xi(\tg)][1]&=[\alpha'' \circ \alpha' \circ \alpha \circ \beta \circ \xi(\tg)]\\
&=[\alpha'' \circ \alpha' \circ \xi(\g')][\deg(\cf(\alpha))+\deg(\cf(\beta))]\\
&=[\xi(\g')][\deg(\cf(\alpha'' ))+\deg(\cf(\alpha'))+\deg(\cf(\alpha ))+\deg(\cf(\beta))]\\
&=[\xi(\g')][1+\deg(\cf(\beta))],
\end{align*}
where the third line uses Equation \eqref{eq deg} for $\alpha''$ and $\alpha'$.  Equation~\eqref{eq deg} holds for $\alpha''$, since $\ul{0} \in P_2(\alpha'')$ was treated in Case 1, and for $\alpha'$ by the inductive hypothesis.
This proves Equation \eqref{eq deg} for $\beta$.

By induction on $\deg(\cf(\beta))$, we reduce to the case where $\deg(\cf(\beta))=0$.
Then, by the procedure used in Case 1, we reduce to the case where $l_{\tg}((0,\tg_{\ov(\beta)}(0)))=0$ and $\dim \uv(\beta) =0$.  A further reduction (details left to the reader) gets us to the case where $\beta \in \Hom(\tg, \g)$ and $\tg, \g$ are basic dividing sets.
Hence Equation \eqref{eq deg} holds since $[\xi(\g)]=[\beta \circ \xi(\tg)]$ by definition.

\s
\n{\bf Case 3.} $\ul{0} \in P_3(\beta)\cup P_4(\beta)$.  We have a bypass triangle $\tg \xra{\beta} \g \xra{\beta'} \g' \xra{\beta''} \tg$ such that $\ul{0} \in P_5(\beta')\cup P_6(\beta')$ and $\ul{0} \in P_1(\beta'')\cup P_2(\beta'')$.
Equation \eqref{eq deg} follows from Lemma \ref{lem deg}(2) and Cases 1 and 2.
\end{proof}

\subsection{Bypass triangles} \label{Sec triangle}

In this subsection we show that $\tf$ takes bypass triangles in $\tcne$ to distinguished triangles in $\tdne$.

\begin{prop} \label{prop triangle}
Let $(\g, [\xi]) \xra{\beta} (\g', [\beta \circ \xi]) \xra{\beta'} (\g'', [\beta' \circ \beta \circ \xi]) \xra{\beta''} (\g, [\xi][1])$ be a bypass triangle in $\tcne$.
Then its image under $\tf$ is a distinguished triangle in $\tdne$.
\end{prop}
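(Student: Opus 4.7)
The goal is to show that
$$\tf(\g)\xra{\tf(\beta)}\tf(\g')\xra{\tf(\beta')}\tf(\g'')\xra{\tf(\beta'')}\tf(\g)[1]$$
is isomorphic in $\tdne$ to a mapping-cone triangle. My plan is to exhibit, at the level of underlying cochain complexes, an explicit homotopy equivalence
$$\Psi\colon \op{Cone}(\cf(\beta))=\cf(\g)[1]\oplus\cf(\g')\longrightarrow \cf(\g''),\qquad \Psi(x,y)=\cf(\gamma(\beta''))(x)+\cf(\beta')(y),$$
under which the structural maps of the cone correspond to $\cf(\beta')$ and $\cf(\beta'')$. That $\Psi$ is a chain map is precisely what Lemma~\ref{lem comp triangle} buys us: the null-homotopy $d_{\g''}\cf(\gamma(\beta''))+\cf(\gamma(\beta''))d_\g=\cf(\beta')\cf(\beta)$, combined with the chain-map property of $\cf(\beta')$ from Proposition~\ref{prop beta}, forces $d_{\g''}\Psi=\Psi\,d_{\op{Cone}}$.

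To exhibit $\Psi$ as a homotopy equivalence I would invoke Lemma~\ref{lem ind triangle}, which yields canonical $\rne$-module decompositions $\cf(\g)=A_1\oplus A_2$, $\cf(\g')=B_1\oplus B_2$, $\cf(\g'')=C_1\oplus C_2$, where the two summands in each complex are indexed respectively by $II(\beta),OI(\g)\setminus II(\beta)$, by $OI(\g')\setminus II(\beta'),II(\beta')$, and by $II(\beta''),OI(\g'')\setminus II(\beta'')$. The three bijections supplied by that lemma are realised as direct sums of identity morphisms on matching projective summands: $A_1\xrightarrow{\sim}B_1$ via $\cf(\beta)$, $A_2\xrightarrow{\sim}C_1$ via $\cf(\gamma(\beta''))$, and $B_2\xrightarrow{\sim}C_2$ via $\cf(\beta')$. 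Inspection of Definition~\ref{def beta ind} shows that shuffling indices $\mi\in SI(\beta)$ are sent by $\beta$ into $II(\beta')$, so $\cf(\beta)$ has the block form $\bigl(\begin{smallmatrix}\mathrm{id}&0\\ 0&\cf(\beta)_{SI}\end{smallmatrix}\bigr)$ relative to $A_1\oplus A_2\to B_1\oplus B_2$. Gaussian elimination (the Cancellation Lemma) applied to the identity pair $A_1[1]\xra{\mathrm{id}} B_1$ inside $\op{Cone}(\cf(\beta))$ then yields a homotopy-equivalent complex on $A_2[1]\oplus B_2$, which via the module isomorphisms above shares the same underlying module as $\cf(\g'')$, with $\Psi$ implementing this identification.

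The technical heart of the proof, and the step I expect to be the main obstacle, is to verify that the differential produced on $A_2[1]\oplus B_2$ by Gaussian elimination matches $d_{\g''}$ term by term under the identifications $A_2\cong C_1$ and $B_2\cong C_2$. After cancellation the surviving differential receives three kinds of contributions: the restrictions of $d_\g$ and $d_{\g'}$ to the uncancelled summands, the off-diagonal $\cf(\beta)_{SI}\colon A_2\to B_2$, and the zigzag $A_2\xra{d_\g}A_1\xra{\mathrm{id}}B_1\xra{d_{\g'}}B_2$ coming from cancellation; these must reproduce respectively the intra-$C_1$, intra-$C_2$, and $C_1\to C_2$ components of $d_{\g''}$. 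The verification proceeds by enumerating the components ${\frak c}''\in\pi_0(R_-(\g''))$ and tracking the pieces $d_{{\frak c}''}$ case by case, in the same spirit as Lemma~\ref{lem beta chain} and the proof of Lemma~\ref{lem comp triangle}. Finally, passing from $\cf$ to $\tf$, the grading shifts assemble into a genuine distinguished triangle in $\tdne$ because the identity $\deg\cf(\beta)+\deg\cf(\beta')+\deg\cf(\beta'')=1$ from Lemma~\ref{lem deg}(2) is exactly the $[1]$ needed to close the triangle.
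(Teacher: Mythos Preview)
Your map $\Psi$ coincides with the paper's $\tf(\eta)=\tf(\gamma'')\oplus\tf(\beta')$, and Lemma~\ref{lem comp triangle} is indeed what makes it a chain map. But the paper does not proceed by Gaussian elimination. Instead it exploits the rotational symmetry of the triangle to write down an explicit homotopy inverse $\tf(\epsilon)=\tf(\beta'')+\tf(\gamma')\colon\tf(\g'')\to\op{Cone}(\tf(\beta))$, built from the analogous map $\cf(\gamma(\beta'))$ for the next edge. Applying Lemma~\ref{lem ind triangle} once to each of the three edges gives three relations of $\rne$-module maps,
\[
\tf(\gamma)\tf(\beta)+\tf(\beta'')\tf(\gamma'')=\op{id}_{\tf(\g)},\quad
\tf(\gamma')\tf(\beta')+\tf(\beta)\tf(\gamma)=\op{id}_{\tf(\g')},\quad
\tf(\gamma'')\tf(\beta'')+\tf(\beta')\tf(\gamma')=\op{id}_{\tf(\g'')},
\]
and from these (together with $\tf(\gamma')\tf(\gamma'')=0$) one reads off $\tf(\eta)\tf(\epsilon)=\op{id}$ and $\tf(\epsilon)\tf(\eta)=\op{id}+d\,\tf(\gamma)$ directly, with no component-by-component analysis of $d_{\g''}$.

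Your Gaussian-elimination outline also has the zigzag pointed the wrong way: cancelling $\phi\colon A_1\to B_1$ produces correction terms $(\text{out of }A_1)\circ\phi^{-1}\circ(\text{into }B_1)$, hence a map $B_2\to A_2$, not $A_2\to B_2$. Under your identifications this is the $C_2\to C_1$ block of $d_{\g''}$, which is missing from your list; the $C_1\to C_2$ block is already fully supplied by $\cf(\beta)_{SI}$. The approach is salvageable after this correction, but the differential-matching step --- particularly once the internal differentials on $A_1$ and $B_1$ force iterated zigzags --- is considerably more work than the paper's two-line verification via the three identities above.
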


\begin{proof}
We will omit the gradings in the objects and write $\tf(\g)$, etc.\  for simplicity. Let $\cf(\gamma'')$ denote $\cf(\gamma(\beta''))$ from Equation~\eqref{eqn: def gamma}.
Similarly, let $\cf(\gamma)$ and $\cf(\gamma')$ denote $\cf(\gamma(\beta))$ and $\cf(\gamma(\beta'))$, respectively.

In order to show that $ \mbox{cone}(\tf(\beta))$ and $\tf(\g'')$ are homotopy equivalent, we define the following two maps
$$\tf(\eta)= \tf(\gamma'') \oplus \tf(\beta'): \mbox{cone}(\tf(\beta)) = \tf(\g)\oplus \tf(\g')\ra \tf(\g''),$$
$$\tf(\epsilon)=\tf(\beta'')+\tf(\gamma'): \tf(\g'') \ra \mbox{cone}(\tf(\beta))=\tf(\g)\oplus \tf(\g').$$
They fit into the following diagram:
$$\xymatrix{
\tf(\g')  \ar[r]^{\tf(\beta')} & \tf(\g'')  \ar[r]^{\tf(\gamma')} \ar[dr]_{\tf(\beta'')} & \tf(\g')\\
\tf(\g) \ar[u]^{\tf(\beta)}  \ar[ur]_{\tf(\gamma'')} & & \tf(\g) \ar[u]_{\tf(\beta)}
}$$
where $\tf(\alpha)=\cf(\alpha)$ up to grading shifts for $\alpha \in \{\beta, \beta', \beta'', \gamma', \gamma''\}$. By the method of Lemma \ref{lem deg}, we can show that $\cf(\gamma)$, $\cf(\gamma')$, and $\cf(\gamma'')$ are homogeneous. By Lemma \ref{lem comp triangle}, $\tf(\eta)$ and $\tf(\epsilon)$ are chain maps and are therefore morphisms in $\tdne$.

We now show that they are homotopy inverses.  By Lemma \ref{lem ind triangle} and Equation~\eqref{eqn: def gamma},
\begin{align*}
\tf(\gamma) \circ \tf(\beta) + \tf(\beta'') \circ \tf(\gamma'')=&\op{id}_{\tf(\g)}, \\
\tf(\gamma') \circ \tf(\beta') + \tf(\beta) \circ \tf(\gamma)=&\op{id}_{\tf(\g')}, \\
\tf(\gamma'') \circ \tf(\beta'') + \tf(\beta') \circ \tf(\gamma')=&\op{id}_{\tf(\g'')}.
\end{align*}
We have $\tf(\eta) \circ \tf(\epsilon)=\op{id}_{\tf(\g'')}$ from the third equation.
It follows from the first two equations and Lemma \ref{lem ind triangle} that $\tf(\epsilon) \circ \tf(\eta)=\op{id}_{\mbox{cone}(\tf(\beta))}+ d\tf(\gamma)$, where $\tf(\gamma): \tf(\g') \ra \tf(\g)$.  Here we are also using $\tf(\gamma')\circ \tf(\gamma'')=0$ which is immediate from the definition.
\end{proof}

\section{$\tdne$ as a triangulated envelope of $\tcne$} \label{section: triangulated envelope}

The image of $\tfne$ generates $\tdne$ under taking iterated cones since all the $P(\g)$, $\g \in \bne$, are in the image. The goal of this section is to show that $\tfne$ is faithful, i.e.,
\begin{equation} \label{eq embed} \tag{F}
\tfne: \Hom_{\tcne}((\g,[\xi]), (\g',[\xi'])) \xra{\sim} \Hom_{\tdne}(\tf(\g,[\xi]), \tf(\g',[\xi'])).
\end{equation}
We first prove (\ref{eq embed}) in the following three basic cases: (i) $\g=\g'$; (ii) there exists a bypass $\g \xra{\beta} \g'$; and (iii) there exists a bypass $\g' \xra{\beta} \g$.  Using the calculations for the basic cases we show in Proposition \ref{prop serre d} that the Serre functors $\cstc$ of $\tcne$ and $\cstd$ of $\tdne$ commute with $\tfne$.
We finally prove the faithfulness in general by combining the results for the basic cases and the Serre functors.

\subsection{Basic cases}

\subsubsection{The case $\g=\g'$}

The goal is to prove (\ref{eq embed}) for $\g=\g'$. Since $\op{End}_{\cne}(\g)=\F\lan \op{id}_{\g} \ran$, it suffices to prove the following.

\begin{prop} \label{prop embed end}
For any $\g$ in $\cne$, $\op{End}_{\dne}(\cf(\g)) = \F\lan \op{id}_{\cf(\g)} \ran$.
\end{prop}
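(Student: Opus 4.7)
The plan is to compute $\op{End}_{\dne}(\cf(\g))$ directly from the explicit presentation $\cf(\g)=\bigoplus_{\mi\in\oi(\g)} P(\g(\mi))[h(\mi)]$ of Equation~\eqref{eqn: def of cal F Gamma in general}. If $\g$ is basic then $\cf(\g)=P(\g)$ and $\op{End}_{\rne}(P(\g))=(\g)\rne(\g)=\F\langle(\g)\rangle$, since $\qne$ has no oriented cycles; no chain homotopy on a single projective module can be nonzero, so the identification with $\op{End}_{\dne}$ is immediate. For nonzero non-basic $\g$, any degree-zero chain map $f$ on $\cf(\g)$ has components $f_{\mi\mj}\in(\g(\mi))\rne(\g(\mj))$ with $h(\mi)=h(\mj)$, and a chain homotopy of degree $-1$ has components with $h(\mj)=h(\mi)+1$; each such component is controlled by $\Hom_{\cne}(\g(\mi),\g(\mj))$ via Lemma~\ref{lem rne}.

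The key technical input is the following \emph{monotonicity lemma}: if $\mi\neq\mj$ in $\oi(\g)$ and $\Hom_{\cne}(\g(\mi),\g(\mj))\neq 0$, then $h(\mi)>h(\mj)$. Setting $f(\gv(k)):=k+c_\mv(k)$ and using the tightness pairs $(\ul{s_p},\ul{s'_p})$ from Proposition~\ref{prop: tightness criterion}, one has
$$h(\mi)-h(\mj)=\sum_p\bigl(f(\ul{s'_p})-f(\ul{s_p})\bigr).$$
When $k=1$, the constraint $\g(\mi)_\ast\cap[\ul s,\ul{s'}]=\{\ul s\}$ combined with the one-omitted-label-per-component bookkeeping forces $\ul s$ and $\ul{s'}$ into the same component $\gv$ of $\g$ (a cross-component swap would leave some $|O(\g(\mj))\cap\gv|\neq 1$), and the strict inequality $h(\mi)-h(\mj)\geq 1$ then follows from Lemma~\ref{lem rel nc}. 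For $k\geq 2$ the pairs may be cross-component, as in Example~\ref{ex g4}; I would proceed by combining the balance condition $|L\cap\gv|=|E\cap\gv|$ per $\mv$ (where $L,E$ denote the leaving and entering omitted labels) with the nesting-degree formula, so that the negative contributions from the ``entering'' side are strictly dominated by the $c_\mv$ terms produced whenever a higher-level entry decreases.

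Granted the monotonicity lemma, the remaining argument is bookkeeping. Off-diagonal components $f_{\mi\mj}$ with $\mi\neq\mj$ and $h(\mi)=h(\mj)$ vanish, and all chain homotopies vanish because they require $h(\mj)>h(\mi)$. Thus $\op{End}_{\dne}(\cf(\g))$ equals its chain-level analogue, consisting of diagonal maps $f_{\mi\mi}=c_\mi\op{id}$ with $c_\mi\in\F$. The cycle condition $df=fd$ expands to $(c_\mi-c_{\vi})d(\mi,\mv)=0$ for every $\mv\in\sv(\mi)$; since $d(\mi,\mv)\neq 0$ by Lemma~\ref{lem diff path}, we obtain $c_\mi=c_{\vi}$. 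The undirected graph on $\oi(\g)$ with edges $\{\mi,\vi\}$ is connected by a descent on $h$: starting from $\mi\neq\mathbf{0}$, pick an innermost $\mv\in\vnb(\g)$ with $\iv>0$; then $\mv$ automatically lies in $\slv(\mi)\cup\shv(\mi)$, and iterating drives $\mi$ to $\mathbf{0}$ while $h$ strictly decreases. Hence all $c_\mi$ coincide and $f=c\op{id}$.

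The main obstacle is the multi-pair case of the monotonicity lemma: the pair-by-pair inequality $f(\ul{s'_p})\geq f(\ul{s_p})$ can be nonstrict — the pair $(\ul 1,\ul 2)$ in Example~\ref{ex g4} has $f(\ul 1)=f(\ul 2)=0$ — so one cannot argue term by term, and must exploit the global component-wise balance together with the nesting structure to produce the total strict inequality.
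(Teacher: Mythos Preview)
Your strategy is genuinely different from the paper's: the paper does not attempt a direct combinatorial classification of maps between the summands $P(\g(\mi))$, but instead runs an induction on $|\vnb(\g)|$ via a double complex $(\op{Map}(\cf(\g),\cf(\g));\,d_{\mv,\mv},\,d_{\g,\g}-d_{\mv,\mv})$ for an outermost $\mv\in\vnb(\g)$, and identifies the first page with $\op{Map}(\cf(\whg),\cf(\whg))$ for the simpler dividing set $\whg=\whg^{\mv}$ (Lemma~\ref{lem embed end}). The spectral sequence then gives $\dim\op{End}_{\dne}(\cf(\g))\le\dim\op{End}_{\dne}(\cf(\whg))$, and the base case is $\g$ basic. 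This sidesteps any need for your monotonicity lemma and, crucially, treats all cohomological degrees at once.

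There are two genuine gaps in your proposal. First, as you yourself flag, the multi-pair case of the monotonicity lemma is not proven; the per-pair inequality can be an equality, and the global argument you sketch (component-wise balance plus nesting) is not carried out. This is essentially the content of Remark~\ref{rmk diff path}, which the paper states but also does not prove, so you cannot simply cite it.

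Second, and more seriously, even granting the monotonicity lemma your argument only computes $\Hom_{\tdne}(\cf(\g),\cf(\g))$, i.e.\ the degree-$0$ piece. By definition $\op{End}_{\dne}(\cf(\g))=\bigoplus_{k\in\Z}\Hom_{\tdne}(\cf(\g),\cf(\g)[k])$, and you must also show the pieces with $k>0$ vanish. Your sentence ``all chain homotopies vanish because they require $h(\mj)>h(\mi)$'' is correct only for $k=0$: a homotopy for a degree-$k$ chain map has components with $h(\mi)-h(\mj)=k-1$, which for $k\ge 1$ is compatible with $h(\mi)\ge h(\mj)$ and hence with nonzero $\Hom(\g(\mi),\g(\mj))$. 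For $k\ge 1$ there are both nonzero chain maps (e.g.\ the differential itself at $k=1$) and nonzero candidate homotopies, and you would still need to show that every such chain map is null-homotopic. That is a separate computation of roughly the same difficulty as the whole proposition, and the monotonicity lemma alone does not resolve it.
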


Before proving Proposition~\ref{prop embed end} we introduce some definitions.  Recall that $d_{\g}=\sum_{\mf{v} \in \tpv(\g)}d_{\mf{v}}$, where $d_{\mf{v}}$ is given in Equation~\eqref{eqn: def of dv}.
Lemma \ref{lem dv} implies that:
\begin{gather} \label{eq double} \tag{D}
d_{\mf{v}}^2=0, \quad d_{\g}d_{\mf{v}}+d_{\mf{v}}d_{\g}=0,
\end{gather}
for any $\mv \in \tpv(\g)$.

For $\g,\g'$ in $\cne$, let $\op{Map}(\cf(\g),\cf(\g'))$ denote the space of $\rne$-module maps, where $\cf(\g),\cf(\g')$ are viewed as $\rne$-modules by ignoring the differentials.
We then define maps
\begin{equation}\label{eqn: bidifferentials}
d_{\mw, \mf{v}}, d_{\es, \mf{v}}, d_{\mw, \es},d_{\g',\g}: \op{Map}(\cf(\g),\cf(\g')) \ra \op{Map}(\cf(\g),\cf(\g')),
\end{equation}
where $f \in \op{Map}(\cf(\g),\cf(\g'))$, $\mf{v} \in \oi(\g), \mw \in \oi(\g')$, and
\begin{align*}
d_{\mw, \mf{v}}f &=d_{\mf{w}}\circ f + f \circ d_{\mf{v}}, \\
d_{\es, \mf{v}}f&=f \circ d_{\mf{v}}, \\
d_{\mw, \es}f&=d_{\mf{w}}\circ f,  \\
d_{\g', \g}f&=d_{\g'} \circ f + f \circ d_{\g}.
\end{align*}

The lemma below follows from (\ref{eq double}).

\begin{lemma} \label{lem double complex}
If $d_0, d_1 \in \{d_{\mw, \mf{v}}, d_{\es, \mf{v}}, d_{\mw, \es}, d_{\g',\g}\}$, then
$$d_0^2=0, \qquad d_0d_1=d_1d_0,$$
in $\op{End}(\op{Map}(\cf(\g),\cf(\g')))$.
\end{lemma}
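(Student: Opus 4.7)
The strategy is to re-express each of the four operators as a linear combination of a more basic commuting family and then reduce everything to the relations of Lemma~\ref{lem dv}. For each $\mw\in \tpv(\g')$ and $\mv\in \tpv(\g)$, define operators on $\op{Map}(\cf(\g),\cf(\g'))$ by
\[
L_{\mw}(f) = d_{\mw}\circ f, \qquad R_{\mv}(f) = f\circ d_{\mv}.
\]
Then the four operators of the lemma can be written as
\[
d_{\mw,\mv} = L_{\mw}+R_{\mv}, \quad d_{\es,\mv}=R_{\mv}, \quad d_{\mw,\es}=L_{\mw}, \quad d_{\g',\g}=\sum_{\mw\in \tpv(\g')} L_{\mw}+\sum_{\mv\in \tpv(\g)} R_{\mv}.
\]

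First I would verify the ``structural'' identities among the generators $\{L_\mw,R_\mv\}$. By Lemma~\ref{lem dv}(1),(2) applied on $\cf(\g)$ and $\cf(\g')$ separately, we have $d_\mv^2=0$, $d_\mw^2=0$, $d_\mv d_{\mv'}=d_{\mv'}d_\mv$, and $d_\mw d_{\mw'}=d_{\mw'}d_\mw$ (here $d_\mv=\sum_{\frak c}d_{\frak c}$ as in Equation~\eqref{eqn: def of dv}, and the relations of Lemma~\ref{lem dv} transfer to $d_\mv$ by summing). Translating these to $L$ and $R$ gives $L_\mw^2=R_\mv^2=0$, and $L_\mw L_{\mw'}=L_{\mw'}L_\mw$, $R_\mv R_{\mv'}=R_{\mv'}R_\mv$. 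Moreover $L_\mw$ and $R_\mv$ trivially commute because left and right composition always do: $L_\mw R_\mv(f)=d_\mw\circ f\circ d_\mv=R_\mv L_\mw(f)$. So $\{L_\mw,R_\mv\}$ is a pairwise commuting family, each of whose members squares to zero.

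Next I would read off both conclusions of the lemma by formal $\F_2$-algebra. Pairwise commutativity of any two of $d_{\mw,\mv},d_{\es,\mv},d_{\mw,\es},d_{\g',\g}$ is immediate: each is a linear combination of the $L$'s and $R$'s, which all commute with each other, so sums of them commute. For $d_0^2=0$, the only nontrivial check is for $d_0$ a sum of basic operators; but since we work over $\F=\F_2$, for any set of mutually commuting operators $\{A_i\}$ with $A_i^2=0$ one has
\[
\Bigl(\sum_i A_i\Bigr)^{\!2}=\sum_i A_i^2+2\sum_{i<j}A_iA_j=0.
\]
Applying this to $d_{\mw,\mv}=L_\mw+R_\mv$ and to $d_{\g',\g}=\sum_\mw L_\mw+\sum_\mv R_\mv$ yields $d_0^2=0$ in all remaining cases (the cases $d_0\in\{L_\mw,R_\mv\}$ are immediate from $L_\mw^2=R_\mv^2=0$).

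The only subtle point--- and in this setting it is not even an obstacle, merely a check --- is that the sign-free identity $d_\g d_\mv+d_\mv d_\g=0$ of~\eqref{eq double}, which would normally mean anticommutation, is actually genuine commutativity here because we are in characteristic two; this is what collapses all cross-terms in the square of $d_{\g',\g}$. Apart from that observation, the proof is a purely formal consequence of Lemma~\ref{lem dv} and the manifest commutation of left and right composition.
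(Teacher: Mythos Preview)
Your proposal is correct and follows essentially the same approach as the paper, which simply states that the lemma follows from~\eqref{eq double}; you have spelled out explicitly the decomposition into left- and right-composition operators $L_\mw,R_\mv$ and the characteristic-$2$ cancellation that the paper leaves implicit.
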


Note that $\Hom(\cf(\g),\cf(\g'))$ is the cohomology $H_{d_{\g',\g}}(\op{Map}(\cf(\g),\cf(\g')))$ by definition.

Recall that to compute $\op{End}_{\cne}(\g)$ on the topological side, we observe that $\#\gamma_{\g,\g}=\#\gamma_{\g',\g'}$, where $\g'$ is obtained from $\g$ by removing a boundary parallel component.  By repeating this reduction, we eventually obtain $\#\gamma_{\g,\g}=\#\gamma_{\g^0,\g^0}=1$, where $\g^0$ is the unique dividing set in $\cal{C}_{0,0}$.  To compute $\op{End}_{\dne}(\cf(\g))$ on the algebraic side we consider a similar reduction on
$$|\vnb(\g)|=|\{\mf{v} \in \tpv(\g) ~|~ l_{\gv}>0\}|.$$
The following example illustrates the idea of the reduction.

\begin{example} \label{ex end}
We compute $\op{End}(\cf(\g))$ for $\g$ from Example \ref{ex g4}.
Recall that $\vnb(\g)=\{(1),(1,1)\}$, where $\g_{(1,1)}=\{\ul{2},\ul{3}\}$ directly nests inside $\g_{(1)}=\{\ul{1},\ul{4}\}$ and no vector in $\vnb(\g)$ nests inside $\g_{(1,1)}$.
Let $\mw=(1)$ and $\mv=(1,1)$.
Let $\widehat{\g}^{\mf{v}}$ denote a dividing set which is obtained from $\g$ by replacing the component $\gv$ with $l_{\gv}+1$ boundary parallel components corresponding to the $l_{\gv}+1$ elements of $\gv$.
In particular, $\vnb(\widehat{\g}^{\mf{v}})=\{(1)\}$ and $\widehat{\g}^{\mf{v}}_{(1)}=\{\ul{1},\ul{4}\}$.

The complex $\mathcal{N}$ (given by \eqref{ex g4 cx}) for $\cf(\g)$ is
$$P(\ul{1},\ul{2}) \xra{d_{\mv}} P(\ul{1},\ul{3}) \xra{d_{\mw}} P(\ul{2},\ul{4}) \xra{d_{\mv}} P(\ul{3},\ul{4}),$$
where $d_{\mv}$ is given in Equation~\eqref{eqn: def of dv}.
As an $\F$-vector space $\op{Map}(\cf(\g),\cf(\g))$ is $7$-dimensional.
The differential $d_{\mv,\mv}$ acts on $\op{Map}(\cf(\g),\cf(\g))$ by:
$$\begin{array}{cccc}
d_{\mv,\mv}: & \op{Map}(\cf(\g),\cf(\g)) & \ra & \op{Map}(\cf(\g),\cf(\g)) \\
& \op{id}_{P(\ul{1},\ul{2})} & \mapsto & d_{\mv}\circ\op{id}_{P(\ul{1},\ul{2})}, \\
& \op{id}_{P(\ul{1},\ul{3})} & \mapsto & d_{\mv}\circ\op{id}_{P(\ul{1},\ul{2})}, \\
& \op{id}_{P(\ul{2},\ul{4})} & \mapsto & d_{\mv}\circ\op{id}_{P(\ul{2},\ul{4})}, \\
& \op{id}_{P(\ul{3},\ul{4})} & \mapsto & d_{\mv}\circ\op{id}_{P(\ul{2},\ul{4})}, \\
& d_{\mw}\circ\op{id}_{P(\ul{1},\ul{3})},~~ d_{\mv}\circ\op{id}_{P(\ul{1},\ul{2})},~~  d_{\mv}\circ\op{id}_{P(\ul{2},\ul{4})} & \mapsto & 0.
\end{array}
$$

There is a double complex $(\op{Map}(\cf(\g),\cf(\g)); d_{\mv,\mv}, d_{\g,\g}-d_{\mv,\mv})$.
Its first page $(H_{d_{\mv,\mv}}; d_{\g,\g}-d_{\mv,\mv})$ is given by:
$$\begin{array}{cccc}
d_{\g,\g}-d_{\mv,\mv}: & H_{d_{\mv,\mv}} & \ra & H_{d_{\mv,\mv}} \\
& [\op{id}_{P(\ul{1},\ul{2})}+\op{id}_{P(\ul{1},\ul{3})}] & \mapsto & [d_{\mw}\circ\op{id}_{P(\ul{1},\ul{3})}], \\
& [\op{id}_{P(\ul{2},\ul{4})}+\op{id}_{P(\ul{3},\ul{4})}] & \mapsto & [d_{\mw}\circ\op{id}_{P(\ul{1},\ul{3})}].
\end{array}
$$
It is easy to see that $(H_{d_{\mv,\mv}}; d_{\g,\g}-d_{\mv,\mv})$ is isomorphic to $(\op{Map}(\widehat{\g}^{\mf{v}}, \widehat{\g}^{\mf{v}}); d_{\widehat{\g}^{\mf{v}},\widehat{\g}^{\mf{v}}})$ whose cohomology is $\op{End}(\widehat{\g}^{\mf{v}})$.
By the spectral sequence associated to the double complex, $\op{End}(\widehat{\g}^{\mf{v}})$ converges to $\op{End}(\g)$.
\end{example}

Returning to the discussion of $\op{End}(\cf(\g))$ in general, suppose $\g \notin \bne$ and $\mf{v}\in \vnb(\g)$ such that $\nv(\mv, l_{\gv})=\es$, i.e., no vector in $\vnb(\g)$ directly nests inside $\mf{v}$.  (If $\g\notin\bne$, then such a vector always exists.)  We define $\widehat{\g}^{\mf{v}}$ as the dividing set in $\mathfrak{ob}(\cal{C}_{n,e-l_{\gv}})$ obtained from $\g$ by replacing the component $\gv$ with $l_{\gv}+1$ boundary parallel components corresponding to the $l_{\gv}+1$ elements of $\gv$. We will write $\wh{\g}$ for $\widehat{\g}^{\mf{v}}$ and $l$ for $l_{\gv}$ when $\mf{v}$ is understood.

There is a bijection
$$\wh{\mf{v}}:\vnb(\g) \backslash \{\mf{v}\} \xra{\sim} \vnb(\wh{\g})$$
satisfying $\whg_{\whv(\mf{w})}=\g_{\mf{w}}$ for $\mf{w} \in \vnb(\g) \backslash \{\mf{v}\}$.
There is also an induced map
$$\whv: OI(\g) \ra OI(\whg)$$
given by $\whv(\mf{i})_{\whv(\mf{w})}=\mi_{\mf{w}}$ for $\mf{w} \in \vnb(\g) \backslash \{\mf{v}\}$.  (It is also called $\whv$ by abuse of notation.)  The map $\whv: OI(\g) \ra OI(\whg)$ is surjective and is an $(l+1)$-to-$1$ map.
Given $\whi \in OI(\whg)$ and $0 \leq s \leq l$, define $\mf{i}^s \in \whv^{-1}(\whi)$ such that $(\mf{i}^s)_{\mf{v}}=s$.


The following lemma relates $\op{End}(\cf(\g))$ and $\op{End}(\cf(\whg))$.

\begin{lemma} \label{lem embed end}
Suppose that $\mf{v} \in \vnb(\g)$ such that $\nv(\mv, l_{\gv})=\es$.
Then there is a finite double complex $(\op{Map}(\cf(\g),\cf(\g)); d_{\mv, \mv}, d_{\g,\g}-d_{\mv,\mv})$ whose first page $(H_{d_{\mv,\mf{v}}}; d_{\g,\g}-d_{\mv,\mf{v}})$ is isomorphic to the complex $(\op{Map}(\cf(\whg),\cf(\whg)); d_{\whg,\whg})$.
\end{lemma}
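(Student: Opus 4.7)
The plan is to verify the double complex structure and then identify its first page with $(\op{Map}(\cf(\whg),\cf(\whg)); d_{\whg,\whg})$ via a block-wise calculation.

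First, verifying the double complex is routine. By Lemma~\ref{lem double complex}, $d_{\mv,\mv}^2=0$, $d_{\g,\g}^2=0$, and $d_{\mv,\mv}$ commutes with $d_{\g,\g}$. Since the ground field is $\F$, this immediately gives $(d_{\g,\g}-d_{\mv,\mv})^2 = d_{\g,\g}^2 + d_{\mv,\mv}^2 = 0$ and the commutation of $d_{\mv,\mv}$ with $d_{\g,\g}-d_{\mv,\mv}$, so $(\op{Map}(\cf(\g),\cf(\g)); d_{\mv,\mv}, d_{\g,\g}-d_{\mv,\mv})$ is a finite double complex; finiteness is clear from the finiteness of $\oi(\g)$.

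The structural step is to exploit the bijection $\oi(\g) \xrightarrow{\sim} \oi(\whg) \times \{0,\dots,l\}$, $\mi \mapsto (\whv(\mi), \mi_{\mv})$, to write $\cf(\g) = \bigoplus_{\whi \in \oi(\whg)} C(\whi)$, where $C(\whi) = \bigoplus_{s=0}^{l} P(\g(\mi^s))[h(\mi^s)]$. The hypothesis $\nv(\mv,l_{\gv})=\es$ forces $\dnv(\mv,j)=\es$ for all $j$, so $\mv$ is never in $\shv(\mi)$ and lies in $\slv(\mi)$ precisely when $\mi_{\mv}>0$; hence $d_\mv$ preserves each $C(\whi)$ and acts within it as right multiplication by the generators $(\g(\mi^s)|\g(\mi^{s-1}))$ along a chain $P(\g(\mi^l)) \to \cdots \to P(\g(\mi^0))$. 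The remaining differentials $d_\mw$ for $\mw \neq \mv$ can cross between blocks, since a shuffling vector $\mu$ with $\mv \in \dnv(\mu,\mi_\mu)$ sends $\mi_\mv$ to $l$ under $d_\mu$.

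The crux is the identification $H_{d_{\mv,\mv}}(\op{Map}(\cf(\g),\cf(\g))) \cong \op{Map}(\cf(\whg),\cf(\whg))$ as $\F$-vector spaces, with the induced first-page differential $d_{\g,\g}-d_{\mv,\mv}$ matching $d_{\whg,\whg}$. I would proceed by a block-wise analysis: for each pair $(\whi,\whj)\in \oi(\whg)^2$, the corresponding contribution to $\op{Map}(\cf(\g),\cf(\g))$ is an $(l+1)\times(l+1)$ grid of spaces $\Hom_{\rne}(P(\g(\mi^s)), P(\g(\mj^t)))$, each either $0$ or $\F$ by the Tightness Criterion (Proposition~\ref{prop: tightness criterion}), on which $d_{\mv,\mv}$ acts by pre- or post-composing with the generators of $\Hom(\g(\mi^{s+1}),\g(\mi^s))$ and $\Hom(\g(\mj^t),\g(\mj^{t-1}))$. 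A case analysis comparing the positions of the omitted labels $\gv(s),\gv(t)$ to the interval data defining $\whg(\whi)_{\ast}$ and $\whg(\whj)_{\ast}$ shows that the block bicomplex is acyclic except at one distinguished cell, whose surviving class corresponds canonically to the generator of $\Hom(\whg(\whi),\whg(\whj))$ when nonzero, and vanishes otherwise. Summing over $(\whi,\whj)$ yields the identification of vector spaces. The residual differential then matches $d_{\whg,\whg}$ because the non-$\mv$ sliding and shuffling data of $\g$ are in natural bijection under $\whv$ with those of $\whg$, with matching combinatorial interval pairs determined by the Tightness Criterion.

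The main obstacle is precisely this block-level combinatorial analysis: classifying the pattern of nonzero entries in each $(l+1)\times(l+1)$ grid and checking that $d_{\mv,\mv}$ collapses it to a single surviving cohomology class. This is the algebraic shadow of the topological reduction $\#\gamma_{\g,\g'}=\#\gamma_{\whg,\whg'}$ that splits a non-boundary-parallel component of $R_+$ into boundary parallel ones, and Example~\ref{ex end} displays the shape of this calculation in a representative small case.
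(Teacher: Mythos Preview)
Your double-complex setup matches the paper's, but from there the routes diverge. Rather than decomposing $\op{Map}(\cf(\g),\cf(\g))$ into $(l+1)\times(l+1)$ Hom-blocks indexed by pairs $(\whi,\wh{\mj})\in\oi(\whg)^2$, the paper exhibits an $\F$-basis of $\op{Map}$ consisting of elements $\prod_t d_{\mf{u}^t}\circ\op{id}_{\g(\mi)}$ with distinct $\mf{u}^t\in\vnb(\g)$---the point being that every nonzero morphism $P(\g(\mi))\to P(\g(\mj))$ for $\mi,\mj\in\oi(\g)$ factors as such a product of differential pieces. Letting $\mw$ be the vector that $\mv$ directly nests inside, the paper then partitions these basis elements according to whether a factor $d_\mw$ appears; for each fixed prefix $\prod_t d_{\mf{u}^t}$ (with $\mf{u}^t\ne\mv,\mw$) and fixed $\whi$, the resulting $d_{\mv,\mv}$-subcomplex is small enough to compute directly and contributes a single cohomology class, after which an explicit isomorphism $G:H_{d_{\mv,\mv}}\to\op{Map}(\cf(\whg),\cf(\whg))$ is written down and checked to intertwine $d_{\g,\g}-d_{\mv,\mv}$ with $d_{\whg,\whg}$ via the rule $d_{\mf{u}}\mapsto d_{\whv(\mf{u})}$. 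The paper's route thus replaces your unperformed block-by-block Tightness-Criterion analysis with one structural observation about the basis; your route, once the block analysis is filled in, would be the more directly K\"unneth-style argument and would not depend on that basis (whose spanning property is itself a nontrivial fact about paths in $\qne$). One caveat on your sketch: the surviving class in a block is generally not supported on a single cell but is a sum (e.g., $\sum_s \op{id}_{\g(\mi^s)}$ on the diagonal blocks, as in Example~\ref{ex end}), so ``acyclic except at one distinguished cell'' should read ``has at most one-dimensional $d_{\mv,\mv}$-cohomology''.
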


\begin{proof}
Since $d_{\mv, \mv}$ and $d_{\g,\g}$ are two commuting differentials by Lemma \ref{lem double complex},
$$(\op{Map}(\cf(\g),\cf(\g)); d_{\mv, \mv}, d_{\g,\g}-d_{\mv,\mv})$$ is a finite double complex.

Since $d_{\mf{u}}=0$ for $\mf{u} \notin \vnb(\g)$, the space $\op{Map}(\cf(\g),\cf(\g))$ has an $\F$-basis
$$\left\{\left.\prod_{t=1}^{k}d_{\mf{u}^t} \circ \op{id}_{\g(\mf{i})} ~\right|~ \mf{i} \in OI(\g), \mf{u}^t \in \vnb(\g), k\geq 0\right\},$$
where $\prod_{t=1}^{0}d_{\mf{u}^t} \circ \op{id}_{\g(\mf{i})}$ is understood to be $\op{id}_{\g(\mf{i})}$.
In this basis
\begin{itemize}
\item the composition is independent of the order of the $d_{\mf{u}^t}$ since they pairwise commute; and
\item each $d_{\mf{u}^t}$ appears at most once since $d_{\mf{u}^t}^2=0$ by Lemma \ref{lem dv}.
\end{itemize}


Let $\mw \in V(\g)$ such that $\mv$ directly nests inside $\mw$.  We consider the case $\mw \neq \ast$.  (When $\mw=\ast$, there is no $d_{\mw}$ and the same proof holds by setting $d_\mw=0$ and $d_{\widehat{\mv}(\mw)}=0$ below.)

We say that a generator $f$ is of Type (1) if it has the form:
$$f=\prod\limits_{t}d_{\mf{u}^t}  \circ \op{id}_{\g(\mf{i})}~\mbox{ or }~ \prod\limits_{t}d_{\mf{u}^t} \circ d_{\mf{v}} \circ \op{id}_{\g(\mf{i})}, ~\mbox{where}~  \mf{u}^t \neq \mf{w},\mv.$$
For each $\prod_{t}d_{\mf{u}^t}$, $\mf{u}^t \neq \mf{w},\mv$, there is a subcomplex of $(\op{Map}(\cf(\g),\cf(\g)); d_{\mv, \mf{v}})$:
$$\mbox{Type}~(1): \quad \prod\limits_{t}d_{\mf{u}^t} \circ \op{id}_{\g(\mf{i}^s)} \xra{d_{\mv, \mf{v}}} \left\{
\begin{array}{cl}
\prod\limits_{t}d_{\mf{u}^t} \circ d_{\mf{v}} \circ \op{id}_{\g(\mf{i}^1)} & s=0, \\
\prod\limits_{t}d_{\mf{u}^t} \circ d_{\mf{v}} \circ \op{id}_{\g(\mf{i}^s)}+\prod\limits_{t}d_{\mf{u}^t} \circ d_{\mf{v}} \circ \op{id}_{\g(\mf{i}^{s+1})} & 0<s<l, \\
\prod\limits_{t}d_{\mf{u}^t} \circ d_{\mf{v}} \circ \op{id}_{\g(\mf{i}^l)} & s=l.
\end{array}\right.$$
Such a subcomplex is said to be of Type (1). As an $\F$-vector space, this subcomplex has dimension $2l+1$ and any two subcomplexes of Type (1) are either equal or intersect trivially.

We say that a generator $f$ is of Type (2) if it has the form:
$$f=\prod\limits_{t}d_{\mf{u}^t} \circ d_{\mf{w}} \circ \op{id}_{\g(\mf{i})}, ~\mbox{where}~  \mf{u}^t \neq \mf{w},\mv.$$
If $f\not=0$, then $\mi_{\mf{v}}=0$, i.e., $\mf{i}=\mf{i}^0$ for some $\whi \in OI(\whg)$.  Each nonzero $f$ of Type (2) generates a $1$-dimensional subcomplex:
$$\mbox{Type}~(2): \quad f \xra{d_{\mv,\mv}} 0.$$

The complex $(\op{Map}(\cf(\g),\cf(\g)); d_{\mv, \mf{v}})$ is a direct sum of its subcomplexes of Types (1) and (2); note that there are no generators of type $\prod_{t}d_{\mf{u}^t} \circ d_{\mf{w}}\circ d_{\mv} \circ \op{id}_{\g(\mf{i})}$ since $d_{\mw}\circ d_{\mv}=0$.

We compute the cohomology $H_{d_{\mv, \mv}}$.
For a subcomplex of Type (1), $d_{\mv, \mf{v}}$ is a surjective map from $\F^{l+1}$ to $\F^{l}$ and a generator of $H_{d_{\mv, \mv}}$ is given by $\sum_{s=0}^{l}\prod_{t}d_{\mf{u}^t} \circ \op{id}_{\g(\mf{i}^s)}$.
For a subcomplex of Type (2), a generator of $H_{d_{\mv, \mv}}$ is given by $\prod_{t}d_{\mf{u}^t} \circ d_{\mf{w}} \circ \op{id}_{\g(\mf{i}^0)}$.

Define an $\F$-linear map:
$$
\begin{array}{cccc}
G: & H_{d_{\mv, \mv}}  & \ra  & \op{Map}(\cf(\whg),\cf(\whg)) \\
& \sum\limits_{s=0}^{l}\prod\limits_{t}d_{\mf{u}^t} \circ \op{id}_{\g(\mf{i}^s)} & \mapsto & \prod\limits_{t}d_{\whv(\mf{u}^t)} \circ \op{id}_{\whg(\whi)}, \\
& \prod\limits_{t}d_{\mf{u}^t} \circ d_{\mf{w}} \circ \op{id}_{\g(\mf{i}^0)}  & \mapsto & \prod\limits_{t}d_{\whv(\mf{u}^t)} \circ d_{\whv(\mf{w})} \circ \op{id}_{\whg(\whi)}.
\end{array}
$$
It is an isomorphism of $\F$-vector spaces since $\whv: \vnb(\g) \backslash \{\mf{v}\} \xra{\sim} \vnb(\wh{\g})$ is a bijection and $\whv: OI(\g) \ra OI(\whg)$ is surjective.

For any $\mf{u} \in \vnb(\g) \backslash \{\mf{v}\}$, we have $G(d_{\mf{u}}\circ f)=d_{\whv(\mf{u})} \circ G(f)$ and $G(f \circ d_{\mf{u}})=G(f) \circ d_{\whv(\mf{u})}$ for $f \in H_{d_{\mv, \mv}}$.
Since
$$d_{\g}-d_{\mf{v}}=\sum\limits_{\mf{u} \in \vnb(\g) \backslash \{\mf{v}\}}d_{\mf{u}},\quad \qquad d_{\whg}=\sum\limits_{\wh{\mf{u}} \in \vnb(\wh{\g})} d_{\wh{\mf{u}}},$$
it follows that $G$ commutes with $d_{\g,\g}-d_{\mv,\mf{v}}$ and $d_{\whg,\whg}$.  Hence the two complexes are isomorphic.
\end{proof}

\begin{proof}[Proof of Proposition \ref{prop embed end}]
Since $\cf(\g)$ is not isomorphic to the zero object of $\dne$, it follows that $\op{id}_{\cf(\g)} \in \op{End}(\cf(\g))$ is nonzero. It then remains to prove that $\dim(\op{End}(\cf(\g)))\leq 1$, which is proved by induction on $|\vnb(\g)|$.

If $|\vnb(\g)|=0$, then $\g \in \bne$. Hence $\cf(\g)=P(\g)$ and $\dim(\op{End}(P(\g)))\leq 1$.

If $|\vnb(\g)|>0$, there exists $\mf{v} \in \vnb(\g)$ such that $\nv(\mv, l_{\gv})=\es$.
By definition $|\vnb(\whg^{\mf{v}})|=|\vnb(\g)|-1.$
By Lemma \ref{lem embed end}, the cohomology of $(\op{Map}(\cf(\whg),\cf(\whg)); d_{\whg,\whg})$ converges to
the cohomology of $(\op{Map}(\cf(\g),\cf(\g)); d_{\g,\g})$.
Hence
$$\dim(\op{End}(\cf(\g)))\leq \dim(\op{End}(\cf(\whg)))$$
and $\dim(\op{End}(\cf(\g)))\leq 1$ by induction.
\end{proof}

\subsubsection{The case of a bypass}

Let $\beta \in \Hom(\g, \g')$ be a nontrivial bypass. Then $\Hom(\g,\g')=\F\lan \beta \ran$ and $\Hom(\g',\g)=0$.

\begin{prop} \label{prop embed bypass}
For any nontrivial bypass $\beta \in \Hom(\g, \g')$, we have
\be
\item $\Hom(\tf(\g,[\xi]), \tf(\g',[\wt{\xi'}])) = \F\lan \tf(\beta) \ran$, if $[\wt{\xi'}]=[\beta \circ \xi]$; otherwise, it is zero.
\item $\Hom(\cf(\g'), \cf(\g)) =0$.
\ee
\end{prop}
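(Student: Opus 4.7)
The plan is to prove both (1) and (2) by simultaneous induction on $N := |\vnb(\g)| + |\vnb(\g')|$, after first reducing (1) to its ungraded counterpart in $\dne$. By Lemma~\ref{lem deg} and Proposition~\ref{prop grading}, $\tf(\beta) = \cf(\beta)[c(\beta)]$ is a homogeneous chain map landing in the graded piece $\Hom_{\tdne}(\tf(\g,[\xi]), \tf(\g',[\beta\circ\xi]))$. Once we establish $\Hom_{\dne}(\cf(\g),\cf(\g')) = \F\langle\cf(\beta)\rangle$ and $\Hom_{\dne}(\cf(\g'),\cf(\g)) = 0$, statement (1) follows because any additional nonzero graded component in $\tdne$ would violate the one-dimensional ungraded bound, and (2) is the ungraded statement itself.

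For the base case $N = 0$, both $\g, \g' \in \bne$, so $\cf(\g) = P(\g)$ and $\cf(\g') = P(\g')$ are indecomposable projective $\rne$-modules concentrated in degree $0$; hence $\Hom_{\dne}(P(\g),P(\g')) = (\g)\rne(\g')$. Because $\beta$ is a nontrivial bypass between basic dividing sets, Notation~\ref{notation: gen rne} gives $\g \xra{s} \g'$ for some $s$, and Lemma~\ref{lem rne} identifies $(\g)\rne(\g') = \F\langle(\g|\g')\rangle = \F\langle\cf(\beta)\rangle$. Proposition~\ref{prop: tightness criterion} rules out any admissible sequence of labels in the reverse direction, forcing $\Hom_{\cne}(\g',\g) = 0$ and hence $(\g')\rne(\g) = 0$.

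For the inductive step, assume $N > 0$ and, without loss of generality, $\g \notin \bne$ (the case $\g' \notin \bne$ is symmetric). Pick $\mv \in \vnb(\g)$ with $\nv(\mv, l_{\gv}) = \es$ and form the peeled dividing set $\whg = \whg^{\mv}$ of Lemma~\ref{lem embed end} by replacing $\gv$ with $l_{\gv} + 1$ boundary-parallel components; peeling $\g'$ at the component $\beta(\mv) \in V(\g')$ produces $\whg'$. In the generic situation $\mv \notin \{\uv(\beta), \ov(\beta)\} \cup LSV(\beta)$, the map $\beta$ fixes $\mv$ (i.e., $\g'_{\beta(\mv)} = \gv$), so $\beta$ descends to a nontrivial bypass $\hat\beta \colon \whg \to \whg'$ of strictly smaller combined complexity. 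Imitating the bicomplex argument of Lemma~\ref{lem embed end}, $(\op{Map}(\cf(\g), \cf(\g'));\, d_{\beta(\mv), \mv},\, d_{\g', \g} - d_{\beta(\mv), \mv})$ has first page isomorphic to $(\op{Map}(\cf(\whg), \cf(\whg'));\, d_{\whg', \whg})$, whose cohomology is $\F\langle\cf(\hat\beta)\rangle$ by the inductive hypothesis; an analogous analysis for $\op{Map}(\cf(\g'), \cf(\g))$ yields $0$. Degeneration of the associated spectral sequence at $E_2$ then delivers (1) and (2).

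The main obstacle will be the non-generic case where $\mv$ is forced to lie in $\{\uv(\beta), \ov(\beta)\} \cup LSV(\beta)$, since then the induced bypass on $(\whg, \whg')$ may become trivial or collapse to a composition, and peeling alone no longer reduces to the inductive hypothesis. In such cases one first applies bypass rotation (Lemma~\ref{lemma: bypass rotation}) to reposition $\beta$ so that a safe peeling vector exists, or one invokes the bypass triangle $\g \xra{\beta(\g)} \g^{1} \to \g^{2} \to \g[1]$ of Definition~\ref{def beta g} and applies the long exact sequences obtained from $\Hom_{\dne}(-, \cf(\g'))$ and $\Hom_{\dne}(\cf(\g'), -)$, combined with Proposition~\ref{prop embed end} at $\cf(\g)$, to pin down the $1$-dimensional Hom space and identify its generator as $\cf(\beta)$.
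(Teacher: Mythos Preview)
Your overall strategy---reduce to the ungraded statement, then induct by peeling outermost components via a bicomplex---is reasonable in spirit, but the ``non-generic'' case is a genuine gap, not a detail to be filled in later.  The two remedies you propose do not work as stated.  Bypass rotation does not change $\g$, $\g'$, or the morphism class of $\beta$ in $\Hom(\g,\g')$, so it cannot produce a new ``safe'' peeling vector in $\vnb(\g)$.  The triangle $\g\xra{\beta(\g)}\g^1\to\g^2$ of Definition~\ref{def beta g} gives long exact sequences involving $\Hom(\cf(\g^i),\cf(\g'))$, but you have no inductive control over those groups: $\g^1,\g^2$ need not be related to $\g'$ by a single bypass, and Proposition~\ref{prop embed end} only computes $\op{End}$, not these cross-Homs.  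Moreover, even in your generic case the assertion that the first page of $(\op{Map}(\cf(\g),\cf(\g'));d_{\beta(\mv),\mv},\ldots)$ is $(\op{Map}(\cf(\whg),\cf(\whg'));d_{\whg',\whg})$ is not a direct citation of Lemma~\ref{lem embed end}, which treats only $\g=\g'$; the asymmetric version requires its own basis analysis.

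The paper avoids all of this by a different reduction.  Using the distinguished triangle $\tf(\g)\to\tf(\g')\to\tf(\g'')$ (Proposition~\ref{prop triangle}) and the exact functors $\Hom(\tf(\g),-)$ and $\Hom(-,\tf(\g'))$ together with Proposition~\ref{prop embed end}, statement~(1) is equivalent to $\Hom(\cf(\g),\cf(\g''))=0$, and also to $\Hom(\cf(\g''),\cf(\g'))=0$.  Rotating the triangle, the whole proposition becomes equivalent to the single vanishing $\Hom(\cf(\g'),\cf(\g))=0$, and one may normalize so that $\ul{0}\in P_1(\beta)\cup P_2(\beta)$.  In that normalized position the paper inducts not on $|\vnb|$ but on $l_{\g'}((\g'_{\uv}(0),\g'_{\uv}(y)))$, always choosing the peeling vector \emph{inside} this interval; this guarantees a matching component in both $\g$ and $\g'$ and sidesteps your non-generic case entirely.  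The base case is handled by a different bicomplex with first differential $d_{\es,\uv}$, where an explicit Tightness-Criterion argument shows acyclicity.
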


\begin{proof}
Consider a bypass triangle $(\g,[\xi]) \xra{\beta} (\g',[\xi']) \xra{\beta'} (\g'',[\xi'']) \xra{\beta''} (\g',[\xi][1])$ in $\tcne$.
By Proposition \ref{prop triangle}, it is mapped to a distinguished triangle in $\tdne$.
By applying the exact functor $\Hom_{\tdne}(\tf(\g,[\wt{\xi}]), -)$ to the distinguished triangle, we obtain a long exact sequence:
$$\Hom(\tf(\g,[\wt{\xi}]), \tf(\g,[\xi])) \xra{\tf(\beta)} \Hom(\tf(\g,[\wt{\xi}]), \tf(\g',[\xi'])) \xra{\tf(\beta')}\Hom(\tf(\g,[\wt{\xi}]), \tf(\g'',[\xi''])),$$
where the subscripts $\tdne$ are omitted.
Hence (1) is equivalent to $\Hom(\cf(\g), \cf(\g'')) =0$.

Similarly, by applying the exact functor $\Hom_{\tdne}(-,\tf(\g',[\wt{\xi'}]))$, one can see that (1) is equivalent to $\Hom(\cf(\g''), \cf(\g')) =0$.
By rotating the bypass triangle, the proposition is equivalent to any one of the following three statements:
$$(i) \Hom(\cf(\g'), \cf(\g)) =0; \quad (ii) \Hom(\cf(\g''), \cf(\g')) =0; \quad (iii) \Hom(\cf(\g), \cf(\g'')) =0.$$
Without loss of generality we can assume that $\ul{0} \in P_1(\beta) \cup P_2(\beta)$.  Proposition~\ref{prop embed bypass} is a consequence of the following lemma.
\end{proof}

\begin{lemma} \label{lemma embed bypass}
We have $\Hom(\cf(\g'), \cf(\g)) =0$ if $\Hom(\g, \g')$ is generated by a bypass $\beta$ and $\ul{0} \in P_1(\beta) \cup P_2(\beta)$.
\end{lemma}

\begin{proof}
There is a bypass triangle $\g \xra{\beta} \g' \xra{\beta'} \g''$ starting with $\beta$. Let us write $\uv, x, y, \mf{w}$ for $\uv(\beta'), x(\beta')$, $y(\beta'), \ov(\beta)$, as in Notation~\ref{notation: beta}; see Figure \ref{6-1}. Since $\ul{0} \in P_1(\beta) \cup P_2(\beta)=P_3(\beta') \cup P_4(\beta')$, we have $0< x \leq y=l_{\g'_{\uv}}$ and $\uv \neq \ast, \mf{w} \neq \ast$.    We write
$$\op{Map}(\cf(\g'), \cf(\g))=\bigoplus_{\mf{j}\in OI(\g'), \mf{i} \in OI(\g)}  \Hom(P(\g'(\mf{j}))[h(\mf{j})],  P(\g(\mf{i}))[h(\mf{i})]).$$

We prove the lemma by induction on $l_{\g'}((\g'_{\uv}(0), \g'_{\uv}(y)))$, defined in Equation~\eqref{eqn: l of A}.

\begin{figure}[ht]
\s
\begin{overpic}
[scale=0.2]{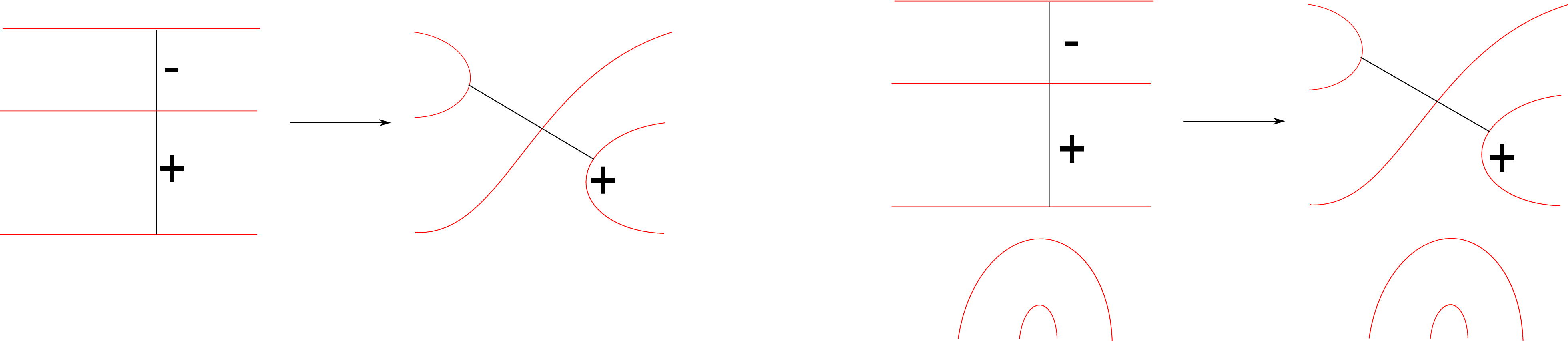}
\put(41,9){$\mf{w}$}
\put(98,10){$\mf{w}$}
\put(33,16){$\beta$}
\put(89,18){$\beta$}
\put(91,3){$\mf{v}$}
\put(64,3){$\mf{v'}$}
\put(82,0){$\g$}
\put(56,0){$\g'$}
\put(64,18){$\beta'$}
\put(60,10){$\uv$}
\put(7,16){$\beta'$}
\put(5,10){$\uv$}
\put(32,0){$\g$}
\put(9,0){$\g'$}
\put(6,22){${\scriptstyle P_4(\beta')}$}
\put(-3,17){${\scriptstyle P_3(\beta')}$}
\put(32,22){${\scriptstyle P_2(\beta)}$}
\put(22,17){${\scriptstyle P_1(\beta)}$}
\put(-1,8){${\scriptscriptstyle \ul{s+x}}$}
\put(-1,13){${\scriptscriptstyle \ul{s+y}}$}
\put(14,8){${\scriptscriptstyle \ul{s+x-1}}$}
\put(16,13){${\scriptscriptstyle \ul{s}}$}
\end{overpic}
\caption{The left-hand side describes Case 1 after removing boundary parallel components nesting inside $(\g'_{\uv}(0), \g'_{\uv}(y))$; the right-hand side depicts $\mf{v}, \mf{v'}$ in Case 2.}
\label{6-1}
\end{figure}

\s
\n{\bf Case 1.} If $l_{\g'}((\g'_{\uv}(0), \g'_{\uv}(y)))=0$, then no vector in $\vnb(\g')$ nests inside $\uv$. We view $\op{Map}(\cf(\g'), \cf(\g))$ as a finite double complex
$$(\op{Map}(\cf(\g'), \cf(\g)); d_{\es, \uv}, d_{\g,\g'}-d_{\es,\uv}),$$
whose first page $(H_{d_{\es,\uv}}; d_{\g,\g'}-d_{\es,\uv})$ converges to $H_{d_{\g,\g'}}=\Hom(\cf(\g'), \cf(\g))$. Here $d_{\es,\uv}f:=f \circ d_{\uv}$ for $f \in \op{Map}(\cf(\g'), \cf(\g))$.
The following claim implies the lemma for Case 1.

\begin{claim2}
The cohomology of the complex $(\op{Map}(\cf(\g'), P(\g(\mf{i}))); d_{\es,\uv})$ is zero for any $\mf{i} \in OI(\g)$.
\end{claim2}

\begin{proof}[Proof of Claim]
We can ignore boundary parallel components $\gv' \subset (\g'_{\uv}(0), \g'_{\uv}(y))$ in the computation since those labels do not appear in either $P(\g'(\mf{j}))$ or $P(\g(\mf{i}))$ for any $\mf{j} \in OI(\g'), \mf{i} \in OI(\g)$.
Since we are assuming that $l_{\g'}((\g'_{\uv}(0), \g'_{\uv}(y)))=0$,
$$\g'_{\uv}(0)=\ul{s}, ~~\g'_{\uv}(x)=\ul{s+x}, ~~\g'_{\uv}(y)=\ul{s+y};$$
$$\g_{\mf{w}}(0)=\ul{s}, ~~ \g_{\mf{w}}(l_{\g_{\mf{w}}})=\ul{s+x-1}.$$

Let $f \in \op{Map}(P(\g'(\mf{j})), P(\g(\mf{i})))$ be a generator such that $f \circ d_{\uv}=0$.  If $\mf{j}_{\uv}<y$, then $f\circ d_{\uv}$ is a map $P(\g'(\mf{j'})) \ra P(\g(\mf{i}))$, where $\mf{j}=\uv|\mf{j'}$. We have
$$\Hom(\g'(\mf{j}), \g(\mf{i})) \neq 0, \quad \Hom(\g'(\mf{j'}), \g(\mf{i})) = 0; \qquad \g'(\mf{j'}) \xra{s+\mf{j}_{\uv}} \g'(\mf{j}).$$
By the tightness criterion (Proposition~\ref{prop: tightness criterion}), in order for $\Hom(\g'(\mf{j'}), \g(\mf{i})) = 0$,  $\Hom(\g'(\mf{j}), \g(\mf{i}))$ must factor through $\Hom(\g'(\mf{j}), \g'(\mf{j''}))$, where $\mf{j''}=\uv|\mf{j} \in OI(\g')$ such that $\g'(\mf{j}) \xra{s+\mf{j}_{\uv}-1} \g'(\mf{j''})$. Hence $\Hom(\g'(\mf{j''}), \g(\mf{i})) \neq 0$ is generated by $g$ such that $f=g \circ d_{\uv}$.

If $\mf{j}_{\uv}=y$ and $\Hom(\g'(\uv|\mf{j}), \g(\mf{i})) \neq 0$ is generated by $g$, then $f=g \circ d_{\uv}$. If $\mf{j}_{\uv}=y$ and $\Hom(\g'(\uv|\mf{j}), \g(\mf{i}))=0$, then $\{s,s+1,\dots,s+y-1\} \subset \g(\mf{i})_{\ast}$.
In particular $\g_{\mf{w}} \subset \g(\mf{i})_{\ast}$ which is not possible since $\mf{w} \neq \ast$.  This proves the claim.
\end{proof}

\s
\n{\bf Case 2.} If $l_{\g'}((\g'_{\uv}(0), \g'_{\uv}(y)))>0$, then there exists $\mf{v'} \in \vnb(\g')$ such that $\g'_{\mf{v'}} \subset (\g'_{\uv}(0), \g'_{\uv}(y))$ and no vector in $\vnb(\g')$ nests inside $\mf{v'}$ (i.e., $\mf{v'}$ is ``outermost''). There is a corresponding vector $\mf{v} \in \vnb(\g)$ such that $\gv=\g'_{\mf{v'}}$ and no vector in $\vnb(\g)$ nests inside $\mf{v}$; see the right-hand side of Figure \ref{6-1}.
Let $\whg, \wh{\g'}$ denote $\whg^{\mf{v}}, \wh{\g'}{}^{\mf{v'}}$.
There is a bypass triangle $\whg \xra{\wh{\beta}} \wh{\g'} \xra{\wh{\beta'}} \wh{\g''}$.
Let $\wh{\uv}, \wh{x}, \wh{y}$ denote $\uv(\wh{\beta'}), x(\wh{\beta'}), y(\wh{\beta'})$.
Then $$l_{\wh{\g'}}((\wh{\g'}_{\wh{\uv}}(0), \wh{\g'}_{\wh{\uv}}(\wh{y})))< l_{\g'}((\g'_{\uv}(0), \g'_{\uv}(y))).$$
The following claim implies the lemma for Case 2.

\begin{claim2}
There is a finite double complex $(\op{Map}(\cf(\g'),\cf(\g)); d_{\mf{v},\mf{v'}}, d_{\g,\g'}-d_{\mf{v},\mf{v'}}),$
where $d_{\mf{v},\mf{v'}}f=d_{\mf{v}}\circ f+f \circ d_{\mf{v'}}$, whose first page with respect to $d_{\mf{v},\mf{v'}}$ is isomorphic to $(\op{Map}(\cf(\wh{\g'}),\cf(\whg)); d_{\whg,\wh{\g'}})$.
\end{claim2}

\begin{proof}[Proof of Claim]

If $\mf{j}_{\uv} < x$, then there exists $\mf{i'} \in OI(\g)$ such that $\g(\mf{i'})=\g'(\mf{j})$. Such $\mf{j} \in OI(\g')$ is said to be of Type (1). Any $f \in \Hom(P(\g'(\mf{j})), P(\g(\mf{i})))$ can be written as $\prod_{t}d_{\mf{u}^t} \circ \op{id}_{\g'(\mf{j})}$ for $\mf{u}^t \in \vnb(\g)$.

If $\mf{j}_{\uv} \geq x$ and $\Hom(P(\g'(\mf{j})), P(\g(\mf{i}))) \neq 0$ for some $\mi$, then $\mf{j}_{\uv} = x$ and $\uv \in \sv(\mf{j})$ such that $(\uv|\mf{j})_{\uv}=x-1$. Such $\mf{j} \in OI(\g')$ is said to be of Type (2).  Then there exists $\mf{i'} \in OI(\g)$ such that $\g(\mf{i'})=\g'(\uv|\mf{j})$ and any $f \in \Hom(P(\g'(\mf{j})), P(\g(\mf{i})))$ can be written as $\prod_{t}d_{\mf{u}^t} \circ \op{id}_{\g'(\mf{j})} \circ d_{\uv}$ for $\mf{u}^t \in \vnb(\g)$.

Summarizing, $\op{Map}(\cf(\g'),\cf(\g))$ has an $\F$-basis:
$$\left\{\left.\prod\limits_{t}d_{\mf{u}^t} \circ \op{id}_{\g'(\mf{j})} ~\right|~ \mf{j} \in OI(\g')~\mbox{of Type (1)}, \mf{u}^t \in \vnb(\g)\right\}$$
$$\bigcup ~~ \left\{ \left.\prod\limits_{t}d_{\mf{u}^t} \circ \op{id}_{\g'(\mf{j})} \circ d_{\uv} ~\right|~ \mf{j} \in OI(\g')~\mbox{of Type (2)}, \mf{u}^t \in \vnb(\g)\right \}.$$
The rest of the proof is similar to that of Lemma \ref{lem embed end} and is left to the reader.
\end{proof}

This completes the proof of Lemma~\ref{lemma embed bypass}.
\end{proof}

\subsection{Serre functors of $\tdne$} \label{Sec serre d}

In this subsection we write $R=\rne$ for simplicity. According to \cite[Theorem 3.1]{Ke}, $\mf{D}^b(R)$ admits a Serre functor since $R$ has finite global dimension and the Serre functor is the left derived functor
$$M \ra DR \otimes_R^{\mathbb{L}}M,$$
where $M$ is a left $R$-module and $DR$ denotes the $R$-bimodule $\Hom_{\F}(R,\F)$. Note that this means that if $r_1,r_2,r\in R$ and $\phi\in \Hom_{\F}(R,\F)$, then $r_1\phi r_2(r)= \phi(r_2 r r_1)$.    Since $\tdne$ is equivalent to $\mf{D}^b(R)$, it admits an induced Serre functor which we denote by $\cstd$. For any projective $R$-module $P(\g)$, $\g \in \bne$, $\cstd(P(\g))$ is isomorphic to a projective resolution of the tensor product $DR \otimes_R P(\g)$.

By definition, $DR$ has a dual $\F$-basis $\{[\g'|\g] ~|~ \Hom(\g,\g')\neq 0, \g,\g' \in \bne\}$, where the linear map $[\g'|\g]: R \ra \F$ sends the generator of $\Hom(\g,\g')$ to $1$ and other generators to zero.
As an $R$-bimodule, $DR$ has the defining relations:
$$(\g')[\g'|\g]=[\g'|\g](\g)=[\g'|\g].$$
$$(\tg'|\g')[\g'|\g]= \left\{
\begin{array}{cl}  [\tg'|\g] & \mbox{if}~ \Hom(\g, \tg')\neq0;
\\0 & \mbox{otherwise}.
\end{array}\right.$$
$$[\g'|\g](\g|\tg)= \left\{
\begin{array}{cl}  [\g'|\tg] & \mbox{if}~ \Hom(\tg, \g')\neq0;
\\0 & \mbox{otherwise}.
\end{array}\right.$$
Hence $DR \otimes_R P(\g)$ has an $\F$-basis $\{[\g'|\g] ~|~ \Hom(\g, \g')\neq 0, \g' \in \bne\}$.

For $\g \in \bne$, we compute $\cstd(P(\g)) \cong DR \otimes_R P(\g)$ in terms of $\csg$.
If $\ul{1} \in \g_{\ast}$ then $\vnb(\csg)=\es$.
If $\ul{1} \not\in \g_{\ast}$, then $\vnb(\csg)=\{\mf{w}\}, l_{\csg_{\mf{w}}}=e$ and $\csg_{\mw}=\{\g_\ast(1)-\ul{1}, \dots, \g_\ast(e)-\ul{1}, \ul{n}\}$, and we write
\begin{equation} \label{eqn: def s g i}
\csg^i=\csg(\mf{i}) \in \bne ~\mbox{ for } ~ 0 \leq i \leq e,
\end{equation}
where $\mf{i} \in OI(\csg)$ such that $\mi_\mf{w}=i$.

\begin{lemma} \label{lem serre d basic}
For any $\g \in \bne$, $\cstd(\tf(\g))=\cstd(P(\g))$ is isomorphic to $\cf(\csg)$ in $\tdne$.
\end{lemma}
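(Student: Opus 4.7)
The plan is to handle the two cases in the lemma's statement separately.

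\textbf{Case 1 ($\ul{1} \in \g_\ast$).} Here $\csg$ is basic, so $\cf(\csg) = P(\csg)$ is concentrated in cohomological degree~$0$, while $\cstd(P(\g)) = DR \otimes_R P(\g)$ is also a left $R$-module placed in degree~$0$ (since $P(\g)$ is projective). I would construct the explicit isomorphism of left $R$-modules
$$\phi \colon DR \otimes_R P(\g) \xra{\sim} P(\csg), \qquad [\g'|\g] \mapsto (\g'|\csg).$$
Lemma~\ref{lem serre c} furnishes the bijection of bases $\{\g' \in \bne : \Hom(\g, \g') \neq 0\} \leftrightarrow \{\g' \in \bne : \Hom(\g', \csg) \neq 0\}$, and Corollary~\ref{cor stack} guarantees compatibility of the left $R$-action: for $s = (\g''|\g') \in R$ one has $s \cdot [\g'|\g] = [\g''|\g]$ matching $s \cdot (\g'|\csg) = (\g''|\csg)$.

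\textbf{Case 2 ($\ul{1} \notin \g_\ast$).} Now $\csg$ has $\vnb(\csg) = \{\mw\}$ with $l_{\csg_\mw} = e$, so
$$\cf(\csg) = \bigl(P(\csg^e) \xra{d_e} P(\csg^{e-1}) \to \cdots \to P(\csg^1) \xra{d_1} P(\csg^0)\bigr),$$
with $P(\csg^i)$ in cohomological degree $-i$ and $d_i$ equal to right multiplication by the bypass generator of $\Hom(\csg^i, \csg^{i-1})$. My plan is to show this complex is a projective resolution of $DR \otimes_R P(\g)$. First I would verify $\Hom(\g, \csg^0) \neq 0$ via the tightness criterion (Proposition~\ref{prop: tightness criterion}), exhibiting the matching $s_i \mapsto s_{i+1}-1$ whenever $s_{i+1} > s_i + 1$ (for $i < e$), together with $s_e \mapsto n$ when $s_e < n$; this defines the augmentation $\epsilon \colon P(\csg^0) \to DR \otimes_R P(\g)$ sending $(\csg^0) \mapsto [\csg^0|\g]$. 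The chain-map relation $\epsilon \circ d_1 = 0$ then reduces to the vanishing $[\csg^1|\g] = 0$, i.e.\ $\Hom(\g, \csg^1) = 0$; this holds because the label $s_1 - 1 \in \csg^1_\ast \setminus \g_\ast$ is strictly smaller than every nonzero element of $\g_\ast$ (and $\ul{0}$ is immobile), so it cannot be the target of any right shift from $\g_\ast$.

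The heart of Case~2 is a cohomology computation performed via the left idempotent decomposition $\cf(\csg) = \bigoplus_{\g'' \in \bne} (\g'') \cf(\csg)$. Each piece $(\g'') P(\csg^i)$ has dimension $\dim \Hom(\g'', \csg^i) \in \{0, 1\}$, and by Corollary~\ref{cor stack} the restricted differential $d_i$ is an isomorphism between consecutive nonzero terms and is otherwise zero. For each $\g''$ set $A(\g'') := \{i : 0 \leq i \leq e,\ \Hom(\g'', \csg^i) \neq 0\}$. The key combinatorial claims are: (i)~$A(\g'')$ is empty or a subinterval of $\{0, \dots, e\}$; (ii)~$A(\g'') = \{0\}$ iff $\Hom(\g, \g'') \neq 0$; and (iii)~no singleton $A(\g'') = \{a\}$ with $a \geq 1$ occurs. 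Granting these, proper intervals of length $\geq 2$ yield acyclic subcomplexes (alternating isomorphisms cancel), singletons $\{0\}$ each contribute one copy of $\F$ to $H^0$, and nothing else contributes. Hence $H^0(\cf(\csg))$ has basis $\{[\g''|\g] : \Hom(\g, \g'') \neq 0\}$ matching $DR \otimes_R P(\g)$ under $\epsilon$, and $H^{-i}(\cf(\csg)) = 0$ for $i \geq 1$; a routine check that the left $R$-action is compatible then upgrades $\epsilon$ to the desired quasi-isomorphism in $\tdne$.

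The main obstacle will be establishing~(iii). For this step, if $a \geq 1$, $\Hom(\g'', \csg^a) \neq 0$, and $\Hom(\g'', \csg^{a-1}) = 0$, then Corollary~\ref{cor stack2} forces $\csg_\mw(a) \in \g''_\ast$ (since $\csg_\mw(a)$ lies in $\csg^{a-1}_\ast \setminus \csg^a_\ast$); I would use this extra constraint, together with a careful rearrangement of the right-shift matching that witnesses $\Hom(\g'', \csg^a) \neq 0$, to produce a matching exhibiting $\Hom(\g'', \csg^{a+1}) \neq 0$, thereby extending $A(\g'')$ into an interval of length $\geq 2$ and ruling out the offending singleton.
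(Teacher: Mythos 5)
Your Case 1 is correct and matches the paper. For Case 2 you and the paper are both exhibiting $\cf(\csg)$ as a projective resolution of $DR\otimes_R P(\g)$; your bookkeeping via the left idempotent decomposition $\cf(\csg)=\bigoplus_{\g''}(\g'')\cf(\csg)$ and the sets $A(\g'')$ is a tidy repackaging of the paper's degree-by-degree $\op{Ker}(\op{pr}_i)=\op{Im}(\op{pr}_{i+1})$ check, and would go through if your claims (i)--(iii) were established.

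The genuine gap is in your argument for (iii), which you yourself flag as the main obstacle. Corollary~\ref{cor stack2} gives only a \emph{sufficient} condition for a composition to vanish (existence of $\ul{s}\in\g_\ast\cap\g''_\ast$ with $\ul{s}\notin\g'_\ast$); it does not say that every vanishing composition arises that way, so it cannot ``force $\csg_{\mf{w}}(a)\in\g''_\ast$.'' Indeed that conclusion is false on just the stated hypotheses: take $n=6$, $e=2$, $\g=\g(\ul{3},\ul{6})$, so $\csg_{\mf{w}}=\{\ul{2},\ul{5},\ul{6}\}$, $\csg^0=\g(\ul{5},\ul{6})$, $\csg^1=\g(\ul{2},\ul{6})$; with $\g''=\g(\ul{2},\ul{3})$ and $a=1$ one has $\Hom(\g'',\csg^1)\neq0$ and $\Hom(\g'',\csg^0)=0$, yet $\csg_{\mf{w}}(1)=\ul{5}\notin\g''_\ast$. (Here $A(\g'')=\{1,2\}$, so (iii) itself survives, but the intermediate step your proof rests on does not.) You also assert (i) with no argument, and the parenthetical ``alternating isomorphisms cancel'' is a misleading picture: by Corollary~\ref{cor stack} the restriction of $d_i$ to $(\g'')P(\csg^i)\to(\g'')P(\csg^{i-1})$ is already an isomorphism whenever both pieces are nonzero, so an interval in $A(\g'')$ of length $\geq 3$ would directly contradict $d^2=0$ --- there is nothing to alternate. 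The clean route to both (i) and (iii) is the interlacing reformulation of Proposition~\ref{prop: tightness criterion} (namely $\Hom(\g'',\tg)\neq0$ iff $\tg_\ast(j-1)<\g''_\ast(j)\leq\tg_\ast(j)$ for all $j$): comparing the $j=a+1$ inequality for $\csg^{a-1}$, $\csg^a$, $\csg^{a+1}$ shows that exactly one of $\Hom(\g'',\csg^{a\pm1})$ vanishes when $\Hom(\g'',\csg^a)\neq0$ and $1\leq a\leq e-1$, which is what the paper's proof of its Claim is computing in disguise; I would reorganize your Step (iii) around those inequalities rather than Corollary~\ref{cor stack2}.
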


\begin{proof}
The tensor product $DR \otimes_R P(\g)$ has an $\F$-basis $\{[\g'|\g] ~|~ \Hom(\g,\g')\neq 0, \g' \in \bne\}$.

If $\ul{1} \in \g_{\ast}$, then $\csg \in \bne$.
By Lemma \ref{lem serre c}, $\Hom(\g,\g')\neq0$ if and only if $\Hom(\g',\csg)\neq0$.
Hence $DR \otimes_R P(\g)$ is isomorphic to $P(\csg)$, i.e., $\cstd(\tf(\g))$ is isomorphic to $\cf(\csg)$.

Assume $\ul{1} \notin \g_{\ast}$ from now on.  Our proof makes repeated use of Proposition \ref{prop: tightness criterion} and Corollary \ref{cor stack}.
Consider the complex $\cf(\csg)$:
$$P(\csg^e) \xra{\op{pr}_e} \cdots \xra{\op{pr}_2} P(\csg^{1}) \xra{\op{pr}_1} P(\csg^{0}).$$
We have 
$\csg^0_{\ast}=\{\ul{0}, \g_{\ast}(2)-\ul{1}, \dots, \g_{\ast}(e)-\ul{1}, \ul{n}\}$.
By Proposition \ref{prop: tightness criterion}, $\Hom(\g,\csg^0)\neq0$ so that $[\csg^0|\g] \in DR \otimes_R P(\g)$ exists.
Define a map of left $R$-modules $$\op{pr}_0: P(\csg^0) \ra DR \otimes_R P(\g)$$ by $\op{pr}_0(\csg^0)=[\csg^0|\g]$.
Moreover, the path from $\g$ to $\csg^0$ is the longest nonzero path starting from $\g$.
In other words, $\Hom(\g',\csg^0)\neq0$ if $\Hom(\g,\g')\neq 0$ for $\g' \in \bne$.
Then $\op{pr}_0(\g'|\csg^0)=[\g'|\g]$ for any generator $[\g'|\g]\in DR \otimes_R P(\g)$.
Hence $\op{pr}_0$ is a surjection.

\begin{claim2}
$\op{Ker}(\op{pr}_0)=\op{Im}(\op{pr}_1)$.
\end{claim2}

\begin{proof}[Proof of Claim]
Since
$$\csg^1_{\ast}=\{\ul{0}, \g_{\ast}(1)-\ul{1}, \g_{\ast}(3)-\ul{1},\dots, \g_{\ast}(e)-\ul{1}, \ul{n}\},$$ it follows that $\Hom(\g, \csg^1)=0$ and $\op{Im}(\op{pr}_1) \subset \op{Ker}(\op{pr}_0)$.

To prove $\op{Ker}(\op{pr}_0)\subset\op{Im}(\op{pr}_1)$, it suffices to show that if $\Hom(\g,\g') = 0$ and $\Hom(\g',\csg^0)\neq0$ for $\g' \in \bne$, then $\Hom(\g',\csg^1)\neq0$.
Since $\Hom(\g',\csg^0)\neq0$ we have
$$\g_{\ast}(i)-\ul{1}=\csg^0_{\ast}(i-1) < \g'_{\ast}(i) \leq \csg^0_{\ast}(i)=\g_{\ast}(i+1)-\ul{1},$$
for $1<i\leq e$.  This implies that $\g'_{\ast}(1)<\g_{\ast}(1)$ since $\Hom(\g,\g') = 0$.
We have
$$\g'_{\ast}(1) \leq \g_{\ast}(1)-\ul{1}, \qquad  \csg^1_{\ast}(i-1) \leq \csg^0_{\ast}(i-1) < \g'_{\ast}(i) \leq \csg^0_{\ast}(i)=\csg^1_{\ast}(i),$$
for $1<i\leq e$. Hence $\Hom(\g',\csg^1)\neq 0$.
\end{proof}

The proofs of $\op{Ker}(\op{pr}_{i})=\op{Im}(\op{pr}_{i+1})$ for $0 < i \leq e$ are similar, where $\op{Im}(\op{pr}_{e+1})$ is understood to be $0$.
Hence $\cf(\csg)$ is a projective resolution of $DR \otimes_R P(\g)$.
\end{proof}

\begin{prop} \label{prop serre d}
The Serre functors $\cstc$ and $\cstd$ commute with $\tf$.
\end{prop}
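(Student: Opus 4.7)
The plan is to construct a natural isomorphism $\eta\colon \tf \circ \cstc \xra{\sim} \cstd \circ \tf$ in three stages: first on basic objects with their canonical gradings, then on all objects via bypass triangles, and finally verifying naturality on morphisms.

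\emph{Base case.} For a basic $\g \in \bne$ with canonical grading $[\xi(\g)]$, Lemma~\ref{lem serre d basic} provides an isomorphism $\cstd(P(\g)) \cong \cf(\csg)$ in $\tdne$. Meanwhile, by Section~\ref{subsubsection: def of tf}, $\tf(\cstc(\g, [\xi(\g)])) = \tf(\csg, [\zeta(\g)\circ \xi(\g)]) = \cf(\csg)[k]$, where $k$ is the Hopf-invariant discrepancy between the lifts $[\zeta(\g)\circ \xi(\g)]$ and $[\xi(\csg)]$. I would prove $k = 0$ by comparing both homotopy classes to the canonical path in the quiver $\qne$ terminating at $\csg$: the generator $\zeta(\g)$ of $\Hom(\g,\csg)$ factors through a sequence of bypasses, whose Hopf invariants can be tracked using Theorem~\ref{thm: huang}. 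This defines $\eta$ on basic objects with canonical gradings, and $T$-equivariance extends it to all lifts.

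\emph{Extension.} For non-basic $\g$, use the canonical bypass triangle $\g \xra{\beta(\g)} \g' \to \g''$ from Definition~\ref{def beta g}, for which $|\vnb(\g')|, |\vnb(\g'')| < |\vnb(\g)|$ by construction. By Proposition~\ref{prop triangle}, $\tf$ sends this to a distinguished triangle, and $\cstd$ preserves distinguished triangles; simultaneously, $\cstc$ (being simply a rotation) carries bypass triangles to bypass triangles, whose $\tf$-image is again distinguished. Induction on $|\vnb(\g)|$ combined with the triangulated axioms extends $\eta$ uniquely. For naturality, by Theorem~\ref{thm: relations in contact category} it suffices to check bypass morphisms $\beta\colon(\g,[\xi])\to(\g',[\beta\circ\xi])$, and a further bypass-triangle induction reduces to the case where $\g, \g' \in \bne$. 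In that case the naturality square lives in $\rne$ and commutes because Proposition~\ref{prop: tightness criterion} identifies morphisms between basic dividing sets by their endpoints, forcing $\cs$ to act on $\rne$ as a rotation automorphism sending $(\g|\g')$ to $(\cs(\g)|\cs(\g'))$.

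\emph{Main obstacle.} The delicate step is verifying $k=0$ in the base case. The homotopy class $\zeta(\g)\circ\xi(\g)$ factors through the (generally non-basic) dividing set $\g$, while $\xi(\csg)$ is defined through a basic-to-basic path in $\qne$, so the two lifts are not directly comparable and the Hopf-invariant computation must proceed via a sequence of intermediate bypass triangles analogous to those in the proof of Lemma~\ref{lem serre c deg}. The fractional Calabi-Yau relation $\cstc^{n+1}\cong T^{e(n-e)}$ of that lemma furnishes a global consistency check: the Serre functor $\cstd$ on $\tdne$ must satisfy a compatible Calabi-Yau relation (determined by the global dimension of $\rne$ and its Nakayama automorphism), which rigidifies the possible grading shifts and forces $k=0$.
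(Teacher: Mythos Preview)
Your overall strategy matches the paper's, but there are two genuine gaps.

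\textbf{Wrong induction variable.} You induct on $|\vnb(\g)|$ and assert that $|\vnb(\g')|,|\vnb(\g'')| < |\vnb(\g)|$ for the canonical triangle $\g \xra{\beta(\g)} \g' \to \g''$. This is false: in Example~\ref{ex g2} one has $\vnb(\g)=\{(1)\}$ with $\g_{(1)}=\{\ul{1},\ul{2},\ul{3}\}$, while $\g'_\ast=\{\ul{0},\ul{1},\ul{4}\}$ and $\g'_{(1)}=\{\ul{2},\ul{3}\}$, so $|\vnb(\g')|=1=|\vnb(\g)|$. The quantity that actually decreases under $\beta(\g)$ is $m(\g)=e+1-|\g_\ast|$, since $\g_\ast \subsetneq \g'_\ast,\g''_\ast$; the paper inducts on $m(\g)$.

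\textbf{Uniqueness of the extension.} Your sentence ``the triangulated axioms extend $\eta$ uniquely'' hides the main point. Axiom TR3 does \emph{not} give a unique fill-in, so you cannot conclude that the isomorphism $\tf(\cstc(\g,[\xi]))\cong\cstd(\tf(\g,[\xi]))$ produced from the cones is independent of choices, nor that it is natural. The paper resolves this by invoking Proposition~\ref{prop embed bypass}: for the middle bypass $\beta\in\Hom(\g',\g'')$ of the triangle, the space $\Hom_{\tdne}(\tf(\g',[\xi']),\tf(\g'',[\xi'']))$ is one-dimensional, generated by $\tf(\beta)$. Since $\cstd$ is an equivalence and (by induction) $\cstd\tf\cong\tf\cstc$ on $\g'$ and $\g''$, both $\cstd(\tf(\beta))$ and $\tf(\cstc(\beta))$ are the unique generator of the same one-dimensional space, hence equal; only then does the cone comparison go through. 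The same one-dimensionality argument handles naturality on bypass morphisms in general (your reduction to ``the naturality square lives in $\rne$'' is problematic anyway, since $\cs(\g)$ need not be basic even when $\g$ is).

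Finally, your fallback for the grading check $k=0$ via the fractional Calabi--Yau relation is circular: Proposition~\ref{prop serre d deg} is deduced \emph{from} this proposition. The paper's direct argument for $k=0$ is short: when $\csg\in\bne$ it is immediate, and when $\csg\notin\bne$ one uses $[\xi(\csg)]=[\ob(\csg)\circ\xi(\csg^0)]$ together with Lemma~\ref{lemma: serre c 2} applied to $\Hom(\csg^0,\csg)\times\Hom(\g,\csg^0)\to\Hom(\g,\csg)$ to identify $[\ob(\csg)\circ\xi_{\g,\csg^0}\circ\xi(\g)]$ with $[\zeta(\g)\circ\xi(\g)]$.
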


\begin{proof} $\mbox{}$

\s\n
{\bf Step 1.}
We first show that $\cstc$ and $\cstd$ commute with $\tf$ on the level of objects, i.e.,
\begin{equation} \label{eqn: Serre commutes}
\cstd(\tf(\g,[\xi])) \cong \tf(\cstc(\g,[\xi])).
\end{equation}
We prove this by induction on $m(\g)=e+1-|\g_{\ast}|$.

Suppose $m(\g)=0$, i.e., $\g \in \bne$. It suffices to prove Equation~\eqref{eqn: Serre commutes}  for $\xi=\xi(\g)$ since both sides commute with shift functors. By Lemma \ref{lem serre d basic}, $$\cstd(\tf(\g,[\xi(\g)]))=\cstd(\cf(\g))\cong \cf(\csg) =\tf(\csg,[\xi(\csg)]).$$
By Definition \ref{def serre tc}, $\tf(\cstc(\g,[\xi(\g)]))=\tf(\csg,[\zeta(\g) \circ \xi(\g)])$, where $\zeta(\g)$ is the generator of $\Hom(\g,\csg)$.  It remains to show that
\begin{gather} \label{eq serre deg}
[\zeta(\g) \circ \xi(\g)]=[\xi(\csg)],
\end{gather}
for $\g \in \bne$. If $\csg \in \bne$, then $\zeta(\g)$ corresponds to a nonzero path from $\g$ to $\csg$ in $\qne$ and Equation \eqref{eq serre deg} follows from the definition of $[\xi(\csg)]$. If $\csg \notin \bne$, then consider $\ob(\csg) \in \Hom(\csg^0, \csg)$ as defined in Equation~\eqref{eqn: triangle with ob}.
By Lemma~\ref{lemma: serre c 2}, the composition $\Hom(\csg^0, \csg) \times \Hom(\g, \csg^0) \ra \Hom(\g, \csg)$ is nontrivial. Hence
$$[\xi(\csg)]=[\ob(\csg) \circ \xi(\csg^0)]=[\ob(\csg) \circ \xi_{\g,\csg^0} \circ \xi(\g)]=[\zeta(\g) \circ \xi(\g)],$$
where $\xi_{\g,\csg^0}$ is the generator of $\Hom(\g, \csg^0)$.

Suppose $m(\g)>0$. Consider the bypass triangle
$$(\g,[\xi][-1]) \xra{\beta(\g)} (\g', [\xi']) \xra{\beta} (\g'', [\xi'']) \xra{} (\g, [\xi])$$
in $\tcne$ which contains $\beta(\g)$ as defined in Definition \ref{def beta g}.
We have $m(\g'), m(\g'') < m(\g)$.
Let $\beta\in \Hom_{\tcne}((\g', [\xi']), (\g'', [\xi'']))$ be the second morphism in the triangle.
By Proposition \ref{prop embed bypass},
$$\Hom_{\tdne}(\tf(\g', [\xi']), \tf(\g'', [\xi'']))=\F\lan\tf(\beta)\ran.$$
Since $\cstc$ is the rotation endofunctor of $\tcne$, it maps bypass triangles to bypass triangles and
$$\Hom_{\tdne}(\tf(\cstc(\g', [\xi'])), \tf(\cstc(\g'', [\xi''])))=\F\lan\tf(\cstc(\beta))\ran.$$
Also, since $\cstd$ is an auto-equivalence of $\tdne$,
$$\Hom_{\tdne}(\cstd(\tf(\g', [\xi'])), \cstd(\tf(\g'', [\xi''])))=\F\lan\cstd(\tf(\beta))\ran.$$
Finally, since $\cstd(\tf(\g', [\xi'])) \cong \tf(\cstc(\g', [\xi']))$ and $\cstd(\tf(\g'', [\xi''])) \cong \tf(\cstc(\g'', [\xi'']))$ by induction, we have $\cstd(\tf(\beta))=\tf(\cstc(\beta))$.
Hence,
$$\cstd(\tf(\g, [\xi])) \cong \cstd(\op{Cone}(\tf(\beta)))=\op{Cone}(\cstd(\tf(\beta)))=\op{Cone}(\tf(\cstc(\beta)))\cong \tf(\cstc(\g, [\xi])),$$
where the first and last isomorphisms follow from Proposition \ref{prop triangle}.

\s
\n{\bf Step 2.} Since the morphisms of $\tcne$ are generated by bypasses, it suffices to prove that $\cstd(\tf(\beta)) \cong \tf(\cstc(\beta))$ for any bypass $\beta \in \Hom_{\tcne}((\g, [\xi]), (\g', [\xi']))$.  This in turn follows from observing that both are generators of
$$\Hom_{\tdne}(\tf(\cstc(\g, [\xi])), \tf(\cstc(\g', [\xi'])))=\Hom_{\tdne}(\cstd(\tf(\g, [\xi])), \cstd(\tf(\g', [\xi']))).$$
\vskip-.26in
\end{proof}

We prove the analogue of Lemma \ref{lem serre c deg} for the Serre functor $\cstd$ of $\tdne$.

\begin{prop} \label{prop serre d deg}
There is an isomorphism of endofunctors of $\tdne$: $\cstd^{n+1} \cong T^{e(n-e)}$.
\end{prop}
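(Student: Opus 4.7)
The plan is to transport the fractional Calabi-Yau isomorphism $\cstc^{n+1}\cong T^{e(n-e)}$ on the contact side (Lemma~\ref{lem serre c deg}) across the functor $\tf$ to the triangulated side, and then extend from the image of $\tf$ to all of $\tdne$ using the fact that the $P(\g)$, $\g\in\bne$, generate $\tdne$ as a triangulated category.

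First, by Proposition~\ref{prop serre d} we have a natural isomorphism $\cstd\circ\tf\cong\tf\circ\cstc$ of functors $\tcne\to\tdne$. Iterating this identity $n+1$ times and using that $\tf$ commutes with the shift functors on both sides by construction (see Section~\ref{subsubsection: def of tf}), we obtain
\begin{equation*}
\cstd^{n+1}\circ\tf\;\cong\;\tf\circ\cstc^{n+1}\;\cong\;\tf\circ T^{e(n-e)}\;\cong\;T^{e(n-e)}\circ\tf,
\end{equation*}
where the middle isomorphism is Lemma~\ref{lem serre c deg}. In particular, evaluating on a basic dividing set $\g\in\bne$, for which $\tf(\g,[\xi(\g)])=P(\g)=\cf(\g)$, yields a natural isomorphism $\cstd^{n+1}(P(\g))\cong P(\g)[e(n-e)]$ compatible with the generating morphisms $(\g\mid\g')$ of $\rne$ (via the faithfulness of $\tf$ on the basic case, Lemma~\ref{lemma embed bypass}, combined with Proposition~\ref{prop embed end}).

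Next I extend this to an isomorphism of endofunctors on all of $\tdne$. Recall from Section~\ref{section: defn of algebra} that the indecomposable projectives $\{P(\g)\mid \g\in\bne\}$ form a complete set of projective generators for $\tdne$, so $\tdne$ is generated as a triangulated category by the image of $\tf$ restricted to $\bne$. Both $\cstd^{n+1}$ and $T^{e(n-e)}$ are exact autoequivalences of $\tdne$ that commute with direct sums and mapping cones; hence a natural isomorphism between them defined on a generating set extends uniquely to a natural isomorphism on the whole triangulated category by induction on the length of an iterated cone expression.

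The main potential obstacle is verifying naturality on the generators $(\g\mid\g')$ of $\rne$, rather than just getting an object-level isomorphism. This is where Proposition~\ref{prop serre d} is essential: it provides the natural transformation $\cstd\circ\tf\Rightarrow\tf\circ\cstc$ not merely pointwise, and the composite argument above therefore yields a genuine isomorphism of functors when restricted to the $P(\g)$. Once this is in hand, the extension to iterated cones is a standard triangulated-category argument and presents no further difficulty.
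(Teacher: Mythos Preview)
Your proof is correct and follows essentially the same approach as the paper: combine Proposition~\ref{prop serre d} with Lemma~\ref{lem serre c deg} and the compatibility of $\tf$ with shifts to obtain $\cstd^{n+1}\circ\tf\cong T^{e(n-e)}\circ\tf$, then extend from the generating projectives to all of $\tdne$. One small remark: your appeal to faithfulness via Lemma~\ref{lemma embed bypass} and Proposition~\ref{prop embed end} is unnecessary, since Proposition~\ref{prop serre d} already furnishes a \emph{natural} isomorphism $\cstd\circ\tf\cong\tf\circ\cstc$ (its Step~2 handles morphisms), so naturality on the generators $(\g\mid\g')$ is automatic.
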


\begin{proof}
Since $\tdne$ is generated by the image of $\tf$ (and in particular the projectives $P(\g)$, $\g\in \bne$, and morphisms between projectives), it suffices to show that $\cstd^{n+1}(\tf(\g,[\xi]))=\tf(\g,[\xi])[e(n-e)]$ for any $(\g,[\xi])$.
By Lemma \ref{lem serre c deg} and Proposition \ref{prop serre d}, 
$$\cstd^{n+1}(\tf(\g,[\xi]))=\tf(\cstc^{n+1}(\g,[\xi]))=\tf(\g,[\xi][e(n-e)])=\tf(\g,[\xi])[e(n-e)].$$
\vskip-.26in
\end{proof}

\subsection{General cases}

Since $\Hom_{\cne}(\g, \g')$ is at most one-dimensional, $\tfne$ is faithful if and only if $\fne$ is faithful, i.e.,
\begin{equation} \label{eq embed cdf} \tag{F$'$}
\fne: \Hom_{\cne}(\g, \g') \xra{\sim} \Hom_{\dne}(\cf(\g), \cf(\g')).
\end{equation}
By Proposition \ref{prop serre d}, (\ref{eq embed cdf}) holds for $\g, \g'$ if and only if it holds for $\cs^k(\g), \cs^k(\g')$ for some $k$.

We prove Equation \eqref{eq embed cdf} for $\g, \g'$ in $\cne$ by induction on $n$.  If $\g$ and $\g'$ have a common boundary parallel component, then it is either a positive region or a negative region.

\s\n{\bf Case 1.} Suppose that $R_+(\g)$ and $R_+(\g')$ have a common boundary parallel component, i.e., there exist $\mf{v} \in V(\g)$ and $\mf{v'} \in V(\g')$ such that $\gv=\g'_{\mf{v'}}=\{\ul{t}\}$.
By applying the Serre functor $t+1$ times, we can assume that $\gv=\g'_{\mf{v'}}=\{\ul{n}\}$.
Let $\tg$ and $\tg'$ denote dividing sets in $\cal{C}_{n-1,e}$ obtained from $\g$ and $\g'$ by removing $\gv$ and $\g'_{\mf{v'}}$, respectively.

\s \n{\bf Case 2.} Suppose that $R_-(\g)$ and $R_-(\g')$ have a common boundary parallel component, i.e., there exist $\mf{v} \in V(\g)$ and $\mf{v'} \in V(\g')$ such that $\{\ul{t},\ul{t+1}\} \subset \gv,\g'_{\mf{v'}}$.
By applying the Serre functor $t+1$ times, we can assume that $\{\ul{0},\ul{n}\} \subset \g_{\ast}, \g'_{\ast}$.
Let $\tg$ and $\tg'$ denote dividing sets in $\cal{C}_{n-1,e-1}$ obtained from $\g$ and $\g'$ by removing $\ul{n}$ from $\g_{\ast}$ and $\g'_{\ast}$, respectively.

\begin{lemma} \label{lem embed ind}
{\em (F$'$)} holds for $\g, \g'$ if and only if it holds for $\tg, \tg'$ in both Cases 1 and 2.
\end{lemma}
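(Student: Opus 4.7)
The plan is to establish natural isomorphisms on both sides of (F$'$) and verify that $\fne$ intertwines them. On the contact side, both $\Hom_{\cne}(\g,\g')$ and $\Hom_{\cal{C}_{n-1,*}}(\tg,\tg')$ are at most one-dimensional, so it suffices to show they vanish simultaneously. In either case, the shared boundary parallel component contributes a single closed circle to $\gamma_{\g,\g'}$ after edge-rounding (exactly as in the proof of Lemma~\ref{lem serre c}), hence $\#\gamma_{\g,\g'}=\#\gamma_{\tg,\tg'}$; the generators correspond via gluing in or removing an $I$-invariant neighborhood of that component.

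For the algebraic side, let $J\subset\bne$ consist of those basic dividing sets $\sigma$ with $\ul{n}\notin\sigma_\ast$ in Case~1 (equivalently, $\{\ul{n}\}$ is a boundary parallel component of $\sigma$) and those with $\ul{n}\in\sigma_\ast$ in Case~2. Every basic dividing set $\g(\mi)$ appearing in $\cf(\g)$ and every $\g'(\mj)$ appearing in $\cf(\g')$ lies in $J$. Since a bypass arrow $\sigma\xra{s}\sigma'$ only moves a single label upward and no arrow $\ul{n}\to\ul{n+1}$ exists, $\ul{n}$ can never leave a based component once present; consequently every nonzero path in $Q_{n,e}$ from $\g(\mi)$ to $\g'(\mj)$ stays inside $J$ (in Case~1 a path that exits $J$ could not return, and in Case~2 a path starting in $J$ cannot exit at all). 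Setting $\epsilon=\sum_{\sigma\in J}(\sigma)\in\rne$, the label-removal map $\sigma\mapsto \sigma_\ast\setminus\{\ul{n}\}$ induces an algebra isomorphism $\epsilon\rne\epsilon\xra{\sim}\cal{R}_{n-1,*}$ and hence a fully faithful functor $\cal{D}_{n-1,*}\to\dne$ that sends $\cf(\tg),\cf(\tg')$ to $\cf(\g),\cf(\g')$. By Remark~\ref{rmk OI RPV} we have $\oi(\g)\simeq\oi(\tg)$ since $\vnb$ is unchanged, and under the induced bijection $\g(\mi)\leftrightarrow\tg(\mi)$ every piece of combinatorial data in the definition of $\cf$ on objects and on bypasses (nesting degrees, sliding and shuffling vectors, differential intervals, and for each nontrivial bypass $\beta$ the sets $II(\beta)$, $SI(\beta)$, $LSV(\beta)$, the shuffling type, and the assignment in Definition~\ref{def beta ind}) transfers verbatim. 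Therefore $\fne$ and $\cal{F}_{n-1,*}$ agree under these identifications, so (F$'$) for $(\g,\g')$ is equivalent to (F$'$) for $(\tg,\tg')$.

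The main obstacle is the routine but tedious verification in the last step that each structure from Sections~\ref{Sec dividing} and~\ref{Sec bypass chain} is transported correctly, because many definitions refer to specific labels and the cyclic ordering around $\bdry D^2$. What makes this manageable is that the removed label $\ul{n}$ is inert for both sides of the construction: in Case~1 it corresponds to a boundary parallel positive component, which belongs neither to $\vnb$ nor to any $LSV(\beta)$ and participates in no differential or chain-map interval between objects of $J$; in Case~2 the pair $\{\ul{0},\ul{n}\}$ bounds a shared boundary parallel negative region that lies outside every bypass arc of attachment relevant to morphisms between objects of $J$.
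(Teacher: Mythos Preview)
Your proof is correct and follows essentially the same approach as the paper: on the contact side you compare $\#\gamma_{\g,\g'}$ with $\#\gamma_{\tg,\tg'}$, and on the algebraic side you identify the full subquiver on $J$ with $Q_{n-1,*}$ (the paper's $\qne'$, $\qne''$) to obtain a fully faithful embedding $\cal{D}_{n-1,*}\hookrightarrow\dne$ carrying $\cf(\tg)$ to $\cf(\g)$. Your additional care in checking that $\fne$ and $\cal{F}_{n-1,*}$ intertwine under these identifications makes explicit a point the paper leaves implicit in the phrase ``canonical isomorphisms,'' but the underlying argument is the same.
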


\begin{proof}
It suffices to prove that there exist canonical isomorphisms:
\begin{gather}
\label{iso I}\Hom_{\cne}(\g, \g') \cong \Hom_{\cal{C}_{n-1,e}}(\tg, \tg'); \\
\label{iso II}\Hom_{\dne}(\cf(\g), \cf(\g')) \cong \Hom_{\cal{D}_{n-1,e}}(\cf(\tg), \cf(\tg')).
\end{gather}

The first isomorphism~\eqref{iso I} follows from observing that $\gamma_{\g,\g'}$ is isomorphic to $\gamma_{\tg,\tg'}$.

Consider two full sub-quivers $\qne'$ and $\qne''$ of $\qne$, where
$$V(\qne')=\{\g \in \bne ~|~ \ul{n} \notin \g_{\ast}\}, \quad V(\qne'')=\{\g \in \bne ~|~ \ul{n} \in \g_{\ast}\}.$$
There are two subalgebras $\rne'$ and $\rne''$ of $\rne$ which are generated by $\qne'$ and $\qne''$, respectively.
Let $\dne'$ and $\dne''$ be the corresponding full subcategories of $\dne$.
By Lemma \ref{lem rne}, $\rne'$ is canonically isomorphic to $R_{n-1,e}$, and $\rne''$ is canonically isomorphic to $R_{n-1,e-1}$.
The second isomorphism~\eqref{iso II} follows from compositions of functors:
$\cal{D}_{n-1,e} \xra{\sim} \dne' \hookrightarrow \dne$ and $\cal{D}_{n-1,e-1} \xra{\sim} \dne'' \hookrightarrow \dne$.
\end{proof}

Before proving (\ref{eq embed cdf}) in general, consider the special cases described in Figure \ref{6-2}. There are two boundary parallel components, one in $R_{\pm}(\g)$ and the other in $R_{\mp}(\g')$. The boundary parallel component of $\g'$ is obtained by rotating that of $\g$ through a counterclockwise angle of $\frac{\pi}{n+1}$.
We say that the pair $(\g,\g')$ is in {\em local annihilation position.}

\begin{figure}[ht]
\begin{overpic}
[scale=0.3]{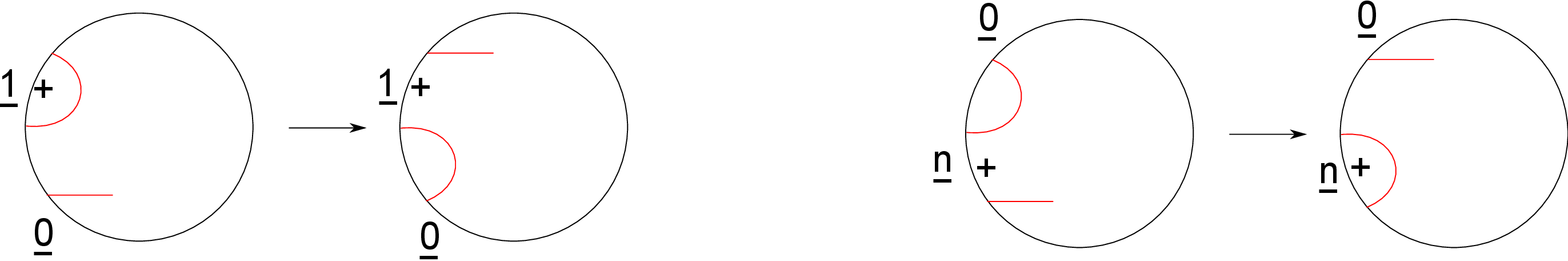}
\put(14,0){$\g$}
\put(38,0){$\g'$}
\put(74,0){$\g$}
\put(98,0){$\g'$}
\end{overpic}
\caption{A pair $(\g,\g')$ in local annihilation position, normalized using the Serre functor.}
\label{6-2}
\end{figure}

\begin{lemma} \label{lem embed local}
If the pair $(\g,\g')$ is in local annihilation position, then
$$\Hom_{\cne}(\g, \g')=0, \qquad \Hom_{\dne}(\cf(\g), \cf(\g'))=0.$$
\end{lemma}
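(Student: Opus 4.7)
The plan is to establish the topological vanishing and the algebraic vanishing independently, as they require rather different techniques.

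For $\Hom_{\cne}(\g,\g')=0$, I would use the Serre functor to normalize so that $(\g,\g')$ sits as in one of the two configurations in Figure~\ref{6-2}. Then trace $\gamma_{\g,\g'}$ directly: the boundary parallel arc of $R_{\pm}(\g)$ on $D^2\times\{0\}$ and the adjacent boundary parallel arc of $R_{\mp}(\g')$ on $D^2\times\{1\}$ share a pair of vertical dividing segments on $\bdry D^2\times[0,1]$ between them; after edge rounding at the four corners, these four pieces fit together into a closed loop. This loop is disjoint from the rest of $\gamma_{\g,\g'}$, so $\#\gamma_{\g,\g'}\geq 2$ and the Hom space vanishes by the convention at the beginning of Section~\ref{section: contact category of disk}.

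For $\Hom_{\dne}(\cf(\g),\cf(\g'))=0$, I would induct on the pair $(m(\g),m(\g'))$ in lexicographic order, where $m(\Gamma):=e+1-|\Gamma_{\ast}|$ measures the failure of $\Gamma$ to be basic. In the base case both $\g,\g'\in\bne$, so $\cf(\g)=P(\g)$ and $\cf(\g')=P(\g')$ are projectives concentrated in degree zero, and
\[
\Hom_{\dne}(P(\g),P(\g'))\cong\Hom_{\rne}(P(\g),P(\g'))\cong\Hom_{\cne}(\g,\g'),
\]
which vanishes by the topological part already proved.

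For the inductive step, assume without loss of generality $m(\g)>0$ (the case $m(\g)=0$, $m(\g')>0$ is symmetric, using $\ob$ on $\g'$). Form the bypass triangle
\[
(\g,[\xi])\xrightarrow{\beta(\g)}(\widetilde{\Gamma}^{1},[\xi_1])\to(\widetilde{\Gamma}^{2},[\xi_2])\to(\g,[\xi])[1]
\]
in $\tcne$ coming from Definition~\ref{def beta g}. Since the arc of attachment of $\beta(\g)$ is contained in the union of the based component and the non-boundary-parallel component $\g_{(i_0)}$, it is disjoint from the boundary parallel component of $\g$ responsible for local annihilation. Hence both $(\widetilde{\Gamma}^{1},\g')$ and $(\widetilde{\Gamma}^{2},\g')$ remain in local annihilation position, and by the construction of $\beta(\g)$ we have $m(\widetilde{\Gamma}^{i})<m(\g)$ for $i=1,2$. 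By Proposition~\ref{prop triangle}, $\tf$ sends this bypass triangle to a distinguished triangle in $\tdne$. Applying $\Hom_{\tdne}(-,\tf(\g',[\zeta]))$ for any lift $[\zeta]$ of $\g'$ yields a long exact sequence, and the inductive hypothesis gives vanishing of the two neighboring Hom spaces in every homological degree; summing over all grading shifts then forces $\Hom_{\dne}(\cf(\g),\cf(\g'))=0$.

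The main obstacle is the verification, in the inductive step, that $\beta(\g)$ indeed leaves the boundary parallel component producing local annihilation undisturbed. This requires a separate check for each of the two configurations of Figure~\ref{6-2}, since $\beta(\g)$ depends on the combinatorics of $\g$ near the based component; the key point is that the boundary parallel component at issue is some $\g_{(k)}$ with $k\neq i_0$, so it is genuinely disjoint from the arc of attachment of $\beta(\g)$. A secondary bookkeeping subtlety is the passage between the graded category $\tdne$ and the ungraded $\dne$, handled by noting that vanishing at every grading level is equivalent to vanishing in $\dne$.
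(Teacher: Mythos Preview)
Your argument is correct but takes a genuinely different route from the paper.

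For the topological vanishing you do exactly what the paper does: the two boundary parallel arcs and the vertical strands close up into a loop after edge rounding, so $\#\gamma_{\g,\g'}>1$.

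For the algebraic vanishing, however, the paper gives a one-line direct computation rather than an induction. After normalizing via the Serre functor to the two configurations of Figure~\ref{6-2}, the paper observes that there is a single label which obstructs every possible map between projective summands. In the first configuration $\gv=\{\ul{1}\}$, so $\ul{1}$ is omitted from $\g(\mi)_\ast$ for \emph{every} $\mi\in OI(\g)$; meanwhile $\ul{1}\in\g'_\ast$, so $\ul{1}\in\g'(\mj)_\ast$ for every $\mj\in OI(\g')$. The Tightness Criterion (Proposition~\ref{prop: tightness criterion}) then forces $\Hom_{\rne}(P(\g(\mi)),P(\g'(\mj)))=0$ for all $\mi,\mj$, so already $\op{Map}(\cf(\g),\cf(\g'))=0$ and a fortiori $\Hom_{\dne}=0$. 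The second configuration is identical with the label $\ul{n}$ playing the role.

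Your inductive approach via bypass triangles and long exact sequences works, but it buys the result at a higher price: you need Proposition~\ref{prop triangle}, and you need the geometric check that $\beta(\g)$ (and the whole triangle) leaves the relevant boundary parallel component untouched. Your sketch of that check is essentially right but slightly imprecise --- in the $R_+$ case the boundary parallel component need not literally be $\g_{(k)}$ for some $k$ (it could sit deeper in the nesting), and in the $R_-$ case it is a negative region rather than a $\g_{(k)}$ at all; in both cases the real point is that the three arcs of $\g$ meeting the attaching arc of $\beta(\g)$ all lie on $\partial\g_\ast\cup\partial\g_{(i_0)}$, whereas the boundary parallel arc in question does not. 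The paper's proof sidesteps all of this by killing $\op{Map}$ outright.
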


\begin{proof}
Since the two boundary parallel components form a loop after edge rounding, $\#\gamma_{\g,\g'}>1$ and $\Hom_{\cne}(\g, \g')=0$.

By applying the Serre functor we are in one of the following two cases as in Figure \ref{6-2}:
\be
\item The boundary parallel components are in $R_+(\g)$ and $R_-(\g')$: there exists $\mf{v} \in V(\g)$ such that $\gv=\{\ul{1}\}$; and $\ul{1} \in \g'_{\ast}$.

\item The boundary parallel components are in $R_-(\g)$ and $R_+(\g')$: there exists $\mf{v'} \in V(\g')$ such that $\g'_{\mf{v'}}=\{\ul{n}\}$; and $\ul{n} \in \g_{\ast}$.
\ee
For any $\mf{i} \in OI(\g), \mf{j} \in OI(\g')$, $\ul{1} \notin \g(\mf{i})$ and $\ul{1} \in \g'(\mf{j})$ in the first case and $\ul{n} \in \g(\mf{i})$ and $\ul{n} \notin \g'(\mf{j})$ in the second case. In either case $\Hom(\g(\mf{i}), \g'(\mf{j}))=0$ by Proposition \ref{prop: tightness criterion}. Hence $\Hom_{\dne}(\cf(\g), \cf(\g'))=0$.
\end{proof}

We are finally in a position to complete the proof of Theorem \ref{thm embed}.

\begin{proof}[Proof of Theorem \ref{thm embed}]
We show that (\ref{eq embed cdf}) holds for any $\g, \g'$ by induction on $n$.  For any boundary parallel component of $\g'$, consider the neighborhood of the component in $\g'$ as on the right-hand side of Figure~\ref{6-3}: there are three endpoints $r,s,t$ of $\g'$ in clockwise order around $\bdry D^2$ and $\g'$ connects $r$ and $s$.  We may assume that $\g$ does not connect $r$ and $s$ since if $\g$ and $\g'$ have a common boundary parallel component then we can reduce $n$ by Lemma \ref{lem embed ind}; and that $\g$ does not connect $s$ and $t$ since if $(\g, \g')$ is in local annihilation position then we are done by Lemma \ref{lem embed local}. Hence there exists a nontrivial bypass triangle $\g \xra{\beta} \tg \xra{} \g^0 \xra{} \g$ such that $(\g^0, \g')$ is in local annihilation position; see Figure \ref{6-3}.

By applying exact functors $\Hom(-,\g')$ (this is exact by Lemma~\ref{lemma: exact functor}) and $\Hom(-,\cf(\g'))$, we have two isomorphisms:
$$\Hom(\tg, \g') \xra{\circ \beta} \Hom(\g, \g'), \quad \Hom(\cf(\tg), \cf(\g')) \xra{\circ \cf(\beta)} \Hom(\cf(\g), \cf(\g')),$$
since $\Hom(\g^0, \g')=0$ and $\Hom(\cf(\g^0), \cf(\g'))=0$ by Lemma \ref{lem embed local}.

\begin{figure}[ht]
\begin{overpic}
[scale=0.3]{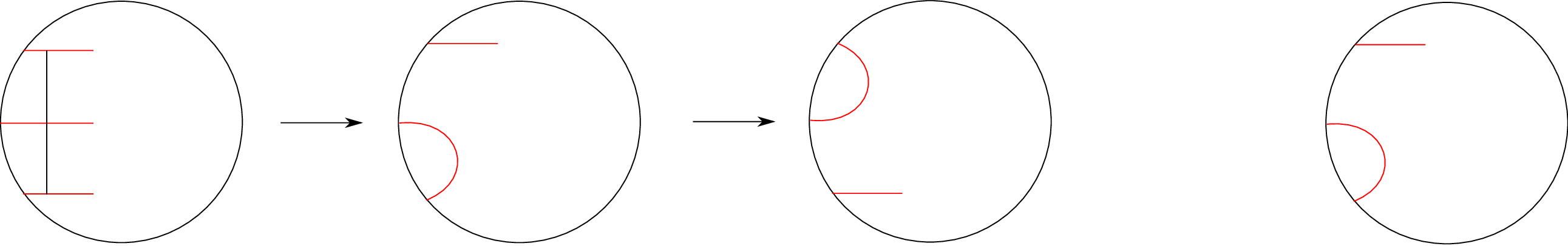}
\put(14,0){$\g$}
\put(40,0){$\tg$}
\put(66,0){$\g^0$}
\put(98,0){$\g'$}
\end{overpic}
\caption{}
\label{6-3}
\end{figure}

Since $\tg$ and $\g'$ have a common boundary parallel component we can reduce $n$ by Lemma \ref{lem embed ind}.   In the case where $n=2$, there are $5$ dividing sets in $\cal{C}_2$, and $\g$ and $\g'$ are either the same or in the unique bypass triangle in $\cal{C}_{2,1}$. The first part of Theorem \ref{thm embed} follows from Propositions \ref{prop embed end} and \ref{prop embed bypass}.  The assertion about exact triangles was the content of Proposition~\ref{prop triangle}.
\end{proof}

\newpage

\section*{Index of notation}

\s\n {\em Section~\ref{section: contact category}}

\s\n $\g:$ a dividing set

\n $\rg:$ the positive region of a convex surface $\Sigma$ with dividing set $\g$

\n $R_+(F):$ the positive region of $\bdry\Sigma$ with respect to the marked points $F\subset \bdry\Sigma$

\n $\delta=\delta_+ \cup \delta_-:$ the arc of a bypass attachment as a union of its positive and negative parts

\s\n {\em Section~\ref{section: contact category of disk}}

\s\n  $\cne:$ the (skeletal version of the) contact category of a disk

\n $\tcne:$ the universal cover of the (skeletal version of the) contact category of a disk

\n $\chi_+=\chi_+(\Gamma), \chi_-=\chi_-(\Gamma):$ the Euler characteristics of $R_+(\Gamma)$ and $R_-(\Gamma)$.

\n $\ul{s}:$ the label $s$ of a positive arc in $\bdry D^2$

\n $\cs, \cs_{\wt{\cal{C}}}:$ Serre functors of $\cne$ and $\tcne$

\s\n {\em Section~\ref{section: algebraic description}}

\s\n $\pi_0(\rg), \pi_0(R_-(\g)):$ the set of components of $\rg$ and $R_-(\g)$

\n $\vz:$ the set of vectors of positive integers

\n $\ast:$ the special element of $\vz$

\n $\Phi_{\g}: \pi_0(\rg) \ra \vz$ the assignment of components of $\rg$ by vectors in $\vz$

\n $V(\g)=\op{Im}(\Phi_{\g}):$ the set of vectors of $\g$

\n $\tpv(\g)=V(\g) \backslash \{\ast\}$

\n $\vnb(\g):$ the subset of non-boundary-parallel components of $\tpv(\g)$

\n $\bne:$ the set of basic dividing sets

\n $\g(\ul{s_1}, \dots, \ul{s_e}):$ the basic dividing set $\g \in \bne$ such that $\g_{\ast}=\{\ul{0}, \ul{s_1}, \dots, \ul{s_e}\}$.

\n $\gv:$ the $\mf{v}$-component of $\rg$, where $\mf{v}\in V(\g)=\op{Im}(\Phi_{\g})$; the set of labels contained in the 

$\mf{v}$-component of $\rg$

\n $\g_{\ast}:$ the based component of $\rg$ containing the label $\ul{0}$

\n $\gv(i):$ the $i$th element of $\gv$

\n $\l_{\gv}=|\gv|-1$

\n $\beta:$ a bypass attachment; later $\beta$ will also be a map $V(\g)\to V(\g')$ (cf.\ Equation~\ref{eqn: beta}) and a

map $II(\beta) \sqcup SI(\beta) \ra OI(\g')$ (cf.\ Definition~\ref{def beta ind})

\n $\uv(\beta), \ov(\beta): $ elements of $V(\g)$, cf.\ Notation~\ref{notation: beta}

\n $x(\beta), y(\beta):$ elements of $\{0,\dots,l_{\g_{\uv(\beta)}}\}$, cf.\ Notation~\ref{notation: beta}

\n $z(\beta):$ element of $\{0,\dots,l_{\g_{\ov(\beta)}}\}$, cf.\ Notation~\ref{notation: beta}

\n $\g_{\uv(\beta)}^l, \g_{\uv(\beta)}^r:$ left and right subsets of $\g_{\uv(\beta)}$, cf.\ Notation~\ref{notation: beta 2}

\s\n {\em Section~\ref{section: defn of algebra}}

\s\n $\qne:$ the quiver

\n $\rne:$ the $\F$-algebra

\n $\g \xra{i} \g':$ an arrow in the quiver $\qne$

\n $(\g), (\g|\g'):$ generators of the algebra $\rne$

\n $P(\g):$ left projective $\rne$-module corresponding to $\g\in \bne$

\n $\tdne:$ the homotopy category of bounded complexes of finitely projective $\rne$-modules

\n $\dne:$ ungraded version of $\tdne$

\s\n{\em Section~\ref{section: defn of functor}}

\s\n $\fne: \cne\to \dne$ and $\tfne: \tcne \to \tdne$ functors

\n $\beta(\g):$ the leftmost bypass on $\g$

\n $\mi=\lan \mi_{\mv} \ran:$ an omitting index with its $\mv$-entries

\n $\oi(\g):$ the set of omitting indices of $\g$

\n $\g(\mi):$ the basic dividing set corresponding to $\mi\in OI(\g)$

\n $c_{\mv}(i):$ the nesting degree of $i$ for $\mv\in V(\g)$

\n $h(\mi):$ the cohomological degree of $\mi\in OI(\g)$

\n $\nv(\mv, i):$ the set of nesting vectors inside $\mv$ up to $i$

\n $\dnv(\mv, i):$ the set of direct nesting vectors inside $\mv$ between $i-1$ and $i$

\n $\slv(\mi):$ the set of sliding vectors of $\mi$

\n $\shv(\mi):$ the set of shuffling vectors of $\mi$

\n $\sv(\mi):$ $\slv(\mi)\cup \shv(\mi)$

\n $\vi:$ $\mv$-modified omitting index, cf.\ Definition~\ref{def diff ind}

\n ${\frak c}|\mi:$ ${\frak c}$-modified omitting index, where ${\frak c}$ is a component of $\pi_0(R_-(\Gamma))$

\n $r(\mi, \mv):$ a nonzero element of $\rne$ corresponding to a path from $\g(\mi)$ to $\g(\vi)$ in $\qne$

\n $d(\mi, \mv): P(\g(\mi))\to P(\g(\vi))$ given by right multiplication by $r(\mi,\mv)$

\n $r(\mi,{\frak c})$, $d(\mi,{\frak c})$ defined analogously

\n $d_{\mv}, d_{\frak c}:$ components of differential $d=d_{\g}$ for $\cf(\g)$

\n $LSV(\beta):$ the set of left shuffling vectors of $\beta$

\n $II(\beta):$ the set of omitting indices of type (Id) for $\beta$

\n $SI(\beta):$ the set of omitting indices of type (Sh) for $\beta$

\n $t(\beta, \mi):$ a nonzero element of $\rne$ corresponding to a path from $\g(\mi)$ to $\g'(\bi)$ in $\qne$

\s \n{\em Section~\ref{section: functors on universal cover}}

\s\n $[\xi(\g)]:$ a homotopy class of $\g$

\n $\overline{\beta}(\g):$ a nontrivial bypass to $\g$

\n $l_{\g}(A):$ the sum of $l_{\gv}$ for $\gv \subset A$

\n $P_i(\beta):$ six parts of $\bdry D^2$ for $\beta$

\s \n{\em Section~\ref{section: triangulated envelope}}

\s\n $\whg^{\mv}:$ a dividing set associated to $\g$ and $\mv \in \vnb(\g)$; also written as $\whg$ if $\mv$ is understood

\n $\whv:$ bijection $\vnb(\g)\backslash \{\mv\} \xra{\sim} \vnb(\whg)$; also denotes the induced map $\whv: OI(\g) \ra OI(\whg)$

\n $\op{Map}(\cf(\g), \cf(\g')):$  $\rne$-module maps where $\cf(\g)$ and $\cf(\g')$ are viewed as $\rne$-modules

\n $d_{\mw, \mf{v}}f=d_{\mf{w}}\circ f + f \circ d_{\mf{v}},$ where $f \in \op{Map}(\cf(\g),\cf(\g'))$ and $\mf{v} \in \oi(\g), \mw \in \oi(\g')$

\n $d_{\es, \mf{v}}f=f \circ d_{\mf{v}}$,  $d_{\mw, \es}f=d_{\mf{w}}\circ f,$  $d_{\g', \g}f=d_{\g'} \circ f + f \circ d_{\g}$

\n  $\cs_{\wt{\cal{D}}}:$  Serre functor of $\tdne$

\n $DR:$ $R$-bimodule $\Hom_{\F}(R,\F)$

\n $[\g'|\g]:$ generators of the $R$-bimodule $DR$

\n $\csg^i:$ a basic dividing set representing $\csg$

\newpage

\end{document}